\tikzset{double line with arrow/.style args={#1,#2}{decorate,decoration={markings,%
mark=at position 0 with {\coordinate (ta-base-1) at (0,1pt);
\coordinate (ta-base-2) at (0,-1pt);},
mark=at position 1 with {\draw[#1] (ta-base-1) -- (0,1pt);
\draw[#2] (ta-base-2) -- (0,-1pt);
}}}}
\theoremstyle{plain}
\newtheorem{thm}{Theorem}[section]
\newtheorem{lem}[thm]{Lemma}
\newtheorem{cor}[thm]{Corollary}
\newtheorem{prop}[thm]{Proposition}
\theoremstyle{definition}
\newtheorem{rem}[thm]{Remark}
\newtheorem*{note}{Note}
\newtheorem{defn}[thm]{Definition}
\newtheorem{eg}[thm]{Example}
\newtheorem{egs}[thm]{Examples}
\newcommand*\isommap{
  \xrightarrow{\raisebox{-0.5ex}[0ex][0ex]{$\sim$}}
}
\newcommand{\cO}{\mathcal{O}}
\newcommand{\bC}{\mathbb{C}}
\newcommand{\bD}{\mathbb{D}}
\newcommand{\bP}{\mathbb{P}}
\newcommand{\bQ}{\mathbb{Q}}
\newcommand{\bR}{\mathbb{R}}
\newcommand{\bV}{\mathbb{V}}
\newcommand{\bZ}{\mathbb{Z}}
\newcommand{\fx}{\mathfrak{x}}
\newcommand{\fy}{\mathfrak{y}}
\newcommand{\scB}{\mathscr{B}}
\newcommand{\scC}{\mathscr{C}}
\newcommand{\scD}{\mathscr{D}}
\newcommand{\scE}{\mathscr{E}}
\newcommand{\scH}{\mathscr{H}}
\newcommand{\scI}{\mathscr{I}}
\newcommand{\scJ}{\mathscr{J}}
\newcommand{\scL}{\mathscr{L}}
\newcommand{\scM}{\mathscr{M}}
\newcommand{\scN}{\mathscr{N}}
\newcommand{\scO}{\mathscr{O}}
\begin{document}

\title{Weight filtration and generating level}

\author{Henry Dakin}

\begin{abstract}
We study the canonical mixed Hodge module structure associated to the $\scD_X$-module $\scM(f^{-\alpha}):=\scO_X(*f)f^{-\alpha}$. We particularly focus on the weight filtration and extend many known results to the weighted setting. We obtain new relations between Hodge theory and birational geometry. We derive a general formula for the Hodge and weight filtrations on $\scM(f^{-\alpha})$, and use this to obtain results concerning the highest weight of $\scM(f^{-\alpha})$ and the generating level of weight filtration steps. Finally, we obtain expressions for several classes of divisor, including certain parametrically prime divisors.
\end{abstract}

\date{}

\maketitle

\tableofcontents

\section{Introduction}\label{sectionintro}

Consider a complex manifold $X$ of dimension $n$ and a holomorphic function $f:X \to \bC$ such that $Z:=\{\fx\in X\mid f(\fx)=0\}\subseteq X$ is a hypersurface. If $\alpha\in \bQ$, we may consider the left $\scD_X$-module $\scM(f^{-\alpha}):=\scO_X(*f)f^{-\alpha}$, where $\scO_X(*f)$ is the left $\scD_X$-module of meromorphic functions on $X$ which are holomorphic along $U:=X\backslash Z$. 

$\scM(f^{-\alpha})$ underlies a complex mixed Hodge module in a natural way, given by pulling back the rank one local system $\bV^{-\alpha}$ on $\bC^*$ with monodromy eigenvalue $e^{2\pi i \alpha}$ along $f|_U$, and then pushing forward along the natural open embedding $j:U\hookrightarrow X$. We will write $M(f^{-\alpha})\in\text{MHM}(X,\bC)$ for this mixed Hodge module. This equips $\scM(f^{-\alpha})$ with two filtrations; the \emph{Hodge filtration} $F_{\bullet}^H\scM(f^{-\alpha})$, a good filtration by sub-$\scO_X$-modules, and the \emph{weight filtration} $W_{\bullet}\scM(f^{-\alpha})$, a finite filtration by sub-$\scD_X$-modules, each of which also underlies a complex mixed Hodge module. 

These two filtrations are deeply linked to the mixed Hodge theory of $U$. Namely, there are natural mixed Hodge structures on the cohomology groups of $U$ with coefficients in the local system on $U$ given by pulling back $\bV^{-\alpha}$ along $f|_U$, as first defined by Deligne, \cite{Del71} (at least in the case $\alpha\in\bZ$). Using the theory of mixed Hodge modules, the mixed Hodge module structure (i.e. the filtrations $F_{\bullet}^H$ and $W_{\bullet}$) can be used to determine these mixed Hodge structures on cohomology groups on $U$. 

When $\alpha\in\bZ$, $\scM(f^{-\alpha}) \simeq \scO_X(*f)$ naturally (extending also to an isomorphism of mixed Hodge modules), and in this case $\scM(f^{-\alpha})$ underlies a \emph{rational} mixed Hodge module, as developed by Saito in \cite{MSai88} and \cite{MSai90}. Otherwise, the mixed Hodge module $M(f^{-\alpha})$ does not have a rational structure and complex mixed Hodge modules become necessary.

In recent work Mustaţă and Popa have in a series of papers (\cite{MP19a}, \cite{MP19b}, \cite{MP19c}, \cite{MP20}) endeavoured to better understand the Hodge filtration $F_{\bullet}^H$ on $\scM(f^{-\alpha})$ through the use of \emph{Hodge ideals}, realising many deep relations to singularity theory and to the birational geometry of pairs, as well as obtaining formulae for $F_{\bullet}^H$ and bounds on invariants like generating level using specialisability. There is also now an extensive literature dedicated towards calculating the Hodge filtration for certain classes of function $f$ (\cite{MSai09}, \cite{Z21}, \cite{CNS22}, \cite{BD24}, \cite{D24}). 

Of course, full understanding of the Hodge theory of $U$ only comes by also considering the weight filtration $W_{\bullet}\scM(f^{-\alpha})$. The literature here is far more limited. The study of the weight filtration was begun by Olano in the papers \cite{Olano21} and \cite{Olano23}, in which further relations to birational geometry are attained, as well as a general formula for \emph{weighted Hodge ideals}, analogous to the one for Hodge ideals proven by Mustaţă and Popa in \cite{MP20}. The aim of this paper is to further this study, obtaining and generalising many results analogous to those of the aforementioned authors.

To this end, there are two approaches, as employed in the work of Mustaţă, Popa, Olano. The first uses birational geometry, and a complete understanding of the mixed Hodge module structure on $\scM(f^{-\alpha})$ in the case that $f$ is simple normal crossing.

Write $D$ for the divisor associated to the function $f$. Then we choose a \emph{strong log resolution} of the pair $(X,D)$, i.e. a proper birational morphism $\pi : Y \to X$, where $Y$ is a complex manifold, where the pullback $E:=\pi^*D$ of $D$ to $Y$ is a simple normal crossing divisor, and where the restriction of $\pi$ to $Y\backslash \text{Supp}(E)$ is an isomorphism. $E$ is then the divisor associated to the function $g:=f\circ\pi$. Then it is easy to see that $M(f^{-\alpha})=\pi_*M(g^{-\alpha})$, and we may attempt to obtain results concerning the filtrations $F_{\bullet}^H$ and $W_{\bullet}$ on $M(f^{-\alpha})$ by passing through this log resolution.

It is well known that in the case $\alpha\in\bZ$ (see \cite{Del71}), the canonical mixed Hodge structure on the cohomology groups of $U$ is given by considering the hypercohomology of the \emph{log de Rham complex} $\Omega_Y^{\bullet}(\log E_{\text{red}})$, with the weight filtration given by filtering by the order of pole. Here, in local coordinates such that $g_{\text{red}}=y_1\cdots y_r$, 
\[\Omega_Y^k(\log E_{\text{red}}):=\sum\scO_Y\bigwedge_{i\in J_1}dy_i\wedge\bigwedge_{i\in J_2}\frac{dy_i}{y_i} \,\text{ and }\, W_l\Omega_Y^k(\log E_{\text{red}}) :=\sum_{|J_2|=l}\scO_Y\bigwedge_{i\in J_1}y_i\wedge\bigwedge_{i\in J_2}\frac{dy_i}{y_i},\]
where $J_2\subseteq [r]$ and $|J_1|+|J_2|=k$ in both summands.

In the case $\alpha\notin \bZ$, we instead have to twist the above log de Rham complex by $\scO_Y(-\lceil\alpha E\rceil)$, and to consider a different filtration $W_{\bullet}^{\alpha}$ on $\Omega_Y^k(\log E_{\text{red}})$. For this, we write in local coordinates $g=\prod_{i=1}^ny_i^{a_i}$ (at a point $\fy\in Y$ say), and let $I_{\fy}=\{i\in[n]\mid a_i\neq 0\}$ and $I_{\alpha,\fy}:=\{i\in I\mid \alpha a_i \in \bZ\}$. Then 
\[W_l^{\alpha}\Omega_Y^k(\log E_{\text{red}}) :=\sum_{|J_2\cap I_{\alpha,\fy}|=l}\scO_Y\bigwedge_{i\in J_1}dy_i\wedge\bigwedge_{i\in J_2}\frac{dy_i}{y_i},\]
where $J_2 \subseteq I_{\fy}$ and $|J_1|+|J_2|=k$. Then, in particular, if we define the bi-filtered complex $(C_{-\alpha}^{\bullet}, F_{\bullet},W_{\bullet})$ of induced right $\scD_Y$-modules by 
\[F_{k-n}W_{n+l}C_{-\alpha}^{\bullet}:=\scO_Y(-\lceil \alpha E\rceil)\otimes_{\scO_Y}W_l^{\alpha}\Omega_Y^{n+\bullet}(\log E_{\text{red}})\otimes_{\scO_Y}F_{\bullet+k}\scD_Y,\]
whose differential is a twist of the standard one (see Section \ref{sectionlogres} below), then\footnote{This theorem is an extension of \cite{Olano23}, Proposition 4.1, to the non-reduced situation.}:

\begin{thm}[Proposition \ref{propSNCresW} and Theorem \ref{thmpiplus} below]

Write $\scM_r(f^{-\alpha})$ for the right $\scD_X$-module associated to $\scM(f^{-\alpha})$, with induced Hodge and weight filtrations. Then 

\begin{enumerate}[label=\roman*)]

\item The filtered complex $(W_{n+l}C_{-\alpha}^{\bullet},F_{\bullet-n})$ is quasi-isomorphic to the filtered right $\scD_Y$-module $(W_{n+l}\scM_r(g^{-\alpha}),F_{\bullet-n}^H)$. \vspace{5pt}

\item $F_{k-n}^HW_{n+l}\scM_r(f^{-\alpha}) = \text{\emph{Im}} \left(R^0\pi_*F_{k-n}(W_{n+l}C_{-\alpha}^{\bullet}\otimes_{\scD_Y}\scD_{Y\to X}) \to \scM_r(f^{-\alpha})\right).$

\end{enumerate}

\label{thmpiplusintro}
    
\end{thm}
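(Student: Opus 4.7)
The plan is to treat part (i) as a local computation on the SNC resolution $Y$ and then deduce part (ii) from (i) via Saito's direct image formalism for mixed Hodge modules.

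For part (i), I would work in local coordinates $y_1,\ldots,y_n$ near a point $\fy\in Y$ in which $g=\prod_{i=1}^n y_i^{a_i}$, and decompose the MHM $M(g^{-\alpha})$ as an external tensor product of one-variable MHMs: for $i\notin I_\fy$ the factor is the trivial MHM on $\bC$; for $i\in I_\fy\setminus I_{\alpha,\fy}$ the factor is $\scO_{\bC}(*0)y_i^{-\alpha a_i}$, which is pure and contributes a single weight; and for $i\in I_{\alpha,\fy}$ the factor is $\scO_{\bC}(*0)$, whose weight filtration is a two-step filtration. By the K\"unneth formula for weights, $W_{n+l}M(g^{-\alpha})$ is then spanned by external products taking the top weight in exactly $l$ of the factors indexed by $I_{\alpha,\fy}$ and the minimal weight elsewhere, which matches precisely the combinatorial definition of $W_l^\alpha\Omega^\bullet_Y(\log E_{\mathrm{red}})$. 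Combining this with the standard filtered Koszul-type resolution of $\scM_r(g^{-\alpha})$ by $\Omega^{n+\bullet}_Y(\log E_{\mathrm{red}})\otimes_{\scO_Y}F_{\bullet+k}\scD_Y$, suitably twisted by $\scO_Y(-\lceil\alpha E\rceil)$, yields the claimed quasi-isomorphism, with the Hodge filtration coming from the order filtration on $\scD_Y$.

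For part (ii), I would apply Saito's theorem on direct images of filtered $\scD$-modules underlying MHMs. Since $\pi$ is projective and $\pi_*M(g^{-\alpha})=M(f^{-\alpha})$ as MHMs, the filtered complex $\pi_+(\scM_r(g^{-\alpha}),F_\bullet^H)$ is strict, so $F_p^H\scM_r(f^{-\alpha})$ is computed as the image of $R^0\pi_*$ applied to $F_p(\scM_r(g^{-\alpha})\otimes_{\scD_Y}\scD_{Y\to X})$. Strictness with respect to the weight filtration follows because each $\mathrm{gr}^W_k M(g^{-\alpha})$ is a pure Hodge module and Saito's pure direct image theorem gives strict pushforward; this lets us restrict the formula to $W_{n+l}\scM_r(g^{-\alpha})$ in place of $\scM_r(g^{-\alpha})$. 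Applying part (i) to identify this filtered complex with $F_{k-n}W_{n+l}C_{-\alpha}^\bullet$ then completes the proof.

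The main obstacle is the combinatorial step in part (i): the claim that $W_l^\alpha$, which by design only involves poles along the coordinates with $\alpha a_i\in\bZ$, genuinely recovers the weight filtration of the MHM. The conceptual reason this works is that in the external tensor product the factors corresponding to $i\in I_\fy\setminus I_{\alpha,\fy}$ are pure and hence contribute no variation in weight, while the factors with $i\in I_{\alpha,\fy}$ behave exactly like the trivial local system case; making this rigorous requires careful bookkeeping of the tensor product decomposition and of the K\"unneth formula at the level of filtered $\scD$-modules. Once this identification is in hand, part (ii) is a relatively formal application of Saito's pushforward machinery, with the only remaining technical care being the verification that the twisted differential on $C_{-\alpha}^\bullet$, accommodating the factor $\scO_Y(-\lceil\alpha E\rceil)$, interacts correctly with the direct image formula.
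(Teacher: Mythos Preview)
Your approach to part (ii) is essentially the same as the paper's: the weight filtration on $\pi_*M(g^{-\alpha})$ is by definition the image of $R^0\pi_*W_{n+l}M(g^{-\alpha})$, and the Hodge filtration formula then follows from strictness of morphisms of mixed Hodge modules. The paper phrases this more directly (citing the definition of the weight filtration on direct images rather than arguing via purity of graded pieces), but the content is the same.

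For part (i), your strategy is genuinely different from the paper's. The paper computes the weight filtration on $\scM(g^{-\alpha})$ in the SNC case via its general $V$-filtration formula (Theorem~\ref{thmmainformula}): it first proves that formula in full generality, then specializes to SNC (Theorem~\ref{thmSNCFW2}) by an explicit and somewhat lengthy computation of $F_k^{t\text{-ord}}K_lV^\alpha$. Your K\"unneth approach is more direct and conceptual: the external tensor product decomposition immediately identifies which factors contribute nontrivial weight (precisely those with $\alpha a_i\in\bZ$, since for $\alpha a_i\notin\bZ$ the one-variable factor is pure by Lemma~\ref{lemDmodprops}.iv), and the filtration $W_l^\alpha$ on log forms drops out naturally. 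This avoids the detour through $V$-filtrations entirely for the SNC case, at the cost of needing K\"unneth compatibility for bi-filtered $\scD$-modules underlying MHMs, which is standard but requires care with indexing. The paper's route has the advantage that the SNC computation serves as a test case and application of their main formula; yours is cleaner if one only wants the log-resolution description.

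One small gap in your proposal: identifying $W_{n+l}\scM_r(g^{-\alpha})$ via K\"unneth tells you what the target of the quasi-isomorphism is, but you still need to verify that the subcomplex $W_{n+l}C_{-\alpha}^\bullet$ remains exact away from degree zero. The paper handles this by an elementary local computation (take $u\in W_{n+l}C_{-\alpha}^r$ with $D(u)=0$, lift to $v$ in the full complex via Proposition~\ref{propSNCresF}, and observe from the explicit form of $D(v)$ that the preimage already lies in $W_{n+l}$). You could do the same, or alternatively build the resolution as an external product of one-variable resolutions from the start, which would give exactness for free.
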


\noindent From this theorem we are able to deduce many interesting consequences, including stronger results for the lowest weight filtration step $W_n\scM(f^{-\alpha})$, and relations to familiar birational invariants like multiplier ideals and adjoint ideals, in turn giving criteria to determine the singularity type of the pair $(X,\alpha D)$.

For $Z$ an arbitrary complex manifold, and $(\scM,F_{\bullet})$ a filtered $(\scD_Z,F_{\bullet}^{\text{ord}})$-module, the \emph{generating level} of $(\scM,F_{\bullet})$ is the smallest integer $k \in\bZ$ such that
\[F_i\scD_Z\cdot F_k\scM =F_{k+i}\scM \text{ for all }i \in\bZ_{\geq 0}.\]
The following is analogous to the result obtained by Mustaţă and Popa (\cite{MP19b}, Theorem 10.1) for $(\scM(f^{-\alpha}),F_{\bullet}^H)$.

\begin{cor}[Corollary \ref{corgenlevWn} below]

The generating level of $(W_n\scM(f^{-\alpha}), F_{\bullet}^H)$ is $\leq k$ if and only if 
\[R^i\pi_*\left(\scO_Y(-\lceil\alpha E\rceil)\otimes\Omega_Y^{n-i}(\log E_{I\backslash I_{\alpha}})\right)=0 \,\,\,\text{ for all }\,\, i >k,\]
where $E_{I\backslash I_{\alpha}}$ is given locally at $\fy$ by $\prod_{i\in I_{\fy}\backslash I_{\alpha,\fy}}y_i$.
In particular, the generating level is always $\leq n-1$.

\label{corgenlevWnintro}

\end{cor}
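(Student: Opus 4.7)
The plan is to follow the template of Musta\c{t}\u{a}--Popa's proof of \cite{MP19b}, Theorem~10.1, using the weighted data from Theorem~\ref{thmpiplusintro} and Saito's strictness in place of the classical inputs, thereby reducing the generating-level question to a higher direct-image vanishing on the log resolution.

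First I would translate the condition to the right $\scD_X$-module $\scM_r(f^{-\alpha})$ via the standard left-right correspondence (which shifts the Hodge filtration index by $n$), and work with the filtered complex $\scC^\bullet := W_n C^\bullet_{-\alpha} \otimes_{\scD_Y}\scD_{Y\to X}$ on $Y$ from Theorem~\ref{thmpiplusintro}. Saito's strictness for the filtered pushforward gives $\text{gr}^{F^H}_{l-n} W_n\scM_r(f^{-\alpha}) \cong R^0\pi_* \text{gr}^F_{l-n}\scC^\bullet$ for each $l$, and since $W_n\scM_r(f^{-\alpha})$ is concentrated in cohomological degree $0$ we have $R^i\pi_*\scC^\bullet = 0$ for $i\neq 0$; this propagates by induction along the Hodge filtration (via the short exact sequences $0\to F_{l-1}\scC^\bullet \to F_l\scC^\bullet \to \text{gr}^F_l\scC^\bullet \to 0$) to give $R^i\pi_*\text{gr}^F_l\scC^\bullet = 0$ for all $l$ and $i\neq 0$. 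Explicitly,
\[
(\text{gr}^F_{l-n}\scC^\bullet)^p = \scO_Y(-\lceil\alpha E\rceil)\otimes_{\scO_Y}\Omega_Y^{n+p}(\log E_{I\backslash I_\alpha})\otimes_{\scO_Y}\pi^*\text{Sym}^{l+p}T_X
\]
for $p\in\{-n,\ldots,0\}$, with a Koszul-type twisted de Rham differential.

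Next I would analyse surjectivity of the multiplication maps $\phi_j : \text{gr}^{F^H}_{k-n}W_n\scM_r\otimes_{\scO_X}\text{Sym}^j T_X \to \text{gr}^{F^H}_{k+j-n}W_n\scM_r$ (to which the generating-level condition reduces) via the hypercohomology spectral sequence from the column filtration on $\text{gr}^F_{k+j-n}\scC^\bullet$. Using the projection formula, valid as $\text{Sym}^\bullet T_X$ is locally free on $X$, the $E_1$-page reads
\[
E_1^{p,q} = R^q\pi_*\!\bigl(\scO_Y(-\lceil\alpha E\rceil)\otimes\Omega_Y^{n+p}(\log E_{I\backslash I_\alpha})\bigr)\otimes\text{Sym}^{k+j+p}T_X,
\]
abutting to $\text{gr}^{F^H}_{k+j-n}W_n\scM_r$ in total degree $0$ and to zero otherwise. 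The column term $E_1^{0,0}$ (together with contributions from lower columns via the Koszul differential) accounts exactly for the image of $\phi_j$, while the diagonal terms $E_1^{-i,i}$ for $i\geq 1$ measure the obstruction to surjectivity. Tracking the $E_r$-page differentials and using convergence-to-zero outside total degree $0$, one concludes that the $\phi_j$ are jointly surjective for all $j\geq 0$ if and only if these diagonal terms vanish whenever $i > k$; varying $j$ so that $\text{Sym}^{k+j-i}T_X$ is nonzero reduces this to
\[
R^i\pi_*\!\bigl(\scO_Y(-\lceil\alpha E\rceil)\otimes\Omega_Y^{n-i}(\log E_{I\backslash I_\alpha})\bigr) = 0 \quad\text{for all } i > k.
\]
The final bound, generating level $\leq n-1$, is then immediate from $R^i\pi_* = 0$ for $i\geq n$, forced by $\dim Y = n$.

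The principal obstacle is the spectral-sequence analysis: formalising that surjectivity of the $\phi_j$ is controlled precisely by the diagonal $E_1^{-i,i}$-terms. The direction ``vanishing implies surjectivity'' should follow by induction on the length of a truncated complex, using the long exact sequence in hypercohomology together with the established vanishing $R^i\pi_*\text{gr}^F_l\scC^\bullet = 0$ for $i\neq 0$, and possibly invoking a Koszul resolution for $\ker(\text{Sym}^a T_X\otimes \text{Sym}^j T_X \to \text{Sym}^{a+j}T_X)$ to tie the picture together cleanly. The converse is the subtler part: one must show that any nonvanishing class in $R^i\pi_*(\scO_Y(-\lceil\alpha E\rceil)\otimes\Omega_Y^{n-i}(\log E_{I\backslash I_\alpha}))$ with $i>k$ propagates through the $E_r$-differentials to a genuine obstruction in the abutment, producing, for appropriate $j$, a section of $\text{gr}^{F^H}_{k+j-n}W_n\scM_r$ not lying in the image of $\phi_j$; this likely demands a careful local computation near the components of $E$.
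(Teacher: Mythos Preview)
Your approach is considerably more laborious than the paper's, and the ``converse'' direction you flag as subtle is indeed a genuine gap that would require real work to close. The paper bypasses the entire spectral-sequence analysis by invoking a clean cohomological criterion for generating level (citing \cite{MP22}, Lemma~10.1): the generating level of $(\scO_{X,-\alpha f},F_\bullet^H)$ is $\leq k$ if and only if $\scH^0\text{gr}_{i-n}^F\text{DR}_X(\scO_{X,-\alpha f},F_\bullet^H)=0$ for all $i>k$. This is an \emph{if and only if} statement at the outset, so no separate argument for the converse is needed.

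From there the proof is four lines. One uses that $W_nM(f^{-\alpha})=\pi_*W_nM(g^{-\alpha})$ (part ii) of Theorem~\ref{thmpiplus}) together with the compatibility $\text{gr}^F\text{DR}_X\circ\pi_*\simeq \bR\pi_*\circ\text{gr}^F\text{DR}_Y$ coming from Saito's strictness, and then plugs in the explicit identification $\text{gr}_{i-n}^F\text{DR}_YW_nM(g^{-\alpha})\simeq \scO_Y(-\lceil\alpha E\rceil)\otimes\Omega_Y^{n-i}(\log E_{I\backslash I_\alpha})[i]$ from Corollary~\ref{corgrDRSNC}. Taking $\scH^0$ of $\bR\pi_*$ applied to a sheaf placed in degree $-i$ gives exactly $R^i\pi_*$ of that sheaf, and the equivalence drops out.

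What your approach buys, in principle, is a more hands-on picture of where the obstruction classes live in the associated graded of the filtered $\scD$-module itself, rather than passing through the de Rham complex. But the de Rham criterion already encodes exactly this information in a packaged form, and using it avoids having to unwind the column-filtration spectral sequence and prove survival of classes to $E_\infty$. I would recommend rewriting your argument around the $\scH^0\text{gr}^F\text{DR}$ criterion; your strictness and vanishing observations in the first paragraph are correct and are precisely what is needed to justify the commutation of $\text{gr}^F\text{DR}$ with $\pi_*$.
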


\noindent Note that we use this result later in order to find a bound for the generating level of the other weight filtration steps, but we were not able to find such a bound without introducing further tools such as specialisability.

Now we extend results of Mustaţă, Popa, Olano (\cite{MP19a}, \cite{MP19b}, \cite{Olano21}) relating zero-th Hodge ideals to familiar birational invariants.

\begin{defn}

Let $X'$ be a complex manifold and $D'$ an effective $\bQ$-divisor on $X'$. Choose a log resolution $\mu : Y' \to X'$ of the pair $(X',D')$.

\begin{enumerate}[label=\roman*)]

\item (\cite{L04}, Definition 9.2.1). The \emph{multiplier ideal sheaf} associated to $D'$ is defined to be
\[\scJ(X',D'):=\mu_*\scO_{Y'}(K_{Y'/X'}-\lfloor \mu^*D'\rfloor)\subseteq \scO_{X'}.\]

\item (\cite{T13}, Definition 1.6). Assume in addition that $D'$ is a \emph{boundary divisor}, i.e. that all coefficients of $D'$ lie in $[0,1]$. Assume moreover that the strict transform $\widetilde{\lfloor D'\rfloor}$ of $\lfloor D'\rfloor$ along $\mu$ is smooth\footnote{Note that such a log resolution always exists.}. Then the \emph{adjoint ideal sheaf} associated to $D'$ is defined to be 
\[\text{adj}(X',D'):=\mu_*\scO_{Y'}(K_{Y'/X'}+\widetilde{\lfloor D'\rfloor}-\lfloor\mu^*D'\rfloor)\subseteq\scO_{X'}.\]

\end{enumerate}

Both of these objects are indeed independent of the choice of log resolution.
    
\end{defn}

\begin{cor}[Corollary \ref{corpiplus} below]

Assume $\alpha> 0$.

\begin{enumerate}[label=\roman*)]

\item $F_0^HW_n\scM(f^{-\alpha})=\scJ(X,\alpha D)f^{-\alpha}$.\vspace{5pt}

\item $F_0^H\scM(f^{-\alpha})=\scJ(X,(\alpha-\epsilon) D)f^{-\alpha}$, for any rational $0<\epsilon<<1$.\footnote{This is well known, and was proven in \cite{MP19b}.}\vspace{5pt}

\item Write $\left\{\alpha D\right\}:=\alpha D-\lceil\alpha D\rceil +D_{\text{\emph{red}}}$ for the boundary divisor associated to $\alpha D$. Then 
\[F_0^HW_{n+1}\scM(f^{-\alpha})=\text{\emph{adj}}(X,\{\alpha D\})\scO_X(\left\{\alpha D\right\}-\alpha D)f^{-\alpha}.\footnotemark\]
    
\end{enumerate}

\footnotetext{For $\alpha =1$ and $D$ reduced, this was proven in \cite{Olano21}.}

\label{corpiplusintro}
    
\end{cor}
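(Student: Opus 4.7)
The plan is to apply Theorem~\ref{thmpiplusintro}(ii) with $k = 0$ and $l \in \{0, 1, \infty\}$. Since $F_0\scD_Y = \scO_Y$, only the top-degree term ($\bullet = 0$) of the filtered de Rham complex survives at the $F_{-n}$ level, and after standard left/right $\scD_X$-module conversion the theorem identifies
\[F_0^H W_{n+l}\scM(f^{-\alpha})\cdot f^\alpha \;=\; \mathrm{Im}\!\left(\pi_*\bigl(\scO_Y(K_{Y/X}-\lceil\alpha E\rceil)\otimes_{\scO_Y}\Phi_l^\alpha\bigr)\to\scO_X(*f)\right),\]
where $\Phi_l^\alpha := W_l^\alpha\Omega_Y^n(\log E_{\mathrm{red}})\otimes_{\scO_Y}\omega_Y^{-1}$ is viewed as a fractional ideal on $Y$. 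Each part now reduces to a local computation of $W_l^\alpha\Omega_Y^n(\log E_{\mathrm{red}}) \subseteq \omega_Y(E_{\mathrm{red}})$ followed by identification of the push-forward.

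For part~(i), every top-degree generator of $W_0^\alpha$ has $J_2\cap I_\alpha = \emptyset$, so $W_0^\alpha\Omega_Y^n(\log E_{\mathrm{red}}) = \omega_Y(E_{I\setminus I_\alpha})$. A componentwise check of $E_{I\setminus I_\alpha} - \lceil\alpha E\rceil = -\lfloor\alpha E\rfloor$ then yields $\pi_*\scO_Y(K_{Y/X}-\lfloor\alpha E\rfloor) = \scJ(X,\alpha D)$. For part~(ii), take $l$ so large that $W_{n+l}\scM(f^{-\alpha}) = \scM(f^{-\alpha})$; then $W_l^\alpha\Omega_Y^n(\log E_{\mathrm{red}}) = \omega_Y(E_{\mathrm{red}})$, and for any sufficiently small rational $\epsilon > 0$ one has $E_{\mathrm{red}} - \lceil\alpha E\rceil = -\lfloor(\alpha-\epsilon)E\rfloor$ componentwise, recovering $\scJ(X,(\alpha-\epsilon)D)$.

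For part~(iii), the same local analysis identifies $W_1^\alpha\Omega_Y^n(\log E_{\mathrm{red}})$ inside $\omega_Y(E_{\mathrm{red}})$ with the sub-$\scO_Y$-module generated by $\omega_Y(E_{I\setminus I_\alpha})$ together with $\omega_Y(E_{I\setminus I_\alpha}+E_i)$ for each $i\in I_\alpha$. Writing $I_\alpha = I_\alpha^{\mathrm{s}}\sqcup I_\alpha^{\mathrm{exc}}$ for the decomposition into strict-transform indices and $\pi$-exceptional indices, the plan is to show: (a)~the strict-transform summands, after twisting by $\scO_Y(-\lceil\alpha E\rceil)$ and pushing forward, produce exactly $\pi_*\scO_Y(K_{Y/X}+\widetilde{\lfloor\{\alpha D\}\rfloor}-\lfloor\pi^*\{\alpha D\}\rfloor)\otimes_{\scO_X}\scO_X(\{\alpha D\}-\alpha D) = \mathrm{adj}(X,\{\alpha D\})\,\scO_X(\{\alpha D\}-\alpha D)$; and (b)~the exceptional summands add nothing beyond the $W_0^\alpha$-part already accounted for in~(i). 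Step~(a) is a componentwise bookkeeping exercise using $\{\alpha D\} = \alpha D - \lceil\alpha D\rceil + D_{\mathrm{red}}$ and the projection formula, together with the fact that strict transforms appear with multiplicity~$1$ in $\pi^*D_j$.

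The principal obstacle is step~(b). The cleanest route is via the residue short exact sequence
\[0\to W_0^\alpha\Omega_Y^n(\log E_{\mathrm{red}})\to W_1^\alpha\Omega_Y^n(\log E_{\mathrm{red}})\to\bigoplus_{i\in I_\alpha}(\iota_i)_*\omega_{E_i}(E_{I\setminus I_\alpha}\cap E_i)\to 0,\]
tensored with $\scO_Y(-\lceil\alpha E\rceil)$. For $i\in I_\alpha^{\mathrm{exc}}$ the restriction of the twist to $E_i$ is sufficiently negative in the $\pi$-vertical directions for a Grauert--Riemenschneider- or relative Kawamata--Viehweg-type vanishing to force the $R^0\pi_*$ contribution to vanish, while for $i\in I_\alpha^{\mathrm{s}}$ the residue reproduces the adjunction-type contribution that matches the strict-transform term $\widetilde{\lfloor\{\alpha D\}\rfloor}$ in the adjoint ideal, in direct parallel with the reduced-divisor case of \cite{Olano21}.
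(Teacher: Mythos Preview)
Your overall strategy matches the paper's exactly: for each $l$ you reduce to computing $\pi_*\bigl(\scO_Y(-\lceil\alpha E\rceil)\otimes W_l^\alpha\omega_Y(E_{\text{red}})\bigr)$, and for $l=1$ you analyse the residue short exact sequence, separating the cokernel into strict-transform and exceptional pieces. Parts (i) and (ii) are handled identically to the paper, and your step~(a) is equivalent to the paper's commutative-diagram argument comparing the full $W_1^\alpha$ sequence with the sub-sequence built from the single strict-transform component.

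The only point where your reasoning diverges is the justification of step~(b). You propose that the $R^0\pi_*$-contribution from an exceptional $E_i$ vanishes because the twist $-\lfloor\alpha E\rfloor|_{E_i}$ is ``sufficiently negative in the $\pi$-vertical directions'' and then invoke a Grauert--Riemenschneider or relative Kawamata--Viehweg type theorem. Neither of those theorems controls $R^0$; they kill \emph{higher} direct images. Moreover the normal-bundle summand $-\lfloor\alpha a_i\rfloor E_i|_{E_i}$ of the twist need not be negative at all (for an exceptional divisor the self-intersection is typically negative, so this contribution can be \emph{positive}), so the ``negativity'' heuristic is not reliable. The paper's argument is both simpler and different in nature: it uses the elementary fact, taken from the proof of \cite{Olano21}, Proposition~5.5, that $\pi_*\omega_{E_i}=0$ whenever $E_i$ is $\pi$-exceptional. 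This is a codimension statement---$\pi(E_i)$ has codimension $\geq 2$ in $X$, so by a Hartogs-type extension (using $R^1\pi_*\omega_Y=0$ to get surjectivity onto the residue) one finds $\pi_*\omega_Y(-\lfloor\alpha E\rfloor+E_i)=\pi_*\omega_Y(-\lfloor\alpha E\rfloor)$ and hence the exceptional residue pushes forward to zero. Replace your vanishing-theorem appeal with this codimension argument and the proof goes through.
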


As a consequence, we obtain relations between the mixed Hodge module structure on $\scM(f^{-\alpha})$ and certain singularity ``thresholds" for the pairs $(X,\alpha D)$ as we vary $\alpha$.

\begin{cor}[Corollary \ref{corpair} below]

Assume $\alpha> 0$.

\begin{enumerate}[label=\roman*)]

\item $F_0^H\scM(f^{-\alpha})=\scO_Xf^{-\alpha}$ if and only if the pair $(X,\alpha D)$ is log canonical.\footnote{Again, this was proven in \cite{MP19b}.}

\item $F_0^HW_{n+1}\scM(f^{-\alpha})=\scO_Xf^{-\alpha}$ if and only if the pair $(X,\alpha D)$ is purely log terminal. 

\item $F_0^HW_n\scM(f^{-\alpha})=\scO_Xf^{-\alpha}$ if and only if the pair $(X,\alpha D)$ is Kawamata log terminal. 

\end{enumerate}

\label{corpairintro}

\end{cor}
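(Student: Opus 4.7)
The plan is to translate each of the three equalities into a statement about classical singularity invariants via Corollary \ref{corpiplusintro}, so that all three equivalences fall out of the standard dictionary between triviality of multiplier/adjoint ideals and the singularity type of a pair.

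Cases (i) and (iii) will be immediate. For (i), I would apply Corollary \ref{corpiplusintro}(ii), which gives $F_0^H\scM(f^{-\alpha})=\scJ(X,(\alpha-\epsilon)D)f^{-\alpha}$; since $f^{-\alpha}$ generates a free rank-one $\scO_X$-submodule of $\scM(f^{-\alpha})$, this coincides with $\scO_Xf^{-\alpha}$ exactly when $\scJ(X,(\alpha-\epsilon)D)=\scO_X$ for small rational $\epsilon>0$, which is one of the standard multiplier-ideal characterizations of $(X,\alpha D)$ being log canonical (\cite{L04}). Case (iii) proceeds identically, using Corollary \ref{corpiplusintro}(i): $F_0^HW_n\scM(f^{-\alpha})=\scJ(X,\alpha D)f^{-\alpha}$ equals $\scO_Xf^{-\alpha}$ iff $\scJ(X,\alpha D)=\scO_X$, which is the analogous characterization of klt.

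Case (ii) requires a little more care. Starting from Corollary \ref{corpiplusintro}(iii),
\[F_0^HW_{n+1}\scM(f^{-\alpha})=\text{adj}(X,\{\alpha D\})\cdot\scO_X(\{\alpha D\}-\alpha D)\cdot f^{-\alpha},\]
I would first observe that $\{\alpha D\}-\alpha D=D_{\text{red}}-\lceil\alpha D\rceil$ is anti-effective, since $\alpha>0$ forces $\lceil\alpha a_i\rceil\ge 1$ whenever $a_i>0$; hence $\scO_X(\{\alpha D\}-\alpha D)$ is the ideal sheaf $\scO_X(-(\lceil\alpha D\rceil-D_{\text{red}}))$. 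Cancelling the free generator $f^{-\alpha}$, the equality $F_0^HW_{n+1}\scM(f^{-\alpha})=\scO_Xf^{-\alpha}$ becomes the ideal identity $\text{adj}(X,\{\alpha D\})\cdot\scO_X(-(\lceil\alpha D\rceil-D_{\text{red}}))=\scO_X$ inside $\scO_X$. Arguing stalkwise, a product of two proper ideals in a local ring is still proper, so this forces both factors to equal $\scO_X$ separately.

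The resulting two conditions are $\text{adj}(X,\{\alpha D\})=\scO_X$ and $\lceil\alpha D\rceil=D_{\text{red}}$. The latter amounts to $\alpha a_i\le 1$ for every $i$ with $a_i>0$, i.e., to $\alpha D$ being a $\bQ$-boundary; in that case $\{\alpha D\}=\alpha D$, and $\text{adj}(X,\alpha D)=\scO_X$ is precisely the adjoint-ideal characterization of plt (\cite{T13}). Conversely, plt requires $\alpha D$ to be a boundary, so both conditions follow. The only non-routine point in the whole argument is recognising that the twist $\scO_X(\{\alpha D\}-\alpha D)$ appearing in the $W_{n+1}$-formula encodes exactly the boundary condition baked into the definition of plt; once this is noted, the remaining bookkeeping is entirely mechanical.
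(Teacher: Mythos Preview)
Your proposal is correct and follows essentially the same route as the paper: both deduce the corollary directly from Corollary~\ref{corpiplusintro} together with the standard triviality criteria for multiplier and adjoint ideals, the only nontrivial point being the handling of the twist $\scO_X(\{\alpha D\}-\alpha D)$ in part (ii). The paper's proof is extremely terse (it merely notes that log canonical implies $\alpha D$ is a boundary and refers to \cite{KM98} and \cite{ST07}), whereas you spell out the argument for (ii) in full; your observation that $I\cdot J=\scO_X$ forces $I=J=\scO_X$ stalkwise (since $I\cdot J\subseteq I\cap J$) makes explicit exactly the step the paper leaves to the reader.
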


\noindent The second method we employ to study the Hodge and weight filtrations on $\scM(f^{-\alpha})$ is to use the so-called \emph{specialisability} of $\scM(f^{-\alpha})$. This posits the existence of the \emph{Kashiwara-Malgrange $V$-filtration} associated to $\scM(f^{-\alpha})$, which is in general used to describe the nearby and vanishing cycle sheaves and which is integrally used in the construction of the category of mixed Hodge modules. In Section \ref{sectionVfilt} we will recall details concerning specialisability further. We then obtain a formula for the Hodge and weight filtrations in terms of this $V$-filtration, which will be used essentially throughout the rest of the paper.

Consider the following map, the \emph{graph embedding} of the function $f$:
\[i_f : X \to X\times \bC \,;\, \fx\mapsto(\fx,f(\fx)).\]
The \emph{Kashiwara-Malgrange $V$-filtration} for $\scM(f^{-\alpha})$ along $\{t=0\}\subseteq X\times\bC$ is a rational decreasing filtration $V^{\bullet}$ of sub-$\scD_X[\partial_tt]$-modules on $i_{f,+}\scM(f^{-\alpha})$, which may be uniquely defined axiomatically (see Definition \ref{defnspecial} below). The most important of these axioms is the nilpotency of the operator $\partial_tt-\gamma$ on $\text{gr}_V^{\gamma}i_{f,+}\scM(f^{-\alpha})$, for any $\gamma\in\bQ$. By this nilpotency, we may define, for any $\gamma\in\bQ$, a finite filtration $K_{\bullet}$ on $V^{\gamma}i_{f,+}\scM(f^{-\alpha})$ of sub-$\scD_X[\partial_tt]$-modules, the so-called \emph{kernel filtration}, by
\[K_lV^{\gamma}i_{f,+}\scM(f^{-\alpha}):=\{u\in V^{\gamma}i_{f,+}\scM(f^{-\alpha})\mid(\partial_tt-\gamma)^l\cdot u \in V^{>\gamma}i_{f,+}\scM(f^{-\alpha})\}.\]

We also define another filtration on $i_{f,+}\scM(f^{-\alpha})$, the so-called \emph{$t$-order filtration}, by seeing that there is an isomorphism $i_{f,+}\scM(f^{-\alpha})\simeq \scM(f^{-\alpha})[\partial_t]$, and letting
\[F_k^{t-\text{ord}}i_{f,+}\scM(f^{-\alpha}):=\sum_{i=0}^k\scM(f^{-\alpha})\partial_t^i.\]

Using these two filtrations, we prove a formula for the Hodge and weight filtrations on $\scM(f^{-\alpha})$, which will be very important for all further results in the paper.

\begin{thm}[Theorem \ref{thmmainformula} below]

For any $k,l \in \bZ_{\geq 0}$, we have an equality
\[F_k^HW_{n+l}\scM(f^{-\alpha}) = \psi_0\left(F_k^{t-\text{\emph{ord}}}K_lV^0i_{f,+}\scM(f^{-\alpha})\right).\footnotemark\]

\footnotetext{This theorem is an extension of formulas such as \cite{MP20}, Theorem A', \cite{Olano23}, Theorem A and \cite{CNS22}, Corollary 4.3.}

\noindent Here, $\psi_0:i_{f,+}\scM(f^{-\alpha})\to\scM(f^{-\alpha})$ is the map $\partial_t\mapsto 0$.

\label{thmmainformulaintro}
    
\end{thm}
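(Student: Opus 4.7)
The plan is to reduce to the simple normal crossing (SNC) case using a strong log resolution $\pi:Y\to X$, prove the formula there by explicit local computation, and transfer it to $X$ via Theorem \ref{thmpiplusintro}. In the SNC case $f=\prod y_i^{a_i}$ in local coordinates $y_1,\ldots,y_n$ at $\fy$, and under the standard identification $i_{f,+}\scM(f^{-\alpha})\cong\scM(f^{-\alpha})[\partial_t]$ with $\scD_{X\times\bC}$-action given by $t\cdot(m\partial_t^k)=fm\partial_t^k-km\partial_t^{k-1}$, the operator $\partial_tt$ is semisimple up to nilpotent part on natural monomial generators $f^{-\alpha}y^{-b}\partial_t^k$, with eigenvalues a rational function of $\alpha$, $a$, $b$, $k$. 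From this I would read off the $V$-filtration; then $K_lV^0$ consists of elements of nilpotency degree $\le l$ under $\partial_tt$, while $F_k^{t\text{-ord}}$ bounds the power of $\partial_t$. Matching this against the description of $F_k^HW_{n+l}\scM(f^{-\alpha})$ via the twisted log de Rham complex of Proposition \ref{propSNCresW} (i.e.\ $W_l^\alpha\Omega_Y^\bullet(\log E_{\text{red}})\otimes\scO_Y(-\lceil\alpha E\rceil)$), I would verify term by term that $\psi_0(F_k^{t\text{-ord}}K_lV^0)=F_k^HW_{n+l}\scM(f^{-\alpha})$, using that a log form $\bigwedge_{i\in J_1}dy_i\wedge\bigwedge_{i\in J_2}dy_i/y_i$ lifts to a $V^0$-element of nilpotency degree $|J_2\cap I_{\alpha,\fy}|$.

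For general $f$, take a strong log resolution $\pi:Y\to X$ with $g=f\circ\pi$. The commutative square of graph embeddings gives $i_{f,+}\scM(f^{-\alpha})\simeq(\pi\times\mathrm{id}_\bC)_+ i_{g,+}\scM(g^{-\alpha})$; by Saito's theory of proper direct images of specialisable holonomic $\scD$-modules, each of $V^\bullet$, $K_\bullet$, and $F^{t\text{-ord}}_\bullet$ is preserved, and $\psi_0$ intertwines with $\pi_+$. Theorem \ref{thmpiplusintro}(ii) expresses $F_k^HW_{n+l}\scM_r(f^{-\alpha})$ as the image in $\scM_r(f^{-\alpha})$ of $R^0\pi_*F_k(W_{n+l}C_{-\alpha}^\bullet\otimes_{\scD_Y}\scD_{Y\to X})$. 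Combining with the SNC case on $Y$, together with the standard left-right $\scD$-module conversion, yields the formula on $X$.

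The main obstacle is the SNC computation of the first step, specifically the match between $K_lV^0$ and the combinatorial condition $|J_2\cap I_{\alpha,\fy}|\le l$ defining $W_l^\alpha$. When $\alpha\in\bZ$, $I_{\alpha,\fy}=I_\fy$ and this is the classical identification of the monodromy filtration on unipotent nearby cycles with the order-of-pole filtration on log forms; for general $\alpha$, only indices $i\in I_{\alpha,\fy}$ (those with $\alpha a_i\in\bZ$) give generators on which $\partial_tt$ acts with integer eigenvalue, and so only these contribute to the nilpotent part on $V^0$. Thus $|J_2\cap I_{\alpha,\fy}|$ exactly records the nilpotency degree, but carefully bookkeeping this across the twist by $\scO_Y(-\lceil\alpha E\rceil)$ and the interaction with $F_\bullet^{t\text{-ord}}$ requires care. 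The second step additionally relies on strictness of proper direct images of mixed Hodge modules for the bi-filtration $(F_\bullet^H,W_\bullet)$, which is needed to transport the SNC identification faithfully across $\pi_+$.
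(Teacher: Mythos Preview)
Your approach is fundamentally different from the paper's, and it has a genuine gap.

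\textbf{Circularity.} You invoke Proposition \ref{propSNCresW} to describe $F_k^HW_{n+l}\scM(g^{-\alpha})$ in the SNC case. But in the paper's logical structure, Proposition \ref{propSNCresW} relies on Theorem \ref{thmSNCFW}, whose proof (as Theorem \ref{thmSNCFW2}) is explicitly deduced \emph{from} Theorem \ref{thmmainformula}. So as written your argument is circular. You could try to repair this by proving the SNC Hodge and weight filtration formula directly from the axioms of mixed Hodge modules, but that is itself a substantial computation and is not what you have sketched.

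\textbf{The pushforward step.} The more serious problem is your second step. You assert that $V^{\bullet}$, $K_{\bullet}$, $F_{\bullet}^{t\text{-ord}}$, and $\psi_0$ are each compatible with $(\pi\times\mathrm{id})_+$. The $V$-filtration and nearby cycles do commute with proper pushforward by Saito's theory, but the $t$-order filtration is \emph{not} a Hodge-theoretic filtration a priori, and there is no reason it should be preserved under proper direct image. What is preserved (by strictness) is the Hodge filtration $F_{\bullet}^H$ on $i_{f,+}\scM(f^{-\alpha})$, and the identification $F_{k+1}^H\cap V^0 = F_k^{t\text{-ord}}\cap V^0$ (the paper's Corollary \ref{corV02}) is itself a nontrivial consequence of specialisability and the MHM structure, proved on $X$ directly. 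Likewise, the map $\psi_0$ is not a morphism of mixed Hodge modules, so its compatibility with pushforward requires separate justification. These compatibilities are exactly where the content of the theorem lies; asserting them amounts to assuming what you want to prove.

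\textbf{What the paper does instead.} The paper's proof avoids resolutions entirely. It works directly on $X$: starting from Corollary \ref{corV0} (which expresses $F_{k+1}^HW_{n+l}i_{f,+}\scM(f^{-\alpha})$ in terms of $V^0W_{n+l}$ and $F^{t\text{-ord}}$), it identifies $V^0W_{n+l}i_{f,+}\scM(f^{-\alpha})$ with $\partial_t\cdot V^1 + K_lV^0$ using Saito's characterisation of the weight filtration via the relative monodromy filtration on $\mathrm{gr}_V^0$ (\cite{MSai90}, Corollary 1.9), together with the fact that the na\"ive limit filtration on $\mathrm{gr}_V^1$ is trivial here. Strictness of the morphism $(\partial_tt)^l\partial_t$ (as a morphism of mixed Hodge modules) then upgrades this to a filtered statement. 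The resolution machinery of Section \ref{sectionlogres} is a \emph{consequence} of this formula, not an input to it.
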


We remark that Davis and Yang \cite{DY25} have recently obtained a stronger expression for the Hodge filtration, incorporating also the kernel of the map $\psi_0$ appearing above. A weighted analogue of their Corollary 1.2 may also be shown to hold, as a stronger version of the above Theorem \ref{thmmainformulaintro}.

In Section \ref{sectionmicrolocal}, we then use Theorem \ref{thmmainformulaintro} to prove some further general statements about the Hodge and weight filtrations. The Hodge filtration is a good filtration, so has some finite generating level, and the same holds when restricting to weight filtration steps. The weight filtration meanwhile is a finite filtration. So we may ask the following questions:

\begin{itemize}
    \item For $l \in \bZ_{\geq 0}$, what is the generating level of $(W_{n+l}\scM(f^{-\alpha}), F_{\bullet}^H)$?
    \item What is the smallest $l \geq 0$ such that $W_{n+l}\scM(f^{-\alpha})=\scM(f^{-\alpha})$?
\end{itemize}

We obtain (partial) answers to these questions, related to invariants we obtain from the so-called \emph{local Bernstein-Sato polynomial} $b_{f,\hspace{0.7pt}\fx}(s)\in\bC[s]$ of $f$ at a point $\fx\in X$. This is the unique monic polynomial of minimal degree satisfying the functional equation
\[b_{f,\hspace{0.7pt}\fx}(s)f_{\fx}^s\in\scD_{X,\hspace{1pt}\fx}[s]\cdot f_{\fx}^{s+1},\]
where $f_{\fx}\in\scO_{X,\hspace{0.7pt}\fx}$ is the germ of $f$ at $\fx$. The set of roots $\rho_{f,\hspace{0.7pt}\fx}$ and their multiplicities are intrinsically related to the singularity theory and Hodge theory of $f$. Note that all roots are negative rational numbers, and that $-1$ is always a root.

The perhaps most important invariant associated to $b_{f,\hspace{0.7pt}\fx}(s)$ is the \emph{minimal exponent} of $f$ at $\fx$. In general we also introduce the \emph{weighted minimal exponents} of $f$ at $\fx$, to help us in our investigation of the weighted setting.

\begin{defn}

Write $\widetilde{b}_{f,\hspace{0.7pt}\fx}(s):=b_{f,\hspace{0.7pt}\fx}(s)/(s+1) \in \bC[s]$ for the \emph{reduced Bernstein-Sato polynomial of $f$ at $\fx$}. The \emph{minimal exponent} $\widetilde{\alpha}_{f,\hspace{0.7pt}\fx}$ of $f$ at $\fx$ is the smallest root of $\widetilde{b}_{f,\hspace{0.7pt}\fx}(-s)$. For $l\in\bZ_{\geq 0}$, the \emph{$l$-th weighted minimal exponent} $\widetilde{\alpha}_{f,\hspace{0.7pt}\fx}^{(l)}$ of $f$ at $\fx$ is the smallest root of $\widetilde{b}_{f,\hspace{0.7pt}\fx}(-s)$ of multiplicity $\geq l+1$. These are all positive rational numbers.

\label{defnwminlexpintro}
    
\end{defn}

We use Saito's algebraic microlocalisation (\cite{MSai94}) as a technical tool, as a stepping stone between the invariants coming from the Bernstein-Sato polynomial and the mixed Hodge module structure on $\scM(f^{-\alpha})$. Of particular significance are the so-called \emph{microlocal multiplier ideals} (see \cite{MSai17}, (1.5.4), and Definition \ref{defnmicromultideals} below) associated to $f$ (also known as \emph{higher multiplier ideals}, see \cite{SY24}), and their weighted counterparts (Definition \ref{defnmicromultideals} below), as we define in Section \ref{sectionmicrolocal} below. Using microlocalisation, we obtain the following set of theorems related to our above questions.

\begin{prop}[Proposition \ref{prophodgepole2} below]

For $k \in \bZ_{\geq 0}$, $\alpha \in (0,1]$ and $l \in \bZ_{\geq 0}$, we have that
\[F_k^HW_{n+l+\lfloor\alpha\rfloor}\scM(f^{-\alpha})=\scO_Xf^{-k-\alpha} \text{ at }\fx\]
if and only if 
\[\text{either } k+\alpha < \widetilde{\alpha}_{f,\hspace{0.7pt}\fx} \text{ or } k+\alpha =\widetilde{\alpha}_{f,\hspace{0.7pt}\fx} \text{ and }\widetilde{\alpha}_{f,\hspace{0.7pt}\fx} \neq \widetilde{\alpha}_{f,\hspace{0.7pt}\fx}^{(l)}.\]

\label{prophodgepole2intro}
    
\end{prop}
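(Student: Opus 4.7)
The strategy is to combine Theorem \ref{thmmainformulaintro} with Saito's algebraic microlocalization, following the pattern established for the Hodge filtration alone by Mustaţă--Popa in \cite{MP20}. By the main formula we may rewrite
\[F_k^HW_{n+l+\lfloor\alpha\rfloor}\scM(f^{-\alpha}) = \psi_0\left(F_k^{t-\text{ord}}K_{l+\lfloor\alpha\rfloor}V^0 i_{f,+}\scM(f^{-\alpha})\right),\]
so the question reduces to deciding when every local section of $\scO_X f^{-k-\alpha}$ at $\fx$ admits a lift to $F_k^{t-\text{ord}}K_{l+\lfloor\alpha\rfloor}V^0 i_{f,+}\scM(f^{-\alpha})$ under $\psi_0$. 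A natural candidate lift is given by a $\scD_X$-multiple of $f^{-k-\alpha}\otimes\partial_t^k$, modified by lower $t$-order corrections.

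Next, I would apply the algebraic microlocalization $\widetilde{(-)}$ of \cite{MSai94} to $i_{f,+}\scM(f^{-\alpha})$. This inverts $\partial_t$ and turns the $t$-order filtration into a shift of the $V$-filtration; more importantly, on $\text{gr}_V^\gamma$ of the microlocalization the nilpotent operator $\partial_t t - \gamma$ has nilpotency order equal to the multiplicity of $-\gamma$ as a root of the local Bernstein--Sato polynomial $b_{f,\fx}(s)$. In this picture, the containment above becomes equivalent to the triviality at $\fx$ of the weighted microlocal multiplier ideal $\widetilde{\scJ}_l\bigl(X,(k+\alpha)D\bigr)$ introduced in Definition \ref{defnmicromultideals}. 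The shift by $\lfloor\alpha\rfloor$ in the kernel filtration index precisely absorbs the factor $(s+1)$ distinguishing $b_{f,\fx}(s)$ from $\widetilde b_{f,\fx}(s)$: at $\alpha=1$, the root $s=-1$ of $b_{f,\fx}$ contributes one extra unit of nilpotency which must be accounted for before the analysis on $\widetilde b_{f,\fx}$ can proceed, whereas for $\alpha\in(0,1)$ no such correction is required.

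The final step is to translate the equality $\scO_{X,\fx}=\widetilde{\scJ}_l\bigl(X,(k+\alpha)D\bigr)$ into a condition on roots of $\widetilde b_{f,\fx}(-s)$. By the nilpotency description above, the obstruction to this equality is controlled by the multiplicity of $k+\alpha$ as a root of $\widetilde b_{f,\fx}(-s)$: triviality holds precisely when either $k+\alpha$ lies strictly below every root, i.e. $k+\alpha<\widetilde\alpha_{f,\fx}$, or $k+\alpha$ equals the smallest root but with multiplicity at most $l$, i.e. $k+\alpha=\widetilde\alpha_{f,\fx}$ and $\widetilde\alpha_{f,\fx}\neq\widetilde\alpha_{f,\fx}^{(l)}$, which is exactly the stated condition. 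The main technical obstacle is the careful verification, to be carried out in Section \ref{sectionmicrolocal}, of the identification of $\psi_0\bigl(F_k^{t-\text{ord}}K_{l+\lfloor\alpha\rfloor}V^0 i_{f,+}\scM(f^{-\alpha})\bigr)$ with $\widetilde{\scJ}_l\bigl(X,(k+\alpha)D\bigr)\cdot f^{-k-\alpha}$ after microlocalization, in particular tracking the indexing shift at $\alpha=1$ so that the extra root of $b_{f,\fx}$ at $-1$ is correctly absorbed; once this technical groundwork is in place, the proposition follows immediately from the definition of the weighted minimal exponents $\widetilde\alpha_{f,\fx}^{(l)}$.
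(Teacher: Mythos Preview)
Your proposal is correct and follows essentially the same route as the paper's proof: reduce via the main formula to the weighted microlocal multiplier ideal $W_l\widetilde{V}^{k+\alpha}\scO_X$, use microlocalization to rephrase triviality of this ideal as $1\in K_lV^{k+\alpha}\widetilde{\scB}_{f,\fx}$, and then read off the answer from the characterisation of the weighted minimal exponents in terms of the $G$- and $V$-filtrations on $\widetilde{\scB}_f$. The paper handles the boundary case $k+\alpha=\widetilde{\alpha}_{f,\fx}$ by observing that $G_{-1}V^{\widetilde{\alpha}_{f,\fx}}\widetilde{\scB}_{f,\fx}\subseteq V^{>\widetilde{\alpha}_{f,\fx}}\widetilde{\scB}_{f,\fx}$, so that the criterion of Lemma~\ref{lemmicrominlexp} collapses to exactly $1\in K_lV^{\widetilde{\alpha}_{f,\fx}}\widetilde{\scB}_{f,\fx}$; this is the precise mechanism behind what you describe as ``multiplicity at most $l$'', and it would strengthen your sketch to state it explicitly.
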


This allows us, via Corollary \ref{corpairintro}, to deduce a relation between the weighted minimal exponents and the thresholds at which the pairs $(X,\alpha D)$ reach certain singularity types.

\begin{cor}[Corollary \ref{corlct} below]

\hfill

\begin{enumerate}[label=\roman*)]

\item The pair $(X,\alpha D)$ is log canonical if and only if $\alpha \leq \widetilde{\alpha}_{f,\hspace{0.7pt}\fx}$.

\item The pair $(X,\alpha D)$ is purely log terminal if and only if $\alpha < \widetilde{\alpha}_{f,\hspace{0.7pt}\fx}$ or $\alpha=\widetilde{\alpha}_{f,\hspace{0.7pt}\fx}$ and $\alpha \neq 1$, $\widetilde{\alpha}_{f,\hspace{0.7pt}\fx}\neq \widetilde{\alpha}_{f,\hspace{0.7pt}\fx}^{(1)}$.

\item The pair $(X,\alpha D)$ is Kawamata log terminal if and only if $\alpha \neq 1$ and $\alpha < \widetilde{\alpha}_{f,\hspace{0.7pt}\fx}$\footnotemark.

\end{enumerate} 

\footnotetext{i) and iii) are already well known. Corollary \ref{corlctintro} can be seen as giving a criterion for when $(X,\text{lct}(X,D)D)$ is a plt pair.}

\label{corlctintro}

\end{cor}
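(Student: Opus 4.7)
The natural strategy is to chain Corollary \ref{corpairintro} with Proposition \ref{prophodgepole2intro}: the former identifies each of the three singularity types of $(X, \alpha D)$ at $\fx$ with an equality of the form $F_0^H W_{\bullet}\scM(f^{-\alpha}) = \scO_X f^{-\alpha}$, and the latter (taken with $k = 0$) translates such an equality into a condition on the weighted minimal exponents. The bulk of the proof consists of matching the weight index prescribed by Corollary \ref{corpairintro} with the expression $n + l + \lfloor \alpha \rfloor$ appearing in Proposition \ref{prophodgepole2intro}, and then simplifying the resulting disjunction case by case.

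For part i), log canonicity is equivalent via Corollary \ref{corpairintro}(i) to $F_0^H \scM(f^{-\alpha}) = \scO_X f^{-\alpha}$ at $\fx$. Since $W_{\bullet}$ is a finite filtration and $\widetilde{b}_{f,\fx}$ has roots of bounded multiplicity, I can choose $l \gg 0$ so that simultaneously $W_{n + l + \lfloor\alpha\rfloor}\scM(f^{-\alpha}) = \scM(f^{-\alpha})$ and $\widetilde{\alpha}_{f,\fx}^{(l)} \neq \widetilde{\alpha}_{f,\fx}$, whereupon Proposition \ref{prophodgepole2intro} collapses to the single condition $\alpha \leq \widetilde{\alpha}_{f,\fx}$. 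For parts ii) and iii) I run the same recipe but with the weight index fixed at $n+1$ (plt) or $n$ (klt): the plt case splits into $(l, \lfloor\alpha\rfloor) = (1, 0)$ for $\alpha \in (0, 1)$ and $(0, 1)$ for $\alpha = 1$, while the klt case forces $(l, \lfloor\alpha\rfloor) = (0, 0)$, hence $\alpha \in (0, 1)$. Using the tautology $\widetilde{\alpha}_{f,\fx}^{(0)} = \widetilde{\alpha}_{f,\fx}$ to kill the disjunctive clause of Proposition \ref{prophodgepole2intro} whenever $l = 0$, and handling the boundary value $\alpha = 1$ of (iii) directly (since $(X, D)$ is never klt at a point of $\text{Supp}(D)$), the resulting pieces reassemble into the stated criteria.

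The main obstacle is the purely combinatorial bookkeeping required to reproduce the asymmetric formulation of (ii) and to verify that the two $l = 0$ boundary cases of (ii) and (iii) collapse correctly; the $\alpha = 1$ edge of part (ii) in particular has to be reached via the $(l, \lfloor \alpha \rfloor) = (0,1)$ split rather than from the $\alpha < 1$ regime, and the two outputs must be checked to coincide with the single clause $1 < \widetilde{\alpha}_{f,\fx}$. Beyond this, no Hodge-theoretic input past the two cited results should be needed.
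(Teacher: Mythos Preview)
Your proposal is correct and follows the same approach as the paper: combine Corollary~\ref{corpairintro} with Proposition~\ref{prophodgepole2intro} (with $k=0$), split on whether $\alpha\in(0,1)$ or $\alpha=1$ to match the weight index $n+l+\lfloor\alpha\rfloor$, and use $\widetilde{\alpha}_{f,\fx}^{(0)}=\widetilde{\alpha}_{f,\fx}$ to collapse the $l=0$ cases. The paper's proof is much terser (it just says the result ``follows immediately'' from Corollary~\ref{corpair}, with the combination with the just-proven Proposition~\ref{prophodgepole2} left implicit), whereas you have written out the case bookkeeping explicitly; note also that for part~(i) you could shortcut the ``choose $l\gg 0$'' step by invoking Proposition~\ref{prophodgepole1} directly.
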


Next, we obtain upper and lower bounds for the highest weight of $\scM(f^{-\alpha})$:

\begin{thm}[Theorem \ref{thmlargestweight} below]

Let $\alpha\in(0,1]$. Write $w_{\text{\emph{max}}}$ for the highest weight of $\scM(f^{-\alpha})$ at $\fx$ (i.e. the largest integer $l$ such that $\text{\emph{gr}}^W_l\scM(f^{-\alpha})\neq 0$ at $\fx$). For $\beta\in\bQ$, write $\widetilde{m}_{f,\beta,\hspace{0.7pt}\fx}$ for the multiplicity of $\beta$ as a root of $\widetilde{b}_{f,\hspace{0.7pt}\fx}(s)$. Then
\[n+\text{\emph{max}}\{\widetilde{m}_{f,-\alpha-i,\hspace{0.7pt}\fx}\,|\,i \in \bZ\} +\lfloor\alpha\rfloor\leq w_{\text{\emph{max}}}\leq n+\sum_{i\geq 0}\widetilde{m}_{f,-\alpha-i,\hspace{0.7pt}\fx}+\lfloor\alpha\rfloor.\]

\label{thmlargestweightintro}
    
\end{thm}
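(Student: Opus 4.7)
The plan is to apply Theorem \ref{thmmainformulaintro} to translate the highest-weight question into a statement about the nilpotency of $\partial_t t$ on a suitable piece of $V^0 i_{f,+}\scM(f^{-\alpha})$, and then to use the microlocal framework of Section \ref{sectionmicrolocal} to relate this nilpotency structure to the root multiplicities of $\widetilde{b}_{f,\fx}(s)$.

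First, taking the union over $k \geq 0$ in Theorem \ref{thmmainformulaintro}, one obtains $W_{n+l}\scM(f^{-\alpha}) = \psi_0(K_l V^0 i_{f,+}\scM(f^{-\alpha}))$, so $w_{\text{max}} - n$ is the smallest $l \geq 0$ for which this image coincides with $\scM(f^{-\alpha})$ in a neighbourhood of $\fx$. Since $K_l V^0 / V^{>0}$ identifies with $\ker((\partial_t t)^l)$ on $\text{gr}^0_V i_{f,+}\scM(f^{-\alpha})$, the main task reduces to controlling the Jordan block sizes of $\partial_t t$ on $\text{gr}^0_V i_{f,+}\scM(f^{-\alpha})$ at $\fx$ (together with checking that $\psi_0(V^0) = \scM(f^{-\alpha})$, which follows from the construction of $i_{f,+}$ and the $V$-filtration axioms).

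Next I would pass to the algebraic microlocalisation used in Section \ref{sectionmicrolocal}: inverting $\partial_t$ gives isomorphisms $\partial_t : \text{gr}^\gamma_V \isommap \text{gr}^{\gamma-1}_V$ on the microlocalised module, so that all graded pieces in a single $\bZ$-coset are identified with one ``microlocal fibre''. Via the standard link (already employed to define the microlocal multiplier ideals) between the Jordan structure of $\partial_t t$ on this fibre and the roots of $b_{f,\fx}(s)$, each eigenvalue $\gamma = -\alpha - i$ contributes a Jordan block of size equal to $\widetilde{m}_{f,-\alpha-i,\fx}$, with an extra $+1$ for the eigenvalue $\gamma = -1$ (i.e.\ $\alpha = 1, i = 0$) coming from the $(s+1)$ factor in the decomposition $b_{f,\fx}(s) = (s+1)\widetilde{b}_{f,\fx}(s)$; this extra contribution is precisely $\lfloor\alpha\rfloor$ in the range $\alpha \in (0,1]$. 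Only the eigenvalues $\gamma = -\alpha - i$ with $i \geq 0$ descend to $\text{gr}^0_V i_{f,+}\scM(f^{-\alpha})$ (after the shift by $t^{\lceil\alpha\rceil + i}$ which moves the corresponding generator into $V^0$); eigenvalues with $i < 0$ remain strictly in negative $V$-degree.

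For the upper bound, summing Jordan block sizes over $i \geq 0$ gives $w_{\text{max}} - n \leq \lfloor\alpha\rfloor + \sum_{i\geq 0}\widetilde{m}_{f,-\alpha-i,\fx}$. For the lower bound, I would choose $i_0$ maximising $\widetilde{m}_{f,-\alpha-i,\fx}$, exhibit the corresponding Jordan chain of length $\lfloor\alpha\rfloor + \widetilde{m}_{f,-\alpha-i_0,\fx}$ in the microlocal picture and transport it into $V^0 i_{f,+}\scM(f^{-\alpha})$ via the shift by $t^{i_0}$, and verify that the top of this chain produces a section not lying in $W_{n+l-1}\scM(f^{-\alpha})$ after applying $\psi_0$. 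The principal technical obstacle is the precise identification of the Jordan block contributions with $\widetilde{m}_{f,-\alpha-i,\fx}$ and the non-collapse of the chain under $\psi_0$; this is where the microlocal setup is crucial, as it provides the non-degenerate pairing needed to track the image of the top of the chain in $\scM(f^{-\alpha})$ modulo lower weight filtration steps.
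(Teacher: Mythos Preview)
Your overall strategy---reduce via Theorem~\ref{thmmainformulaintro} to the nilpotency structure on $\mathrm{gr}_V^{\alpha}\scB_f$, then pass to the microlocalisation $\widetilde{\scB}_f$ and invoke the link to $\widetilde{b}_{f,\fx}(s)$---is the same as the paper's. But the ``Jordan block'' picture you describe is not correct, and this is where the argument currently fails.

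On $\mathrm{gr}_V^{\alpha}\widetilde{\scB}_f$ the operator $s=-\partial_t t$ has the \emph{single} eigenvalue $-\alpha$; there are no separate ``eigenvalues $-\alpha-i$'' producing distinct Jordan blocks. The integers $\widetilde{m}_{f,-\alpha-i,\fx}$ enter instead through the $G$-filtration $G_k\widetilde{\scB}_f=\scD_X[s,\partial_t^{-1}]\cdot\partial_t^k$: Lemma~\ref{lemredBSmult} says that $\widetilde{m}_{f,-\alpha-i,\fx}$ is the nilpotency index of $(s+\alpha)$ on the \emph{bigraded} piece $\mathrm{gr}^G_i\mathrm{gr}_V^{\alpha}\widetilde{\scB}_f$. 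So the upper bound is obtained not by ``summing block sizes'' but by iterating through the $G$-filtration (Zassenhaus) to get $(s+k'+\alpha)^{\sum_{i\le k'}l_i}\cdot G_0V^{k'+\alpha}\widetilde{\scB}_f\subseteq V^{>k'+\alpha}\widetilde{\scB}_f$, and then translating back to the weight filtration via Lemma~\ref{lemWcomp} (which uses that the weighted microlocal multiplier ideals control $F_k^HW_{n+l}\scM(f^{-\alpha})$ for all $k'\le k$ simultaneously). Your outline omits this translation step entirely.

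For the lower bound the paper does not exhibit a chain and check non-collapse under $\psi_0$; that ``non-collapse'' is exactly the obstruction you flag but give no mechanism for. Instead the paper argues contrapositively: if $W_{n+l+\lfloor\alpha\rfloor}\scM(f^{-\alpha})=\scM(f^{-\alpha})$ then, using that $\ker\psi_{-\alpha}$ is the image of $(s+\alpha)$ on $i_{f,+}\scO_X(*f)$ (this is the key identity from the proof of Lemma~\ref{lemWcomp}), one deduces $F_k^{t\text{-ord}}V^{\alpha}\scB_f\subseteq K_{l+\lfloor\alpha\rfloor}V^{\alpha}\scB_f$ for every $k$, hence $V^{\alpha}\widetilde{\scB}_f=K_lV^{\alpha}\widetilde{\scB}_f$. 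Restricting to each $G_k$ then gives $\widetilde{m}_{f,-\alpha-k,\fx}\le l$ directly from Lemma~\ref{lemredBSmult}.
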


\begin{note}

\vspace{-10pt}
$W_n\scO_X(*f)=\scO_X \neq \scO_X(*f)$ always, as we assume that $f$ is non-invertible, so the bound $n+\lfloor\alpha\rfloor\leq w_{\text{max}}$ is immediate.
    
\end{note}

\begin{rem}

Note also that these bounds of course also imply that in the case that $-\alpha-i$ is the only integer shift of $-\alpha$ that is a root of $\widetilde{b}_{f,\hspace{0.7pt}\fx}(s)$, we actually obtain a precise expression for the maximal weight, as
\[w_{\text{max}}=n+\widetilde{m}_{f,-\alpha-i,\hspace{0.7pt}\fx}+1.\]
This occurs for instance (see Example \ref{egweightbounds} below) when $\alpha=1$ and when $n=2$, when $\alpha=1$ and $f$ defines a \emph{strongly Koszul free divisor}, or when $\alpha=1$ and $f$ defines a (reduced, central) arrangement of hyperplanes.
    
\end{rem}

For the first of our two questions, we obtain an upper bound.

\begin{thm}[Theorems \ref{thmgenlevelW1} and \ref{thmgenlevelW2} below]

Assume $\alpha\in(0,1]$. The generating level of $(\text{\emph{gr}}^W_{n+l}\scM(f^{-\alpha}),F_{\bullet}^H)$ at a point $\fx\in X$ is less than or equal to 
\[n-l-\lceil\alpha+\widetilde{\alpha}_{f,\hspace{0.7pt}\fx}\rceil+1.\]

\label{thmgenlevelW1intro}
    
\end{thm}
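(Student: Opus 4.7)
The plan is to use Theorem \ref{thmmainformulaintro} to translate the generating-level assertion into a question about the induced $t$-order filtration on $\text{gr}_K^l V^0 i_{f,+}\scM(f^{-\alpha})$, and then, passing to Saito's algebraic microlocalisation, to carry out an inductive descent in the $t$-order using the functional equation defining $\widetilde{b}_{f,\hspace{0.7pt}\fx}(s)$.

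Taking successive quotients in the formula from Theorem \ref{thmmainformulaintro} gives
\[F_k^H\text{gr}^W_{n+l}\scM(f^{-\alpha}) = \psi_0\!\left(F_k^{t\text{-ord}}\text{gr}_K^l V^0 i_{f,+}\scM(f^{-\alpha})\right),\]
so the statement reduces to showing that for every $k \geq n-l-\lceil\alpha+\widetilde{\alpha}_{f,\hspace{0.7pt}\fx}\rceil+1$, any element of the right-hand side at level $k+1$ can be rewritten, modulo $W_{n+l-1}$, as a sum of first-order $\scD_X$-derivatives of elements at level $k$. In the microlocal picture introduced in Section \ref{sectionmicrolocal} the operator $\partial_t$ becomes invertible and $s:=-\partial_t t$ acts nilpotently of order $\leq 1$ on $\text{gr}_K^l\text{gr}_V^0$; a single $\partial_t$ can then be rewritten as an $\scO_X[s]$-multiplication composed with $\partial_t^{-1}$. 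Specialising the functional equation $\widetilde{b}_{f,\hspace{0.7pt}\fx}(s)f_{\fx}^s \in \scD_{X,\hspace{0.7pt}\fx}[s]\cdot f_{\fx}^{s+1}$ at $s = -k-\alpha$ yields an equality
\[\widetilde{b}_{f,\hspace{0.7pt}\fx}(-k-\alpha) \cdot f^{-k-\alpha} \equiv Q \cdot f^{-k-\alpha+1} \pmod{K_{l-1} \text{ terms}},\]
with $Q \in F_1\scD_{X,\hspace{0.7pt}\fx}$, and the coefficient $\widetilde{b}_{f,\hspace{0.7pt}\fx}(-k-\alpha)$ is invertible precisely when $k+\alpha$ is not a root of $\widetilde{b}_{f,\hspace{0.7pt}\fx}(-s)$ of multiplicity $\geq l+1$, i.e.\ when $k+\alpha > \widetilde{\alpha}_{f,\hspace{0.7pt}\fx}^{(l)}$ in the notation of Definition \ref{defnwminlexpintro}. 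Iterating this inversion across the integer shifts of $\widetilde{\alpha}_{f,\hspace{0.7pt}\fx}$ lying in the range $[\widetilde{\alpha}_{f,\hspace{0.7pt}\fx}, k+\alpha]$ produces exactly the bound in the statement, once one accounts for the ceiling arising because integer jumps of the b-function roots occur in steps of $1$.

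The main obstacle will be the bookkeeping at the boundary between filtration levels. Each application of $\partial_t^{-1}$ in the microlocal inversion shifts the $t$-order down and the $V$-index up, and one must confirm that the descent terminates at exactly the stated value of $k$, neither earlier nor later. More importantly, the remainder terms produced at each step must be shown to land in $K_{l-1}$, equivalently in $W_{n+l-1}$ under $\psi_0$, which requires a strictness-type compatibility between the Hodge, kernel and $V$-filtrations on the microlocal module; this strictness is the technical heart of the proof and is where the multiplicity condition defining the weighted minimal exponent $\widetilde{\alpha}_{f,\hspace{0.7pt}\fx}^{(l)}$ really enters, since it is precisely the obstruction to $s = -k-\alpha$ being annihilated on $\text{gr}_K^l$. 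Finally, the case $\alpha = 1$, where $\lfloor\alpha\rfloor = 1$ and where $-1$ is a root of $b_{f,\hspace{0.7pt}\fx}(s)$ but not of $\widetilde{b}_{f,\hspace{0.7pt}\fx}(s)$, will require separate attention to check that the first inversion step is still valid and contributes consistently to the ceiling bound.
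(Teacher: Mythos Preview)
Your approach has a genuine gap: it does not use duality, and I do not see how the bound $n-l-\lceil\alpha+\widetilde{\alpha}_{f,\hspace{0.7pt}\fx}\rceil+1$, in particular the $-l$ shift, can be obtained from a direct $b$-function inversion argument.

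Two concrete problems. First, the functional equation $\widetilde{b}_{f,\hspace{0.7pt}\fx}(s)\cdot 1 \in \scD_{X,\hspace{1pt}\fx}[s,\partial_t^{-1}]\cdot\partial_t^{-1}$ involves a $\scD_X$-operator of \emph{a priori} unbounded order; your claim that the resulting $Q$ lies in $F_1\scD_{X,\hspace{1pt}\fx}$ is simply not true, so a single inversion does not drop the $t$-order by one while staying in $F_1\scD_X\cdot(-)$. Second, the invertibility condition you state is not right as written: the number $\widetilde{b}_{f,\hspace{0.7pt}\fx}(-k-\alpha)$ vanishes whenever $-k-\alpha$ is a root of any multiplicity, not only of multiplicity $\geq l+1$; working on $\text{gr}_K^l$ does not repair this, because the $\scD_X$-order of the functional-equation operator is the obstruction, not the nilpotency order of $s+\alpha$.

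The paper proceeds quite differently. Via Lemma~\ref{lemgenlevphi} it reduces to bounding the generating level of $(\text{gr}^W_{n-2+l}\text{gr}_V^{\alpha}\scO_X,F_\bullet^H)$, and then applies the duality criteria of Lemma~\ref{lemgenlevcrit}, following Musta\c{t}\u{a}--Popa. The key point is the isomorphism
\[
\bD\bigl(\text{gr}^W_{n-2+l}\psi_{f,e(\alpha)}\underline{\bC}_X[n]\bigr)\;\simeq\;\text{gr}^W_{n-2+l}\bigl(\psi_{f,e(-\alpha)}\underline{\bC}_X[n]\bigr)(n-2+l),
\]
coming from the self-duality of nearby cycles together with the hard-Lefschetz symmetry of the monodromy weight filtration. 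The Tate twist $(n-2+l)$ is precisely what produces the $-l$ in the bound. For $\alpha\in(0,1)$ one then only needs $F_{n-k-l}^H\text{gr}_V^{1-\alpha}\scO_X=0$, which follows from $\partial_t^{\,n-k-l-1}\in V^{>1-\alpha}\scB_f$, i.e.\ from the minimal-exponent inequality $n-k-l-\alpha<\widetilde{\alpha}_{f,\hspace{0.7pt}\fx}$ via Corollary~\ref{corpartial}. For $\alpha=1$ the same duality reduces to an Ext-vanishing (criterion (ii) of Lemma~\ref{lemgenlevcrit}), handled by an explicit description of $\text{gr}^{F^H}_{q+1}W_{n-2+l}\text{gr}_V^1\scO_X$. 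None of this can be recovered from your inductive descent; the $-l$ is a Hodge-theoretic shift coming from duality and the monodromy weight filtration, not from counting integer root shifts of $\widetilde{b}_{f,\hspace{0.7pt}\fx}$.
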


From this we may then deduce the following.

\begin{cor}[Corollary \ref{corgenlevelW} below]

Assume $\alpha\in(0,1]$. The generating level of $(W_{n+l}\scM(f^{-\alpha}),F_{\bullet}^H)$ at $\fx$ is less than or equal to
$n-\lceil\alpha+\widetilde{\alpha}_{f,\hspace{0.7pt}\fx}\rceil+1-\lfloor\alpha\rfloor$.

\label{corgenlevelWintro}
    
\end{cor}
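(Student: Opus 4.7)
The plan is to bootstrap the graded bound from Theorem \ref{thmgenlevelW1intro} into a bound on the full filtration step $W_{n+l}\scM(f^{-\alpha})$ by induction on $l$, using the short exact sequences of complex mixed Hodge modules
\[0 \to W_{n+l-1}\scM(f^{-\alpha}) \to W_{n+l}\scM(f^{-\alpha}) \to \text{gr}^W_{n+l}\scM(f^{-\alpha}) \to 0.\]
Morphisms of mixed Hodge modules are strictly compatible with the Hodge filtration, so these are strictly exact sequences of good filtered $\scD_X$-modules. A standard argument at the level of the associated graded with respect to $F^H_\bullet$ yields the general fact that if both the sub and the quotient in such a strict short exact sequence have generating level $\leq p$, then so does the middle term: one lifts a $F^H_p$-expression of the image of a given element in the quotient to $F^H_p W_{n+l}$, and then uses the hypothesis on $W_{n+l-1}$ to control the kernel.

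For the base case of the induction, if $\alpha\in(0,1)$ then $\lfloor\alpha\rfloor=0$ and $W_{n-1}\scM(f^{-\alpha})=0$ at $\fx$, so $l=0$ reduces directly to Theorem \ref{thmgenlevelW1intro}. If $\alpha=1$, then $W_n\scM(f^{-1})=\scO_X$ with the trivial Hodge filtration, which has generating level $0$; the induction begins at $l=1$, and we verify the base-case inequality $0\leq n-\lceil 1+\widetilde{\alpha}_{f,\fx}\rceil$ separately, using standard upper bounds on the minimal exponent of a singularity (invoking the non-singular case where $\widetilde{\alpha}_{f,\fx}=\infty$ is handled trivially since then $\scM(f^{-1})$ is smooth at $\fx$).

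For the inductive step, Theorem \ref{thmgenlevelW1intro} bounds the generating level of $\text{gr}^W_{n+l}\scM(f^{-\alpha})$ by $n-l-\lceil\alpha+\widetilde{\alpha}_{f,\fx}\rceil+1$, which for $l\geq\lfloor\alpha\rfloor$ is dominated by $n-\lceil\alpha+\widetilde{\alpha}_{f,\fx}\rceil+1-\lfloor\alpha\rfloor$. Combining with the inductive hypothesis through the filtered $\scD$-module fact noted above yields the claimed bound. I expect the main obstacle to be the careful bookkeeping of the lowest weight at $\fx$ in the integer case $\alpha=1$, where the trivial generating level of $W_n\scM(f^{-1})=\scO_X$ must be reconciled with the stated bound; once this is handled, the remainder of the proof is a routine assembly of the graded bound through the weight filtration.
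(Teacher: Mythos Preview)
Your proposal is correct and follows essentially the same approach as the paper: both arguments induct on $l$ via the strict short exact sequences $0\to W_{n+l-1}\to W_{n+l}\to \text{gr}^W_{n+l}\to 0$, feeding in the graded bound of Theorem \ref{thmgenlevelW1intro} and handling the base case $W_n$ separately (for $\alpha\in(0,1)$ via $W_n=\text{gr}^W_n$, for $\alpha=1$ via $W_n\scO_X(*f)=\scO_X$). The only additional ingredient in the paper's proof is an appeal to Corollary \ref{corgenlevWn} to obtain the sharper bound $\min(n-1,\,\cdot\,)$ stated in the body of the paper, which is not needed for the introduction version you are proving.
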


Finally, we also use the formula given in Theorem \ref{thmmainformulaintro} to obtain expressions for the Hodge and weight filtrations for several classes of examples.

In Section \ref{subsectionegs}, we first give a formula for both of these filtrations in the case that $f$ has simple normal crossing singularities (Theorem \ref{thmSNCFW2}), which is used essentially in Section \ref{sectionlogres} to obtain the expression given in Theorem \ref{thmpiplusintro}. In Section \ref{subsectionegs} we also give an expression for when $f$ has weighted homogeneous isolated singularities (Theorem \ref{thmWHom}), extending the work of \cite{MSai09} and \cite{Z21}.

We also in Section \ref{sectionPPD} attain a formula for the Hodge and weight filtrations for a much larger class of examples. These functions have in particular to be \emph{parametrically prime}. This is an algebraic condition without any geometric intuition, but many classes of examples have been shown to satisfy this hypothesis. This extends results proven in \cite{CNS22}, \cite{BD24} and \cite{D24}.

\begin{defn}\label{defnPPintro}

\hfill

\begin{enumerate}[label=\alph*)]
    \item We say that $f$ is \emph{Euler homogeneous at $\fx\in X$} if there exists some $E \in\text{Der}_{\bC}(\scO_{X,\hspace{0.7pt}\fx})$ such that $E(f_{\fx})=f_{\fx}$, where $f_{\fx}\in\scO_{X,\hspace{1pt}\fx}$ is the germ of $f$ at $\fx\in X$.
    \item We say that $f$ is \emph{parametrically prime at $\fx\in X$} if $\text{gr}^{\sharp}(\text{ann}_{\scD_{X,\hspace{1pt}\fx}[s]} f_{\fx}^{s-1}) \subseteq \text{gr}^{\sharp}\scD_{X,\hspace{1pt}\fx}[s]$ is prime, where $\text{gr}^{\sharp}(\text{ann}_{\scD_{X,\hspace{1pt}\fx}[s]} f_{\fx}^{s-1})$ is the ideal of symbols of elements of $\text{ann}_{\scD_{X,\hspace{1pt}\fx}[s]} f_{\fx}^{s-1}$ with respect to the \emph{total order filtration} $F_k^{\sharp}\scD_{X,\hspace{1pt}\fx}[s]:=\sum_{i=0}^k(F_{k-i}\scD_{X,\hspace{1pt}\fx})s^i$ on $\scD_{X,\hspace{1pt}\fx}[s]$. 
\end{enumerate}

\end{defn}

\noindent Now, write 
\[\beta_{f,-\alpha,\hspace{0.7pt}\fx}(s):=\prod_{\lambda \in \rho_{f,\hspace{0.7pt}\fx}\cap (-\alpha-1,-\alpha)}(s+\lambda+1)^{m_{f,\lambda,\hspace{0.7pt}\fx}},\]
where $m_{f,\lambda,\hspace{0.7pt}\fx}$ is the multiplicity of $\lambda$ as a root of $b_{f,\hspace{0.7pt}\fx}(s)$. We then define the following $\scD_{X,\hspace{1pt}\fx}[s]$-ideal:
\[\Gamma_{f,-\alpha,\hspace{0.7pt}\fx}:=\scD_{X,\hspace{1pt}\fx}[s]f_{\fx}+\scD_{X,\hspace{1pt}\fx}[s]\beta_{f,-\alpha,\hspace{0.7pt}\fx}(-s)+\text{ann}_{\scD_{X,\hspace{1pt}\fx}[s]}f_{\fx}^{s-1}\subseteq \scD_{X,\hspace{1pt}\fx}[s],\]
and equip it with a finite filtration by sub-ideals as follows:
\[W_0\Gamma_{f,-\alpha,\hspace{0.7pt}\fx}:=\Gamma_{f,-\alpha-\epsilon,\hspace{0.7pt}\fx}\subseteq \Gamma_{f,-\alpha,\hspace{0.7pt}\fx}, \,\,\,\,\,0<\epsilon<<1,\]
and, for $l \in\bZ_{\geq 0}$,
\[W_l\Gamma_{f,-\alpha,\hspace{0.7pt}\fx}:=\{\gamma \in \Gamma_{f,-\alpha,\hspace{0.7pt}\fx}\mid (s+\alpha)^l\gamma\in W_0\Gamma_{f,-\alpha,\hspace{0.7pt}\fx}\}\subseteq \Gamma_{f,-\alpha,\hspace{0.7pt}\fx}.\]
Then we prove the following:

\begin{thm}[Theorem \ref{thmgammaW} below]

Assume $\alpha\geq 0$. Let $\fx\in X$ and assume that $\rho_{f,\hspace{0.7pt}\fx}\subseteq(-2-\alpha,-\alpha)$. Let $l \in\bZ_{\geq 0}$. Then

\begin{enumerate}[label=\roman*)]

\item \, $W_{n+l}\scM(f^{-\alpha})_{\fx}=\phi_{-\alpha}(W_l\Gamma_{f,-\alpha,\hspace{0.7pt}\fx})\cdot f_{\fx}^{-1-\alpha}$, where $\phi_{-\alpha}:\scD_{X,\hspace{1pt}\fx}[s] \to \scD_{X,\hspace{1pt}\fx}\, ;\, P(s) \mapsto P(-\alpha)$.

\item \, $F_0^HW_{n+l}\scM(f^{-\alpha})_{\fx}=\left(W_l\Gamma_{f,-\alpha,\hspace{0.7pt}\fx}\cap\scO_{X,\hspace{0.7pt}\fx}\right)\cdot f_{\fx}^{-1-\alpha}$.

\end{enumerate}

\label{thmgammaWintro}
    
\end{thm}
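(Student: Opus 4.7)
The plan is to deduce both statements from Theorem \ref{thmmainformulaintro}. Taking the union over $k \geq 0$ in the main formula gives $W_{n+l}\scM(f^{-\alpha}) = \psi_0(K_l V^0 i_{f,+}\scM(f^{-\alpha}))$, while specialising to $k = 0$ gives $F_0^H W_{n+l}\scM(f^{-\alpha}) = \psi_0(F_0^{t-\text{ord}} K_l V^0 i_{f,+}\scM(f^{-\alpha}))$. The task therefore reduces to giving a concrete description of $V^0 i_{f,+}\scM(f^{-\alpha})_\fx$ together with its kernel filtration.

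The central step is to construct a natural $\scD_{X,\fx}$-linear map
\[\widetilde{\Phi} : \scD_{X,\fx}[s] \to i_{f,+}\scM(f^{-\alpha})_\fx, \qquad P(s) \mapsto P(s) \cdot f_\fx^{s-1},\]
where the right-hand side is interpreted inside the canonical $\scD_{X,\fx}[s]$-action on $i_{f,+}\scM(f^{-\alpha})$ in which $s$ acts through the appropriate shift of $-\partial_t t$, calibrated so that $\psi_0$ corresponds under $\widetilde{\Phi}$ to the evaluation $s \mapsto -\alpha$ followed by multiplication by $f_\fx^{-\alpha-1}$. The core computation is then to prove
\[\widetilde{\Phi}^{-1}\bigl(V^0 i_{f,+}\scM(f^{-\alpha})_\fx\bigr) \;=\; \Gamma_{f,-\alpha,\fx}.\]
The inclusion $\supseteq$ is essentially formal: $f$ acts as $t$, which raises the $V$-order by one; $\beta_{f,-\alpha,\fx}(-s)$ is constructed precisely to kill the roots of $b_{f,\fx}$ in $(-\alpha-1,-\alpha)$ and so to send its generator into $V^{>0}\subseteq V^0$; and $\text{ann}_{\scD[s]}f_\fx^{s-1}$ lies in the kernel of $\widetilde{\Phi}$. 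The opposite inclusion makes essential use of the hypothesis $\rho_{f,\fx}\subseteq (-2-\alpha,-\alpha)$: it ensures that only the Bernstein-Sato roots already captured by $\beta_{f,-\alpha,\fx}(-s)$ obstruct passage into $V^0$, so that no additional generators are required.

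Next, under $\widetilde{\Phi}$ the operator $(s+\alpha)$ corresponds (up to sign and constant) to the nilpotent operator governing $K_\bullet V^0$. Running the preceding analysis with $\alpha$ replaced by $\alpha+\epsilon$ for $0<\epsilon<<1$ (which shifts the interval defining $\beta$ and moves $-\alpha-\epsilon$ clear of any root) identifies $\widetilde{\Phi}^{-1}(V^{>0}) = \Gamma_{f,-\alpha-\epsilon,\fx} = W_0\Gamma_{f,-\alpha,\fx}$. Combined with the definition of $K_lV^0$, this yields $\widetilde{\Phi}^{-1}(K_lV^0) = W_l\Gamma_{f,-\alpha,\fx}$. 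Applying $\psi_0$, which under our identification amounts to $\phi_{-\alpha}$ followed by multiplication by $f_\fx^{-\alpha-1}$, gives (i). For (ii), the additional $F_0^{t-\text{ord}}$ restriction in the main formula corresponds, under $\widetilde{\Phi}$, to elements $P(s)\in W_l\Gamma_{f,-\alpha,\fx}$ which simultaneously lie in $\scO_{X,\fx}\subseteq \scD_{X,\fx}[s]$, producing exactly $(W_l\Gamma_{f,-\alpha,\fx}\cap\scO_{X,\fx})\cdot f_\fx^{-\alpha-1}$.

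The principal obstacle will be the rigorous identification $\widetilde{\Phi}^{-1}(V^0) = \Gamma_{f,-\alpha,\fx}$; carrying this out requires a careful interplay between the $V$-filtration axioms, the Bernstein-Sato equation and, crucially, the restrictive root hypothesis, which alone rules out spurious extra generators on the right-hand side.
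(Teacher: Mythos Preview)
Your approach is essentially the same as the paper's. The paper works on the $i_{f,+}\scO_X(*f)$ side via the map $\pi_f:\scD_{X,\fx}[s]\to i_{f,+}\scO_X(*f)_{(\fx,0)}$, $P(s)\mapsto P(-\partial_tt)\cdot f_\fx^{-1}$, rather than on the $i_{f,+}\scM(f^{-\alpha})$ side, but this is a cosmetic difference (the two are related by the isomorphism $\Phi$ of Proposition~\ref{propVcomp}.iii), and $\psi_{-\alpha}\circ\pi_f = \widetilde\phi_{f,-\alpha}$). The substantive point is that what you flag as the ``principal obstacle''---the identification $\pi_f(\Gamma_{f,-\alpha,\fx}) = V^{\alpha}i_{f,+}\scO_X(*f)_{(\fx,0)}$ (equivalently your $\widetilde\Phi^{-1}(V^0)=\Gamma$)---is not proven in this paper at all: it is quoted as Theorem~\ref{thmgamma} from \cite{D24}, where the root hypothesis $\rho_{f,\fx}\subseteq(-2-\alpha,-\alpha)$ is used exactly as you anticipate. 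Once this is granted, the paper's proof is precisely your outline: apply Theorem~\ref{thmgamma} again with $\alpha$ replaced by $\alpha+\epsilon$ to get $\pi_f(W_0\Gamma)=V^{>\alpha}$, then upgrade to $\pi_f(W_l\Gamma)=K_lV^{\alpha}$ using the crucial fact that $\ker\pi_f=\text{ann}_{\scD_{X,\fx}[s]}f_\fx^{s-1}\subseteq W_0\Gamma$, and finally apply $\psi_{-\alpha}$ together with Theorem~\ref{thmmainformula}. For part ii), the paper uses the same kernel trick once more: given $u\in F_0^{t\text{-ord}}K_lV^{\alpha}$, one has preimages $\gamma_1\in W_l\Gamma$ and $\gamma_2\in\Gamma\cap\scO_{X,\fx}$, and $\gamma_2-\gamma_1\in\ker\pi_f\subseteq W_l\Gamma$ forces $\gamma_2\in W_l\Gamma\cap\scO_{X,\fx}$.
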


At least for $\alpha=0$, there are many classes of examples satisfying the above hypothesis. All hyperplane arrangements satisfy this, as do \emph{strongly Koszul free divisors} for instance.

Under some somewhat stronger hypotheses, we obtain an analogous expression for every step of the Hodge filtration.

\begin{thm}[Theorem \ref{thmPPformulaW} below]

Assume that $\alpha\geq 0$. Assume that $f$ is Euler homogeneous and parametrically prime at a point $\fx\in X$, and assume that $\rho_{f,\hspace{0.7pt}\fx}\subseteq(-2-\alpha,-\alpha)$. Then, for all $k \in \bZ$ and $l\in\bZ_{\geq 0}$,
\[F_k^HW_{n+l}\scM(f^{-\alpha})_{\fx}=\phi_{-\alpha}(W_l\Gamma_{f,-\alpha,\hspace{0.7pt}\fx}\cap F_k^{\sharp}\scD_{X,\hspace{1pt}\fx}[s])\cdot f_{\fx}^{-1-\alpha}.\]

\label{thmPPformulaWintro}
    
\end{thm}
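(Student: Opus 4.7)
The strategy is to combine Theorem~\ref{thmmainformulaintro}, which gives
\[F_k^HW_{n+l}\scM(f^{-\alpha})_{\fx}=\psi_0\bigl(F_k^{t\text{-ord}}K_lV^0i_{f,+}\scM(f^{-\alpha})\bigr)_{\fx},\]
with an explicit $\scD_{X,\fx}[s]$-presentation of $V^0i_{f,+}\scM(f^{-\alpha})_{\fx}$ encoded by $\Gamma_{f,-\alpha,\fx}$. The case $k=0$ is already covered by Theorem~\ref{thmgammaWintro}(ii); the Euler homogeneity and parametric primality hypotheses are what is needed to align the $t$-order filtration $F_\bullet^{t\text{-ord}}$ with the total order filtration $F_\bullet^\sharp\scD_{X,\fx}[s]$ at every step of the kernel filtration.

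First, I would construct a $\scD_{X,\fx}[s]$-module map $\scD_{X,\fx}[s]\to V^0i_{f,+}\scM(f^{-\alpha})_{\fx}$ in which $s$ acts as $-\partial_tt$ on a chosen generator whose image under $\psi_0$ is $f_{\fx}^{-1-\alpha}$, and verify that its kernel is exactly $\Gamma_{f,-\alpha,\fx}$. This is where the hypothesis $\rho_{f,\fx}\subseteq(-2-\alpha,-\alpha)$ is crucial: the interval has length $2$, so only the single integer shift $-\alpha-1$ of $-\alpha$ can appear as a root, forcing the polynomial $\beta_{f,-\alpha,\fx}(-s)$ to cut out precisely the relevant generalized eigenvalues of $\partial_tt$ on $\text{gr}_V^0$, while the summand $\scD_{X,\fx}[s]f_{\fx}$ enforces passage to $V^{>0}$ and $\text{ann}_{\scD_{X,\fx}[s]}f_{\fx}^{s-1}$ supplies the remaining $\scD_X$-relations. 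Under this identification the sub-filtration $W_\bullet\Gamma_{f,-\alpha,\fx}$ corresponds tautologically to $K_\bullet V^0$, since $s+\alpha$ translates to $-(\partial_tt+\alpha)$ and the defining condition $(s+\alpha)^l\gamma\in W_0\Gamma$ becomes the nilpotency condition $(\partial_tt+\alpha)^l\cdot(\text{image})\in V^{>0}$.

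The main work, and the principal obstacle, is then to verify that under this identification $F_k^{t\text{-ord}}$ matches $F_k^\sharp\scD_{X,\fx}[s]$ at every kernel-filtration step. Euler homogeneity gives an operator $E\in\text{Der}_\bC(\scO_{X,\fx})$ with $E(f_{\fx})=f_{\fx}$, which acts on the distinguished generator as $-\partial_tt+c$ for some $c\in\bC$; hence each occurrence of $s$ on the $\scD_{X,\fx}[s]$-side can be replaced by an honest first-order differential operator, yielding the straightforward inclusion $\phi_{-\alpha}(W_l\Gamma_{f,-\alpha,\fx}\cap F_k^\sharp\scD_{X,\fx}[s])f_{\fx}^{-1-\alpha}\subseteq F_k^HW_{n+l}\scM(f^{-\alpha})_{\fx}$. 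Parametric primality is then used to obtain the reverse inclusion: an element of $F_k^{t\text{-ord}}K_lV^0$ that admitted only a lift of order strictly greater than $k$ in $\scD_{X,\fx}[s]$ would produce a zero-divisor in $\text{gr}^\sharp(\scD_{X,\fx}[s]/\text{ann}_{\scD_{X,\fx}[s]}f_{\fx}^{s-1})$, contradicting primality. This weighted refinement of the technique of \cite{CNS22}, \cite{BD24}, \cite{D24}, carried out compatibly with each step of $W_\bullet\Gamma_{f,-\alpha,\fx}$, is the technical heart of the argument. Finally, applying $\psi_0$ recovers the map $P\mapsto\phi_{-\alpha}(P)\cdot f_{\fx}^{-1-\alpha}$ appearing in Theorem~\ref{thmgammaWintro}, producing the stated formula; specializing $k=0$ (so that $F_0^\sharp\scD_{X,\fx}[s]=\scO_{X,\fx}$) recovers Theorem~\ref{thmgammaWintro}(ii) as a consistency check.
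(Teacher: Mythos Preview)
Your outline is correct in spirit and follows essentially the same route as the paper: reduce via Theorem~\ref{thmmainformulaintro} to matching $F_k^{t\text{-ord}}K_lV^{\alpha}$ with $\pi_f(W_l\Gamma_{f,-\alpha,\fx}\cap F_k^\sharp\scD_{X,\fx}[s])$, where $\pi_f:\scD_{X,\fx}[s]\to i_{f,+}\scO_X(*f)_{(\fx,0)}$ sends $P(s)\mapsto P(-\partial_tt)\cdot f_{\fx}^{-1}$ (the paper works on the $\scO_X(*f)$ side rather than the $\scM(f^{-\alpha})$ side, but these are interchangeable by Proposition~\ref{propVcomp}).

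Where you diverge is in the final step. You describe a ``weighted refinement'' of the parametric-primality zero-divisor argument, to be ``carried out compatibly with each step of $W_\bullet\Gamma_{f,-\alpha,\fx}$'', and call this the technical heart. In the paper there is no such refinement: the parametric-primality argument is invoked only once, in its original unweighted form (\cite{D24}, Lemma~4.4), to produce for any $u\in F_k^{t\text{-ord}}K_lV^{\alpha}$ a lift $\gamma_2\in\Gamma_{f,-\alpha,\fx}\cap F_k^\sharp\scD_{X,\fx}[s]$. Separately, Theorem~\ref{thmgammaWintro}(i) supplies a lift $\gamma_1\in W_l\Gamma_{f,-\alpha,\fx}$. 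The entire weighted upgrade is then the one-line observation
\[\gamma_2-\gamma_1\in\ker\pi_f=\text{ann}_{\scD_{X,\fx}[s]}f_{\fx}^{s-1}\subseteq W_0\Gamma_{f,-\alpha,\fx}\subseteq W_l\Gamma_{f,-\alpha,\fx},\]
so $\gamma_2\in W_l\Gamma_{f,-\alpha,\fx}\cap F_k^\sharp\scD_{X,\fx}[s]$ already. In other words, because the kernel of $\pi_f$ sits inside every $W_l\Gamma$, the filtration-lifting problem and the weight-lifting problem are completely decoupled, and no new work is needed beyond the unweighted case. Your proposal would also succeed, but it anticipates labour that turns out to be unnecessary.
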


Again, there are many classes of examples satisfying these hypotheses. For instance, any function of \emph{linear Jacobian type} is parametrically prime and Euler homogeneous, and any function that is \emph{tame}, \emph{Saito holonomic} and \emph{strongly Euler homogeneous} is of linear Jacobian type. So strongly Koszul free divisors and tame hyperplane arrangements for instance satisfy all necessary hypotheses (with $\alpha=0$) and we may apply the above theorem to calculate their Hodge and weight filtrations. See \cite{BD24} and \cite{D24} for further details, as well as Section \ref{sectionPPD} below.

The structure of the paper is as follows. In Section 2 we record some of the basics of mixed Hodge modules, as well as the main theorems and constructions we will require in this paper. We then investigate the left $\scD_X$-module $\scM(f^{-\alpha})$ and explain how to equip it with a canonical complex mixed Hodge module structure. Then we choose a log resolution and investigate the relation between the mixed Hodge module structure on $\scM(f^{-\alpha})$ and on the pullback. In Section 3 we recall the notions of specialisability and quasi-unipotency, and discuss nearby and vanishing cycle functors. We then go on to use these fundamentals to prove the formula for the Hodge and weight filtrations given in Theorem \ref{thmmainformulaintro} above. In Section 4 we introduce microlocalisation, (weighted) minimal exponents and (weighted) microlocal multiplier ideals. We derive some useful formulae and results for each of these invariants and then use what we have learnt to prove Theorems \ref{thmlargestweightintro} and \ref{thmgenlevelW1intro} as given above, concerning highest weights and generating levels. We then conclude in Section 5 by calculating the Hodge and weight filtration for certain parametrically prime divisors as above described.

The author would like to thank his supervisor Christian Sevenheck for introducing him to the subject and for numerous helpful discussions regarding the material of this paper.

\newpage

\section{Hodge filtrations via log resolutions}\label{sectionlogres}

\noindent We begin with a global investigation of the canonical filtrations $F_{\bullet}^H$ and $W_{\bullet}$ associated to the left $\scD_X$-module $\scM(f^{-\alpha})$. We first explain how to construct these filtrations functorially, i.e. how to put a (complex) mixed Hodge module structure on $\scM(f^{-\alpha})$. We then describe how one may calculate these filtrations using log resolutions, and obtain some expressions for certain steps of these filtrations, all in the vein of the analysis carried out in papers such as \cite{MP19a}, \cite{MP19b}, \cite{Olano21}, \cite{Olano23}.

\vspace{10pt}

\subsection{Hodge module prerequisites}

\; \vspace{5pt} \\ We begin by recalling some key statements concerning complex mixed Hodge modules and twisted mixed Hodge modules that will be required in this article, for the convenience of the reader. The main sources are Saito's original two papers on mixed Hodge modules \cite{MSai88} and \cite{MSai90}, the (at the present incomplete) mixed Hodge module project \cite{SS24} of Sabbah and Schnell, which develops the theory in the complex setting, and the work of papers such as \cite{SV11}, \cite{DV23}, \cite{DV24} and \cite{SY24} which introduce and develop the theory of twisted mixed Hodge modules. Another useful source we will use throughout is Mochizuki's \cite{M15a}, which (in particular) develops the theory of mixed Hodge modules in the real setting and which we use to fill in some of the current gaps in \cite{SS24}.

Let $X$ be a complex manifold of dimension $n$. We will write $\text{MH}(X,\bC)$, or sometimes $\text{MH}(X)$ (or $\text{MH}(X,\bC,w)$, $\text{MH}(X,w)$ when we wish to fix the weight $w$), for the category of complex (pure) Hodge modules on the manifold $X$. We will write $\text{MHM}(X,\bC)$, or $\text{MHM}(X)$, for the category of complex mixed Hodge modules on $X$. $\text{MH}(X,\bQ)$ and $\text{MHM}(X,\bQ)$ will denote the categories of rational (pure) Hodge modules and rational mixed Hodge modules. Note that polarisations won't be required for the most part in this paper so polarised / polarisable mixed Hodge modules won't be mentioned. Note also that unless stated otherwise, $\scD$-modules are to be assumed to be left.

\begin{prop}[\cite{SS24}, Theorem 14.6.1]

Any polarisable variation of $\bC$-Hodge structure on $X$ of weight $m$ (see Section 4 of \cite{SS24} for the definition (4.1.4) and fundamental results) has a canonical structure as a complex pure Hodge module of weight $m+n$ on $X$. 

\label{propVHS}
    
\end{prop}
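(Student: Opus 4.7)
The plan is, given a polarisable complex variation of Hodge structure on $X$ of weight $m$, to construct the underlying filtered $\scD_X$-module explicitly and then verify the axioms of a pure complex Hodge module of weight $m+n$, following the strategy of Saito's classical construction in the rational setting, as carried out in the complex-analytic setting in \cite{SS24}.

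First I would take the holomorphic vector bundle $\scV$ underlying the VHS, together with its integrable flat connection, and regard it as a regular holonomic $\scD_X$-module which is smooth on all of $X$. I would equip it with the Hodge filtration $F_\bullet\scV$ read off from the pointwise Hodge decomposition, using the standard indexing conventions for left $\scD$-modules of \cite{SS24}, Section 4. Combining this filtered $\scD_X$-module with the sesquilinear polarisation pairing on the underlying local system produces a candidate polarised filtered $\scD$-triple of weight $m+n$ with underlying filtered $\scD_X$-module $(\scV, F_\bullet\scV)$.

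The substantive task is to verify that this triple satisfies the recursive definition of a pure complex Hodge module. Pure Hodge modules are defined inductively on the dimension of the support: the key axiom is that for every locally defined holomorphic function $g$, the nearby and vanishing cycle functors $\psi_{g,\lambda}$ and $\phi_{g,1}$ applied to the candidate yield complex mixed Hodge modules on $\{g=0\}$ whose weight filtrations are a shift of the monodromy weight filtration of the nilpotent endomorphism associated to the unipotent part of the monodromy, and whose primitive parts are themselves polarisable pure Hodge modules of strictly lower weight.

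The main obstacle is precisely this inductive verification of the specialisability axiom for arbitrary $g$. The essential tools are a log resolution of the zero divisor of $g$ to reduce to the case of a normal crossing polar locus, combined with Schmid's nilpotent and $SL_2$-orbit theorems in their complex-analytic form (due to Mochizuki \cite{M15a} and to Sabbah--Schnell in \cite{SS24}), which imply that the graded pieces of the Kashiwara--Malgrange $V$-filtration on Deligne's canonical meromorphic extension of $\scV$ underlie polarisable $\bC$-VHS on the strata of the normal crossing divisor, of the appropriate lower weight. The inductive hypothesis then supplies the required Hodge module structure on the nearby and vanishing cycles and the matching of weight filtrations, closing the induction. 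The weight $m+n$ arises from the standard perverse shift by $n = \dim X$, and functoriality of the construction in the VHS yields the canonicity asserted in the proposition.
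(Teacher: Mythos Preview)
The paper does not give its own proof of this proposition: it is stated as a direct citation of \cite{SS24}, Theorem 14.6.1, and no argument is provided beyond the reference. Your proposal is therefore not in competition with any proof in the paper; it is a reasonable high-level sketch of the strategy that \cite{SS24} (and ultimately Saito's original construction) follows, but for the purposes of this paper the result is simply quoted as a black box.
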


\begin{eg}

The most important example in this paper is as follows. Let $\beta\in\bQ$ and consider the vector bundle with (flat, holomorphic) connection $(\scO_{\bC^*},\nabla^{\beta})$ on the complex manifold $\bC^*$, with $\nabla^{\beta}$ defined by
\[\nabla^{\beta} : \scO_{\bC^*} \to \Omega_{\bC^*}^1 \,\, ;\, \, g \mapsto \left(\frac{\partial g}{\partial t} + \beta\frac{g}{t}\right)dt,\]
where $t$ is the natural coordinate function on $\bC^*$. 

We write $\bV^{\beta}$ for the local system associated to this vector bundle with connection, i.e. $\bV^{\beta}:=\ker\nabla^{\beta}$. This is the local system of rank 1 on the complex manifold $\bC^*$ with monodromy eigenvalue $e^{-2\pi i\beta}$, i.e. the local system defined by the monodromy representation
\[\rho^{\beta} : \pi_1(\bC^*) \simeq \bZ \to \text{Aut}(\bC) \simeq \bC^* \,\, ;\, \, m \mapsto e^{-2\pi i \beta m}.\]

Note firstly that $\bV^{\beta}$ is a \emph{rational} local system if and only if $\beta \in \bZ$, otherwise it has only the structure of a complex-valued local system. See secondly that this local system does in fact satisfy that its monodromy eigenvalue is a root of unity (in general these eigenvalues must only be of absolute value 1).

We write $\scO^{\beta}$ for the left $\scD_{\bC^*}$-module associated to the vector bundle with connection $(\scO_{\bC^*},\nabla^{\beta})$. We may write this as
\[\scO^{\beta} = \scO t^{\beta}, \text{ with action given by } \partial_t \cdot (gt^{\beta}) = \frac{\partial g}{\partial t}t^{\beta} +\beta g t^{\beta-1},\]
where $\scO:=\scO_{\bC^*}$.

$(\scO,\nabla^{\beta})$ induces a variation of $\bC$-Hodge structure on $\bC^*$ of weight 0, by giving $(\scO,\nabla^{\beta})$ the trivial Hodge filtration $\text{gr}^F_0\scO^{\beta} = \scO^{\beta}$. 

We also consider the $\scO\otimes_{\bC}\overline{\scO}$-linear morphism $S^{\beta}:\scO\otimes_{\bC}\overline{\scO} \to \scC^{\infty}_{\bC^*}$ given \emph{locally} by
\[S^{\beta}:\scO\otimes_{\bC}\overline{\scO} \to \scC^{\infty}_{\bC^*}\,;\, g\otimes \overline{g'}\mapsto g\overline{g'}t^{\beta},\]
where $g,g'\in\scO$. It is then easy to see (again, locally) that
\[S^{\beta}(\nabla^{\beta}g,\overline{g'})=d'S(g,\overline{g'}),\]
so that $S$ induces a polarisation on the aforementioned variation of $\bC$-Hodge structure (see \cite{SS24}, Lemma 4.1.3 and Remark 4.1.8).

Using Proposition \ref{propVHS}, we thus induce a complex pure Hodge module of weight 1 on $\bC^*$, which we will denote $\underline{\bC}^{\beta}[1]$. This overlies a \emph{rational} pure Hodge module if and only if $\beta \in \bZ$ (in which case we obtain the trivial Hodge module). Moreover, we have in general a natural isomorphism of Hodge modules $\underline{\bC}^{\beta}[1] \simeq \underline{\bC}^{\beta +1}[1]$.

\label{eglocsys}
    
\end{eg}

\begin{prop}

Let $f : X\to Y$ be a holomorphic map between complex manifolds. 

\begin{enumerate}[label =\roman*)]

\item There exist functors
\[f^*, f^! : D^{\text{\emph{b}}}\text{\emph{MHM}}(Y,\bC) \to D^{\text{\emph{b}}}\text{\emph{MHM}}(X,\bC)\]
lifting the analogous functors of constructible complexes. These functors respect composition of functions (i.e., given $g:Y \to Z$, $(g\circ f)^* = f^*\circ g^*$ and $(g\circ f)^! = f^!\circ g^!$).

\item \emph{(\cite{M15a}\footnote{Throughout, we will often be content with citing \cite{M15a} when referencing general results concerning complex mixed Hodge modules. This suffices due to the alternative definition of complex mixed Hodge modules given for instance in \cite{DS13} (Definition 3.2.1) as the summand of a real mixed Hodge module. Real mixed Hodge modules are related to mixed twistor $\scD$-modules through the functor described in Section 13.5 of \cite{M15a} which preserves all relevant functors. The essential image of this functor is described in \cite{M15b}, Theorem 3.35. \newline \indent Of course real mixed Hodge modules are somewhat simpler (being stronger) than twistor $\scD$-modules and completely analogous to Saito's rational mixed Hodge modules, and thus all results should follow through analogously. We cite Mochizuki only to be as rigorous as possible.}, Proposition 7.2.7).} If $f$ is projective, then there exists a functor
\[f_*= f_! : D^{\text{\emph{b}}}\text{\emph{MHM}}(X,\bC) \to D^{\text{\emph{b}}}\text{\emph{MHM}}(Y,\bC)\]
lifting the analogous functor of constructible complexes. This functor also respects composition of functions. Note also that the functor induces functors
\[R^kf_*=H^k\circ f_*:\text{\emph{MH}}(X,\bC,w)\to\text{\emph{MH}}(X,\bC,w+k).\]

\item \emph{(\cite{M15a}, Theorem 13.3.1).} There is an (exact) duality functor 
\[\bD_X : \text{\emph{MHM}}(X,\bC) \to \text{\emph{MHM}}(X,\bC)\] 
lifting the Verdier duality functor on perverse sheaves, and which exchanges $f_*$ with $f_!$ and $f^*$ with $f^!$ (when $f_*$ and $f_!$ exist).
    
\end{enumerate}

\label{propMHMfunctors}
    
\end{prop}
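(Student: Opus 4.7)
The plan is to deduce each part from the corresponding established result for real mixed Hodge modules (or mixed twistor $\scD$-modules), then transport to the complex setting via the summand construction indicated in the footnote. Recall that a complex mixed Hodge module is by \cite{DS13} (Definition 3.2.1) a direct summand, compatible with the complex structure, of a real mixed Hodge module; and real mixed Hodge modules embed as a full subcategory of mixed twistor $\scD$-modules via Section 13.5 of \cite{M15a}, with essential image characterised by \cite{M15b}, Theorem 3.35. So nothing new has to be built; the work is checking that the functors already constructed by Mochizuki preserve these subcategories.

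For part (i), I would first invoke the construction of $f^*$ and $f^!$ on the derived category of mixed twistor $\scD$-modules, verify (citing Mochizuki) that they restrict to real mixed Hodge modules, and then note that the $\bC$-summand inside a real mixed Hodge module is preserved by these $\bC$-linear functors, giving $f^*, f^! : D^{\text{b}}\text{MHM}(Y,\bC) \to D^{\text{b}}\text{MHM}(X,\bC)$. Compatibility with the analogous constructible functors, as well as the composition identities $(g\circ f)^* = f^*\circ g^*$ and $(g\circ f)^! = f^!\circ g^!$, is inherited directly from the twistor/real setting. For part (ii), I would cite \cite{M15a}, Proposition 7.2.7, for the existence of $f_* = f_!$ for projective $f$ on real mixed Hodge modules, transport via the summand construction, and obtain the weight-shifting property of $R^k f_*$ from the decomposition theorem for pure Hodge modules in Mochizuki's framework. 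For part (iii), I would cite \cite{M15a}, Theorem 13.3.1, for the duality $\bD_X$ in the real setting, together with the exchange identities $\bD_X \circ f_* = f_! \circ \bD_X$ and $\bD_X \circ f^* = f^! \circ \bD_X$ already present there, and again transport through the summand.

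The main obstacle is not producing proofs (they exist in the cited literature), but the bookkeeping required to confirm that the essential image of $\text{MHM}(X,\bC)$ inside $\text{MHM}(X,\bR)$ (equivalently, inside mixed twistor $\scD$-modules) is stable under each of $f^*, f^!, f_*, \bD_X$. Concretely, one must track the complex structure and the associated idempotent endomorphism through each of Mochizuki's constructions, verifying that it commutes with the structural data (filtrations, connections, gluing with nearby/vanishing cycle data). This is routine but demands care because real mixed Hodge modules carry a complex conjugation whose compatibility with the idempotent has to be monitored at the level of derived categories, and because the characterisation of the essential image in \cite{M15b} must be checked to be preserved by each six-functor operation individually.
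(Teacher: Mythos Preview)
Your approach for parts (ii) and (iii) coincides with the paper's: both simply cite Mochizuki and invoke the summand description from the footnote. The divergence is in part (i).

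For part (i), the paper takes a genuinely different route. Rather than transporting $f^*, f^!$ from the twistor setting, the paper notes explicitly that it was ``unable to find a source for this result'' and instead gives a direct construction: assuming duality (part iii), it suffices to build $f^*$; one then factors $f$ as the graph embedding $i_f : X \to X\times Y$ followed by the projection $p_2 : X\times Y \to Y$. For the projection, $f^*M := \underline{\bC}_Z[n]\boxtimes M$ via external tensor product (which preserves mixed Hodge modules). For a closed embedding, one reduces locally to $i:\{g=0\}\hookrightarrow Y$ with $g$ smooth and \emph{defines} $i^*M$ and $i^!M$ via the exact triangles
\[
i_*i^!M \to \phi_{g,1}M \xrightarrow{\mathrm{var}} \psi_{g,1}M(-1) \xrightarrow{+1}, \qquad
\psi_{g,1}M \xrightarrow{\mathrm{can}} \phi_{g,1}M \to i_*i^*M \xrightarrow{+1},
\]
together with Kashiwara's equivalence.

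Your uniform transport strategy is not wrong in principle, but the paper's remark flags precisely the obstacle you would face: there does not appear to be a clean citable statement in \cite{M15a} giving $f^*, f^!$ for an \emph{arbitrary} holomorphic map between complex manifolds in the mixed twistor category, together with preservation of the real MHM subcategory. So your sentence ``I would first invoke the construction of $f^*$ and $f^!$ on the derived category of mixed twistor $\scD$-modules'' is where the burden actually lies, and you would likely end up reconstructing something close to the paper's decomposition anyway. The paper's approach buys concreteness and avoids that literature gap; your approach would buy uniformity if the relevant reference existed.
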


\begin{proof}

We make a quick comment as to one way to construct the functors in part a), as we were unable to find a source for this result. Assuming the existence of the duality functor, it suffices to construct $f^*$ say. By decomposing the function $f$ into the composition of the maps
\[i_f : X \to X\times Y ; x \mapsto (x,f(x)), \,\,\,\,\, p_2 : X\times Y \to Y ; (\fx,\fy)\mapsto \fy,\]
we see that it suffices to construct $f^*$ in the case that $f$ is a closed embedding and the case that $f$ is a projection. 

For a projection $f : X=Z\times Y \to Y$, we define 
\[f^*M:=\underline{\bC}_Z[n]\boxtimes M\in\text{MHM}(X),\]
where $\underline{\bC}_Z[n]$ is the pure Hodge module of weight $n$ on $Z$ induced by the trivial variation of $\bC$-Hodge structure on $Z$ of weight 0 (Proposition \ref{propVHS}), and $-\boxtimes -$ denotes the external tensor product, which preserves mixed Hodge modules (see \cite{M15a}, Lemma 11.4.12).

For $f=i$ a closed embedding, we may either assume the existence of projective pushforward and of left and right adjoints of projective pushforward, or we may construct the pullback functors manually as follows. Since the statement is local, by decomposing $i$ we may assume that it is of the form $i:X=\{g=0\} \to Y$ for $g:Y \to \bC$ some non-zero non-invertible holomorphic function with no critical values. Then, using Kashiwara's equivalence, we may define $i^*M$ and $i^!M$ (for $M\in \text{MHM}(Y)$) to be the unique objects of $D^{\text{b}}\text{MHM}(X)$ such that the following exact triangles exist:
\[i_*i^!M\rightarrow\phi_{g,1}M\xrightarrow{\text{var}} \psi_{g,1}M(-1)\xrightarrow{+1},\]
\[\psi_{g,1}M\xrightarrow{\text{can}} \phi_{g,1}M\rightarrow i_*i^*M\xrightarrow{+1}.\vspace{5pt}\]
\noindent The existence of $\phi_{g,1}M, \psi_{g,1}M \in \text{MHM}(Y)$ is a fundamental condition in the definition of mixed Hodge modules. We'll discuss these objects in further detail in Section 3 when we need them (see Theorem \ref{thmphipsiMHM}).
    
\end{proof}

\begin{rem}

In the algebraic category, the direct image functors $f_*$ and $f_!$ exist also when $f$ is not projective. The existence of $j_*$ and $j_!$ for an open embedding in the analytic category depends upon the mixed Hodge module one wishes to apply these functors to. However, we do have the following:
    
\end{rem}

\begin{prop}[\cite{M15a}, Proposition 11.2.2 and Corollary 11.2.10] 

Let $j:U \hookrightarrow X$ be an open embedding of complex manifolds, such that the complement $Z$ of $U$ is a hypersurface. Let $M$ be a complex mixed Hodge module on $X$. Then the functors $j_*$ and $j_!$ are well-defined on $j^*M$\footnote{See Remark \ref{remjjtriangles} below for the meaning of this statement.}. If $\scM$ is the underlying $\scD_X$-module of $M$, then the underlying $\scD_X$-modules of $j_*j^*M$ and $j_!j^*M$ are 
\[\scM\otimes_{\scO_X}\scO_X(*Z) \,\,\, \text{ and }\,\,\, \bD\left((\bD\scM)\otimes_{\scO_X}\scO_X(*Z)\right)\]
respectively, where $\scO_X(*Z)$ is the sheaf of meromorphic functions on $X$ which are holomorphic along $U$.

\label{proplocalisable}
    
\end{prop}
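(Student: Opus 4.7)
The plan is to reduce to the case of an open embedding with smooth hypersurface complement via a graph embedding, and then to invoke the specialisability axiom that is built into the definition of a complex mixed Hodge module.

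Since the statement is local on $X$ and $Z$ is a hypersurface, I would first shrink $X$ so that $Z = f^{-1}(0)$ for some non-invertible holomorphic function $f:X\to\bC$. I would then consider the graph embedding $i_f:X\hookrightarrow X\times\bC$ together with the open embedding $j':X\times\bC^*\hookrightarrow X\times\bC$, whose complement is the smooth hypersurface $\{t=0\}$, where $t$ denotes the coordinate on $\bC$. Writing $N:=i_{f,*}M$ on $X\times\bC$, the problem reduces to constructing $(j')_*(j')^*N$ and $(j')_!(j')^*N$ and identifying their underlying $\scD_{X\times\bC}$-modules, since $j_*j^*M$ and $j_!j^*M$ are recovered by transport back along $i_f$ using $p \circ i_f = \text{id}_X$, where $p$ is the projection.

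The well-definedness of $(j')_*$ and $(j')_!$ applied to $(j')^*N$ follows directly from the specialisability axiom: $N$ admits a Kashiwara--Malgrange $V$-filtration along $\{t=0\}$ such that the nearby and vanishing cycles $\psi_{t,1}N$ and $\phi_{t,1}N$ underlie mixed Hodge modules on $X\times\{0\}$ (cf. Theorem \ref{thmphipsiMHM} below). Given this, $(j')_*(j')^*N$ and $(j')_!(j')^*N$ are defined by the two canonical mapping cone triangles involving the morphisms $\text{can}$ and $\text{var}$, exactly as already indicated in the proof of Proposition \ref{propMHMfunctors}.

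For the identification of underlying $\scD$-modules, I would use the standard fact that on any specialisable $\scD_{X\times\bC}$-module $\scN$, the localisation $\scN[t^{-1}] = \scN\otimes_{\scO_{X\times\bC}}\scO_{X\times\bC}(*\{t=0\})$ is precisely the $\scD$-module expression of $(j')_*(j')^*\scN$, as can be read off from the $V$-filtration. Transporting this back through $i_f$ by Kashiwara's equivalence yields $\scM\otimes_{\scO_X}\scO_X(*Z)$. The formula for $j_!j^*M$ then follows formally by applying the duality relation $j_! = \bD_X\circ j_*\circ\bD_X$ from Proposition \ref{propMHMfunctors}(iii).

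The hard part is entirely contained in the verification of specialisability and the construction of $\psi_{t,1}$, $\phi_{t,1}$ as endofunctors on complex mixed Hodge modules in the analytic setting; this is what constitutes a substantial portion of \cite{M15a} and what makes the open analytic case more delicate than the algebraic or rational settings, where one has more direct access to the six-functor formalism.
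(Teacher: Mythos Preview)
The paper does not actually give a proof of this proposition; it is stated as a citation from \cite{M15a} (Proposition 11.2.2 and Corollary 11.2.10), with Remark~\ref{remjjtriangles} immediately afterwards serving only to clarify the meaning of ``well-defined'' via the exact triangles involving $i_*i^!$ and $i_*i^*$. Your sketch is a correct and standard outline of how the cited result is established, and it aligns with the paper's own framework: the reduction to the smooth hypersurface $\{t=0\}$ via the graph embedding, the construction of $j_*j^*$ and $j_!j^*$ through the triangles built from $\text{can}$ and $\text{var}$ (exactly as in Remark~\ref{remjjtriangles} and the proof of Proposition~\ref{propMHMfunctors}), and the appeal to duality for $j_!$ are all consistent with how the paper sets things up. Since there is no proof in the paper to compare against, there is nothing further to contrast.
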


\begin{note}

Note that for an open embedding $j:U \to X$, $j^*=j^!$.
    
\end{note}

\begin{rem}

Note that in this paper, we will effectively define $j_*j^*M$ and $j_!j^*M$ to complete the following two exact triangles;
\[i_*i^!M\to M \to j_*j^*M \xrightarrow{+1}\]
and
\[j_!j^*M\to M \to i_*i^*M \xrightarrow{+1}.\]

\vspace{2pt}\noindent Here, $i:Z \hookrightarrow X$ is the natural closed embedding of $Z$ into $X$. $Z$ is of course not in general a manifold, but we may define the functors $i_*i^!$ and $i_*i^*$ nevertheless; choose locally a reduced function $f:X\to \bC$ whose zero locus equals $Z$, and consider the Cartesian diagram

\begin{center}
\begin{tikzcd}
Z \arrow[r, "i"] \arrow[d, "i"] & X \arrow[d,"i_f"]\\
X \arrow[r, "i'"] & X\times\bC, \\   
\end{tikzcd}\vspace{-17pt}
\end{center}

\noindent where $i'(\fx)=(\fx,0)$ and $i_f(\fx)=(\fx,f(\fx))$. It is then a simple exercise to show that 
\[\psi_{t,1}i_{f,*}M\simeq i_*'\psi_{f,1}M \,\,\,\text{ and }\,\,\,\phi_{t,1}i_{f,*}M\simeq i_*'\phi_{f,1}M,\]
implying by Kashiwara's equivalence and the exact triangles appearing in the proof of Proposition \ref{propMHMfunctors} that we have exact triangles
\[i'^!i_{f,*}M\rightarrow\phi_{f,1}M\xrightarrow{\text{var}} \psi_{f,1}M(-1)\xrightarrow{+1},\]
\[\psi_{f,1}M\xrightarrow{\text{can}} \phi_{f,1}M\rightarrow i'^*i_{f,*}M\xrightarrow{+1}.\vspace{5pt}\]
Motivated by these triangles, we then define (locally)
\[i_*i^!:=i'^!i_{f,*}\,\,\,\text{ and }\,\,\,i_*i^*:=i'^*i_{f,*}.\]

\label{remjjtriangles}

\end{rem}

\begin{note}

As a final comment, see that the setting of Proposition \ref{proplocalisable} is indeed the only situation in which a mixed Hodge module on $U$ is ``extendable" over the open embedding $j$, i.e. for a complex mixed Hodge module $N$ on $U$, $j_*N, j_!N \in \text{MHM}(X)$ ``exist" if any only if there exists some $M\in\text{MHM}(X)$ such that $N=j^*M$. 
    
\end{note}

\noindent We may also always extend variations of $\bC$-Hodge structure over open embeddings.

\begin{prop}[\cite{SS24}, Theorem 16.2.1]

Let $j :U \to X$ be an open embedding of complex manifolds such that the complement $Z$ of $U$ in $X$ is a hypersurface. Let $M$ be the complex pure Hodge module of weight $m$ overlying a polarisable variation of $\bC$-Hodge structure on $U$. Then there is a unique complex pure Hodge module of weight $m$ on $X$, which we will denote $j_{!*}M$, satisfying that $j^*j_{!*}M \simeq M$ and that $j_{!*}M$ has no submodules or quotients supported on $Z$\footnote{This is equivalent to requiring that $j_{!*}M$ has \emph{strict support} $X$.}. This will be called the \emph{minimal extension} of $M$ over $j$, and its underlying $\scD_X$-module is indeed the minimal extension over $j$ of the $\scD_U$-module underlying $M$. 

\label{propextension}
    
\end{prop}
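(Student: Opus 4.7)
The plan is to break the statement into uniqueness and existence, tackling uniqueness first (which is essentially formal from the strict support condition) and then reducing existence to a local statement that can be handled by $V$-filtration methods.

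For uniqueness, suppose $N_1, N_2 \in \text{MH}(X,\bC,m)$ both restrict to $M$ over $U$ and have strict support $X$. The identity $j^*N_1 \simeq M \simeq j^*N_2$ is a morphism of pure Hodge modules on $U$, and I would use the adjunction and the fact that $\text{Hom}(N_i, N_j)$ is controlled by $\text{Hom}(j^*N_i, j^*N_j)$ when the $N_i$ have no sub/quotient supported on $Z$: concretely, any extension of the identity by zero on a summand supported on $Z$ is impossible precisely because of the strict support hypothesis, so the morphism $j^*N_1 \to j^*N_2$ extends uniquely to a morphism $N_1 \to N_2$. Its kernel and cokernel are supported on $Z$ and are subquotients of pure Hodge modules of weight $m$ with strict support $X$, hence vanish, giving the isomorphism.

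For existence, since the statement is local on $X$, I would cover $X$ by charts where $Z = \{f = 0\}$ for a holomorphic function $f : X \to \bC$, and then use the graph embedding $i_f : X \to X \times \bC$ to reduce to the case where $Z$ is the smooth principal divisor $\{t=0\}$: indeed, $i_{f,*}$ is a closed embedding and induces an equivalence between mixed Hodge modules on $X$ and those on $X \times \bC$ supported in $i_f(X)$ (Kashiwara's equivalence at the level of Hodge modules), so it suffices to produce the minimal extension inside $X \times \bC$. Now I would start from the Deligne meromorphic extension of the underlying flat bundle of $M$, which equips it with a canonical $V$-filtration along $\{t=0\}$, then follow the classical recipe: let $\scN_*$ be the meromorphic extension $(j_*M)$, which carries a $V$-filtration whose graded pieces at $\alpha \in (-1,0]$ encode the nearby cycle data, and define the underlying $\scD_X$-module of $j_{!*}M$ as the unique $\scD$-submodule of $\scN_*$ generated by $V^{>-1}\scN_*$ (equivalently, the image of $j_!M \to j_*M$). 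Equip it with the Hodge filtration by the standard formula $F_p = \sum_{i \geq 0} \partial_t^i \bigl(F_{p-i} V^{>-1} \scN_* \bigr)$.

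The hard part is then verifying that this filtered $\scD$-module is genuinely a pure Hodge module of weight $m$ with strict support $X$, i.e.\ that it satisfies Saito's axioms (compatibility of $F_\bullet$ with $V^\bullet$, strictness of specialisation, admissibility and polarisability along $Z$). This is where the input from the polarisable VHS hypothesis is essential, and one invokes Schmid's nilpotent orbit theorem together with Cattani--Kaplan--Schmid for the behaviour of Hodge norms near $Z$ to ensure that the induced structures on the graded pieces $\text{gr}_V^\alpha$ are themselves polarisable complex Hodge structures of the correct weight. This is carried out in \cite{SS24}, Sections 14--16; in our setting the restriction-of-scalars version of Mochizuki's argument (\cite{M15a}) provides an independent route. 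The strict support property is automatic from the construction since $V^{>-1}\scN_*$ has no sections supported on $\{t=0\}$, and the uniqueness step above then guarantees that the locally constructed modules glue canonically into a global $j_{!*}M \in \text{MH}(X,\bC,m)$.
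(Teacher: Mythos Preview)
The paper does not give its own proof of this proposition: it is stated with the citation \cite{SS24}, Theorem 16.2.1, and used as a black box. So there is nothing in the paper to compare your argument against directly.

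That said, your outline is a reasonable sketch of the standard route taken in the cited reference and in Saito's original work. A couple of points worth tightening. For uniqueness, rather than arguing by extension of morphisms and adjunction, it is cleaner to invoke the structure theorem for polarisable pure Hodge modules (decomposition by strict support): a pure Hodge module of weight $m$ with strict support $X$ is uniquely determined by its restriction to any dense open subset, so two such objects restricting to the same $M$ on $U$ are isomorphic. Your kernel/cokernel argument is fine once one has an actual morphism $N_1\to N_2$, but producing that morphism without already knowing the structure theorem requires some care (you implicitly use something like $\text{Hom}(N_1,N_2)\simeq\text{Hom}(j^*N_1,j^*N_2)$, which itself needs the strict support hypothesis and a gluing argument). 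For existence, your reduction and $V$-filtration construction are correct in outline; note that with the paper's indexing convention (where $\partial_t t-\gamma$ is nilpotent on $\text{gr}_V^\gamma$) the minimal extension is generated over $\scD_X$ by $V^{>0}$ of the localisation, not $V^{>-1}$, and the Hodge filtration formula should be shifted accordingly. The genuinely hard step, as you correctly identify, is verifying that the resulting filtered $\scD$-module satisfies the axioms of a polarisable pure Hodge module, and this is precisely where the analytic input (nilpotent orbit theorem, $\text{SL}_2$-orbit theorem, norm estimates) enters; you are right to defer to \cite{SS24} or \cite{M15a} for this.
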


\begin{rem}

Under the assumptions of Proposition \ref{propextension}, note that we must have that $j_*$ and $j_!$ thus exist on $M$. Note also that since the complement of $U$ is a hypersurface, then $j_*$ and $j_!$ are exact and thus $j_*M, j_!M \in \text{MHM}(X,\bC)$ (i.e. they're concentrated in degree zero as elements of the derived category). Moreover, since $j^*W_{\text{min}}j_*M=M$, and since all non-zero $\scD_X$-submodules of the $\scD_X$-module underlying $j_*M$ may be shown (using Proposition \ref{proplocalisable}) to have support $X$,  the uniqueness of the minimal extension implies in this case that the lowest non-zero weight filtration step of $j_*M$ is $W_mj_*M$, and that
\[W_mj_*M \simeq j_{!*}M.\]

\label{remWmin}
    
\end{rem} 

\vspace{10pt}

\noindent We will discuss further details concerning complex mixed Hodge modules, in particular the existence and properties of nearby and vanishing cycles for complex mixed Hodge modules, in Section \ref{sectionVfilt}. In Section \ref{sectionlogres} we will only need the above facts, as in this section we're only interested in putting a mixed Hodge module structure on $\scM(f^{-\alpha})$.

The only further topic of discussion is twisted mixed Hodge modules, which we will use to exhibit a more global construction of the mixed Hodge module structure on $\scM(f^{-\alpha})$ we will soon define. Twisted $\scD$-modules were first investigated by Beilinson and Bernstein in \cite{BB93}. See also Chapter 11 of \cite{HTT08}. The following short summary is cited from \cite{SY24}.

\begin{defn}

Let $X$ be a complex manifold and $\scL$ a holomorphic line bundle on $X$. Write $p:L\to X$ for the total space of the line bundle $\scL$. Write $\scI_X \subseteq \scO_L$ for the ideal sheaf of the zero section in $L$. Define a decreasing filtration of $\scO_L$-modules 
\[V^j\scD_L = \{P\in\scD_L\,:\, P\cdot \scI_X^i \subseteq \scI_X^{i+j} \,\,\,\forall i \in\bZ_{\geq 0}\}.\]
Write $\theta$ for the vector field tangent to the natural $\bC^*$ action on $L$ (mapping to $t\partial_t$ in any trivialisation). Then $\theta \in V^0\scD_L$.

Let $\beta \in \bQ$. The sheaf of \emph{$\beta\scL$-twisted differential operators} on $X$ is then defined to be the sheaf of rings
\[\scD_{X,\hspace{0.5pt}\beta\scL}:=\text{gr}_V^0\scD_L / (\theta-\beta)\text{gr}_V^0\scD_L.\]

Note that in any trivialisation $L|_U \simeq U \times\bC_t$ of the line bundle $\scL$, 
\[\scD_{X,\hspace{0.5pt}\beta\scL}|_U\simeq\scD_U[t\partial_t] / (t\partial_t-\beta)\scD_U[t\partial_t] \simeq \scD_U.\]
Moreover, in a change of trivialisation given by $g\in\scO_U^*$, the differential operators change via 
\begin{equation}\label{eqtrivtran}\partial_i \mapsto \partial_i + \beta g^{-1}\frac{\partial g}{\partial x_i}.\end{equation}

A \emph{$\beta\scL$-twisted $\scD$-module} on $X$ is a module over the ring $\scD_{X,\hspace{0.5pt}\beta\scL}$. If $\scM$ is such a module, then on any trivialisation $L|_U \simeq U \times\bC_t$, $\scM|_U$ is naturally a $\scD_U$-module under the above isomorphism. Note also that an equivalent definition for $\beta\scL$-twisted $\scD$-module is as a collection of $\scD$-modules indexed by trivialisations of $\scL$, with appropriate gluing data. Finally, see that if $\scM$ is a $\beta\scL$-twisted $\scD$-module on $X$, then $\scL \otimes \scM$ is a $(\beta-1)\scL$-twisted $\scD$-module on $X$.

\label{defntwDmod}
    
\end{defn}

\begin{defn}

Take $X$, $\scL$ and $\beta$ as above. A \emph{$\beta\scL$-twisted mixed Hodge module} on $X$ is a collection of complex mixed Hodge modules indexed by the trivialisations of the line bundle $\scL$, transforming on intersections (by which we mean the underlying bi-filtered $\scD_U$-module transforms) as according to Equation \eqref{eqtrivtran} above. Such a twisted mixed Hodge module is \emph{pure} if it is pure on every trivialisation. We write $\text{MHM}(X,\beta\scL)$ and $\text{MH}(X,\beta\scL)$ for the categories of $\beta\scL$-twisted mixed and pure Hodge modules respectively. Note that when $\beta=0$ we recover $\text{MHM}(X,\bC)$ and $\text{MH}(X,\bC)$.

\label{defntwMHM}
    
\end{defn}

\begin{prop}

Let $f:X \to Y$ be a projective map between complex manifolds and $\beta \in \bQ$, and let $\scL$ be a holomorphic line bundle on $Y$. Write $\scL_X:=f^*\scL$. Then there exists a functor
\[f_*=f_! : D^{\text{\emph{b}}}\text{\emph{MHM}}(X,\beta\scL_X) \to D^{\text{\emph{b}}}\text{\emph{MHM}}(Y,\beta\scL)\]
restricting to the corresponding functor on trivialisations. These functors again respect composition of functions.

\label{proptwMHMfunctors}

\end{prop}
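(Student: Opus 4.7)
The plan is to construct $f_*$ locally on trivialisations of $\scL$ using the existing projective pushforward for complex mixed Hodge modules (Proposition \ref{propMHMfunctors} ii)), and then to glue.

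A trivialisation $\sigma$ of $\scL$ over an open $V\subseteq Y$ pulls back to a trivialisation $f^*\sigma$ of $\scL_X$ over $f^{-1}(V)$, and the restriction $f|_{f^{-1}(V)}:f^{-1}(V)\to V$ remains projective, being a base change of $f$. Given $M\in D^{\text{b}}\text{MHM}(X,\beta\scL_X)$, which by Definition \ref{defntwMHM} is represented by a collection of complex mixed Hodge modules $M_\tau$ over trivialisations $\tau$ of $\scL_X$ with compatible transitions, I would set
\[(f_*M)_\sigma := (f|_{f^{-1}(V)})_*M_{f^*\sigma} \,\in\, D^{\text{b}}\text{MHM}(V,\bC).\]
Compatibility of this definition with restriction to smaller opens is immediate from the corresponding property of the ordinary pushforward.

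The main step is to check that these local pushforwards glue via the correct transition data. For two trivialisations $\sigma,\sigma'$ of $\scL$ on a common open $V$, related by $g\in\scO_V^*$, the transition from $M_{f^*\sigma}$ to $M_{f^*\sigma'}$ is governed by Equation \eqref{eqtrivtran} with transition function $f^*g$. I must show that the induced map on pushforwards agrees with the transition on the target prescribed by $g$. Equivalently, for any $N\in D^{\text{b}}\text{MHM}(f^{-1}(V),\bC)$ the $\scO$-module isomorphism acting via $\partial_i\mapsto \partial_i+\beta(f^*g)^{-1}\partial_i(f^*g)$ on $N$ must push forward to the isomorphism acting via $\partial_j\mapsto \partial_j+\beta g^{-1}\partial_j g$ on $f_*N$. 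This is a twisted projection formula: at the level of filtered $\scD$-modules it follows because the transformation is conjugation by the formal symbol $(f^*g)^\beta$, which is $\scO_Y$-linear and so commutes with $f_*$ by the ordinary projection formula. It upgrades to complex mixed Hodge modules because the pushforward constructed in \cite{M15a} is built from the transfer bimodule $\scD_{X\to Y}$ and commutes with tensoring by line bundles pulled back from $Y$.

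Respect for composition of projective maps then follows from Proposition \ref{propMHMfunctors} ii) applied trivialisation by trivialisation. The main obstacle is the gluing step; an alternative that avoids the explicit calculation would be to use Definition \ref{defntwDmod} to realise $\scD_{X,\beta\scL_X}$-modules as $\bC^*$-monodromic $\scD$-modules on the total space $L_X$ with fibrewise eigenvalue $\beta$, and to push forward along the (projective) base-change map $\tilde f:L_X\to L$ using the untwisted functor, checking only that monodromicity and the eigenvalue $\beta$ are preserved by $\tilde f_*$. Both routes rely on the same underlying input.
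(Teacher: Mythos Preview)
Your proposal is correct and follows the same approach the paper intends: the paper's proof is a one-line ``This follows immediately from \ref{propMHMfunctors}.ii)'' together with a reference to \cite{SY24}, Theorem 3.8, so you have simply unpacked what ``immediately'' means by spelling out the trivialisation-by-trivialisation construction and the gluing check. Your discussion of the transition compatibility (the twisted projection formula) and the alternative monodromic route are both reasonable elaborations, but the paper regards the whole thing as formal once Definition \ref{defntwMHM} is in place.
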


\begin{proof}

This follows immediately from \ref{propMHMfunctors}.ii). See \cite{SY24}, Theorem 3.8 for the statement of this result in the pure case, as well as the statement of the decomposition theorem for filtered twisted $\scD$-modules underlying twisted pure Hodge modules along projective morphisms.
    
\end{proof}

\noindent See \cite{DV24} and \cite{SY24} especially for further detail about twisted mixed Hodge modules; they won't be used at all extensively in this paper.

\vspace{10pt}

\subsection{The complex mixed Hodge module structure on $\scM(f^{-\alpha})$}

\; \vspace{5pt} \\ Now we exhibit how to endow the left $\scD_X$-module $\scM(f^{-\alpha})$ with a canonical complex mixed Hodge module structure. In fact using twisted mixed Hodge modules we are able to globalise this construction. This mixed Hodge module only has a rational structure in the case that $\alpha$ is an integer, although, as we will see, it is always a direct summand of a rational mixed Hodge module, which implies that $\scM(f^{-\alpha})$ has quasi-unipotent monodromy and thus is still in fact $\bQ$-specialisable. 

Again $X$ is a complex manifold of dimension $n$ and $f\in\scO_X$ is a holomorphic function on $X$ such that the zero locus $Z:=f^{-1}(0)\subseteq X$ is a hypersurface, and $U:=X\backslash Z$. We take also a rational number $\alpha \in\bQ$ and write $\overline{\alpha}:=-\alpha +\lceil\alpha\rceil\in[0,1)$.

\begin{defn}

We define two left $\scD_X$-modules as follows. 
\[\scM(f^{-\alpha}):=\scO_X[f^{-1}]f^{-\alpha} \,\,\text{ with action given by } \delta \cdot (uf^{-\alpha}) := \delta(u)f^{-\alpha}-\alpha u\delta(f) f^{-\alpha-1},\]
where $\delta \in\text{Der}_{\bC}(\scO_X)$ and $u \in\scO_X[f^{-1}]$.
\[\scO_{X,-\alpha f}:= \scD_X \cdot f^{\,\overline{\alpha}} \subseteq \scM(f^{-\alpha}).\] We will write $\scO_X(*f)$ for $\scM(f^{\,0}) \simeq \scO_X[f^{-1}]$.

\vspace{5pt}

\noindent Now, write $j : U \hookrightarrow X$ for the natural open embedding and consider the Cartesian diagram
\begin{center}
\begin{tikzcd}
U \arrow[r, "j"] \arrow[d, "\widetilde{f}"] & X \arrow[d, "f"]\\
\bC^* \arrow[r, "\widetilde{j}"] & \bC.
\end{tikzcd}
\end{center}

\noindent Recalling the left $\scD_{\bC^*}$-module $\scO^{-\alpha}$ defined in the previous subsection, we define 
\[\scO_{U, -\alpha f} := \widetilde{f}^*\scO^{-\alpha}.\]
Note that this only depends upon the fractional part of $\alpha$ (i.e. on $\overline{\alpha}$).

See also that the left $\scD_U$-module $\scO_{U,-\alpha f}$ may be written 
\[\scO_{U,-\alpha f} = \scO_Uf^{-\alpha}, \,\,\text{ with action }\, \delta \cdot (uf^{-\alpha}) = \delta(u)f^{-\alpha} -\alpha u\delta(f)f^{-\alpha-1}.\]

\label{defnDmods}

\end{defn}

\begin{lem}

These two left $\scD_X$-modules satisfy the following elementary properties.

\begin{enumerate}[label=\roman*)]

\item $\scM(f^{-\alpha}) \simeq \scO_{X,-\alpha f}\otimes_{\scO_X}\scO_X(*f)$ as left $\scD_X$-modules. 

\item $\scO_{X,-\alpha f}$ is the minimal extension of $\scO_{U,-\alpha f}$ along the open embedding $j$.

\item $\scO_{X,-\alpha f} = \scD_X \cdot f^{k+\overline{\alpha}}$ for any $k \in\bZ_{\geq 0}$.

\item Let $\fx\in X$ such that $f(\fx)=0$. Then $\scO_{X,-\alpha f,\hspace{0.7pt}\fx} = \scM(f^{-\alpha})_{\fx}$ if and only if $b_{f,\hspace{0.7pt}\fx}(l-\alpha) \neq 0$ for all $l \in\bZ$, where $b_{f,\hspace{0.7pt}\fx}(s)$ is the Bernstein-Sato polynomial of $f$ at $\fx$ (see Section \ref{sectionmicrolocal} for further details).
    
\end{enumerate}

\label{lemDmodprops}
    
\end{lem}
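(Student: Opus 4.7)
The plan is to establish the four claims largely independently, with the Bernstein--Sato functional equation serving as the main tool for (iii) and (iv). The unifying observation I would exploit is the identity $f^{\overline{\alpha}}=f^{\lceil\alpha\rceil}\cdot f^{-\alpha}$, so that the generator $f^{\overline{\alpha}}$ of $\scO_{X,-\alpha f}$ differs from $f^{-\alpha}$ only by an integer power of $f$.

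For (i), I would note that every element of $\scM(f^{-\alpha})$ has the form $uf^{-\alpha-k}=(uf^{-\lceil\alpha\rceil-k})\cdot f^{\overline{\alpha}}$ with $uf^{-\lceil\alpha\rceil-k}\in\scO_X(*f)$, making the natural multiplication map $\scO_{X,-\alpha f}\otimes_{\scO_X}\scO_X(*f)\to\scM(f^{-\alpha})$ surjective. I would then check injectivity by restricting to $U$, where both sides collapse to the rank-one free $\scO_U$-module generated by $f^{\overline{\alpha}}$, using that tensoring with $\scO_X(*f)$ identifies with $j_*j^*$ on coherent $\scO_X$-modules and that the left-hand side is already an $\scO_X(*f)$-module. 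For (ii), I would combine (i) with Proposition \ref{proplocalisable} to identify $\scM(f^{-\alpha})\simeq j_*\scO_{U,-\alpha f}$, and then invoke the standard characterisation of the minimal extension as the unique sub-$\scD_X$-module of $j_*\scO_{U,-\alpha f}$ with strict support $X$ restricting correctly on $U$. The cyclic module $\scO_{X,-\alpha f}=\scD_X\cdot f^{\overline{\alpha}}$ is by construction generated by a lift of the generator of $\scO_{U,-\alpha f}$, and since it sits inside the torsion-free module $\scM(f^{-\alpha})$ it has no sub-$\scD_X$-module supported on $Z$; a dual argument involving the generator $f^{\overline{\alpha}}$ rules out quotients supported on $Z$.

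For (iii), I would iterate the Bernstein--Sato functional equation $b_{f,\fx}(s)f^s=P(s,\partial)f^{s+1}$ at $s=\overline{\alpha},\overline{\alpha}+1,\dots,\overline{\alpha}+k-1$ to obtain $\left(\prod_{j=0}^{k-1}b_{f,\fx}(\overline{\alpha}+j)\right)f^{\overline{\alpha}}=Q\cdot f^{\overline{\alpha}+k}$ for some $Q\in\scD_{X,\fx}$. Every root of $b_{f,\fx}$ being strictly negative while $\overline{\alpha}+j\geq 0$ makes the scalar nonzero, giving $f^{\overline{\alpha}}\in\scD_X\cdot f^{\overline{\alpha}+k}$; the reverse inclusion is trivial. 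For the ``if'' direction of (iv), I would apply the same functional equation downward: writing $f^{-\alpha-k}=f^{\overline{\alpha}-\lceil\alpha\rceil-k}$ and iterating at $s=\overline{\alpha}-j$ for $j=1,\dots,\lceil\alpha\rceil+k$, each step requires $b_{f,\fx}(\overline{\alpha}-j)\neq 0$. After reindexing $l=\lceil\alpha\rceil-j$ this becomes $b_{f,\fx}(l-\alpha)\neq 0$ for $l\leq\lceil\alpha\rceil-1$; non-vanishing for $l\geq\lceil\alpha\rceil$ is automatic since then $l-\alpha\geq\overline{\alpha}\geq 0$.

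The hard part will be the converse direction of (iv): if $b_{f,\fx}(l_0-\alpha)=0$ for some $l_0\in\bZ$, one must prove $\scO_{X,-\alpha f,\fx}\subsetneq\scM(f^{-\alpha})_{\fx}$. The cleanest argument I see routes through the $V$-filtration on $i_{f,+}\scM(f^{-\alpha})$ along $\{t=0\}$ (introduced in Section \ref{sectionVfilt}): equality of the two modules would force the $V$-filtration to have no non-trivial jumps at any integer shift of $-\alpha$, but these jumps are controlled precisely by the integer-shifted roots of $b_{f,\fx}$. A more elementary but fiddlier route would exploit minimality of $b_{f,\fx}(s)$: stalkwise equality lets one lift the expression for $f^{-\alpha-k}$ back to $\scD_{X,\fx}[s]\cdot f^s$, yielding a functional equation whose $b$-polynomial is strictly divided by $b_{f,\fx}(s)/(s-(l_0-\alpha))$ and thereby contradicting minimality.
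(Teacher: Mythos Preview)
Your treatment of (i), (iii) and the ``if'' direction of (iv) matches the paper's, all resting on the Bernstein--Sato functional equation and negativity of its roots.

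For (ii) you use the strict-support characterisation of the minimal extension, whereas the paper uses the characterisation as the \emph{smallest} sub-$\scD_X$-module of $j_+\scO_{U,-\alpha f}$ restricting to $\scO_{U,-\alpha f}$. The paper's choice avoids your vague ``dual argument'': to rule out quotients supported on $Z$ via your route, you would need to show that any proper submodule $\scN\subsetneq\scO_{X,-\alpha f}$ with $\scN|_U=\scO_{U,-\alpha f}$ contains $f^{\overline{\alpha}}$, which comes down to observing that $f^{k+\overline{\alpha}}\in\scN$ for some $k$ and then invoking (iii). This works, but is precisely the content of the paper's argument, only repackaged; the ``smallest submodule'' formulation makes the dependence on (iii) transparent and saves you the separate no-sub/no-quotient check.

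The real gap is in the ``only if'' direction of (iv). Your route (b) is not valid as stated: a relation $f^{c-1}=Q\cdot f^c$ in $\scM(f^{-\alpha})_\fx$ for a specific value $c=l_0-\alpha$ does not lift to a relation in $\scD_{X,\fx}[s]\cdot f^s$ for generic $s$, because the specialisation map $\scD_{X,\fx}[s]f^s\to\scD_{X,\fx}f^c$ need not be injective, and even granting Kashiwara's isomorphism $\scD_{X,\fx}[s]f^s/(s-c)\simeq\scD_{X,\fx}f^c$ one still has to argue that $(\scD_{X,\fx}[s]f^s/\scD_{X,\fx}[s]f^{s+1})\otimes_{\bC[s]}\bC[s]/(s-c)\neq 0$ whenever $b_{f,\fx}(c)=0$, which is exactly the non-trivial input. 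The paper sidesteps all of this by citing the standard equivalence $\scO_{X,\fx}[f_\fx^{-1}]f_\fx^{-\alpha}=\scD_{X,\fx}\cdot f_\fx^{-\alpha+k}\Leftrightarrow b_{f,\fx}(l-\alpha)\neq 0$ for all $l<k$ (Kashiwara, Lemma~6.21). Your route (a) via the $V$-filtration would also work, but borrows machinery from later in the paper.
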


\begin{proof}

\begin{enumerate}[label=\roman*)]

\item This is elementary, an isomorphism is given by 
\[\scO_{X,-\alpha f}\otimes_{\scO_X}\scO_X(*f) \to \scM(f^{-\alpha}) \,\,;\,\, P\cdot f^{\,\overline{\alpha}}\otimes u \mapsto uP\cdot f^{\,\overline{\alpha}}.\]

\item One definition of the minimal extension of $\scO_{U,-\alpha f}$ along the open embedding $j$ is as the smallest (with respect to inclusion) left $\scD_X$-submodule of (the non-coherent) $j_+\scO_{U,-\alpha f}$ which pulls back along $j$ to $\scO_{U,-\alpha f}$.

Write $\scN\subseteq\scM(f^{-\alpha}) \subseteq j_+\scO_{U,-\alpha f}$ for the minimal extension of $\scO_{U,-\alpha f}$ along $j$. Then, since $\scN$ restricts to $\scO_{U,-\alpha f}$ on $U$, $f^{k-\alpha} \in \scN$ for some $k \in\bZ$. 

Now, for each $\fx\in X$, write $b_{f,\hspace{0.7pt}\fx}(s)$ for the Bernstein-Sato polynomial of $f$ at $\fx$, i.e. the monic polynomial of minimal degree such that there exists $P_{\fx}(s)\in\scD_{X,\hspace{1pt}\fx}[s]$ with
\[P_{\fx}(s)\cdot f_{\fx}^{s+1}=b_{f,\hspace{0.7pt}\fx}(s)f_{\fx}^s,\]
where $f_{\fx}\in\scO_{X,\hspace{0.7pt}\fx}$ is the germ of $f$ at $\fx$.

Then it is well-known that the roots of the Bernstein-Sato polynomial are all negative rational numbers. Therefore, for $k \in\bZ_{\geq 1}$, we may write
\[f^{\overline{\,\alpha}}_{\fx} = \left(\prod_{l=0}^{k-1}b_{f,\hspace{0.7pt}\fx}(l+\overline{\alpha})^{-1}P_{\fx}(l+\overline{\alpha})\right) \cdot f_{\fx}^{k+\overline{\alpha}},\]
implying that
\[\scD_{X,\hspace{1pt}\fx}\cdot f_{\fx}^{\,\overline{\alpha}} = \scD_{X,\hspace{1pt}\fx}\cdot f_{\fx}^{k+\overline{\alpha}},\]
which in turn gives the global statement
\[\scD_X\cdot f^{\,\overline{\alpha}} = \scD_X\cdot f^{k+\overline{\alpha}}.\]
Therefore we see that the smallest $\scD_X$-submodule of $\scM(f^{-\alpha})$ restricting to $\scO_{U,-\alpha f}$ is 
\[\scN=\scD_X\cdot f^{k+\overline{\alpha}} = \scD_X\cdot f^{\,\overline{\alpha}}\]
for any $k \in \bZ_{\geq 0}$.

\item The above proof also proves part iii).

\item It is easy to show (see for instance Lemma 6.21 of \cite{Kash03}) that, for $k \in\bZ$,
\[\scO_{X,\hspace{0.7pt}\fx}[f_{\fx}^{-1}]f_{\fx}^{-\alpha} = \scD_{X,\hspace{1pt}\fx}\cdot f_{\fx}^{-\alpha +k} \,\,\Leftrightarrow\,\, b_{f,\hspace{0.7pt}\fx}(l-\alpha)\neq 0 \,\,\,\text{for all }\, l \in\bZ_{<k}.\]
Therefore we see that $\scM(f^{-\alpha})_{\fx}=\scO_{X,-\alpha f,\hspace{0.7pt}\fx} = \scD_{X,\hspace{1pt}\fx}\cdot f_{\fx}^{\,\overline{\alpha}}$ if and only if $b_{f,\hspace{0.7pt}\fx}(l-\alpha) \neq 0$ for all $l \in \bZ_{< \lceil\alpha\rceil}$. Since as stated above we know that all of the roots of $b_{f,\hspace{0.7pt}\fx}(s)$ are negative, this is equivalent to $b_{f,\hspace{0.7pt}\fx}(l-\alpha)\neq 0$ for all $l \in\bZ$.
    
\end{enumerate}

\end{proof}

\begin{defn}

Recalling also the complex pure Hodge module $\underline{\bC}^{\beta}[1]\in\text{MH}(\bC^*,\bC)$ of weight 1 defined in the previous subsection, we define 
\[\underline{\bC}_{U,-\alpha f}[n] := \widetilde{f}^*\underline{\bC}^{-\alpha}[1] \in \text{MH}(U,\bC).\]
Note that this also only depends upon the fractional part of $\alpha$.

$\underline{\bC}_{\,U,-\alpha f}[n]$ is a complex pure Hodge module of weight $n$, and is induced by a variation of $\bC$-Hodge structure. Thus we may apply Proposition \ref{propextension} to make our most important definitions of this paper.
    
\end{defn}

\begin{defn}

\[M(f^{-\alpha}):=j_*\underline{\bC}_{U,-\alpha f}[n] \in \text{MHM}(X,\bC), \,\,\,\,\,\,\, \underline{\bC}_{X,-\alpha f}[n]:=j_{!*}\underline{\bC}_{U,-\alpha f}[n]\in\text{MH}(X,\bC).\]

\label{defnMHMs}
    
\end{defn}

\noindent By Lemma \ref{lemDmodprops}.i) and Proposition \ref{proplocalisable} the left $\scD_X$-module underlying $M(f^{-\alpha})$ is $\scM(f^{-\alpha})$. By Lemma \ref{lemDmodprops}.ii), the underlying left $\scD_X$-module of $\underline{\bC}_{X,-\alpha f}[n]$ is $\scO_{X,-\alpha f}$.

Also, Remark \ref{remWmin} implies that the smallest non-zero weight step of $M(f^{-\alpha})$ has index $n$, and that
\[W_nM(f^{-\alpha}) = \underline{\bC}_{X,-\alpha f}[n].\]
Therefore we see that the underlying left $\scD_X$-module of $W_nM(f^{-\alpha})$ is $\scO_{X,-\alpha f}$. In particular, by Lemma \ref{lemDmodprops}.iv), $M(f^{-\alpha})$ is pure as a complex Hodge module if and only if $b_{f,\hspace{0.7pt}\fx}(-\alpha+k)\neq 0$ for all $k\in\bZ$ and for all $\fx\in X$ with $f(\fx)=0$.

The following is analogous to Lemma 2.6 of \cite{MP19b}.

\begin{lem}

Take $l \in \bZ_{\geq 1}$ such that $\alpha l\in\bZ$. Consider the map
\[p : \widetilde{U}:=\{(x,z) \in U\times\bC\,|\, f(x)^{-l\alpha}=z^l\} \to U ; (x,z) \mapsto x.\]
$p$ is a holomorphic map between complex manifolds, and is clearly a local biholomorphism, and projective (since $\widetilde{U}$ is closed in $U\times\bP^1_{\bC}$).

Then we have a canonical isomorphism of left $\scD_X$-modules
\[p_+\scO_{\widetilde{U}} \simeq \bigoplus_{i=0}^{l-1}\scO_{U,-i\alpha f},\]
inducing isomorphisms of complex mixed Hodge modules
\[p_*\underline{\bC}_{\widetilde{U}}[n] \simeq \bigoplus_{i=0}^{l-1}\underline{\bC}_{U,-i\alpha f}[n]\,\,\,\text{ and }\,\,\, j_*p_*\underline{\bC}_{\widetilde{U}}[n] \simeq \bigoplus_{i=0}^{l-1}M(f^{-i\alpha}).\]

\label{lemsummand}
    
\end{lem}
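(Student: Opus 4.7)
The plan is to exploit the fact that $p$ is a Galois \'etale cover of degree $l$ with Galois group $\mu_{l}$ (the group of $l$-th roots of unity), acting on $\widetilde{U}$ over $U$ by $(x,z)\mapsto(x,\zeta z)$, and to decompose each object of interest into isotypic components. For part i), since $p$ is finite \'etale of degree $l$, $p_{+}\scO_{\widetilde{U}}=p_{*}\scO_{\widetilde{U}}$ is $\scO_U$-locally free of rank $l$, and the induced $\mu_{l}$-action splits it canonically into rank-one eigensheaves $\scE_{i}$ ($i=0,\ldots,l-1$), with $\zeta\in\mu_{l}$ acting on $\scE_{i}$ by $\zeta^{i}$. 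The global function $z$ on $\widetilde{U}$ satisfies $\zeta^{*}z=\zeta z$, so $z^{i}$ is a global generator of $\scE_{i}$ over $\scO_U$. Since $p$ is \'etale, any derivation $\delta\in\text{Der}_{\bC}(\scO_U)$ lifts uniquely to a derivation $\widetilde{\delta}$ on $\widetilde{U}$, and differentiating $z^{l}=f^{-l\alpha}$ yields $\widetilde{\delta}(z^{i})=-i\alpha z^{i}\delta(f)/f$, matching exactly the connection on $\scO_{U,-i\alpha f}=\scO_U f^{-i\alpha}$ from Definition \ref{defnDmods} and giving a canonical $\scD_U$-isomorphism $\scE_{i}\simeq\scO_{U,-i\alpha f}$.

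For the Hodge module isomorphisms, $p_{*}$ is defined on $D^{\text{b}}\text{MHM}$ by Proposition \ref{propMHMfunctors}.ii), since $p$ is finite hence projective, and $p_{*}\underline{\bC}_{\widetilde{U}}[n]$ is concentrated in degree $0$ and pure of weight $n$ because $p$ is \'etale. The $\mu_{l}$-action on $\widetilde{U}$ induces automorphisms of $p_{*}\underline{\bC}_{\widetilde{U}}[n]$ in $\text{MH}(U,\bC,n)$, and the associated isotypic idempotents split in this abelian category, refining the $\scD_U$-decomposition above to a decomposition in $\text{MH}(U,\bC,n)$. Each summand underlies a rank-one polarisable variation of $\bC$-Hodge structure with trivial Hodge filtration, and comparison with the construction of $\underline{\bC}_{U,-i\alpha f}[n]=\widetilde{f}^{*}\underline{\bC}^{-i\alpha}[1]$ via Example \ref{eglocsys} and Proposition \ref{propVHS} identifies the $i$-th summand with $\underline{\bC}_{U,-i\alpha f}[n]$. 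Applying $j_{*}$ summand-wise (well-defined on each summand by Proposition \ref{proplocalisable} since $Z$ is a hypersurface) and using Definition \ref{defnMHMs} then yields $j_{*}p_{*}\underline{\bC}_{\widetilde{U}}[n]\simeq\bigoplus_{i=0}^{l-1}M(f^{-i\alpha})$.

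The main obstacle is this last Hodge module identification: while the underlying $\scD_U$-modules match automatically from the first step, one must verify that the push-forward polarisation on each $\zeta^{i}$-eigensheaf agrees (up to a positive scalar) with the pullback along $\widetilde{f}$ of the polarisation $S^{-i\alpha}$ of Example \ref{eglocsys}, and likewise that the Hodge filtrations coincide. This should be checked locally on $U$, working on a simply connected open where one may fix a branch of $f^{-\alpha}$, write $z=\zeta_{0}f^{-\alpha}$ (for a choice of $\zeta_{0}\in\mu_{l}$ on each sheet of $p$), and compare explicit formulas for the push-forward polarisation on the $z^{i}$-eigensheaf with those defining $S^{-i\alpha}$.
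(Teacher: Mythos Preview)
Your approach is essentially the same as the paper's: both compute the $\scD_U$-decomposition by writing a general section as $\sum_{i=0}^{l-1}g_iz^i$ and differentiating the defining equation $z^l=f^{-l\alpha}$ to obtain $\delta(z^i)=-i\alpha\,z^i\delta(f)/f$, which identifies the $i$-th piece with $\scO_{U,-i\alpha f}$. Your Galois/isotypic framing is a clean way to say exactly this.

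The one place you make things harder than the paper does is the Hodge-module upgrade. You flag matching the push-forward polarisation with $S^{-i\alpha}$ as the ``main obstacle'', but this check is unnecessary: the category $\text{MH}(U,\bC)$ used here is that of \emph{polarisable} (not polarised) Hodge modules, so an isomorphism need not intertwine any chosen polarisations. Both $p_*\underline{\bC}_{\widetilde{U}}[n]$ and $\bigoplus_i\underline{\bC}_{U,-i\alpha f}[n]$ are pure of weight $n$ with Hodge filtration concentrated in the single degree $0$, so once the underlying $\scD_U$-modules are identified there is nothing left to match---this is exactly how the paper argues (``all of the Hodge modules involved on both sides have trivial Hodge and weight filtrations, with matching indexing''). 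Your idempotent-splitting argument in $\text{MH}(U,\bC)$ is fine and slightly more conceptual, but the local polarisation computation you propose at the end can simply be dropped.
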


\begin{proof}

Since $p$ is a local isomorphism, $p^*\scD_U \simeq \scD_{\widetilde{U}}$, and thus $p_+\scO_{\widetilde{U}}\simeq p_*\scO_{\widetilde{U}}$, where $p_*$ denotes sheaf push-forward.

Now, an arbitrary holomorphic function on $\widetilde{U}$ may be written $\sum_{i=0}^{l-1}g_iz^i$, where $g_i \in \scO_U$. Thus we may define an $\scO_U$-linear map
\[\tau : p_*\scO_{\widetilde{U}} \to \bigoplus_{i=0}^{l-1}\scO_{U,-i\alpha f}\,\,\, ;\,\,\, \sum_{i=0}^{l-1}g_iz^i \mapsto (g_if^{-i\alpha})_{i=0}^{l-1}.\]
It is clear that this is an isomorphism of $\scO_U$-modules. 

Now, given any derivation $\delta \in\text{Der}_{\bC}(\scO_{\widetilde{U}})$, 
\[0=\delta(f^{-l\alpha}-z^l)=-l\alpha \delta(f)f^{-l\alpha-1}-l\delta(z)z^{l-1} \,\,\,\Rightarrow\,\,\, f\delta(z) = -\alpha \delta(f)z,\]
implying that
\[\delta(g_iz^i) = \delta(g_i)z^i-i\alpha g_i\frac{\delta(f)}{f}z^i\]
for any $g_i \in\scO_U$.

This thus proves that $\tau$ is a $\scD_U$-linear map; if $\delta \in\text{Der}_{\bC}(\scO_U)$ and $g_i \in \scO_U$, then 
\[\delta(g_if^{-i\alpha}) = \delta(g_i)f^{-i\alpha} -i\alpha g_i \frac{\delta(f)}{f}f^{-i\alpha}.\]

Finally, the first isomorphism of complex Hodge modules follows because all of the Hodge modules involved on both sides have trivial Hodge and weight filtrations, with matching indexing. The second isomorphism just follows by applying the (well-defined) functor $j_*$.
    
\end{proof}

\vspace{10pt}

\noindent Finally, we also globalise the above construction, using twisted mixed Hodge modules. 

Assume that we are given a complex manifold $X$ and an effective divisor $D$ on $X$, and a rational number $\alpha \in \bQ$. Write $Z$ for the support of $D$ and $U$ for the complement of $Z$ in $X$, with $j:U\hookrightarrow X$ for the natural open embedding.

Write $\scL=\scO_X(D)$. Assume that we have a global section $s\in\Gamma(X,\scL)$ such that $D=\text{div}(s)$. 

\begin{defn}

Given a local trivialisation of the line bundle $\scL$ on an open subset $V\subseteq X$, we obtain some $f\in\scO_V$ such that $f=s|_V$ and thus $D|_V = \text{div}(f)$. We then consider the complex mixed Hodge module $M(f^{-\alpha})$ defined above. It is a simple exercise to check that the underlying $\scD_V$-module does indeed change according to Equation \eqref{eqtrivtran} under change of trivialisation. Thus the definition of twisted mixed Hodge module gives us a $-\alpha\scL$-twisted mixed Hodge module on $X$, which we will write as $M_{X,-\alpha \scL}$, which restricts to $M(f^{-\alpha})$ on a trivialisation such as the one above.

\label{defntwMHMs}
    
\end{defn}

\begin{rem}

\hfill

\begin{enumerate}[label=\roman*)]

\item Since $j^*\scL \simeq \scO_U$, we have functors 
\[j_*, j_! : \text{MHM}_{\text{ex}}(U,\bC) \to \text{MHM}(X,-\alpha\scL),\]
where $\text{MHM}_{\text{ex}}(U,\bC)$ is the full subcategory of complex mixed Hodge modules on $U$ for which $j_*$ and $j_!$ (as functors into $\text{MHM}(X,\bC)$) are well-defined.

We use the same notation as in the untwisted case but hope that context will help differentiate.

Then $M_{X,-\alpha\scL} = j_*\underline{\bC}_{U}[n]$.

\item Consider the left $\scD_L$-module $s_+\scO_X$. We write
\[V^j_{\text{ind}}s_+\scO_X := V^j\scD_L \cdot 1 \subseteq s_+\scO_X.\]
Let $k\in\bZ$ be such that $b_{f,\hspace{0.7pt}\fx}(l-\alpha)\neq 0$ for all $l \in\bZ_{<k}$ (so that $\scD_{X,\hspace{1pt}\fx}\cdot f_{\fx}^{k-\alpha} = \scM(f^{-\alpha})_{\fx}$), for all local trivialisations and points $\fx$ (i.e. ``take $k$ sufficiently small"). The $-\alpha\scL$-twisted $\scD$-module on $X$ underlying the $-\alpha\scL$-twisted mixed Hodge module $M_{X,-\alpha \scL}$ is then given by
\[\scM_{X,-\alpha\scL}:=\scL^k\otimes_{\scO_X}\frac{\text{gr}_{V_{\text{ind}}}^0s_+\scO_X}{(\theta+\alpha-k)\cdot\text{gr}_{V_{\text{ind}}}^0s_+\scO_X}.\]
(This is because the above is locally isomorphic to $\frac{\scD_X[s]\cdot f^s}{(s+\alpha-k)\scD_X[s]\cdot f^s} \simeq \scD_X\cdot f^{k-\alpha} = \scM(f^{-\alpha})$.)

\end{enumerate}
    
\end{rem}

\vspace{10pt}

\subsection{Hodge and weight filtrations via log resolutions}

\; \vspace{5pt} \\ We will now use log resolutions to obtain an expression for the mixed Hodge module structure on $\scM(f^{-\alpha})$, mirroring the strategy from \cite{MP19a}, \cite{MP19b}, \cite{Olano23}, \cite{MP22} etc. The first step in this direction is to understand what happens in the case that our divisor is simple normal crossing. It will turn out to be very necessary to understand what happens in the non-reduced setting; when we return to the general situation later, pulling our divisor back along the chosen log resolution will give us a non-reduced simple normal crossing divisor, even if we start with a reduced divisor.

Let $Y$ be a complex manifold of dimension $n$ and let $E$ be a simple normal crossing divisor on $Y$, so that we may write
\[E=\sum_{i=1}^ma_iE_i\]
for some smooth hypersurfaces $E_i$ and $a_i \in \bZ_{\geq 0}$. Again, let $\alpha \in \bQ$. Write
\[I:=\{i \in [m] \; |\; a_i\neq 0\} \;\text{ and }\; I_{\alpha}:=\{i \in I\; |\; \alpha a_i\in\bZ\}.\]
Assume that $E$ is defined by some global function $g \in\cO_Y$. Throughout, we interchange between working locally and globally. Whenever we work locally at a point $\fy \in Y$, we choose local coordinates $y_1,\ldots,y_n$ on $Y$ centred at $\fy$ such that, without loss of generality (via reordering), $E_i$ is locally defined by the function $y_i$, for $i=1,\ldots,n$, and such that, if we write
\[I_{\fy}:=\{i \in I \; |\; \fy\in E_i\} \subseteq [n] \;\;\text{ and }\;\; I_{\alpha,\fy}:=\{i \in I_{\alpha}\; |\; \fy\in E_i\}\subseteq I_{\fy},\]
then $g$ is given locally at $\fy$ by 
\[g = \prod_{i\in I_{\fy}}y_i^{a_i}.\]

\begin{thm}


Still working locally, we write $m_{\alpha,\fy}:=|I_{\alpha,\fy}|$.

\begin{enumerate}[label = \roman*)]

\item Then
\[W_{n+m_{\alpha,\fy}}\scM(g^{-\alpha})_{\fy} = \scM(g^{-\alpha})_{\fy}.\footnotemark\]

\footnotetext{So in particular, $W_{n+m_{\alpha}}\scM(g^{-\alpha}) = \scM(g^{-\alpha})$, where $m_{\alpha}:=|I_{\alpha}|$. Moreover, $W_{2n}\scM(g^{-\alpha}) = \scM(g^{-\alpha})$.}

\item 
Let $l\in\bZ$ such that $0 \leq l \leq m_{\alpha,\fy}$. Then
\[F_0^HW_{n+l}\scM(g^{-\alpha})_{\fy} = \sum_{J\subseteq I_{\alpha,\fy},\, |J|=l}\scO_{Y,\fy}\prod_{i \in I_{\alpha,\fy}\backslash J}y_i^{\lceil \alpha a_i\rceil}\prod_{i \in (I_{\fy}\backslash I_{\alpha,\fy})\cup J}y_i^{\lceil \alpha a_i\rceil-1} g^{-\alpha}\]
and, for $k \in \bZ_{\geq 0}$, 
\[F_k^HW_{n+l}\scM(g^{-\alpha})=F_k\scD_Y\cdot F_0^HW_{n+l}\scM(g^{-\alpha}).\]
    
\end{enumerate}

\label{thmSNCFW}
    
\end{thm}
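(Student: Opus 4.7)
The plan is to exploit the local product structure available in the SNC setting and reduce to a one-variable computation, reassembled via a Künneth formula for the Hodge and weight filtrations of an external tensor product. Working in the chosen local coordinates at $\fy$, there is a local product decomposition $Y \simeq \prod_{i=1}^n Y_i$ into one-dimensional disks, with respect to which $g = \prod_{i \in I_\fy} y_i^{a_i}$. Since $M(g^{-\alpha}) = j_*\widetilde{g}^*\underline{\bC}^{-\alpha}[1]$, and since on $U$ the pullback $\widetilde{g}^*\underline{\bC}^{-\alpha}[1]$ coincides with the external product of the one-variable pullbacks (both being rank-one polarisable VHS with matching monodromy and with trivial Hodge filtration concentrated in degree $0$), the fact that $j_*$ commutes with external products yields an isomorphism of complex mixed Hodge modules
\[
M(g^{-\alpha}) \simeq \left(\boxtimes_{i \in I_\fy} M_i\right) \boxtimes \left(\boxtimes_{i \notin I_\fy} \underline{\bC}_{Y_i}[1]\right), \qquad M_i := M(y_i^{-\alpha a_i}).
\]

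Next I would compute each one-variable factor directly. For $i \notin I_\fy$, $\underline{\bC}_{Y_i}[1]$ is pure of weight $1$ with $F_0^H = \scO_{Y_i}$. For $i \in I_{\alpha,\fy}$, so that $\alpha a_i \in \bZ$ and the monodromy is trivial, $M_i = j_*\underline{\bC}_{Y_i^*}[1]$ has weight filtration $W_1 M_i = \underline{\bC}_{Y_i}[1]$ (minimal extension, with underlying $\scD$-module $\scO_{Y_i}$) and $W_2 M_i = M_i$ (with underlying $\scD$-module $\scO_{Y_i}(*0)$); by the classical order-of-pole description (cf.\ \cite{MP19a}), $F_0^H W_1 M_i = \scO_{Y_i}$ and $F_0^H W_2 M_i = \scO_{Y_i}\cdot y_i^{-1}$. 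For $i \in I_\fy \setminus I_{\alpha,\fy}$, so that $\alpha a_i \notin \bZ$, Lemma \ref{lemDmodprops} iv) shows that $\scM(y_i^{-\alpha a_i})$ is its own minimal extension, hence $M_i$ is pure of weight $1$; Saito's description of the Hodge filtration on the minimal extension of the trivially-filtered rank-one VHS on $Y_i^*$ then gives $F_0^H M_i = \scO_{Y_i} \cdot y_i^{\lceil \alpha a_i\rceil - 1}\cdot y_i^{-\alpha a_i}$. In each of the three cases, $F_k^H M_i = F_k\scD_{Y_i} \cdot F_0^H M_i$.

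I would now assemble using the Künneth formulas for the Hodge and weight filtrations of an external tensor product,
\[
W_{n+l}\bigl(\boxtimes_i M_i\bigr) = \sum_{\sum l_i = n+l} \boxtimes_i W_{l_i} M_i, \qquad F_k\bigl(\boxtimes_i M_i\bigr) = \sum_{\sum k_i = k} \boxtimes_i F_{k_i} M_i,
\]
which follow from additivity of weights for external products of pure Hodge modules. Part i) is then immediate: the highest weight of the external product equals the sum of the highest weights of the factors, namely $(n-|I_\fy|) + |I_\fy \setminus I_{\alpha,\fy}| + 2|I_{\alpha,\fy}| = n + m_{\alpha,\fy}$. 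For part ii), each $M_i$ has its lowest non-zero weight step equal to $1$ while $l_i = 2$ is possible only for $i \in I_{\alpha,\fy}$; the weight Künneth sum therefore reduces to a sum over subsets $J \subseteq I_{\alpha,\fy}$ of size $l$ (the factors taking $W_2$). Plugging in the one-variable $F_0^H$ computations factor by factor, and absorbing the common $g^{-\alpha} = \prod y_i^{-\alpha a_i}$ into the displayed $y_i$-powers, recovers exactly the claimed formula. The generating statement $F_k^H W_{n+l} = F_k\scD_Y \cdot F_0^H W_{n+l}$ then follows from the one-variable generating property in each factor combined with the Künneth description of $F_k$.

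The main technical obstacle is the Hodge filtration computation in the one-variable non-integer case, which requires careful application of Saito's formula for the minimal extension of a VHS with the correct twist (in particular the appearance of $y_i^{\lceil\alpha a_i\rceil - 1}$ rather than $y_i^{\lceil\alpha a_i\rceil}$). The integer case is classical, and both the external product decomposition (which reduces to identifying two rank-one polarisable VHS on $U$) and the Künneth formulas for $W$ and $F$ are standard in the MHM formalism.
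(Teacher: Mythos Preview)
Your approach is correct and genuinely different from the paper's. The paper defers the proof to Section~3 (Theorem~\ref{thmSNCFW2}), where it is carried out by a direct computation with the $V$-filtration: one writes down the known expression for $V^{\lambda}i_{f,+}\scO_X$ in the monomial case, proves a claim that $F_k^{t\text{-ord}}\cap\scD_X\cdot x^b = F_k\scD_X\cdot x^b$, then establishes an explicit formula for $F_k^{t\text{-ord}}K_lV^{\lambda}$ by a double induction on $l$ and $k$, and finally applies the main formula Theorem~\ref{thmmainformula}. Your Künneth reduction bypasses all of this: you factor $M(g^{-\alpha})$ as an external product of one-variable mixed Hodge modules, compute each factor by hand (trivial, the classical $j_*\underline{\bC}[1]$, or the pure twisted minimal extension), and reassemble using the standard Künneth formulas for $F$ and $W$ on external products.

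What each buys: the paper's computation is self-contained within its own framework and in particular exhibits Theorem~\ref{thmmainformula} doing real work; this is important because Theorem~\ref{thmSNCFW} is logically upstream of the log-resolution results in Section~2, so the paper needs a proof that does not depend on those. Your argument is shorter and more conceptual, and is independent of Theorem~\ref{thmmainformula}, but it imports the Künneth formula for the weight filtration of external products of complex mixed Hodge modules, which is standard but not developed in the paper. One small point: the weight Künneth naturally produces a sum over $J\subseteq I_{\alpha,\fy}$ with $|J|\le l$, and you pass silently to $|J|=l$; this is fine because the terms are nested (smaller $J$ give $\scO_Y$-submodules of the terms for larger $J$), but it would be worth saying so explicitly.
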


\begin{proof}

The only way to prove this result is using the relationship between Hodge and weight filtrations and $V$-filtrations. We will prove this theorem in Section 3 (Theorem \ref{thmSNCFW2}) using our main formula for Hodge and weight filtrations that we will also prove in Section 3 (Theorem \ref{thmmainformula}).
    
\end{proof}

\begin{note}

It is easy to check that this indeed agrees with the expression $W_n\scM(g^{-\alpha}) = \scD_Y\cdot g^{\,\overline{\alpha}}$ obtained earlier in the chapter, but the Hodge filtration is not the order filtration induced by the generator $g^{\,\overline{\alpha}}$, but rather the order filtration induced by the (globally defined) generator 
\[\prod_{i \in I_{\alpha,\fy}}y_i^{\lceil \alpha a_i\rceil}\prod_{i \in I_{\fy}\backslash I_{\alpha,\fy}}y_i^{\lceil \alpha a_i\rceil-1} g^{-\alpha} = \prod_{i \in I_{\fy}}y_i^{\lfloor \alpha a_i\rfloor}\,g^{-\alpha}.\]
    
\end{note}

\vspace{5pt}

\noindent Next we use this result to find filtered resolutions (by induced $\scD$-modules) for the right $\scD_X$-module $\scM_r(g^{-\alpha}):=\omega_Y\otimes_{\scO_Y}\scM(g^{-\alpha})$ and for its weight filtration steps. We write $E_{\text{red}}$ for the reduction of the divisor $E$, given by
\[E_{\text{red}}=\sum_{i\in I}E_i,\] 
and define a logarithmic connection
\[\nabla:\scO_Y(-\lceil \alpha E\rceil)\to \scO_Y(-\lceil \alpha E\rceil)\otimes_{\scO_Y}\Omega_Y^1(\log E_{\text{red}}) \; ;\; s \mapsto s\otimes \sum_{i\in I_{\fy}}\lceil \alpha a_i\rceil \frac{dy_i}{y_i}.\]
Now we define the following filtered complex $(C_{-\alpha}^{\bullet},F_{\bullet-n})$ of right $\scD_Y$-modules.

The objects of $C_{-\alpha}^{\bullet}$ and $F_{k-n}C_{-\alpha}^{\bullet}$ (for $k \in \bZ$) are given by 
\[C_{-\alpha}^r:=\scO_Y(-\lceil \alpha E\rceil)\otimes_{\scO_Y}\Omega_Y^{n+r}(\log E_{\text{red}})\otimes_{\scO_Y}\scD_Y,\]
\[F_{k-n}C_{-\alpha}^r:=\scO_Y(-\lceil \alpha E\rceil)\otimes_{\scO_Y}\Omega_Y^{n+r}(\log E_{\text{red}})\otimes_{\scO_Y}F_{r+k}\scD_Y,\]
\vspace{3pt}while the differential is a ``twist by $-\alpha \log(g)$" of the differential induced by $\nabla$, i.e.
\[D:C_{-\alpha}^r \to C_{-\alpha}^{r+1}\;\; ; \;\; s \otimes P \mapsto \nabla s\otimes P+\sum_{i=1}^ndy_i\wedge s\otimes\partial_{y_i}P-\alpha\frac{dg}{g}\wedge s\otimes P\]
in local coordinates, where $s \in \scO_Y(-\lceil \alpha E\rceil)\otimes_{\scO_Y}\Omega_Y^{n+r}(\log E_{\text{red}})$ and $P\in \scD_X$. Note that it is easy to see that this differential is indeed well-defined when restricted to $F_{k-n}C_{-\alpha}^{\bullet}$. 

\begin{prop}

The filtered complex $(C_{-\alpha}^{\bullet},F_{\bullet-n})$ is quasi-isomorphic to the filtered right $\scD_Y$-module $(\scM_r(g^{-\alpha}),F_{\bullet-n}^H)$. 

\label{propSNCresF}
    
\end{prop}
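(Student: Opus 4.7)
The plan is to first construct an augmentation $\epsilon : C_{-\alpha}^0 \to \scM_r(g^{-\alpha})$ in degree zero, then show that $(C_{-\alpha}^{\bullet}, F_{\bullet - n})$ is a filtered resolution of $(\scM_r(g^{-\alpha}), F_{\bullet - n}^H)$ via this map. The hard part will be verifying filtered exactness in negative degrees; the filtration-compatibility in degree zero falls out of Theorem \ref{thmSNCFW}.

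First I would define $\epsilon$ locally by $s \otimes P \mapsto (s g^{-\alpha}) \cdot P$, where a local section $s \in \scO_Y(-\lceil \alpha E \rceil) \otimes \omega_Y(\log E_{\text{red}})$ is identified with a section of $\omega_Y$ via the inequality $\lceil \alpha a_i \rceil - 1 \geq 0$ for $i \in I_{\fy}$. A direct coordinate computation verifies $\epsilon \circ D = 0$: the three summands of $D$, namely the log connection $\nabla$ contributing $\lceil \alpha a_i \rceil \, dy_i / y_i$, the Spencer term $\sum dy_i \wedge \cdot \otimes \partial_{y_i}$, and the twist $-\alpha \, dg/g$, together reproduce exactly the Leibniz rule for $s \cdot g^{-\alpha}$. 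For the filtered augmentation in degree zero, the module $F_{-n} C_{-\alpha}^0$ is generated locally by $\prod_{i \in I_{\fy}} y_i^{\lceil \alpha a_i \rceil - 1} dy_1 \wedge \cdots \wedge dy_n \otimes 1$, whose image under $\epsilon$ matches, thanks to Theorem \ref{thmSNCFW}.i)--ii) applied with $l = m_{\alpha, \fy}$ (so $W_{n + m_{\alpha, \fy}} = \scM(g^{-\alpha})$ and the sum over $J$ reduces to the single term $J = I_{\alpha, \fy}$), exactly the local generator of $F_0^H \scM(g^{-\alpha})$. Combined with the identity $F_k^H = F_k \scD_Y \cdot F_0^H$ from the same theorem, this matches $\epsilon(F_{k-n} C_{-\alpha}^0) = F_{k-n}^H \scM_r(g^{-\alpha})$ for every $k$.

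For exactness in negative degrees I would pass to the associated graded with respect to $F_{\bullet - n}$. On $\text{gr}^F$ only the principal-symbol piece of $D$ survives, since both $\nabla$ and the $dg/g$ twist are $\scO_Y$-linear and hence drop in filtration order. Under $\text{gr}^F \scD_Y \simeq \text{Sym}_{\scO_Y}(T_Y) \simeq \scO_Y[\xi_1, \ldots, \xi_n]$, the graded complex becomes
$$\scO_Y(-\lceil \alpha E \rceil) \otimes_{\scO_Y} \Omega_Y^{n + \bullet}(\log E_{\text{red}}) \otimes_{\scO_Y} \scO_Y[\xi_1, \ldots, \xi_n]$$
with Koszul-type differential $\sum dy_i \wedge \cdot \otimes \xi_i$. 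Using the local $\scO_Y$-basis of $\Omega_Y^{\bullet}(\log E_{\text{red}})$ given by wedges of $dy_j$ and $dy_j/y_j$, one sees that this complex is the standard Koszul complex on the regular sequence $(\xi_1, \ldots, \xi_n)$ tensored with a free module, and hence is acyclic outside top degree. A standard filtered/spectral-sequence argument then lifts this acyclicity to $C_{-\alpha}^{\bullet}$ itself, and combined with the previous paragraph identifies $H^0(\epsilon)$ with $(\scM_r(g^{-\alpha}), F_{\bullet - n}^H)$.

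The main obstacle is the final step: the log-pole bookkeeping in the Koszul reduction requires care, since one must check that both $dy_i$ and $dy_i/y_i$ contribute $\xi_i$ correctly under the symbol computation, and that no subtle filtration shift is introduced when rewriting $\Omega_Y^{n+\bullet}(\log E_{\text{red}})$ in a local $\scO_Y$-free basis adapted to $I_{\fy}$. Once this is in place, the identification of the cokernel with the correct Hodge filtration step hinges essentially on the non-trivial input of Theorem \ref{thmSNCFW}; without that theorem one would only get some filtration induced by $F_{\bullet} \scD_Y$ acting on the evident generator, and it is Theorem \ref{thmSNCFW}.ii) that certifies this coincides with $F_{\bullet}^H$.
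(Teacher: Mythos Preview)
Your approach is the standard one, and it matches what the paper does: the paper simply cites \cite{MP19b}, Proposition 6.1, and records the explicit augmentation map $\Phi$, which is exactly your $\epsilon$. Your use of Theorem \ref{thmSNCFW} (with $l = m_{\alpha,\fy}$) to identify the induced filtration in degree zero with $F^H_{\bullet}$ is also correct and is the expected mechanism.

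One point to correct in your Koszul step. When you pass to the local log basis $e_i := dy_i/y_i$ for $i \in I_{\fy}$ and $e_j := dy_j$ for $j \notin I_{\fy}$, the element $\sum_i \xi_i\, dy_i \in \Omega^1_Y(\log E_{\text{red}}) \otimes \text{gr}^F\scD_Y$ reads
\[
\sum_{i \in I_{\fy}} (y_i\xi_i)\, e_i \;+\; \sum_{j \notin I_{\fy}} \xi_j\, e_j,
\]
so the associated graded complex is the Koszul complex on the sequence $(y_i\xi_i)_{i \in I_{\fy}},\,(\xi_j)_{j \notin I_{\fy}}$, \emph{not} on $(\xi_1,\ldots,\xi_n)$. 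This is exactly the ``log-pole bookkeeping'' you flag as the main obstacle, and your tentative claim that both $dy_i$ and $dy_i/y_i$ contribute $\xi_i$ is not quite right. Fortunately the corrected sequence is still regular in $\scO_{Y,\fy}[\xi_1,\ldots,\xi_n]$ (a UFD argument: the elements involve pairwise disjoint sets of irreducible factors $y_i,\xi_i$), so the Koszul complex remains acyclic in negative degrees and your argument goes through unchanged after this fix.
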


\begin{proof}

This is proven in \cite{MP19b} (Proposition 6.1). An explicit quasi-isomorphism is given by 
\[\Phi : \scO_Y(-\lceil\alpha E\rceil)\otimes_{\scO_Y}\omega_Y(E_{\text{red}})\otimes_{\scO_Y}\scD_Y \to \scM_r(g^{-\alpha}),\]
given in local coordinates by
\[h\omega \otimes P \mapsto (g^{-\alpha}h\omega)\cdot P,\]
where $h=\prod_{i\in I_{\fy}}y_i^{\lceil\alpha a_i\rceil}$, $\omega \in\omega_Y(E_{\text{red}})$ and $P\in\scD_Y$.
    
\end{proof}

\begin{rem}

It is important to note that if $s \in \scO_Y(-\lceil \alpha E\rceil)$ then its image under this twisted differential map has poles only along the $E_i$ for which $i \notin I_{\alpha}$. Indeed, it is easy to see, for $P\in \scD_Y$, that
\[D(s\otimes P) = s\otimes \sum_{i \in I_{\fy}}(\lceil \alpha a_i\rceil-\alpha a_i)\frac{dy_i}{y_i}\otimes P+ s\otimes \sum_{i=1}^n dy_i\otimes \partial_{y_i}P.\]
    
\end{rem}

For $l \in \bZ$ with $0 \leq l \leq m_{\alpha}$, the above remark now allows us to define filtered subcomplexes $(W_{n+l}C_{-\alpha}^{\bullet},F_{\bullet-n})$ of $(C_{-\alpha}^{\bullet},F_{\bullet-n})$, whose objects are defined by
\[W_{n+l}C_{-\alpha}^r:=\scO_Y(-\lceil \alpha E\rceil)\otimes_{\scO_Y}W_l^{\alpha}\Omega_Y^{n+r}(\log E_{\text{red}})\otimes_{\scO_Y}\scD_Y,\]
\[F_{k-n}W_{n+l}C_{-\alpha}^r:=\scO_Y(-\lceil \alpha E\rceil)\otimes_{\scO_Y}W_l^{\alpha}\Omega_Y^{n+r}(\log E_{\text{red}})\otimes_{\scO_Y}F_{r+k}\scD_Y.\]
\vspace{3pt}Here, $W_l^{\alpha}\Omega_Y^{n+r}(\log E_{\text{red}})$ is defined as the $\scO_X$-submodule of $\Omega_Y^{n+r}(\log E_{\text{red}})$ generated locally by elements of the form
\[\bigwedge_{i \in J_1}dy_i\wedge\bigwedge_{i \in J_2}\frac{dy_i}{y_i},\]
where $J_1 \subseteq I_{\alpha,\fy}\cup ([n]\backslash I_{\fy})$ and $J_2 \subseteq I_{\fy}$ are disjoint such that $J_2\cap I_{\alpha,\fy}$ has size $l$ and such that $J_1\cup J_2$ has size $n+r$. (i.e. we filter the logarithmic forms along $E_{\text{red}}$ by the order of pole along $\sum_{i\in I_{\alpha}}E_i$.)

Note that if $I_{\alpha}=I$ then this weight filtration agrees with the standard weight filtration on $\Omega_Y^{n+r}(\log E_{\text{red}})$ as defined by Deligne, \cite{Del71}, (3.1.5).

The above remark ensures that the differential maps are well-defined when restricted to these objects. 

\vspace{5pt}

\noindent As an extension of Proposition 4.1 of \cite{Olano23} (to the twisted setting) we have:

\begin{prop}

The filtered complex $(W_{n+l}C_{-\alpha}^{\bullet},F_{\bullet-n})$ is quasi-isomorphic to the filtered right $\scD_Y$-module $(W_{n+l}\scM_r(g^{-\alpha}),F_{\bullet-n}^H)$. 

\label{propSNCresW}
    
\end{prop}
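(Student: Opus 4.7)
The plan is to show that the explicit quasi-isomorphism $\Phi$ from the proof of Proposition \ref{propSNCresF} restricts to a filtered quasi-isomorphism on the weight-filtered subcomplexes. Since the target $(W_{n+l}\scM_r(g^{-\alpha}),F_{\bullet-n}^H)$ is concentrated in degree zero, this amounts to three claims: (a) $\Phi$ carries $F_{k-n}W_{n+l}C_{-\alpha}^{\bullet}$ into $F_{k-n}^HW_{n+l}\scM_r(g^{-\alpha})$ for all $k$; (b) the restriction of $\Phi$ is surjective in degree zero onto each piece of the Hodge filtration; and (c) the subcomplex $W_{n+l}C_{-\alpha}^{\bullet}$ has vanishing cohomology in negative degrees.

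For (a), one works locally at a point $\fy \in Y$. A top-degree generator of $W_l^{\alpha}\Omega_Y^n(\log E_{\text{red}})$ has the form $\bigwedge_{i\in J_1}dy_i\wedge\bigwedge_{i\in J_2}\frac{dy_i}{y_i}$ with $J_1\subseteq I_{\alpha,\fy}\cup([n]\backslash I_{\fy})$, $J_2\subseteq I_{\fy}$, $|J_1|+|J_2|=n$ and $|J_2\cap I_{\alpha,\fy}|=l$. Multiplying by the generator $h=\prod_{i\in I_{\fy}}y_i^{\lceil \alpha a_i\rceil}$ and rewriting, one checks directly that the image $\Phi(h\omega\otimes 1)=g^{-\alpha}h\omega$ lies in the $\scO_Y$-module described for $F_0^HW_{n+l}\scM(g^{-\alpha})$ in Theorem \ref{thmSNCFW}.ii), by matching the exponents $y_i^{\lceil\alpha a_i\rceil}$ against the indexing set $J:=J_2\cap I_{\alpha,\fy}$. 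For $k>0$, compatibility then follows from the fact that both filtrations are generated over $F_{\bullet}\scD_Y$ by their zeroth step (by Theorem \ref{thmSNCFW}.ii) on the target side, and by construction on the source side). For (b), the same local identification shows that every generator of $F_0^HW_{n+l}\scM_r(g^{-\alpha})$ appearing in Theorem \ref{thmSNCFW}.ii) is realised as $\Phi$ of a manifest generator of $W_{n+l}C_{-\alpha}^0$; surjectivity of $\Phi$ onto higher Hodge steps is then automatic.

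The genuine obstacle is (c). I would proceed by induction on $l$, using the short exact sequence of complexes of right $\scD_Y$-modules
\[0\to W_{n+l-1}C_{-\alpha}^{\bullet}\to W_{n+l}C_{-\alpha}^{\bullet}\to \mathrm{gr}^W_{n+l}C_{-\alpha}^{\bullet}\to 0,\]
reducing the claim to analysing the graded pieces. Locally, $\mathrm{gr}^W_{n+l}C_{-\alpha}^{\bullet}$ decomposes as a direct sum indexed by subsets $J\subseteq I_{\alpha,\fy}$ of size $l$, and on each summand the differential $D$ acts as a Koszul-type operator: restriction to $J$ kills the $\frac{dy_i}{y_i}$ with $i\in J$, leaving a differential that, after multiplying out the generator $h$, coincides with the twisted log de Rham differential on the intersection $E_J$ with values in the remaining forms having poles only along divisors indexed by $I_{\fy}\setminus I_{\alpha,\fy}$. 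These residue-type complexes are well-known to be acyclic in negative degrees (this is the twisted/unweighted version already handled by Proposition \ref{propSNCresF}, applied on each $E_J$ to the pullback of $g$). Combined with the inductive hypothesis, the long exact sequence in cohomology then gives vanishing in negative degrees for $W_{n+l}C_{-\alpha}^{\bullet}$, and identifies $H^0$ with $W_{n+l}\scM_r(g^{-\alpha})$, carrying the Hodge filtration described in Theorem \ref{thmSNCFW}.ii). The base case $l=0$ corresponds to the smallest weight step $W_n\scM_r(g^{-\alpha})$, which by Lemma \ref{lemDmodprops}.ii) is the minimal extension and admits a direct verification via the generator $g^{\,\overline{\alpha}}$. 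The main technical care throughout is in ensuring the residue computations are compatible with the twist $-\alpha\,dg/g$ appearing in the differential $D$; this is precisely where the restriction of $J_1$ to $I_{\alpha,\fy}\cup([n]\setminus I_{\fy})$ in the definition of $W_l^{\alpha}$ becomes essential, as it guarantees that the twist does not produce spurious poles along the divisors being ``resolved'' by the weight filtration.
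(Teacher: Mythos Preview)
Your parts (a) and (b) match the paper's treatment: both verify that the restriction $\Phi_{n+l}$ of the explicit quasi-isomorphism $\Phi$ from Proposition \ref{propSNCresF} is well-defined and surjective onto $F_{k-n}^HW_{n+l}\scM_r(g^{-\alpha})$ by direct comparison with Theorem \ref{thmSNCFW}.

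For (c), however, the paper takes a much more direct route than your induction/residue strategy. Rather than filtering by $l$ and computing residues on strata $E_J$, the paper simply uses the exactness of the ambient complex $C_{-\alpha}^{\bullet}$ (already known from Proposition \ref{propSNCresF}): given $u\in W_{n+l}C_{-\alpha}^r$ with $D(u)=0$, lift to some $v\in C_{-\alpha}^{r-1}$ with $D(v)=u$, and then use the explicit formula
\[
D(h\omega\otimes P)=h\Bigl(\sum_{i\in I_\fy\setminus I_{\alpha,\fy}}(\lceil\alpha a_i\rceil-\alpha a_i)\tfrac{dy_i}{y_i}\wedge\omega\otimes P+\sum_{i\in[n]}dy_i\wedge\omega\otimes\partial_{y_i}P\Bigr)
\]
to read off that the form part of $v$ already lies in $W_l^{\alpha}$. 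The point is exactly the one you flag at the end: because the twist term only produces poles along $I\setminus I_\alpha$, the differential $D$ preserves $W^\alpha$-weight, so any primitive for $u$ in the full complex can be replaced by one in the subcomplex. The same argument handles $\ker\Phi_{n+l}$ in degree~0. The filtered upgrade then comes for free because these are complexes of induced $\scD_Y$-modules.

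Your approach via the short exact sequence $0\to W_{n+l-1}\to W_{n+l}\to\mathrm{gr}^W_{n+l}\to 0$ and residues on $E_J$ is a legitimate alternative and is the classical method for untwisted log de Rham complexes, but it carries more overhead here: the phrase ``pullback of $g$'' is imprecise (since $g$ vanishes identically on $E_J$), and one must carefully identify the residual complex on $E_J$ as another instance of Proposition \ref{propSNCresF} for the restricted divisor $E_{I\setminus I_\alpha}|_{E_J}$ with the residual twist. Your base case $l=0$ is also not as immediate as you suggest: Lemma \ref{lemDmodprops}.ii) identifies the $\scD_Y$-module $W_n\scM(g^{-\alpha})$ but says nothing about why $W_nC_{-\alpha}^{\bullet}$ resolves it; this still requires a direct argument (which is again just the paper's lifting observation specialised to $l=0$). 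So while your strategy can be made to work, the paper's argument is both shorter and avoids these technicalities entirely.
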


\begin{proof}

By Proposition \ref{propSNCresF}, it suffices to show that the complex $W_{n+l}C_{-\alpha}^{\bullet}$ is quasi-isomorphic to $W_{n+l}\scM_r(g^{-\alpha})$. 

First we show that this complex is exact away from zero. Take $r \in \{-n,\ldots,-1\}$ and $u \in W_{n+l}C_{-\alpha}^r=\scO_Y(-\lceil \alpha E\rceil)\otimes_{\scO_Y}W_l^{\alpha}\Omega_Y^{n+r}(\log E_{\text{red}})\otimes_{\scO_Y}\scD_Y$, and assume that $D(u) = 0$. Then, since $H^r(C_{-\alpha}^{\bullet})=0$, there exists some $v \in \scO_Y(-\lceil \alpha E\rceil)\otimes_{\scO_Y}\Omega_Y^{n+r-1}(\log E_{\text{red}})\otimes_{\scO_Y}\scD_Y$ such that $D(v)=u$.

In local coordinates we may write $v = h\omega \otimes P$ where $h = \prod_{i \in I_{\fy}}y_i^{\lceil \alpha a_i\rceil}$, $\omega\in\Omega_Y^{n+r-1}(\log E_{\text{red}})$ and $P\in\scD_Y$. Then
\begin{align*}u =D(v) &=h\left(\sum_{i\in I_{\fy}}\lceil\alpha a_i\rceil\frac{dy_i}{y_i}\wedge\omega\otimes P + \sum_{i \in[n]}dy_i\wedge\omega\otimes\partial_{y_i}P -\alpha\frac{dg}{g}\wedge\omega\otimes P\right)\\
&=h\left(\sum_{i\in I_{\fy}\backslash I_{\alpha,\fy}}\left(\lceil\alpha a_i\rceil-\alpha a_i\right)\frac{dy_i}{y_i}\wedge\omega\otimes P + \sum_{i \in[n]}dy_i\wedge\omega\otimes\partial_{y_i}P\right).\end{align*}

In particular, $dy_i\wedge\omega \in W_l^{\alpha}\Omega_Y^{n+r}(\log E_{\text{red}})$ for all $i$, implying that $\omega \in W_l^{\alpha}\Omega_Y^{n+r-1}(\log E_{\text{red}})$ as required.

Next we consider the restriction of the map $\Phi$, 
\[\Phi_{n+l} : \scO_Y(-\lceil\alpha E\rceil)\otimes_{\scO_Y}W_l^{\alpha}\omega_Y(E_{\text{red}})\otimes_{\scO_Y}\scD_Y \to W_{n+l}\scM_r(g^{-\alpha}),\]
which is well-defined and surjective by the expression for $W_{n+l}\scM_r(g^{-\alpha})$ given in Theorem \ref{thmSNCFW}. Given that the kernel of $\Phi$ equals the image of 
\[\scO_Y(-\lceil\alpha E\rceil)\otimes_{\scO_Y}\Omega_Y^{n-1}(\log E_{\text{red}})\otimes_{\scO_Y}\scD_Y \to \scO_Y(-\lceil\alpha E\rceil)\otimes_{\scO_Y}\omega_Y(E_{\text{red}})\otimes_{\scO_Y}\scD_Y,\]
we may show identically to above that the kernel of $\Phi_{n+l}$ equals the image of 
\[\scO_Y(-\lceil\alpha E\rceil)\otimes_{\scO_Y}W_l^{\alpha}\Omega_Y^{n-1}(\log E_{\text{red}})\otimes_{\scO_Y}\scD_Y \to \scO_Y(-\lceil\alpha E\rceil)\otimes_{\scO_Y}W_l^{\alpha}\omega_Y(E_{\text{red}})\otimes_{\scO_Y}\scD_Y,\]
finishing the proof.

\end{proof}

\begin{cor}

There is a quasi-isomorphism of filtered complexes
\[\text{\emph{DR}}_Y(W_{n+l}M(g^{-\alpha})) \simeq (\scO_Y(-\lceil\alpha E\rceil)\otimes_{\scO_Y}W^{\alpha}_l\Omega_Y^{\bullet}(\log E_{\text{\emph{red}}}),\sigma^{\geq -\bullet-n}),\]
where $\sigma^{\geq\bullet}$ is the truncation filtration on the complex (concentrated in degrees $-n,\ldots,0$) $W^{\alpha}_l\Omega_Y^{\bullet}(\log E_{\text{\emph{red}}})$.

In particular,
\[\text{\emph{gr}}^F_{i-n}\text{\emph{DR}}_Y(W_{n+l}M(g^{-\alpha})) \simeq \scO_Y(-\lceil\alpha E\rceil)\otimes_{\scO_Y}W^{\alpha}_l\Omega_Y^{n-i}(\log E_{\text{\emph{red}}})[i].\]

\label{corgrDRSNC}
    
\end{cor}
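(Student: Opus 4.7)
The plan is to apply the filtered de Rham functor $\text{DR}_Y$ to the filtered quasi-isomorphism of right $\scD_Y$-modules provided by Proposition \ref{propSNCresW},
\[(W_{n+l}C^\bullet_{-\alpha}, F_{\bullet-n}) \simeq (W_{n+l}\scM_r(g^{-\alpha}), F_{\bullet-n}^H),\]
and to identify the resulting filtered complex on the left-hand side with the log de Rham complex of the statement. The essential observation is that each term $W_{n+l}C^r_{-\alpha}$ is an induced right $\scD_Y$-module of the form $\scA^r \otimes_{\scO_Y} \scD_Y$, where $\scA^r := \scO_Y(-\lceil\alpha E\rceil) \otimes_{\scO_Y} W_l^\alpha \Omega^{n+r}_Y(\log E_{\text{red}})$. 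For such a module, the functor $\text{DR}_Y = (-) \otimes^L_{\scD_Y} \scO_Y$ collapses to $\scA^r$ in cohomological degree $0$, so the double complex underlying $\text{DR}_Y(W_{n+l}C^\bullet_{-\alpha})$ degenerates to the single complex $\scO_Y(-\lceil\alpha E\rceil) \otimes_{\scO_Y} W_l^\alpha \Omega^{n+\bullet}_Y(\log E_{\text{red}})$.

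I would then identify the induced differential. Inspecting the formula for $D$, the piece $\sum dy_i \wedge s \otimes \partial_{y_i}P$ vanishes after $\otimes_{\scD_Y}\scO_Y$ (because $\partial_{y_i}\cdot 1 = 0$), while the surviving terms $\nabla s - \alpha \frac{dg}{g}\wedge s$ amount to the exterior derivative coupled with the modified connection $\nabla - \alpha \frac{dg}{g}$ on $\scO_Y(-\lceil\alpha E\rceil)$. This modified connection has residues $\lceil\alpha a_i\rceil - \alpha a_i \in [0,1)$ along each $E_i$, which is exactly the Deligne-canonical structure needed to realise the twisted log de Rham complex.

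For the filtration, I would use the standard principle that for an induced right $\scD_Y$-module $\scA \otimes_{\scO_Y} \scD_Y$ filtered by $\scA \otimes F_{\bullet+c}\scD_Y$, the associated filtration on $\text{DR}_Y$ (computed via the Spencer-Koszul resolution of $\scO_Y$) places each $\text{gr}^F_p$ contribution in a single cohomological degree. Applied to $F_{k-n}W_{n+l}C^r_{-\alpha} = \scA^r \otimes F_{r+k}\scD_Y$, the graded piece $\text{gr}^F_{i-n}$ of the de Rham complex concentrates in degree $-i$, giving precisely $\scO_Y(-\lceil\alpha E\rceil) \otimes W_l^\alpha \Omega^{n-i}_Y(\log E_{\text{red}})[i]$, and the induced filtration on the total complex is the stupid filtration $\sigma^{\geq -\bullet - n}$.

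The main obstacle, which is really a bookkeeping step, is tracking filtration degrees consistently through two passages: the left/right translation between the mixed Hodge module $W_{n+l}M(g^{-\alpha})$ and the filtered right $\scD_Y$-module $\scM_r(g^{-\alpha})$ from Proposition \ref{propSNCresW}, and the shift of filtration degree that occurs when the $\scD_Y$-factor is removed via $\otimes^L_{\scD_Y}\scO_Y$. Once these conventions are aligned, the identification of $\text{gr}^F$ with the stupid filtration is essentially formal, and the ``in particular'' assertion follows by direct inspection of $\text{gr}^F_{i-n}(\sigma^{\geq -\bullet - n})$.
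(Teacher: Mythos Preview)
Your proposal is correct and is exactly the argument the paper has in mind; the paper states this corollary without proof, treating it as an immediate consequence of Proposition~\ref{propSNCresW} via the standard fact that $\text{DR}_Y$ applied to a complex of induced right $\scD_Y$-modules $\scA^\bullet\otimes_{\scO_Y}\scD_Y$ (with filtration $\scA^\bullet\otimes F_{\bullet+c}\scD_Y$) returns $(\scA^\bullet,\sigma^{\geq\bullet})$. Your discussion of the differential and of the filtration bookkeeping simply makes explicit what the paper leaves to the reader.
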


\vspace{15pt}

Now we use what we have shown in the case of simple normal crossing divisors to obtain an expression for the filtrations on $\scM(f^{-\alpha})$ in the general setting, using Hironaka's existence of log resolutions.

As before, let $X$ be a complex manifold of dimension $n$ and let $D$ be an effective divisor on $X$. Let $\alpha \in \bQ$ and write $\overline{\alpha}=-\alpha +\lceil \alpha \rceil$. Write also $\scL=\scO_X(D)$.

\begin{defn}

A \emph{strong log resolution} for the pair $(X,D)$ is a proper birational morphism $\pi : Y \to X$ such that the pullback $E=\pi^*D$ along $\pi$ of the divisor $D$ is a simple normal crossing divisor, and such that the restriction of $\pi$ to $Y\backslash \text{Supp}(\pi^*D)$ is an isomorphism.
    
\end{defn}

\noindent It is a theorem of Hironaka (\cite{H64}, Main Theorem II\footnote{The theorem and proof generalise to complex analytic spaces. See for instance \cite{W08}, Theorem 2.0.2 (where a simplified proof is also given).}) that such a resolution always exists. Moreover, we may ensure that $\pi$ is locally given by a composition of blow ups with smooth centres (so that $\pi$ is locally projective in particular). Write 
\[E=\pi^*D \;\;\text{ and }\;\; \scL_Y=\pi^*\scL = \scO_Y(E).\]
Then by Proposition \ref{proptwMHMfunctors} above we have a functor
\[\pi_*:D^{\text{b}}\text{MHM}(Y,\overline{\alpha}\scL_Y) \to D^{\text{b}}\text{MHM}(X,\overline{\alpha}\scL),\]
and thus (using functoriality of direct image)
\[\pi_*M_{Y,\overline{\alpha}\scL_Y}= \pi_*j_{Y,*}\underline{\bC}_U[n]=(\pi\circ j_Y)_*\underline{\bC}_U[n]=j_*\underline{\bC}_U[n]=M_{X,\overline{\alpha}\scL},\]
where $j_Y:U \to Y$ is the open embedding of $U$ into $Y$. Therefore we can attempt to understand the $\overline{\alpha}\scL$-twisted mixed Hodge module structure on $\scM_{X,\overline{\alpha}\scL}$ by pushing forward the $\overline{\alpha}\scL_Y$-twisted mixed Hodge module structure on $\scM_{Y,\overline{\alpha}\scL_Y}$, which we now understand well due to the above (local) computations.

We now assume that there exists a global defining equation $f$ for the divisor $D$ and write $g$ for its pullback to $Y$. 

We consider the same filtered complex of right $\scD_Y$-modules $(C_{-\alpha}^{\bullet},F_{\bullet-n})$ as above, associated to the divisor $E=\pi^*D$, and its sub-filtered complexes $(W_{n+l}C_{-\alpha}^{\bullet},F_{\bullet-n})$. 

\begin{thm}

Let $l \in \bZ$ with $0 \leq l \leq m_{\alpha}$.

\begin{enumerate}[label = \roman*)]

\item For any $p \in \bZ\backslash \{0\}$ and $k \in \bZ$, 
\[R^p\pi_*(C_{-\alpha}^{\bullet}\otimes_{\scD_Y}\scD_{Y\to X})=0\]
and
\[R^p\pi_*F_{k-n}(C_{-\alpha}^{\bullet}\otimes_{\scD_Y}\scD_{Y\to X})=0.\]
Moreover, there are canonical isomorphisms
\[R^0\pi_*(C_{-\alpha}^{\bullet}\otimes_{\scD_Y}\scD_{Y\to X}) \simeq \scM_r(f^{-\alpha})\]
and
\[R^0\pi_*F_{k-n}(C_{-\alpha}^{\bullet}\otimes_{\scD_Y}\scD_{Y\to X}) \simeq F_{k-n}^H\scM_r(f^{-\alpha}).\]

\item For any $p \in \bZ\backslash \{0\}$ and $k \in \bZ$, 
\[R^p\pi_*(W_nC_{-\alpha}^{\bullet}\otimes_{\scD_Y}\scD_{Y\to X})=0\]
and
\[R^p\pi_*F_{k-n}(W_nC_{-\alpha}^{\bullet}\otimes_{\scD_Y}\scD_{Y\to X})=0.\]
Moreover, there are canonical isomorphisms
\[R^0\pi_*(W_nC_{-\alpha}^{\bullet}\otimes_{\scD_Y}\scD_{Y\to X}) \simeq \omega_{X,-\alpha f}:=\left(\scO_{X,-\alpha f}\right)_r\]
and
\[R^0\pi_*F_{k-n}(W_nC_{-\alpha}^{\bullet}\otimes_{\scD_Y}\scD_{Y\to X}) \simeq F_{k-n}^H\omega_{X,-\alpha f}.\]

\item We have equalities
\[\hspace{5pt}W_{n+l}\scM_r(f^{-\alpha}) = \text{\emph{Im}} \left(R^0\pi_*(W_{n+l}C_{-\alpha}^{\bullet}\otimes_{\scD_Y}\scD_{Y\to X}) \to R^0\pi_*(C_{-\alpha}^{\bullet}\otimes_{\scD_Y}\scD_{Y\to X}) \isommap \scM_r(f^{-\alpha})\right)\]
and, for $k\in\bZ$,
\[F_{k-n}^HW_{n+l}\scM_r(f^{-\alpha}) = \text{\emph{Im}} \left(R^0\pi_*F_{k-n}(W_{n+l}C_{-\alpha}^{\bullet}\otimes_{\scD_Y}\scD_{Y\to X}) \to \scM_r(f^{-\alpha})\right).\]
    
\end{enumerate}

\label{thmpiplus}
    
\end{thm}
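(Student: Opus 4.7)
The plan is to combine Saito's strictness theorem for the filtered direct image along the (locally) projective morphism $\pi$ with the explicit filtered resolutions provided by Propositions \ref{propSNCresF} and \ref{propSNCresW}. In each case the hyper-direct image $R\pi_*$ of $F_{\bullet-n}(C_{-\alpha}^\bullet \otimes_{\scD_Y} \scD_{Y\to X})$ computes the filtered $\scD$-module direct image of $(\scM_r(g^{-\alpha}), F_\bullet^H)$, and analogously for the weight subcomplexes $W_{n+l}C_{-\alpha}^\bullet$. The theorem will then follow from strictness once the corresponding MHM-level pushforwards are identified.

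For part i), functoriality of direct image applied to the factorisation $j = \pi \circ j_Y$ (with $j_Y: U \hookrightarrow Y$ the open embedding) gives
\[\pi_* M(g^{-\alpha}) = \pi_* j_{Y,*} \underline{\bC}_{U,-\alpha f}[n] = j_* \underline{\bC}_{U,-\alpha f}[n] = M(f^{-\alpha})\]
in $D^{\text{b}}\text{MHM}(X, \bC)$. Since $M(f^{-\alpha})$ is concentrated in cohomological degree $0$, this forces $H^p\pi_* M(g^{-\alpha}) = 0$ for $p \neq 0$. Combining Saito strictness of the filtered direct image with Proposition \ref{propSNCresF} then simultaneously yields the stated vanishing of the higher $R^p\pi_*$ and the filtered identification of the degree-$0$ part with $(\scM_r(f^{-\alpha}), F_{\bullet-n}^H)$.

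For part ii), we apply the same scheme to the pure Hodge module $W_n M(g^{-\alpha}) = j_{Y,!*}\underline{\bC}_{U,-\alpha f}[n]$ using Proposition \ref{propSNCresW} with $l=0$. The main obstacle here is to establish the analogous identification $\pi_* W_n M(g^{-\alpha}) = W_n M(f^{-\alpha})$ in $D^{\text{b}}\text{MHM}(X, \bC)$, concentrated in degree $0$. Since $\pi$ is projective and $W_n M(g^{-\alpha})$ is pure of weight $n$, Saito's decomposition theorem splits $\pi_* W_n M(g^{-\alpha})$ into a direct sum of pure Hodge modules; restriction to $U$, on which $\pi$ is an isomorphism, identifies the summand of strict support $X$ in cohomological degree $0$ as $j_{!*}\underline{\bC}_{U,-\alpha f}[n] = W_n M(f^{-\alpha})$. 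To rule out additional summands supported on $Z$ or in non-zero degrees, one invokes Saito vanishing for the lowest piece $\text{gr}^F_{-n}\text{DR}$ of the de Rham complex of $W_n M(g^{-\alpha})$, which by Corollary \ref{corgrDRSNC} is $\scO_Y(-\lceil\alpha E\rceil)[n]$; the required vanishing along $\pi$ follows from its birationality together with standard Koll\'{a}r--Saito type vanishing for $\pi$-nef combinations of $\pi$-exceptional divisors.

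For part iii), the filtered inclusion $(W_{n+l}\scM_r(g^{-\alpha}), F_\bullet^H) \hookrightarrow (\scM_r(g^{-\alpha}), F_\bullet^H)$ underlies an inclusion of mixed Hodge modules; applying the filtered direct image yields a morphism of $R\pi_*$ whose degree-$0$ cohomology maps into the isomorphism $R^0\pi_*(C_{-\alpha}^\bullet\otimes\scD_{Y\to X}) \simeq \scM_r(f^{-\alpha})$ of part i). Since $H^k\pi_*$ shifts weights of pure constituents by $k$, the image in $M(f^{-\alpha})$ of $H^0\pi_* W_{n+l}M(g^{-\alpha})$ has weights at most $n+l$ and therefore lies inside $W_{n+l}M(f^{-\alpha})$; conversely, the long exact sequence for the quotient $M(g^{-\alpha})/W_{n+l}M(g^{-\alpha})$, whose degree-$0$ pushforward has weights strictly greater than $n+l$, forces this image to contain $W_{n+l}M(f^{-\alpha})$. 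Finally, strictness of the filtered direct image, applied via the filtered resolution of Proposition \ref{propSNCresW}, transfers this image description to each step of the Hodge filtration, yielding the stated formula for $F_{k-n}^HW_{n+l}\scM_r(f^{-\alpha})$.
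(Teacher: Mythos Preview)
Your treatment of parts i) and iii) is essentially the paper's. For i), both combine $\pi_*M(g^{-\alpha})=M(f^{-\alpha})$ with Saito strictness and Proposition~\ref{propSNCresF}. For iii), the paper simply cites the definition of the weight filtration on a direct image (referencing \cite{M15a}, Section~7.1.2) together with strictness of morphisms of mixed Hodge modules; your argument via weight bounds and the long exact sequence for $M(g^{-\alpha})/W_{n+l}M(g^{-\alpha})$ amounts to a correct reconstruction of that definition, if somewhat longer than needed.

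For part ii) your route differs from the paper's and contains a real gap. The paper dispatches ii) in one line: by the uniqueness of the minimal extension (Proposition~\ref{propextension}) one has $\pi_*\underline{\bC}_{Y,-\alpha g}[n] = \underline{\bC}_{X,-\alpha f}[n]$, after which the filtered statements follow exactly as in part i). Your plan instead appeals to the decomposition theorem and then tries to exclude extra summands via vanishing for a single graded piece of the de Rham complex. There are two problems. First, a bookkeeping slip: by Corollary~\ref{corgrDRSNC} with $l=0$, the object $\scO_Y(-\lceil\alpha E\rceil)[n]$ you name is $\text{gr}^F_0\text{DR}$, not $\text{gr}^F_{-n}\text{DR}$; the genuine lowest piece is $\text{gr}^F_{-n}\text{DR}(W_nM(g^{-\alpha}))=\omega_Y(-\lfloor\alpha E\rfloor)$. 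Second, and more seriously, vanishing of $R^p\pi_*$ on one fixed Hodge graded piece does not exclude summands of $H^0\pi_*W_nM(g^{-\alpha})$ supported on $Z$, nor summands in nonzero cohomological degree: any such summand may have its Hodge filtration starting at a strictly higher index, and hence contribute nothing to that graded piece while remaining nonzero. In other words, Koll\'ar--Saito-type vanishing controls only where the Hodge filtration on each $H^p\pi_*$ \emph{begins}, not whether $H^p\pi_*$ itself vanishes or is free of pieces supported on $Z$. The argument as sketched therefore does not close.
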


\begin{proof}

\begin{enumerate}[label=\roman*)]

\item This first part is proven by Mustață and Popa in \cite{MP19b}, and holds by Proposition \ref{propSNCresW}, since $M(f^{-\alpha})=\pi_*M(g^{-\alpha})$, where we use the inherent strictness of the direct image of mixed Hodge modules (with respect to the Hodge filtration), which essentially just says that $R^p\pi_*$ and $F_{\bullet}$ commute (see \cite{SS24}, Theorem 14.3.2 and Proposition 8.8.23).

\item By the uniqueness property of minimal extension (see Proposition \ref{propextension}) we have necessarily that $\pi_*\underline{\bC}_{Y,-\alpha g}[n] = \underline{\bC}_{X,-\alpha f}[n]$, and then the statements follow exactly as in part i).

\item The first isomorphism holds by the definition of the weight filtration of the direct image; see for instance Section 7.1.2 of \cite{M15a}. The second isomorphism holds since $R^0\pi_*W_{n+l}M(g^{-\alpha}) \to R^0\pi_*M(g^{-\alpha})\simeq M(f^{-\alpha})$, being a morphism of mixed Hodge modules, is strict with respect to the Hodge filtrations (since, for instance, the category of mixed Hodge modules is abelian; see e.g. \cite{M15a}, Lemma 7.1.2).

\end{enumerate}
    
\end{proof}

\begin{note}

We see in particular that $W_{n+m_{\alpha}}\scM(f^{-\alpha})=\scM(f^{-\alpha})$, and that $W_{2n}\scM(f^{-\alpha})=\scM(f^{-\alpha})$.

\end{note} 

As a corollary, we find that certain steps of the Hodge filtration on weight filtration steps of $\scM(f^{-\alpha})$ coincide with familiar birational invariants.

\begin{defn}

Let $X'$ be a complex manifold and $D'$ an effective $\bQ$-divisor on $X'$. Choose a log resolution $\mu : Y' \to X'$ of the pair $(X',D')$.

\begin{enumerate}[label=\roman*)]

\item (\cite{L04}, Definition 9.2.1). The \emph{multiplier ideal sheaf} associated to $D'$ is defined to be
\[\scJ(X',D'):=\mu_*\scO_{Y'}(K_{Y'/X'}-\lfloor \mu^*D'\rfloor)\subseteq \scO_{X'}.\]

\item (\cite{T13}, Definition 1.6). Assume in addition that $D'$ is a \emph{boundary divisor}, i.e. that all coefficients of $D'$ lie in the closed interval $[0,1]$. Assume moreover that the strict transform $\widetilde{\lfloor D'\rfloor}$ of $\lfloor D'\rfloor$ along $\mu$ is smooth\footnote{i.e. the strict transforms of the components of $\lfloor D'\rfloor$ don't intersect.}\footnote{Such a log resolution always exists.}. Then the \emph{adjoint ideal sheaf} associated to $D'$ is defined to be 
\[\text{adj}(X',D'):=\mu_*\scO_{Y'}(K_{Y'/X'}+\widetilde{\lfloor D'\rfloor}-\lfloor\mu^*D'\rfloor)\subseteq\scO_{X'}.\]

\end{enumerate}

Both of these objects are indeed independent of the choice of log resolution.
    
\end{defn}

\begin{cor}

Assume $\alpha\geq 0$.

\begin{enumerate}[label=\roman*)]

\item $F_{-n}^HW_{n+l}\scM_r(f^{-\alpha}) = f^{-\alpha}\pi_*(\scO_Y(-\lceil\alpha E\rceil)\otimes W_l^{\alpha}\omega_Y(E_{\text{\emph{red}}}))$. \vspace{10pt}

\item $F_0^HW_n\scM(f^{-\alpha})=\scJ(X,\alpha D)f^{-\alpha}$. \vspace{10pt}

\item $F_0^H\scM(f^{-\alpha})=\scJ(X,(\alpha-\epsilon) D)f^{-\alpha}$, for any rational $0<\epsilon<<1$.\vspace{10pt}

\item Write $\left\{\alpha D\right\}:=\alpha D-\lceil\alpha D\rceil +D_{\text{\emph{red}}}$ for the boundary divisor associated to $\alpha D$. Then
\[F_0^HW_{n+1}\scM(f^{-\alpha})=\text{\emph{adj}}(X,\{\alpha D\})\scO_X(\left\{\alpha D\right\}-\alpha D)f^{-\alpha}.\footnotemark\]

\footnotetext{In particular, if $\alpha D$ is itself a boundary divisor, then $\alpha D=\left\{\alpha D\right\}$, and $F_0^HW_{n+1}\scM(f^{-\alpha})=\text{adj}(X,\{\alpha D\})f^{-\alpha}$, as observed in \cite{Olano21} (Theorem A) in the case that $D$ is reduced and $\alpha =1$.}

\end{enumerate}

\label{corpiplus}
    
\end{cor}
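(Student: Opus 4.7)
The plan is to deduce all four parts from Theorem \ref{thmpiplus}.iii) applied to the bottom piece of the Hodge filtration ($k=0$). In the complex $F_{-n}(W_{n+l}C_{-\alpha}^{\bullet}\otimes_{\scD_Y}\scD_{Y\to X})$, the term in degree $r$ carries a factor $F_r\scD_Y$ (vanishing for $r<0$) and a factor $\Omega_Y^{n+r}(\log E_{\text{red}})$ (vanishing for $r>0$), so only the single sheaf $\scO_Y(-\lceil\alpha E\rceil)\otimes_{\scO_Y}W_l^\alpha\omega_Y(E_{\text{red}})$ in degree zero contributes. Combined with the explicit description of the quasi-isomorphism $\Phi$ (absorbing the factor $g^{-\alpha}=\pi^*f^{-\alpha}$) and the conversion between right and left $\scD$-modules, this gives part i) immediately.

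For part ii), specialise part i) to $l=0$. Since $W_0^\alpha\omega_Y(E_{\text{red}})$ is locally generated by top forms whose poles lie only along $E_i$ with $i\in I\backslash I_\alpha$, we have $W_0^\alpha\omega_Y(E_{\text{red}}) = \omega_Y(\sum_{i\in I\backslash I_\alpha}E_i)$. Using $\lceil\alpha a_i\rceil = \lfloor\alpha a_i\rfloor$ for $i\in I_\alpha$ and $\lceil\alpha a_i\rceil - 1 = \lfloor\alpha a_i\rfloor$ otherwise, the divisor $K_Y + \sum_{i\in I\backslash I_\alpha}E_i - \lceil\alpha E\rceil$ simplifies cleanly to $K_Y - \lfloor\alpha E\rfloor$, and the projection formula combined with the definition of the multiplier ideal yields $F_0^HW_n\scM(f^{-\alpha}) = \scJ(X,\alpha D)\cdot f^{-\alpha}$. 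For part iii) one instead applies Theorem \ref{thmpiplus}.i) (without weight restriction), replacing $W_l^\alpha$ by the full $\omega_Y(E_{\text{red}})$; the analogous computation produces $K_Y - \lfloor\alpha E\rfloor + \sum_{i\in I_\alpha}E_i$, which for sufficiently small rational $\epsilon>0$ equals $K_Y - \lfloor(\alpha-\epsilon)E\rfloor$, since $\lfloor(\alpha-\epsilon)a_i\rfloor$ drops by one from $\lfloor\alpha a_i\rfloor$ precisely when $i\in I_\alpha$.

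Part iv) is the most delicate. Applying part i) with $l=1$, one checks locally that
\[W_1^\alpha\omega_Y(E_{\text{red}}) = \sum_{i\in I_\alpha}\omega_Y\Bigl(E_i + \sum_{j\in I\backslash I_\alpha}E_j\Bigr),\]
so after twisting by $\scO_Y(-\lceil\alpha E\rceil)$ and repeating the divisor computation from part ii), we obtain
\[F_0^HW_{n+1}\scM(f^{-\alpha}) = f^{-\alpha}\pi_*\sum_{i\in I_\alpha}\scO_Y(K_{Y/X} - \lfloor\alpha E\rfloor + E_i).\]
The main obstacle is identifying this sum of pushforwards with $\text{adj}(X,\{\alpha D\})\cdot\scO_X(\{\alpha D\}-\alpha D)\cdot f^{-\alpha}$. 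The strategy is: (a) decompose the index set $I_\alpha$ on $Y$ into strict transforms $\widetilde{D_i}$ of components of $D$ with $\alpha a_i\in\bZ$, whose union is precisely $\widetilde{\lfloor\{\alpha D\}\rfloor}$ (smooth by our log resolution assumption), and exceptional divisors $F_j$ with $\alpha c_j\in\bZ$; (b) show via a local vanishing or discrepancy argument (in the spirit of Olano's $\alpha=1$ reduced case in \cite{Olano21}) that the exceptional summands contribute nothing to the pushforward beyond the multiplier-ideal piece already captured; (c) use the projection formula to absorb the twist $\scO_X(\{\alpha D\}-\alpha D) = \scO_X(D_{\text{red}} - \lceil\alpha D\rceil)$, reconciling $\lfloor\alpha E\rfloor - \sum_{i\in I_\alpha}\widetilde{D_i}$ on $Y$ with $\lfloor\pi^*\{\alpha D\}\rfloor - \widetilde{\lfloor\{\alpha D\}\rfloor}$ from the definition of $\text{adj}(X,\{\alpha D\})$. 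The core technical content lies precisely in this bookkeeping between strict transforms, exceptionals, and the fractional twist bundle.
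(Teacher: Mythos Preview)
Your arguments for parts i)--iii) are essentially identical to the paper's: the degree-zero truncation of the filtered complex, the identification $W_0^\alpha\omega_Y(E_{\text{red}})=\omega_Y(E_{I\setminus I_\alpha})$, and the $\epsilon$-shift computation all match.

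For part iv), your outline has the right shape but step (b) is where the actual work lies, and you have not supplied it. Writing the pushforward as $\pi_*\sum_{i\in I_\alpha}\scO_Y(K_{Y/X}-\lfloor\alpha E\rfloor+E_i)$ is imprecise: this is a sum of subsheaves of a fixed ambient sheaf, and $\pi_*$ does not distribute over such sums, so one cannot simply discard the exceptional ``summands'' term by term. The paper's mechanism is cleaner and avoids this issue entirely. One first writes the short exact sequence
\[0\to\omega_Y(-\lfloor\alpha E\rfloor)\to\scO_Y(-\lceil\alpha E\rceil)\otimes W_1^\alpha\omega_Y(E_{\text{red}})\to\omega_{E_\alpha(1)}\to 0,\]
where $E_\alpha(1)=\coprod_{i\in I_\alpha}E_i$. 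The key input is then the vanishing $\pi_*\omega_{E_j}=0$ for every exceptional component $E_j$ (as in \cite{Olano21}, Proposition 5.5). Working locally at $\fy\in Y$, since $\widetilde{D_{\text{red}}}$ is chosen smooth there is at most one non-exceptional component $E_i$ through $\fy$; if no such $E_i$ exists or if $i\notin I_\alpha$, then $\pi_*\omega_{E_\alpha(1)}=0$ and the middle term pushes forward to the multiplier ideal already. If $i\in I_\alpha$, a second comparison diagram shows $\pi_*$ of the middle term equals $\pi_*(\scO_Y(-\lfloor\alpha E\rfloor)\otimes\omega_Y(E_i))$. Globalising yields $\pi_*\scO_Y(K_{Y/X}+\sum_{i\in I_\alpha\cap K}E_i-\lfloor\alpha E\rfloor)$ with $K$ the non-exceptional indices, and one checks $\sum_{i\in I_\alpha\cap K}E_i=\widetilde{\lfloor\{\alpha D\}\rfloor}$. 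Your step (c) bookkeeping is then straightforward.
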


\begin{proof}

\begin{enumerate}[label=\roman*)]

\item By Theorem \ref{thmpiplus}, 
\begin{align*}
F_{-n}^HW_{n+l}\scM_r(f^{-\alpha}) =& \text{Im} \left(R^0\pi_*F_{-n}(W_{n+l}C_{-\alpha}^{\bullet}\otimes_{\scD_Y}\scD_{Y\to X}) \to \scM_r(f^{-\alpha})\right) \\
=& f^{-\alpha}\pi_*(\scO_Y(-\lceil\alpha E\rceil)\otimes W_l^{\alpha}\omega_Y(E_{\text{red}})). 
\end{align*}

\item It is easy to see that we have the equality
\[W_0^{\alpha}\omega_Y(E_{\text{red}})=\omega_Y(E_{I\backslash I_{\alpha}}),\]
where $E_{I\backslash I_{\alpha}}:=\sum_{i\in I\backslash I_{\alpha}} E_i$, so that
\[\scO_Y(-\lceil\alpha E\rceil)\otimes W_0^{\alpha}\omega_Y(E_{\text{red}}) = \omega_Y(-\lfloor\alpha E\rfloor).\]
Part i) then of course implies that
\[F_0^HW_n\scM(f^{-\alpha})=\pi_*\scO_Y(K_{Y/X}-\lfloor\pi^*\alpha D\rfloor)f^{-\alpha}=\scJ(X,\alpha D)f^{-\alpha}.\] 

\item This is \cite{MP19b}, Theorem A. $W_{m_{\alpha}}^{\alpha}\omega_Y(\log E_{\text{red}})=\omega_Y(E_{\text{red}})$, so
\[F_0^H\scM(f^{-\alpha})=\pi_*\scO_Y(K_{Y/X}+E_{\text{red}}-\lceil\alpha E\rceil)f^{-\alpha} = \pi_*\scO_Y(K_{Y/X}-\lfloor\pi^*(\alpha-\epsilon)D\rfloor)f^{-\alpha}.\]

\item Note that the proof here is similar to the one given in \cite{Olano21}, Proposition 5.5. Choose a log resolution such that $\widetilde{D_{\text{red}}}$ is smooth. We work locally, proving the result at the point $\fy$, which in turn proves the result globally.

Since (locally)
\[W_1^{\alpha}\omega_Y(E_{\text{red}}) = \sum_{i\in I_{\alpha,\fy}}y_i^{-1}\omega_Y(E_{I\backslash I_{\alpha}}),\]
we have a short exact sequence
\[0 \to \omega_Y(-\lfloor\alpha E\rfloor) \to \scO_Y(-\lceil\alpha E\rceil)\otimes W_1^{\alpha}\omega_Y(E_{\text{red}}) \to \omega_{E_{\alpha}(1)} \to 0,\]
where $E_{\alpha}(1):=\coprod_{i \in I_{\alpha}}E_i$.

Now, it is a well-known fact (see for instance the proof of \cite{Olano21}, Proposition 5.5) that $\pi_*E_j=0$ for any exceptional component $E_j$. 

Thus, if no component of $\widetilde{D_{\text{red}}}$ contains $\fy$, then $\pi_*\omega_{E_{\alpha}(1)}=0$ at $\fy$. Otherwise, there exists some $i \in I_{\fy}$ such that $E_i=\widetilde{D_{\text{red}}}$ at $\fy$. If $i \notin I_{\alpha,\fy}$, we again have that $\pi_*\omega_{E_{\alpha}(1)}=0$ at $\fy$. So we see that in either of these cases that
\[F_0^HW_{n+1}\scM(f^{-\alpha})=\pi_*\scO_Y(K_{Y\backslash X}-\lfloor\pi^*\alpha D\rfloor)f^{-\alpha}\]
at the point $\fx:=\pi(\fy)\in X$. 

If instead $i\in I_{\alpha,\fy}$, consider the commutative diagram with exact rows
\begin{center}
\begin{tikzcd}
    0 \arrow[r] & \omega_Y(-\lfloor\alpha E\rfloor) \arrow[-,double line with arrow={-,-}]{d}  \arrow[r] & \scO_Y(-\lfloor\alpha E\rfloor)\otimes\omega_Y(E_i) \arrow[r] \arrow[hookrightarrow, d] & \omega_{E_i} \arrow[r] \arrow[d, hookrightarrow] & 0\\
    0 \arrow[r] & \omega_Y(-\lfloor\alpha E\rfloor) \arrow[r] & \scO_Y(-\lceil\alpha E\rceil)\otimes W_1^{\alpha}\omega_Y(E_{\text{red}}) \arrow[r] & \omega_{E_{\alpha}(1)} \arrow[r] & 0.\\
\end{tikzcd}\vspace{-15pt}
\end{center}
In this case, $\pi_*\omega_{E_{\alpha}(1)} = \pi_*\omega_{E_i}$ at the point $\fy$. Thus
\[\pi_*(\scO_Y(-\lceil\alpha E\rceil)\otimes W_1^{\alpha}\omega_Y(E_{\text{red}}))=\pi_*(\scO_Y(-\lfloor\alpha E\rfloor)\otimes\omega_Y(E_i))\]
at $\fx$, so that 
\[F_0^HW_{n+1}\scM(f^{-\alpha})=\pi_*\scO_Y(K_{Y\backslash X}+E_i-\lfloor\pi^*\alpha D\rfloor)f^{-\alpha}\]
at $\fx$. Globalising, we thus obtain the expression 
\[F_0^HW_{n+1}\scM(f^{-\alpha})=\pi_*\scO_Y(K_{Y\backslash X}+\sum_{i\in I_{\alpha}\cap K}E_i-\lfloor\pi^*\alpha D\rfloor)f^{-\alpha},\]
where $K:=\{i\in I\mid E_i \text{ not exceptional}\}$. 

But it is easy to see that $\sum_{i\in I_{\alpha}\cap K}E_i = \widetilde{\lfloor\left\{\alpha D\right\}\rfloor}$. From this it is then easy to conclude that 
\[F_0^HW_{n+1}\scM(f^{-\alpha})=\text{adj}(X,\{\alpha D\})\scO_X(\left\{\alpha D\right\}-\alpha D)f^{-\alpha}\]
as required.

\end{enumerate}
    
\end{proof}

\begin{cor} 

Assume $\alpha\geq 0$.

\begin{enumerate}[label=\roman*)]

\item $F_0^H\scM(f^{-\alpha})=\scO_Xf^{-\alpha}$ if and only if the pair $(X,\alpha D)$ is log canonical.

\item $F_0^HW_{n+1}\scM(f^{-\alpha})=\scO_Xf^{-\alpha}$ if and only if the pair $(X,\alpha D)$ is purely log terminal. 

\item $F_0^HW_n\scM(f^{-\alpha})=\scO_Xf^{-\alpha}$ if and only if the pair $(X,\alpha D)$ is Kawamata log terminal. 

\end{enumerate}

\label{corpair}

\end{cor}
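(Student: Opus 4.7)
The plan is to reduce each of the three statements to the analogous characterisation of the singularity type in terms of multiplier or adjoint ideals, using the formulas in Corollary \ref{corpiplus} that we have already established.

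For part i), the formula $F_0^H\scM(f^{-\alpha})=\scJ(X,(\alpha-\epsilon)D)f^{-\alpha}$ for $0<\epsilon\ll 1$ reduces the statement to $\scJ(X,(\alpha-\epsilon)D)=\scO_X$. This is precisely the well-known characterisation of log canonicity of the pair $(X,\alpha D)$: by definition, $(X,\alpha D)$ is log canonical iff for some (equivalently any) log resolution $\pi:Y\to X$, one has $K_{Y/X}-\lfloor\pi^*(\alpha-\epsilon)D\rfloor\geq 0$ for all sufficiently small $\epsilon>0$, which is the content of $\scJ(X,(\alpha-\epsilon)D)=\scO_X$. (This already appears in \cite{MP19b}.)

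For part iii), the formula $F_0^HW_n\scM(f^{-\alpha})=\scJ(X,\alpha D)f^{-\alpha}$ reduces the statement to $\scJ(X,\alpha D)=\scO_X$, which is the standard characterisation of klt for the pair $(X,\alpha D)$: each coefficient $\lfloor a\cdot\text{mult}_E\pi^*D\rfloor$ is bounded above by $\text{ord}_E K_{Y/X}$, so that $K_{Y/X}-\lfloor\pi^*\alpha D\rfloor$ is effective, iff all discrepancies are strictly greater than $-1$.

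Part ii) is the one requiring the most care, and will be the main obstacle. Here the formula is $F_0^HW_{n+1}\scM(f^{-\alpha})=\text{adj}(X,\{\alpha D\})\scO_X(\{\alpha D\}-\alpha D)f^{-\alpha}$. The subtlety is that one must separate the condition into the two factors. First I would observe that $\scO_X(\{\alpha D\}-\alpha D)$ has coefficient $1-\lceil\alpha c_i\rceil$ on each component $D_i$ with coefficient $c_i$ in $D$, which is $\leq 0$, with equality iff $\alpha c_i\leq 1$. Hence $\scO_X(\{\alpha D\}-\alpha D)=\scO_X$ iff $\alpha D$ is itself a boundary divisor (i.e. $\{\alpha D\}=\alpha D$), and otherwise the product on the right-hand side is a proper ideal. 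On the other hand, plt fails whenever $\lfloor\alpha D\rfloor$ is not reduced (by convention plt requires $\alpha D$ to be a boundary), so both sides fail when $\alpha D$ is not a boundary. When $\alpha D$ is a boundary divisor, the statement reduces to $\text{adj}(X,\alpha D)=\scO_X$, which is the classical characterisation of plt: by the definition in Section 2.3 above, $\text{adj}(X,\alpha D)=\scO_X$ iff $K_{Y/X}+\widetilde{\lfloor\alpha D\rfloor}-\lfloor\pi^*\alpha D\rfloor$ is effective on a log resolution with $\widetilde{\lfloor\alpha D\rfloor}$ smooth, which is equivalent to the discrepancies along all exceptional divisors being strictly greater than $-1$ (while non-exceptional components of $\lfloor\alpha D\rfloor$ are permitted to have discrepancy $-1$), i.e. to plt. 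The independence of the adjoint ideal from the choice of log resolution makes this unambiguous, completing the proof.
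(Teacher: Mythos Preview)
Your proposal is correct and takes essentially the same approach as the paper: reduce each statement to the corresponding formula in Corollary~\ref{corpiplus}, then invoke the standard characterisations of lc, plt, and klt via multiplier and adjoint ideals. The paper's proof is just a one-line note observing that log canonical (hence plt) forces $\alpha D$ to be a boundary divisor and then deferring to the literature (\cite{KM98}, \cite{ST07}) for the singularity-theoretic characterisations; you have simply spelled out this reduction in more detail, including the explicit computation of the coefficients of $\{\alpha D\}-\alpha D$ needed to handle the non-boundary case in part~ii).
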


\begin{note}

This follows because in particular $(X,\alpha D)$ is log canonical $\Rightarrow$ $\alpha D$ is a boundary divisor. See \cite{KM98}, 2.34, as well as for instance \cite{ST07}, Definition 2.15-2.16 and Remark 2.17 for definitions and further details about the singularity theory of pairs.

\end{note}
    
\noindent Finally, we also find a necessary and sufficient criterion for determining the generating level of $(\scO_{X,-\alpha f}, F_{\bullet}^H)$, analogous to \cite{MP19b}, Theorem 10.1.

\begin{defn}

Let $Z$ be an arbitrary complex manifold, and $(\scM,F_{\bullet})$ a (integrally-indexed) filtered $(\scD_Z,F_{\bullet}^{\text{ord}})$-module. The \emph{generating level} of $(\scM,F_{\bullet})$ is the smallest integer\footnote{(Taken to be $\infty$ if no such integer exists.)} $k \in\bZ$ such that
\[F_i\scD_Z\cdot F_k\scM =F_{k+i}\scM \text{ for all }i \in\bZ_{\geq 0}.\]
    
\end{defn}

\begin{cor}

The generating level of $(\scO_{X,-\alpha f}, F_{\bullet}^H)$ is $\leq k$ if and only if 
\[R^i\pi_*\left(\scO_Y(-\lceil\alpha E\rceil)\otimes\Omega_Y^{n-i}(\log E_{I\backslash I_{\alpha}})\right)=0 \,\,\,\text{ for all }\,\, i >k.\]
In particular, the generating level is always $\leq n-1$.

\label{corgenlevWn}
    
\end{cor}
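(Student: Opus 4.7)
The plan is to adapt the Mustaţă--Popa argument for \cite{MP19b}, Theorem 10.1 (which gives the analogous characterisation for the full Hodge filtration on $\scM(f^{-\alpha})$) to the present weight-filtered setting. The essential input is Theorem \ref{thmpiplus}.ii), which produces a filtered complex presentation of the right $\scD_X$-module $(\omega_{X,-\alpha f}, F^H_{\bullet-n})$ with degree-$r$ term at level $k-n$ equal to
\[\scO_Y(-\lceil\alpha E\rceil)\otimes_{\scO_Y}\Omega^{n+r}_Y(\log E_{I\backslash I_\alpha})\otimes_{\scO_Y} F_{r+k}\scD_{Y\to X} \qquad (-n\leq r\leq 0),\]
and with higher direct images $R^p\pi_*$ vanishing for $p\neq 0$ at each filtration step. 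Since the generating level of a left $\scD$-module and of its right counterpart $\omega_X\otimes(-)$ differ only by the standard shift of $n$, one may equivalently work throughout with this right module presentation.

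Next, reduce the generating-level statement to a cohomological vanishing. By a standard induction on $i$, ``generating level $\leq k$" is equivalent to surjectivity of the multiplication maps $F_1\scD_X\otimes_{\scO_X} F^H_j\to F^H_{j+1}$ for all $j\geq k$ (in either the left or right formulation, after the appropriate shift). Writing this multiplication at the level of the filtered complex and combining with the Koszul-type short exact sequence
\[0 \to F_{\bullet-1}\scD_{Y\to X}\otimes_{\scO_Y} T_Y \to F_\bullet\scD_{Y\to X} \to \text{gr}^F_\bullet\scD_{Y\to X}\to 0,\]
together with the strict filtered direct image from Theorem \ref{thmpiplus}.ii), one sees via the hypercohomology spectral sequence that the cokernel of this multiplication is governed by the higher direct images $R^i\pi_*$ of the associated-graded complex on $Y$, whose degree-$(-i)$ term is precisely $\scO_Y(-\lceil\alpha E\rceil)\otimes\Omega^{n-i}_Y(\log E_{I\backslash I_\alpha})$. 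The surjectivity for all $j\geq k$ then translates exactly into the claimed vanishings $R^i\pi_* = 0$ for $i>k$, and conversely.

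I expect the main obstacle to be the Koszul/Spencer bookkeeping in the second paragraph --- specifically, identifying the cokernel with the stated higher direct images and keeping the cohomological degree shifts straight. However, since $W^\alpha_0\Omega^\bullet_Y(\log E_{\text{red}}) = \Omega^\bullet_Y(\log E_{I\backslash I_\alpha})$ behaves formally like an ordinary logarithmic de Rham complex, the computation of \cite{MP19b} should carry over essentially verbatim once the filtered complex of Theorem \ref{thmpiplus}.ii) is in hand.

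Finally, for the ``in particular" statement, it suffices to check the vanishing for $i=n$, since $\Omega^{n-i}_Y = 0$ for $i>n$. But $\pi\colon Y\to X$ is a proper birational morphism between complex manifolds of dimension $n$, so its fibres have dimension at most $n-1$, forcing $R^n\pi_*\scF = 0$ for every coherent sheaf $\scF$ on $Y$; in particular $R^n\pi_*\scO_Y(-\lceil\alpha E\rceil) = 0$, yielding the universal bound $n-1$.
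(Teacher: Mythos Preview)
Your approach is correct and will work, but the paper takes a considerably shorter route. Rather than unpacking the Koszul/Spencer complex and tracking multiplication maps directly, the paper invokes the criterion from \cite{MP22}, Lemma 10.1: the generating level of $(\scO_{X,-\alpha f},F^H_\bullet)$ is $\leq k$ if and only if $\scH^0\text{gr}^F_{i-n}\text{DR}_X(\scO_{X,-\alpha f},F^H_\bullet)=0$ for all $i>k$. Since $W_nM(f^{-\alpha})=\pi_*W_nM(g^{-\alpha})$ and $\text{gr}^F\text{DR}$ commutes with proper pushforward (Saito's strictness), this reduces immediately to computing $\scH^0\bR\pi_*\text{gr}^F_{i-n}\text{DR}_YW_nM(g^{-\alpha})$, which by Corollary~\ref{corgrDRSNC} is $R^i\pi_*(\scO_Y(-\lceil\alpha E\rceil)\otimes\Omega_Y^{n-i}(\log E_{I\backslash I_\alpha}))$. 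This gives the equivalence in four lines.

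Your route---working at the level of the filtered resolution by induced $\scD$-modules, using the Koszul sequence for $F_\bullet\scD_{Y\to X}$, and reading off the obstruction via a hypercohomology spectral sequence---is essentially what underlies the $\text{gr}^F\text{DR}$ formalism, so the two arguments are equivalent in content. The paper's version buys brevity by having already packaged the SNC computation as Corollary~\ref{corgrDRSNC} and by quoting the de Rham criterion; your version is more self-contained but requires the bookkeeping you flag as the main obstacle. Your treatment of the ``in particular'' clause (fibre dimension $\leq n-1$ forces $R^n\pi_*=0$) is fine and more explicit than what the paper writes.
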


\begin{proof}

It is easy to see (see for instance \cite{MP22}, Lemma 10.1) that the generating level of $(\scO_{X,-\alpha f}, F_{\bullet}^H)$ is $\leq k$ if and only if
\[\scH^0\text{gr}_{i-n}^F\text{DR}_X(\scO_{X,-\alpha f}, F_{\bullet}^H)=0 \,\,\, \text{ for all }\,\, i >k.\]
But 
\begin{align*}
\scH^0\text{gr}_{i-n}^F\text{DR}_XW_nM(f^{-\alpha}) &=\scH^0\text{gr}_{i-n}^F\text{DR}_X\pi_*W_nM(g^{-\alpha})\\
&=\scH^0\bR\pi_*\text{gr}_{i-n}^F\text{DR}_YW_nM(g^{-\alpha})\\
&=\scH^0\bR\pi_*(\scO_Y(-\lceil\alpha E\rceil)\otimes_{\scO_Y}\Omega_Y^{n-i}(\log E_{I\backslash I_{\alpha}})[i])\\
&=R^i\pi_*\left(\scO_Y(-\lceil\alpha E\rceil)\otimes\Omega_Y^{n-i}(\log E_{I\backslash I_{\alpha}})\right).\\
\end{align*}

\vspace{-20pt}
    
\end{proof}

\begin{rem}

In general, the $E_2$-degeneration of the weight spectral sequence for mixed Hodge modules implies that $\text{gr}^W_{n+l}M(f^{-\alpha})$ is isomorphic to the cohomology of the complex
\[H^{-1}\pi_*\text{gr}_{n+l+1}^WM(g^{-\alpha}) \to H^0\pi_*\text{gr}_{n+l}^WM(g^{-\alpha}) \to H^1\pi_*\text{gr}_{n+l-1}^WM(g^{-\alpha}).\]
It is not clear to us whether it is possible to use this to obtain some generating level bound for $(\text{gr}_{n+l}^W\scM(f^{-\alpha}),F_{\bullet}^H)$ for general $l$.
    
\end{rem}
    
\vspace{10pt}

\vspace{10pt}

\newpage

\section{Hodge filtrations via canonical $V$-filtrations}\label{sectionVfilt}

\noindent We have now obtained expressions for the Hodge and weight filtrations on $\scM(f^{-\alpha})$ through the use of log resolution. In this section we obtain purely algebraic formulae for these filtrations, using results of Morihiko Saito concerning specialisability of $\scD$-modules. In most cases this expression is somewhat easier to calculate than the expression in terms of log resolutions, as we will display with multiple examples throughout the rest of the paper. (See Section \ref{subsectionegs} and Section \ref{sectionPPD}.)

\vspace{5pt}

\subsection{Specialisability and quasi-unipotency}

\; \vspace{5pt} \\ We recall for the convenience of the reader the notions of specialisability, quasi-unipotency, and regularity (of filtered $\scD$-modules), along a fixed hypersurface. Very important to our study of the filtered $\scD$-modules $(\scM(f^{-\alpha}),F_{\bullet}^H)$ is the fact that they satisfy all of these properties along the hypersurface $f^{-1}(0)$. $V$-filtrations were first introduced by Malgrange and Kashiwara (\cite{Mal83}, Lemme 3.3 and Théorème 3.4 and \cite{Kash83}, Theorem 1). The exposition here is based on Saito's treatment (\cite{MSai88}, Section 3.1-3.2). See also \cite{Sch14}, Section 8-9, \cite{PS08}, Section 14.2 and \cite{CDM24}, as well as \cite{SS24}, Chapter 10.

\begin{defn}[\cite{MSai88}, Définition 3.1.1 and Lemme 3.1.2]

Let $X$ be a complex manifold and $Y\subseteq X$ a complex submanifold of codimension 1, the zero locus of some holomorphic function $t:X\to\bC$ (with no critical values) say. Let $\scM$ be a regular holonomic left $\scD_X$-module.

Then $\scM$ is $\bQ$-\emph{specialisable} along the hypersurface $Y$ if there exists a rational, decreasing, exhaustive, discrete, left-continuous filtration, denoted $V^{\bullet}\scM$ (which we shorten to $V^{\bullet}$ below for ease of notation), satisfying 
\begin{enumerate}[label=\roman*)]
\item For each $\gamma\in\bQ$, $V^{\gamma}$ is a coherent module over the subring \[V^0\scD_X=\{P\in\scD_X\;|\;P\cdot t^i\in(t^i) \;\forall\; i\geq 0\} \subseteq \scD_X,\]
\item For each $\gamma\in\bQ$, $t\cdot V^{\gamma}\subseteq V^{\gamma+1}$, with equality if $\gamma>0$,
\item For each $\gamma\in\bQ$, $\partial_t\cdot V^{\gamma}\subseteq V^{\gamma-1}$,
\item For each $\gamma\in\bQ$, $\partial_tt-\gamma$ acts nilpotently on $\text{gr}_V^{\gamma}:=V^{\gamma}/ V^{>\gamma}$.
\end{enumerate}
If this filtration exists, it is unique, and is called the \emph{Kashiwara-Malgrange $V$-filtration} for $\scM$ along $Y$.

\label{defnspecial}
    
\end{defn}

\begin{defn}

Let $X$ be a complex manifold and $f:X\to\bC$ holomorphic, such that the zero locus $f^{-1}(0)$ is a hypersurface. Let $\scM$ be a regular holomorphic left $\scD_X$-module.

Then $\scM$ is $\bQ$-\emph{specialisable} along the hypersurface $f^{-1}(0)$ if the regular holonomic left $\scD_{X\times\bC}$-module $i_{f,+}\scM$ is $\bQ$-specialisable along $X\times\{0\}$, where $i_f$ is the graph embedding 
\[i_f:X\to X\times\bC\,;\,\fx \mapsto (\fx,f(\fx)).\]

\label{defnspecial2}
    
\end{defn}

\begin{rem}

Note that one can show that if $f$ is smooth, then $\scM$ is $\bQ$-specialisable along $f^{-1}(0)$ in the sense of Definition \ref{defnspecial} if and only if it is $\bQ$-specialisable along $f^{-1}(0)$ in the sense of Definition \ref{defnspecial2}, so that the two definitions are indeed consistent with one another. 
    
\end{rem}

\vspace{5pt}

\noindent The significance of specialisability is that it allows us to understand the $\scD$-modules corresponding to nearby and vanishing cycle functors under the Riemann-Hilbert correspondence. In fact, any regular holonomic $\scD$-module that has quasi-unipotent monodromy along the hypersurface in question under the de Rham functor is $\bQ$-specialisable along that hypersurface.

\begin{defn}

Let $X$ be a complex manifold and $f:X\to\bC$ a holomorphic function, such that the zero locus is a hypersurface. Let $\scM$ be a holonomic left $\scD_X$-module.

Then $\scM$ has \emph{quasi-unipotent monodromy} (or is \emph{quasi-unipotent}) along the hypersurface $f^{-1}(0)$ if the monodromy action on the nearby cycles $\psi_f\text{DR}_X\scM$ of the perverse sheaf $\text{DR}_X\scM$ is quasi-unipotent.

\label{defnquni}
    
\end{defn}

\begin{thm}[\cite{Kash83}, Theorem 1. See also \cite{PS08}, Theorem 14.23]

Let $X$ be a complex manifold and $f:X\to\bC$ a holomorphic function, such that the zero locus is a hypersurface. Let $\scM$ be a regular holonomic left $\scD_X$-module.

If $\scM$ has quasi-unipotent monodromy along $f^{-1}(0)$, then $\scM$ is $\bQ$-specialisable along $f^{-1}(0)$.

\label{thmKM}
    
\end{thm}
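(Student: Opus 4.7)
The plan is to work locally near a point of $f^{-1}(0)$, and first to reduce via the graph embedding $i_f : X \hookrightarrow X \times \bC$ to the case of a smooth divisor $\{t=0\}$ with $t$ a coordinate. This reduction is built into Definition \ref{defnspecial2}, and the hypothesis transfers because the nearby cycle complex $\psi_f \mathrm{DR}_X \scM$ is canonically identified with $\psi_t \mathrm{DR}_{X \times \bC}(i_{f,+}\scM)$, so quasi-unipotency of monodromy is preserved. From now on I may assume $\scM$ is a regular holonomic $\scD_{X \times \bC}$-module and that we are looking for a $V$-filtration along the smooth hypersurface $X \times \{0\}$.

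Uniqueness of the filtration is a formal consequence of the axioms of Definition \ref{defnspecial}. Given two filtrations $V'^{\bullet}$ and $V''^{\bullet}$ satisfying i)--iv), one examines the operator $\partial_t t - \gamma$ on the coherent $V^0 \scD_{X \times \bC}$-module $V'^{\gamma} / (V'^{\gamma} \cap V''^{>\gamma})$: by the nilpotency axiom on $V'$ and the fact that $\partial_t t - \gamma$ is invertible on $\mathrm{gr}_{V''}^{\gamma'}$ for $\gamma' > \gamma$, this quotient must vanish. Running the symmetric argument gives $V'^{\gamma} = V''^{\gamma}$.

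The real content is existence, and the main obstacle is producing a Bernstein--Sato type equation at the level of a lattice. Concretely, one first uses coherence of $\scM$ to construct, locally, a coherent $V^0 \scD_{X \times \bC}$-submodule $L \subseteq \scM$ that generates $\scM$ over $\scD_{X \times \bC}$; such a lattice exists because $\scM$ is finitely generated and $V^0 \scD_{X \times \bC}$ is a Noetherian sheaf of rings with the property that $\bigcup_k \partial_t^k V^0 \scD_{X \times \bC} = \scD_{X \times \bC}$. The crucial step is then to show that there exists a nonzero $b(s) \in \bC[s]$ with
\[
b(\partial_t t) \cdot L \subseteq t L.
\]
This requires the regular holonomicity of $\scM$ in an essential way: one applies finiteness of characteristic varieties and a Noetherian argument to the descending chain $L \supseteq L + tL \supseteq L + tL + (t\partial_t)(tL) \supseteq \cdots$ in $L / tL$, regarded as a coherent module over the restriction ring, to extract the polynomial relation. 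This $b(s)$ plays the role of a Bernstein--Sato polynomial for the lattice.

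At this point quasi-unipotency enters decisively. The roots of $b(s)$ govern the generalised eigenvalues of $\partial_t t$ on $L/tL$, and via the Riemann--Hilbert correspondence the eigenvalues of local monodromy on $\psi_f \mathrm{DR}_X \scM$ are precisely $\{ e^{-2\pi i \lambda} : b(\lambda) = 0 \}$. Quasi-unipotency forces each such $e^{-2\pi i \lambda}$ to be a root of unity, hence every root $\lambda$ of $b$ lies in $\bQ$. Finally, one defines the $V$-filtration by partitioning the roots of $b(s)$ into cosets modulo $\bZ$: for each $\gamma \in \bQ$ set
\[
V^{\gamma} L := \bigcap_{\mu < \gamma} \ker \bigl[(\partial_t t - \mu)^{N} \text{ acting on } L / (\text{suitable subquotient})\bigr]
\]
for $N$ large, and then propagate by $V^{\gamma+1} L = t V^{\gamma} L$ for $\gamma \geq 0$ and $V^{\gamma} \scM = \sum_{k \geq 0} \partial_t^k V^{\gamma+k} L$ globally. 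The axioms i)--iv) are then verified directly: coherence and the behaviour under $t$ come from $L$ being a $V^0 \scD$-lattice, the behaviour under $\partial_t$ from the commutation $[\partial_t, t] = 1$, and nilpotency of $\partial_t t - \gamma$ on $\mathrm{gr}_V^{\gamma}$ from the generalised-eigenspace construction. Globalising is automatic thanks to uniqueness, completing the proof.
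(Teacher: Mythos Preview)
The paper does not give its own proof of this theorem: it is stated with attribution to \cite{Kash83} and \cite{PS08} and no argument is supplied. So there is nothing in the paper to compare your attempt against.

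That said, your sketch follows the standard architecture correctly (graph reduction, construction of a $V^0\scD$-lattice, existence of a $b$-function for the lattice, rationality of its roots from quasi-unipotency, and assembly of $V^{\bullet}$ from generalised eigenspaces). There are, however, two places where the details as written do not work. First, your ``Noetherian argument to the descending chain $L \supseteq L + tL \supseteq L + tL + (t\partial_t)(tL) \supseteq \cdots$'' is not a descending chain at all: since $t \in V^0\scD_{X\times\bC}$ and $L$ is a $V^0\scD$-module, one has $tL \subseteq L$, so every term in your chain equals $L$. The genuine argument uses holonomicity of $\scM$ to conclude that $L/tL$ is a holonomic $\scD_X$-module, whence the $\scD_X$-linear endomorphism $\partial_t t$ acting on it satisfies a polynomial equation; this is where regularity and holonomicity actually enter. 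Second, your definition of $V^{\gamma}L$ as an intersection of kernels on an unspecified ``suitable subquotient'' is not a definition; the standard construction takes the generalised eigenspace decomposition of $L/tL$ under $\partial_t t$, lifts the resulting filtration to $L$, and then propagates by powers of $t$ and $\partial_t$. Both issues are repairable, and the overall shape of your argument is the right one.
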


We now make the following definition, motivated by the Riemann-Hilbert correspondence.

\begin{defn}

Let $X$ be a complex manifold and $f:X\to\bC$ a holomorphic function,  such that the zero locus is a hypersurface. Let $\scM$ be a regular holonomic left $\scD_X$-module, $\bQ$-specialisable along $f^{-1}(0)$. Write 
\[\scM_f:=i_{f,+}\scM.\]
Now we define, for each $\gamma \in \bQ$,
\[\psi_{f,e(\gamma)}\scM:=\text{gr}_V^{\gamma-\lceil\gamma\rceil+1}\scM_f\,\,\,\text{ and } \,\,\,\phi_{f,e(\gamma)}\scM:=\text{gr}_V^{\gamma-\lfloor\gamma\rfloor}\scM_f,\]
where $e(\gamma):=e^{2\pi i \gamma}\in\bC$. These are then regular holonomic $\scD_X$-modules (see \cite{Kash83}, Theorem 2).

\label{defnphipsi}
    
\end{defn}

\begin{thm}[\cite{MSai88}, Proposition 3.4.12]

Let $X$ be a complex manifold and $f:X\to\bC$ a holomorphic function such that the zero locus $Z:=f^{-1}(0)$ is a hypersurface. Let $\scM$ be a regular holonomic left $\scD_X$-module, $\bQ$-specialisable along $Z$. Then, for $\gamma\in\bQ$ we have canonical isomorphisms
\[\text{\emph{DR}}_X\psi_{f,e(\gamma)}\scM \simeq \psi_{f,e(\gamma)}\text{\emph{DR}}_X\scM\,\,\,\text{ and }\,\,\,\text{\emph{DR}}_X\psi_{f,e(\gamma)}\scM \simeq \psi_{f,e(\gamma)}\text{\emph{DR}}_X\scM.\]
Moreover, under the de Rham functor and the above isomorphisms, 
\begin{enumerate}[label=\roman*)]

\item For each $\gamma\in\,[0,1]\,\cap\bQ$, $\partial_tt-\gamma: \text{\emph{gr}}_V^{\gamma}\scM_f\to\text{\emph{gr}}_V^{\gamma}\scM_f$ is mapped to $N=\frac{1}{2\pi i}\log T_u$, where $T_u$ is the unipotent component of the monodromy operator $T$ (so that $T=T_sT_u$).

\item $t:\text{\emph{gr}}_V^0\scM_f\to\text{\emph{gr}}_V^1\scM_f$ is mapped to $\text{\emph{var}}:\phi_{f,1}\text{\emph{DR}}_X\scM_f\to\psi_{f,1}\text{\emph{DR}}_X\scM_f$.

\item $\partial_t:\text{\emph{gr}}_V^1\scM_f\to\text{\emph{gr}}_V^0\scM_f$ is mapped to $\text{\emph{can}}:\psi_{f,1}\text{\emph{DR}}_X\scM_f\to\phi_{f,1}\text{\emph{DR}}_X\scM_f$.
    
\end{enumerate}

\label{thmphipsiRH}

\end{thm}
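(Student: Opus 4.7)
The overall strategy is to reduce to the smooth case via graph embedding, then construct an explicit comparison between the algebraic $V$-filtration and Deligne's topological nearby cycle functor.

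\emph{Reduction.} Since the graph embedding $i_f : X \hookrightarrow X\times\bC$ is a closed embedding, the de Rham functor commutes with $i_{f,+}$ in the natural way, and by Definition \ref{defnphipsi} we have $\psi_{f,\lambda}\scM = \psi_{t,\lambda}(i_{f,+}\scM)$ (and analogously for $\phi$, as well as for the topological side via $i_{f,*}$). Thus it suffices to prove the statement when $f = t$ is a coordinate function; relabelling, I assume $\scN := \scM$ is a regular holonomic $\scD_{X\times\bC}$-module which is $\bQ$-specialisable along $\{t=0\}$.

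\emph{Construction of the comparison.} I would build the isomorphism $\text{DR}(\text{gr}_V^{\gamma}\scN) \simeq \psi_{t,e(\gamma)}\text{DR}(\scN)$ eigenspace by eigenspace, using Deligne's description of nearby cycles via the universal cover $\widetilde{p}: \widetilde{X^*} \to X^*$ of a punctured neighbourhood of $\{t=0\}$. For $u \in V^{\gamma}\scN$, the multivalued section $u\cdot t^{-\gamma}$ extends along $\widetilde{X^*}$, and the deck transformation acts as $e(-\gamma)\cdot \exp\bigl(-2\pi i(\partial_t t - \gamma)\bigr)$, which is a finite expression thanks to axiom iv) of Definition \ref{defnspecial}. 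This simultaneously identifies the eigenvalue $T_s = e(\gamma)$ and gives $T_u = \exp(2\pi i(\partial_t t - \gamma))$, so $N = \partial_t t - \gamma$ under the correspondence; this proves i). That the resulting morphism is in fact an isomorphism of perverse sheaves is precisely the content of Kashiwara--Malgrange (Theorem \ref{thmKM}) together with the uniqueness of the $V$-filtration.

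\emph{Identification of $\mathrm{var}$ and $\mathrm{can}$.} On the perverse-sheaf side, $\mathrm{can}$ and $\mathrm{var}$ are characterised (up to a Tate twist) by the exact triangles
\[
i^*\text{DR}(\scN) \to \psi_{t,1}\text{DR}(\scN) \xrightarrow{\mathrm{can}} \phi_{t,1}\text{DR}(\scN) \xrightarrow{+1}
\]
and its $\mathrm{var}$-analogue. On the $\scD$-module side, Kashiwara's equivalence computes $i^*\scN$ and $i^!\scN$ as the cokernel and kernel of $\partial_t: \text{gr}_V^1\scN \to \text{gr}_V^0\scN$ and $t: \text{gr}_V^0\scN\to \text{gr}_V^1\scN$ respectively. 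Matching these two triangles via the comparison from the previous paragraph forces $\partial_t \leftrightarrow \mathrm{can}$ and $t \leftrightarrow \mathrm{var}$, giving ii) and iii). The main obstacle is bookkeeping: tracking signs, factors of $2\pi i$, and the Tate twist $(-1)$ appearing in the $\mathrm{var}$ triangle to verify that the comparison is compatible on the nose; the cleanest route is to follow the explicit calculations in \cite{MSai88}, \S 3.4.
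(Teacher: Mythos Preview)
The paper does not give its own proof of this statement: Theorem \ref{thmphipsiRH} is quoted verbatim from \cite{MSai88}, Proposition 3.4.12, with no accompanying proof block, and the exposition proceeds directly to the monodromy weight filtration. There is therefore nothing in the paper to compare your argument against.

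That said, your sketch is a faithful outline of the standard approach (and indeed of what Saito does in \S 3.4 of \cite{MSai88}): reduce to a smooth hypersurface via the graph embedding, build the comparison on each $\text{gr}_V^{\gamma}$ by sending $u \mapsto u\,t^{-\gamma}$ and reading off the monodromy action, and then pin down $\mathrm{can}$ and $\mathrm{var}$ by matching the defining exact triangles. The one place where your outline is slightly glib is the assertion that the comparison map is an isomorphism of perverse sheaves ``precisely by Kashiwara--Malgrange together with uniqueness of the $V$-filtration''; in practice this step requires a genuine argument (regularity of $\scM$ is used here to control the de Rham complex near $t=0$, and one typically passes through the formal or analytic microlocalisation to check that the map is a quasi-isomorphism). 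If you intend this as a proof rather than a plan, that step would need to be filled in.
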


\noindent Since the operator $\partial_tt-\gamma$ acts nilpotently on $\text{gr}_V^{\gamma}$, we obtain, for each $r\in\bZ$, a unique increasing filtration $M_{\bullet}$ of coherent left $\scD_X$-modules on $\text{gr}_V^{\gamma}\scM_f$, the \emph{monodromy weight filtration centred at $r$}, satisfying that
\begin{enumerate}[label=\roman*)]
    \item For all $l$, $(\partial_tt-\gamma)\cdot M_l\text{gr}_V^{\gamma}\scM_f \subseteq M_{l-2}\text{gr}_V^{\gamma}\scM_f,$
    \item For all $l \geq 0$, $(\partial_tt-\gamma)^l\cdot -: \text{gr}^M_{r+l}\text{gr}_V^{\gamma}\scM_f \isommap \text{gr}^M_{r-l}\text{gr}_V^{\gamma}\scM_f.$
\end{enumerate}

\noindent Note that there is an explicit expression for the steps of this filtration: 
\[M_{r+l}\text{gr}_V^{\gamma}\scM_f = \sum_{m\geq 0}(\partial_tt-\gamma)^m\cdot (\ker(\partial_tt-\gamma)^{2m+l+1}).\]
(See for instance the statement and proof of \cite{Del80}, Proposition 1.6.1.)

Since $\partial_tt-\gamma$ maps to $N$ under the de Rham functor, as mentioned above, and by uniqueness, the monodromy weight filtration centred at $r$ on $\text{gr}_V^{\gamma}\scM_f$ is mapped to the monodromy weight filtration centred at $r$ on the nearby/vanishing cycles.

\begin{defn}

We also define the \emph{kernel filtration} $K_{\bullet}$ on $\text{gr}_V^{\gamma}\scM_f$ by
\[K_l\text{gr}_V^{\gamma}\scM_f := \{ u \in \text{gr}_V^{\gamma}\scM_f\,|\, (\partial_tt-\gamma)^l\cdot u =0\}.\]
    
\end{defn}

\vspace{5pt}

Since we are interested in filtered $\scD$-modules, not just $\scD$-modules, we also define a notion of specialisability for filtered $\scD$-modules, which requires some compatibility between the $V$-filtration with the filtration on our $\scD$-module.

(\cite{Lau83}, Section 5 and \cite{MSai93}, (1.8).) Note first that if $X$ is a complex manifold and $f:X\to\bC$ is a holomorphic function, and $(\scM,F_{\bullet})$ is a filtered left $\scD_X$-module, then the pushforward of the \emph{filtered} $\scD_X$-module $(\scM,F_{\bullet})$ is given by 
\[i_{f,+}\scM = i_{f,*}\scM \otimes_{\bC}\bC[\partial_t], \,\,\,F_ki_{f,+}\scM = \sum_{i\geq 0}(i_{f,*}F_{k-1-i}\scM)\partial_t^i.\]

\begin{defn}[\cite{MSai88}, 3.2.1]

Let $X$ be a complex manifold and $f:X\to\bC$ a holomorphic function, such that the zero locus is a hypersurface. Let $(\scM,F_{\bullet})$ be a filtered regular holonomic left $\scD_X$-module. $(\scM,F_{\bullet})$ has \emph{quasi-unipotent monodromy} (or is \emph{quasi-unipotent}) along the hypersurface $f^{-1}(0)$ if

\begin{enumerate}[label=\roman*)]

\item $\scM$ has quasi-unipotent monodromy along $f^{-1}(0)$.

\item For all integers $k$ and rationals $\gamma$, $t\cdot F_kV^{\gamma}\scM_f\subseteq F_kV^{\gamma+1}\scM_f$, with equality if $\gamma>0$.

\item For all integers $k$ and rationals $\gamma$, $\partial_t\cdot F_k\text{gr}_V^{\gamma}\scM_f\subseteq F_{k+1}\text{gr}_V^{\gamma-1}\scM_f$, with equality if $\gamma<1$.

\end{enumerate}

Moreover, $(\scM,F_{\bullet})$ is \emph{regular} along $f^{-1}(0)$ if the filtration $F_{\bullet}$ on $\scM_f$ induces \emph{good} filtrations on the left $\scD_X$-modules $\text{gr}^M_l\text{gr}_V^{\gamma}\scM_f$, for all $l$ and $\gamma$, where $M_l$ is the monodromy weight filtration centred at $r$ on $\text{gr}_V^{\gamma}\scM_f$ (for any $r$).

\label{defnspecial3}
    
\end{defn}

\begin{rem}

Important to the theory of \emph{rational} mixed Hodge modules is that any filtered $\scD$-module underlying a rational mixed Hodge module is quasi-unipotent and regular along any hypersurface. Quasi-unipotency may not necessarily hold for filtered $\scD$-modules underlying complex mixed Hodge modules a priori, but conveniently does hold for our key examples $(\scM(f^{-\alpha}),F_{\bullet}^H)$. Note in fact that complex mixed Hodge modules are always $\bR$-\emph{specialisable} along hypersurfaces, and satisfy conditions ii) and iii) as well as regularity in Definition \ref{defnspecial3} above. Since the filtered $\scD$-modules underlying complex mixed Hodge modules we consider in this paper are always quasi-unipotent, we won't need this however.
    
\end{rem}

\begin{lem}

Let $X$ be a complex manifold and $f:X\to\bC$ a holomorphic function,  such that the zero locus is a hypersurface. Let $\alpha\in\bQ$. Then the (regular holonomic) filtered left $\scD_X$-module $(\scM(f^{-\alpha}),F_{\bullet}^H)$ is quasi-unipotent and regular along $f^{-1}(0)$.

\label{lemMfquni}
    
\end{lem}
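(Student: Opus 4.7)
The plan is to exhibit $(\scM(f^{-\alpha}), F_\bullet^H)$ as a direct summand of a filtered $\scD$-module underlying a \emph{rational} mixed Hodge module. Since rational mixed Hodge modules are by construction quasi-unipotent and regular along any hypersurface (this is foundational in \cite{MSai88}), and since all the conditions appearing in Definition \ref{defnspecial3} together with regularity pass to direct summands, the lemma will follow.

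The summand realisation is an immediate consequence of Lemma \ref{lemsummand}. Choosing $l\in\bZ_{\geq 2}$ with $l\alpha\in\bZ$, that lemma gives a canonical isomorphism of complex mixed Hodge modules
\[j_*p_*\underline{\bC}_{\widetilde{U}}[n]\;\simeq\; \bigoplus_{i=0}^{l-1}M(f^{-i\alpha}),\]
so that $M(f^{-\alpha})$ (the $i=1$ summand) sits as a direct summand of the left-hand side. The left-hand side is rational: $\underline{\bC}_{\widetilde{U}}[n]$ is trivially a rational Hodge module on the complex manifold $\widetilde{U}$; $p$ is projective (as $\widetilde{U}$ is closed in $U\times\bP^1_{\bC}$), so $p_*$ preserves rationality by the rational analogue of Proposition \ref{propMHMfunctors}.ii); and the extension along the open embedding $j$ preserves rationality in the localisable setting of Proposition \ref{proplocalisable}.

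That direct summands of rational mixed Hodge modules inherit the conditions of Definition \ref{defnspecial3} and regularity is a formality built on uniqueness of the Kashiwara--Malgrange $V$-filtration. If $\scN \oplus \scP$ decomposes as filtered $\scD$-modules, then $V^\bullet \scN \oplus V^\bullet \scP$ satisfies the defining properties of the $V$-filtration on $\scN \oplus \scP$ and so coincides with it by uniqueness; hence $V^\gamma \scN = V^\gamma(\scN\oplus\scP)\cap\scN$, and the compatibilities $t\cdot F_kV^\gamma\subseteq F_kV^{\gamma+1}$ (with equality for $\gamma > 0$) and $\partial_t\cdot F_k\text{gr}_V^\gamma\subseteq F_{k+1}\text{gr}_V^{\gamma-1}$ (with equality for $\gamma < 1$) descend from the total module to the summand. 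The analogous argument applies to the monodromy weight filtrations, giving regularity; quasi-unipotency of the perverse sheaf underlying a summand is of course immediate. There is no serious obstacle here: the substantive input is Lemma \ref{lemsummand}, and everything else is an unpacking of definitions together with the foundational fact that Saito's rational mixed Hodge modules come pre-packaged with the specialisability data required by Definition \ref{defnspecial3}.
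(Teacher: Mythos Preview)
Your proof is correct and follows essentially the same approach as the paper: the paper's proof simply reads ``This follows immediately by Lemma \ref{lemsummand} and the above remark,'' where the remark is the observation that filtered $\scD$-modules underlying rational mixed Hodge modules are quasi-unipotent and regular along any hypersurface. Your version supplies the (routine) verification that these properties pass to direct summands, but the substantive idea is identical.
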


\begin{proof}

This follows immediately by Lemma \ref{lemsummand} and the above remark.
    
\end{proof}

\begin{prop}

Let $X$ be a complex manifold and $f:X\to \bC$ a holomorphic function, such that the zero locus is a hypersurface. 

We then have the following properties concerning the Kashiwara-Malgrange $V$-filtrations for $\scM(f^{-\alpha})$ and $\scO_{X,-\alpha f}$.

\begin{enumerate}[label=\roman*)]

\item Under the natural inclusion $i_{f,+}\scO_{X,-\alpha f} \hookrightarrow i_{f,+}\scM(f^{-\alpha})$, we have an identification
\[V^{\gamma}i_{f,+}\scO_{X,-\alpha f} = V^{\gamma}i_{f,+}\scM(f^{-\alpha}), \,\,\text{ for all }\,\, \gamma >0.\]

\item For all $\gamma \in \bQ$,
\[t\cdot V^{\gamma}i_{f,+}\scM(f^{-\alpha}) = V^{\gamma+1}i_{f,+}\scM(f^{-\alpha}).\]

\item There is an isomorphism of $\scD_X\langle t,t^{-1},s\rangle$-modules\footnote{Here, for the $s$-linearity, $s$ acts as $s+\alpha$ on the right hand side.}
\[\Phi: i_{f,+}\scM(f^{-\alpha}) \isommap i_{f,+}\scO_X(*f)\]
given by
\[\sum_{i=0}^kg_if^{-\alpha}\partial_t^i \,\,\mapsto\,\, \sum_{i=0}^k\sum_{j=i}^kg_jf^{i-j}{j\choose i}Q_{j-i}(-\alpha)\partial_t^i,\]
where $Q_j(s)$ is the polynomial $Q_j(s)=s(s+1) \ldots (s+j-1)$ ($:=1$ if $j=0$).

Moreover, for any $\gamma\in\bQ$, 
\[\Phi(V^{\gamma}i_{f,+}\scM(f^{-\alpha})) = V^{\gamma+\alpha}i_{f,+}\scO_X(*f).\]

\end{enumerate}

\label{propVcomp}

\end{prop}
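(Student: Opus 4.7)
The proposition has three parts and I would tackle them in the order (iii), (i), (ii), since the isomorphism in (iii) is the algebraic engine driving the other two statements.

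Plan for part (iii). The formula for $\Phi$ is lower triangular in the $\partial_t$-degree with identity diagonal (the coefficient of $\partial_t^j$ in $\Phi(g f^{-\alpha}\partial_t^j)$ is simply $g$, since $\binom{j}{j}Q_0(-\alpha)=1$). This immediately shows $\Phi$ is a bijection of $\scO_X(*f)$-modules and admits an explicit inverse of the same combinatorial shape. The substantive work is to verify module-theoretic compatibility against each generating action separately:
\begin{enumerate}[label=\alph*)]
\item for the $t$-action, using $t\cdot m\partial_t^k = fm\partial_t^k - km\partial_t^{k-1}$;
\item for the $\partial_{x_i}$-action, using the twisted Leibniz rule $\partial_{x_i}(g f^{-\alpha}) = (\partial_{x_i}g)f^{-\alpha} - \alpha g\partial_{x_i}(f)f^{-\alpha-1}$ inside $\scM(f^{-\alpha})$, transported through $i_{f,+}$;
\item for the $s$-action, viewed as $-\partial_tt$, where a direct computation yields $\Phi(s\cdot x) = (s+\alpha)\Phi(x)$, i.e.\ the claimed shift.
\end{enumerate}
Each verification reduces to the algebraic identities $Q_{j+1}(-\alpha)=(j-\alpha)Q_j(-\alpha)$ and $\binom{j+1}{i}=\binom{j}{i}+\binom{j}{i-1}$. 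Invertibility of $t$ on both $i_{f,+}\scM(f^{-\alpha})$ and $i_{f,+}\scO_X(*f)$ is elementary because $f$ acts invertibly on their degree-$0$ pieces (one explicitly solves $t\cdot v = u$ by descending induction on $\partial_t$-degree), so $t^{-1}$-linearity follows automatically. For the $V$-filtration statement, I would invoke the uniqueness clause of Definition \ref{defnspecial}: the filtration $\Phi^{-1}(V^{\bullet+\alpha}i_{f,+}\scO_X(*f))$ is rational, decreasing, exhaustive, discrete, left-continuous and consists of coherent $V^0\scD$-modules; the $t$- and $\partial_t$-axioms translate directly through $\Phi$; and the nilpotency of $\partial_tt-\gamma$ on its $\gamma$th graded piece is exactly the nilpotency of $\partial_tt-(\gamma+\alpha)$ on $\text{gr}_V^{\gamma+\alpha}i_{f,+}\scO_X(*f)$ after bookkeeping the $\alpha$-shift of $s$.

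Plan for part (i). Apply the exact functor $i_{f,+}$ to the short exact sequence
\[0 \to \scO_{X,-\alpha f} \to \scM(f^{-\alpha}) \to \scC \to 0\]
whose cokernel $\scC$ is supported on $Z=f^{-1}(0)$ by Lemma \ref{lemDmodprops} (the inclusion is an isomorphism on $U$). Uniqueness of the $V$-filtration implies it is strict with respect to subobjects and quotients; it then suffices to show $V^\gamma i_{f,+}\scC=0$ for $\gamma>0$. But $i_{f,+}\scC$ is supported on $X\times\{0\}$, so by Kashiwara's equivalence it is of the form $i'_+\scN=\scN[\partial_t]$ for a regular holonomic $\scD_X$-module $\scN$ and the zero-section embedding $i'$. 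A direct computation on $\scN[\partial_t]$ gives $(\partial_tt)\cdot(m\partial_t^k)=-k\cdot m\partial_t^k$, so the only $\partial_tt$-eigenvalues are non-positive integers, yielding the desired vanishing.

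Plan for part (ii). Axiom ii) of Definition \ref{defnspecial} gives $tV^\gamma\subseteq V^{\gamma+1}$ with equality for $\gamma>0$, so only $\gamma+1\leq 1$ needs attention. For $\gamma+1\notin\bZ$, the operator $\partial_tt-(\gamma+1)$ is invertible modulo nilpotents on $\text{gr}_V^{\gamma+1}$: for any $u\in V^{\gamma+1}$ this yields a relation $(\gamma+1)u \equiv \partial_t(tu)\pmod{V^{>\gamma+1}}$, and since $\partial_tu\in V^\gamma$ we obtain $u\in tV^\gamma + V^{>\gamma+1}$. Iterating with the axiom at sufficiently positive indices (using discreteness of the filtration) then forces $u\in tV^\gamma$. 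For $\gamma+1\in\bZ$ the map $t:\text{gr}_V^\gamma\to\text{gr}_V^{\gamma+1}$ corresponds via Theorem \ref{thmphipsiRH} to the variation map for eigenvalue $1$; but $M(f^{-\alpha})=j_*\underline{\bC}_{U,-\alpha f}[n]$ is a $*$-extension by Definition \ref{defnMHMs}, so it has no quotient supported on $Z$, and the distinguished triangle $i^!M\to\phi_{f,1}M\xrightarrow{\text{var}}\psi_{f,1}M(-1)\to[1]$ from the proof of Proposition \ref{propMHMfunctors} then forces $\text{var}$ to be surjective. Alternatively, one can transport the whole question via (iii) to $\scO_X(*f)$ and use the analogous $*$-extension property there.

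Main obstacle. The bulk of the work lies in part (iii): the combinatorial verification that $\Phi$ intertwines the $\scD_X\langle t,t^{-1},s\rangle$-actions as asserted, with the crucial $\alpha$-shift of $s$, and the translation of this into the $\alpha$-shift of the $V$-filtration via uniqueness. Parts (i) and (ii) are comparatively short once this framework is in place; in particular, the only genuine extra input needed in (ii) is the $*$-extension property of $M(f^{-\alpha})$, which is built into its definition.
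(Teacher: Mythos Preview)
The paper does not actually prove this proposition; it simply cites \cite{MP20}, Section~2. So there is no proof in the paper to compare against, and your plan is in effect a reconstruction of what that reference contains. Your outlines for parts (iii) and (i) are correct and standard: the triangular shape of $\Phi$ plus the uniqueness of the $V$-filtration handles (iii), and the exactness of the $V$-filtration on short exact sequences together with the computation of $V^\bullet$ on a module supported on $\{t=0\}$ handles (i).

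Part (ii), however, has a couple of slips. First, your case split should be $\gamma=0$ versus $\gamma\neq 0$, not $\gamma+1\in\bZ$ versus $\gamma+1\notin\bZ$: the relevant operator for surjectivity of $t:\text{gr}_V^{\gamma}\to\text{gr}_V^{\gamma+1}$ is $t\partial_t$ on $\text{gr}_V^{\gamma+1}$, which has generalised eigenvalue $\gamma$ (not $\gamma+1$), so it is invertible precisely when $\gamma\neq 0$. Your congruence ``$(\gamma+1)u\equiv\partial_t(tu)\pmod{V^{>\gamma+1}}$'' is also not literally correct, since $\partial_tt-(\gamma+1)$ is only nilpotent, not zero, on the graded piece; you need to invert $t\partial_t$ properly. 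Second, for $\gamma=0$ you write that the $*$-extension $M(f^{-\alpha})=j_*\underline{\bC}_{U,-\alpha f}[n]$ ``has no quotient supported on $Z$'', but this is false (e.g.\ $\scM(f^{-\alpha})/\scO_{X,-\alpha f}$ is such a quotient); $j_*$-extensions have no \emph{sub}module supported on $Z$. The correct argument is that $M(f^{-\alpha})=j_*j^*M(f^{-\alpha})$ forces $i^!M(f^{-\alpha})=0$ in the derived sense, whence the triangle $i_*i^!M\to\phi_{f,1}M\xrightarrow{\text{var}}\psi_{f,1}M(-1)\to$ makes $\text{var}$ an \emph{isomorphism}, so $t:\text{gr}_V^0\to\text{gr}_V^1$ is bijective. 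Combining this with bijectivity on all other graded pieces and global bijectivity of $t$ (from invertibility of $f$) then gives $tV^{\gamma}=V^{\gamma+1}$ for all $\gamma$.
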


\begin{proof}

The proofs of all of these statements may be found for instance in Section 2 of \cite{MP20}.
    
\end{proof}

\vspace{5pt}

\noindent As one may hope, the above-defined $\scD$-modules $\psi_{f,e(\gamma)}\scM$ and $\phi_{f,e(\gamma)}\scM$ in fact underlie (complex) mixed Hodge modules, whenever $\scM$ itself does.

\begin{thm}

Let $X$ be a complex manifold and $f:X\to\bC$ a holomorphic function,  such that the zero locus is a hypersurface. Let $M$ be a complex mixed Hodge module on $X$ and write $(\scM,F_{\bullet})$ for the underlying regular holonomic filtered $\scD_X$-module. Assume that $(\scM,F_{\bullet})$ has quasi-unipotent monodromy along the hypersurface $f^{-1}(0)$.

Then, for each $\gamma\in\bQ$, there exist complex mixed Hodge modules 
\[\psi_{f,e(\gamma)}M \in \text{\emph{MHM}}(X,\bC) \,\,\,\text{ and }\,\,\,\phi_{f,e(\gamma)}M \in \text{\emph{MHM}}(X,\bC)\]
respectively overlying the $\scD_X$-modules
\[\psi_{f,e(\gamma)}\scM\,\,\,\text{ and }\,\,\, \phi_{f,e(\gamma)}\scM,\]
with the Hodge filtrations on both induced by the Hodge filtration on $\scM_f$ (given by pushing forward the filtration on $\scM$)\footnote{More precisely, the indexing for $\phi_{f,e(\gamma)}\scM$ is shifted by $-1$, see e.g. \cite{MSai89}.}. If $M$ is pure, the weight filtrations on $\psi_{f,e(\gamma)}\scM$ and $\phi_{f,e(\gamma)}\scM$ equal the monodromy weight filtrations $M_{\bullet}$ centred at $n-1$ and $n$ respectively, as defined above. If $M$ is mixed, the weight filtrations are given by the so-called \emph{relative monodromy weight filtrations}, as defined in Section 1 of \cite{MSai90}.

Moreover, the morphisms $\text{\emph{var}}$, $\text{\emph{can}}$ and $N$ extend to morphisms of complex mixed Hodge modules
\[\text{\emph{var}}:\phi_{f,1}M\to\psi_{f,1}M(-1),\]
\[\text{\emph{can}}:\psi_{f,1}M\to\phi_{f,1}M,\]
\[N = \text{\emph{var}}\circ\text{\emph{can}}:\psi_{f,1}M \to \psi_{f,1}M(-1),\]
and, in the bounded derived category of complex mixed Hodge modules $D^b\text{\emph{MHM}}(X,\bC)$, there are canonical exact triangles
\[i_*i^!M\rightarrow\phi_{f,1}M\xrightarrow{\text{\emph{var}}} \psi_{f,1}M(-1)\xrightarrow{+1},\]
\[\psi_{f,1}M\xrightarrow{\text{\emph{can}}} \phi_{f,1}M\rightarrow i_*i^*M\xrightarrow{+1},\]
where $i:f^{-1}(0)\hookrightarrow X$ is the natural closed embedding (recall Remark \ref{remjjtriangles}).

\label{thmphipsiMHM}
    
\end{thm}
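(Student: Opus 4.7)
The plan is to reduce to the case where the defining function is a coordinate, and then invoke Saito's construction of nearby/vanishing cycle functors for the underlying filtered $\scD$-module, using the embedding of complex mixed Hodge modules into the larger categories where this is established. First, via the graph embedding $i_f : X \to X \times \bC$, we may identify $\psi_{f,e(\gamma)}M = \psi_{t,e(\gamma)}i_{f,*}M$ and $\phi_{f,e(\gamma)}M = \phi_{t,e(\gamma)}i_{f,*}M$, where $t$ is the coordinate on the $\bC$-factor. So it suffices to treat the smooth case, in which the $V$-filtration along $\{t=0\}$ is the one from Definition \ref{defnspecial}.

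For the underlying filtered $\scD$-module side, the $\scD$-modules $\psi_{f,e(\gamma)}\scM$ and $\phi_{f,e(\gamma)}\scM$ are already defined in Definition \ref{defnphipsi}; quasi-unipotency ensures that the pushforward filtration $F_\bullet$ on $\scM_f$ is compatible with $V^\bullet$ in the sense of Definition \ref{defnspecial3}, so the induced filtrations on $\text{gr}_V^\gamma\scM_f$ (with the $-1$ shift for $\phi$) give well-defined good filtrations on $\psi_{f,e(\gamma)}\scM$ and $\phi_{f,e(\gamma)}\scM$. The weight filtration in the pure case is given by the monodromy weight filtration $M_\bullet$ shifted to centre $n-1$ (resp.\ $n$), whose existence follows from the nilpotency of $\partial_t t - \gamma$ on $\text{gr}_V^\gamma$; in the mixed case one invokes the relative monodromy weight filtration construction of \cite{MSai90}, which exists and is unique because the weight filtration of $M$ is compatible with the $V$-filtration.

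The principal obstacle is verifying that these bi-filtered $\scD$-modules actually assemble into complex mixed Hodge modules, i.e.\ that they satisfy Saito's inductive axioms in the complex setting. For rational mixed Hodge modules this is exactly the content of \cite{MSai88} Section 3.4 and \cite{MSai90}. To pass to the complex case, I would use the realisation of complex MHM as direct summands of real MHM (\cite{DS13}, Definition 3.2.1) together with the embedding of real MHM into mixed twistor $\scD$-modules (\cite{M15a}, Section 13.5), and cite Mochizuki's construction of the functors $\psi$ and $\phi$ for mixed twistor $\scD$-modules along with his compatibility results showing that they preserve the essential image of real MHM. Restricting to the complex summand then produces $\psi_{f,e(\gamma)}M$ and $\phi_{f,e(\gamma)}M$ as objects of $\text{MHM}(X,\bC)$ with the asserted underlying structure.

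Finally, the morphisms $\text{var}$, $\text{can}$, and $N$ are induced from the $\scD$-module morphisms $t : \text{gr}_V^0\scM_f \to \text{gr}_V^1\scM_f$ and $\partial_t : \text{gr}_V^1\scM_f \to \text{gr}_V^0\scM_f$ of Theorem \ref{thmphipsiRH}; quasi-unipotency (conditions ii) and iii) of Definition \ref{defnspecial3}) forces these to be strictly compatible with $F_\bullet$ and with the (relative) monodromy weight filtrations, so they lift to morphisms in $\text{MHM}(X,\bC)$, with the Tate twist $(-1)$ appearing on $\text{var}$ and $N$ due to the standard weight shift under the $V$-filtration. The two exact triangles are then built by construction: one defines $i_*i^!M$ and $i_*i^*M$ (as in Remark \ref{remjjtriangles}) precisely so as to complete $\text{var}$ and $\text{can}$ to exact triangles in $D^b\text{MHM}(X,\bC)$, and the underlying perverse sheaf statement matches the classical triangles for $\psi_f$, $\phi_f$, confirming consistency.
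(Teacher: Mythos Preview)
Your proposal is correct and follows essentially the same route as the paper: the paper's proof is a single sentence citing Mochizuki's definition of mixed twistor $\scD$-modules (\cite{M15a}, Definition 7.2.1) together with the realisation of complex mixed Hodge modules as summands of real ones via \cite{DS13} and \cite{M15b}, which is exactly the core step you identify. Your write-up simply unpacks what that citation entails (graph reduction, induced filtrations, lifting of $\text{var}$, $\text{can}$, $N$, and the triangles from Remark \ref{remjjtriangles}), so there is no substantive difference in approach.
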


\begin{proof}

The statement follows for instance by Mochizuki's definition of mixed twistor $\scD$-module (\cite{M15a}, Definition 7.2.1), again using \cite{DS13}, Definition 3.2.1 and \cite{M15b}, Theorem 3.35. 
    
\end{proof}

\begin{rem}

From now on, the weight filtrations on $\psi_{f,e(\gamma)}\scM$ and $\phi_{f,e(\gamma)}\scM$ given above (i.e., with the correct choice of centring) will be referred to as the monodromy weight filtrations (relative monodromy weight filtrations) on $\psi_{f,e(\gamma)}\scM$ and $\phi_{f,e(\gamma)}\scM$, and will be denoted $M_{\bullet}$ ($M_{\bullet}^{\text{rel}}$, respectively). 
    
\end{rem}

Finally, under certain additional hypotheses to the ones required for filtered quasi-unipotency, some of which are automatically attained for mixed Hodge modules, we obtain expressions for the Hodge filtration on $\scM_f$ in terms of $V^0\scM_f$ and/or $V^{>0}\scM_f$.

\begin{lem}[\cite{MSai88}, Proposition 3.2.2 and Remarque 3.2.3]

Assume that $X$ and $f$ are as in Theorem \ref{thmphipsiMHM}, and that $(\scM,F_{\bullet})$ is a filtered left $\scD_X$-module, quasi-unipotent and regular along the hypersurface $f^{-1}(0)$. Then:

\begin{enumerate}[label=\roman*)]
\item If $\partial_t\cdot F_k\text{\emph{gr}}_V^1\scM_f = F_{k+1}\text{\emph{gr}}_V^0\scM_f$\footnote{Here, the filtrations are the ones induced by $F_{\bullet}\scM_f$, without any shift in the indexing.} for every $k$, then
\[F_k\scM_f = \sum_{i\geq 0}\partial_t^i\cdot(V^{>0}\scM_f\cap j_{f,*}j_f^*F_{k-i}\scM_f)\]
for every $k$, where $j_f:X\times\bC^*\hookrightarrow X\times\bC$ is the natural open embedding.
\item If $t:\text{\emph{gr}}_V^0\scM_f \to\text{\emph{gr}}_V^1\scM_f$ is injective and strict (with respect to $F_{\bullet}$) then 
\[F_k\scM_f = \sum_{i\geq 0}\partial_t^i\cdot(V^0\scM_f\cap j_{f,*}j_f^*F_{k-i}\scM_f)\]
for every $k$.
\end{enumerate}

\label{lemV0}
    
\end{lem}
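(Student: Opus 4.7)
The plan is to prove (i) by establishing both inclusions using the compatibility axioms in Definition~\ref{defnspecial3} together with a nested induction, and then to adapt the same argument for (ii) with the modification reflecting the stronger hypothesis. Throughout, I use freely that $V^{>0}\scM_f$ (and, under the hypothesis of (ii), also $V^0\scM_f$) is $t$-torsion free, which follows from the bijectivity of $t$ on $V^\gamma\scM_f$ for $\gamma>0$.

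For the inclusion $\supseteq$, the main step is the identity
\[
V^{>0}\scM_f \cap j_{f,*}j_f^* F_{k-i}\scM_f \;=\; F_{k-i}V^{>0}\scM_f.
\]
Granted this, one iterates the axiom $\partial_t \cdot F_j V^\eta \scM_f \subseteq F_{j+1} V^{\eta -1}\scM_f$ from Definition~\ref{defnspecial3}~iii) to conclude $\partial_t^i \cdot F_{k-i}V^{>0}\scM_f \subseteq F_k V^{>-i}\scM_f \subseteq F_k \scM_f$. To prove the identity, take $u$ in the LHS lying in $V^{\gamma_0}\scM_f$ with $\gamma_0>0$. The localization condition yields $t^N u \equiv u' \pmod{t\text{-torsion}}$ in $\scM_f$ for some $u' \in F_{k-i}\scM_f$; choosing $M$ with $t^M$ killing the torsion gives $t^{N+M}u \in F_{k-i}\scM_f \cap V^{\gamma_0+N+M}\scM_f = F_{k-i}V^{\gamma_0+N+M}\scM_f$. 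The equality $F_{k-i}V^{\gamma_0+N+M}\scM_f = t^{N+M} F_{k-i}V^{\gamma_0}\scM_f$ (iterated from the $\gamma>0$ case of Definition~\ref{defnspecial3}~ii)), combined with $t$-injectivity on $V^{>0}\scM_f$, descends to $u \in F_{k-i}V^{\gamma_0}\scM_f \subseteq F_{k-i}V^{>0}\scM_f$.

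For the inclusion $\subseteq$, I would induct on $k$, with $F_k\scM_f = 0$ for $k \ll 0$ as the base case. Given $u \in F_k\scM_f$, discreteness of $V^\bullet$ yields a unique $\gamma_0 \in \bQ$ with $u \in F_kV^{\gamma_0}\scM_f \setminus F_kV^{>\gamma_0}\scM_f$. If $\gamma_0 > 0$, then $u$ already lies in the $i=0$ summand. Otherwise $\gamma_0 \leq 0$, and Definition~\ref{defnspecial3}~iii) (equality for $\gamma<1$) reinforced by the hypothesis of (i) at $\gamma=1$ yields the surjection $\partial_t : F_{k-1}\text{gr}_V^{\gamma_0 + 1}\scM_f \twoheadrightarrow F_k \text{gr}_V^{\gamma_0}\scM_f$. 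Lifting, one decomposes $u = \partial_t u_1 + u_2$ with $u_1 \in F_{k-1}V^{\gamma_0+1}\scM_f$ and $u_2 \in F_k V^{>\gamma_0}\scM_f$: the outer induction on $k$ applied to $u_1$ expresses $\partial_t u_1$ in the desired form (with each $\partial_t$-factor increasing the index $i$ by one), while a secondary iteration on $\gamma_0$, terminating in finitely many steps by discreteness of $V^\bullet$, handles $u_2$.

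The proof of (ii) follows the same structure, but the extra hypothesis forces $t: V^0\scM_f \isommap V^1\scM_f$ to be bijective---surjectivity being automatic, since $t\partial_t = \partial_tt - 1$ acts as $-1+N$ on $\text{gr}_V^1\scM_f$ and is therefore invertible, while injectivity comes directly from the hypothesis (combined with the known $t$-injectivity on $V^{>0}\scM_f$)---and strictness extends this bijectivity to each $F_\bullet$-step. This upgrades the key intersection identity to $V^0\scM_f \cap j_{f,*}j_f^* F_{k-i}\scM_f = F_{k-i}V^0\scM_f$, and the iteration on $\gamma_0$ now only needs to run until $\gamma_0 \geq 0$. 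The main obstacle in both parts is establishing the intersection identity itself: bridging the ``generic'' filtration constraint on $X \times \bC^*$ to the global one on $X \times \bC$ requires threading the compatibility axioms of Definition~\ref{defnspecial3} through the $t$-torsion in $\scM_f$ and carefully controlling the interplay between $t$, $\partial_t$, and the two filtrations simultaneously.
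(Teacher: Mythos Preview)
The paper does not supply its own proof of this lemma; it is cited directly from \cite{MSai88}, Proposition 3.2.2 and Remarque 3.2.3. Your overall strategy---proving the intersection identity $V^{>0}\scM_f \cap j_{f,*}j_f^*F_j\scM_f = F_jV^{>0}\scM_f$ via the $t$-action and then running a double induction on $k$ and on the $V$-level $\gamma_0$---is the standard one (and is Saito's), and your treatment of part (i) is correct.

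There is, however, a genuine error in your argument for part (ii). You assert that $t\partial_t = \partial_t t - 1$ acts as $-1 + N$ on $\text{gr}_V^1\scM_f$ and is therefore invertible, whence $t: V^0\scM_f \to V^1\scM_f$ is surjective. But on $\text{gr}_V^1\scM_f$ the operator $\partial_t t$ has eigenvalue $\gamma = 1$, so $t\partial_t$ acts as the nilpotent $N$, not as $-1 + N$. In fact $t:\text{gr}_V^0\scM_f \to \text{gr}_V^1\scM_f$ need not be surjective (e.g.\ $\scM = \scO_X$ with $f$ smooth: $\text{gr}_V^0 = 0$ while $\text{gr}_V^1 \neq 0$). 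Fortunately surjectivity is unnecessary for the intersection identity $V^0\scM_f \cap j_{f,*}j_f^*F_j\scM_f = F_jV^0\scM_f$. Given $u$ in the left-hand side, $tu \in V^1\scM_f \cap j_{f,*}j_f^*F_j\scM_f \subseteq F_jV^{>0}\scM_f$ by the identity already established for $V^{>0}$, hence $tu \in F_jV^1\scM_f$. Strictness now gives $t\bar u \in t(\text{gr}_V^0\scM_f)\cap F_j\text{gr}_V^1\scM_f = t(F_j\text{gr}_V^0\scM_f)$, and injectivity of $t$ on $\text{gr}_V^0\scM_f$ forces $\bar u \in F_j\text{gr}_V^0\scM_f$; lifting to $v \in F_jV^0\scM_f$ leaves $u - v \in V^{>0}\scM_f \cap j_{f,*}j_f^*F_j\scM_f = F_jV^{>0}\scM_f$, so $u \in F_jV^0\scM_f$. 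Your $\subseteq$ argument for (ii) is unaffected, since that induction terminates at $\gamma_0 \geq 0$ and never invokes the false surjectivity.
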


\begin{rem}

Note that the strictness of $t:\text{gr}_V^0\scM_f \to\text{gr}_V^1\scM_f$ is indeed automatic whenever $\scM$ underlies a complex mixed Hodge module, so only injectivity need be checked in this situation. See also that the hypothesis of part i) is also satisfied whenever $\scM$ underlies a complex mixed Hodge module and $\partial_t : \text{gr}_V^1\scM_f \to\text{gr}_V^0\scM_f$ is surjective, again by the inherent strictness of morphisms of mixed Hodge modules.
    
\end{rem}

\vspace{5pt}

\subsection{Hodge and weight filtrations via $V$-filtrations} \label{HWV}

\; \vspace{5pt} \\ We now return to our study of the $\scD$-modules $\scO_{X,-\alpha f}$ and $\scM(f^{-\alpha})$, employing the framework of the previous section to obtain an expression for the Hodge and weight filtrations on $\scM(f^{-\alpha})$ in terms of the associated Kashiwara-Malgrange $V$-filtrations.

As usual, let $X$ be a complex manifold and $f\in\scO_X$ a holomorphic function such that the zero locus is a hypersurface. We write $i:Z:=f^{-1}(0)\hookrightarrow X$ for the natural closed embedding. Let $\alpha\in\bQ$.

\begin{prop}

In the category of complex mixed Hodge modules $\text{\emph{MHM}}(X,\bC)$ there are short exact sequences 
\[0 \rightarrow \phi_{f,1}\underline{\bC}_{X,-\alpha f}[n] \xrightarrow{\text{\emph{var}}} \psi_{f,1}\underline{\bC}_{X,-\alpha f}[n](-1)\rightarrow H^1 i_*i^!\underline{\bC}_{X,-\alpha f}[n] \rightarrow 0,\]
\[0 \rightarrow H^{-1} i_*i^*\underline{\bC}_{X,-\alpha f}[n] \rightarrow \psi_{f,1}\underline{\bC}_{X,-\alpha f}[n] \xrightarrow{\text{\emph{can}}} \phi_{f,1}\underline{\bC}_{X,-\alpha f}[n] \rightarrow 0.\]

\vspace{5pt} \noindent We therefore obtain, in $\text{\emph{MHM}}(X,\bC)$, an exact sequence
\[0 \rightarrow H^{-1} i_*i^*\underline{\bC}_{X,-\alpha f}[n] \rightarrow \psi_{f,1}\underline{\bC}_{X,-\alpha f}[n] \xrightarrow{N} \psi_{f,1}\underline{\bC}_{X,-\alpha f}[n](-1) \rightarrow \frac{M(f^{-\alpha})}{\underline{\bC}_{X,-\alpha f}[n]}\rightarrow 0.\]

\label{propcanvar}
    
\end{prop}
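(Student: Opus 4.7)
The plan is to apply the two exact triangles given in Theorem \ref{thmphipsiMHM} to $M:=\underline{\bC}_{X,-\alpha f}[n]$ and read off long exact cohomology sequences in the abelian category $\text{MHM}(X,\bC)$. Since $\psi_{f,1}M$ and $\phi_{f,1}M$ are honest mixed Hodge modules (i.e.\ concentrated in cohomological degree $0$), the triangles
\[i_*i^!M\to\phi_{f,1}M\xrightarrow{\text{var}}\psi_{f,1}M(-1)\xrightarrow{+1}\quad\text{and}\quad\psi_{f,1}M\xrightarrow{\text{can}}\phi_{f,1}M\to i_*i^*M\xrightarrow{+1}\]
yield, upon taking cohomology, the four-term exact sequences
\[0\to H^0i_*i^!M\to\phi_{f,1}M\xrightarrow{\text{var}}\psi_{f,1}M(-1)\to H^1i_*i^!M\to 0,\]
\[0\to H^{-1}i_*i^*M\to\psi_{f,1}M\xrightarrow{\text{can}}\phi_{f,1}M\to H^0i_*i^*M\to 0.\]
Thus the two short exact sequences claimed in the proposition reduce to the vanishings $H^0i_*i^!M=0$ and $H^0i_*i^*M=0$, equivalently to the injectivity of $\text{var}$ and the surjectivity of $\text{can}$ on $M$.

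To obtain these vanishings I exploit the fact that $M=j_{!*}\underline{\bC}_{U,-\alpha f}[n]$ is a minimal extension (Definition \ref{defnMHMs} and Proposition \ref{propextension}). Applying the triangles of Remark \ref{remjjtriangles}, namely
\[i_*i^!M\to M\to j_*j^*M\xrightarrow{+1},\qquad j_!j^*M\to M\to i_*i^*M\xrightarrow{+1},\]
and using that both $j_!j^*M$ and $j_*j^*M$ lie in $\text{MHM}(X,\bC)$ (Proposition \ref{proplocalisable}, since $Z$ is a hypersurface), the associated long exact sequences collapse to
\[0\to H^0i_*i^!M\to M\to j_*j^*M\to H^1i_*i^!M\to 0,\]
\[0\to H^{-1}i_*i^*M\to j_!j^*M\to M\to H^0i_*i^*M\to 0.\]
By the characterising property of $j_{!*}$ as a subobject of $j_*$ and a quotient of $j_!$, the canonical morphism $M\to j_*j^*M$ is injective and $j_!j^*M\to M$ is surjective. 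Hence $H^0i_*i^!M=0$ and $H^0i_*i^*M=0$, which gives both short exact sequences.

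Finally, I splice the two short exact sequences using $N=\text{var}\circ\text{can}$. Injectivity of $\text{var}$ gives $\ker N=\ker(\text{can})\simeq H^{-1}i_*i^*M$, and surjectivity of $\text{can}$ gives $\text{im}(N)=\text{var}(\phi_{f,1}M)$, whence $\text{coker}(N)=\text{coker}(\text{var})\simeq H^1i_*i^!M$. The first of the four-term sequences above, combined with the identification $j_*\underline{\bC}_{U,-\alpha f}[n]=M(f^{-\alpha})$ from Definition \ref{defnMHMs}, shows $H^1i_*i^!M\simeq M(f^{-\alpha})/\underline{\bC}_{X,-\alpha f}[n]$, producing the displayed four-term sequence. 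The only non-formal input is the minimal extension property, and recognising that it is precisely what forces $\text{var}$ to be injective and $\text{can}$ surjective is the main (albeit mild) obstacle; everything else is bookkeeping with long exact sequences in $\text{MHM}(X,\bC)$.
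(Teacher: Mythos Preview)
Your proof is correct and follows the same overall structure as the paper's: reduce the short exact sequences to the vanishings $H^0i_*i^!M=0$ and $H^0i_*i^*M=0$, then splice using $N=\text{var}\circ\text{can}$. The only difference lies in how these two vanishings are obtained. You argue symmetrically from the minimal extension property, using that $j_{!*}$ is a subobject of $j_*$ and a quotient of $j_!$; the paper instead checks $H^0i_*i^!M=0$ directly at the level of underlying $\scD$-modules (the localisation map $\scO_{X,-\alpha f}\to\scM(f^{-\alpha})$ is injective because $f$ is not a zero-divisor), and then deduces $H^0i_*i^*M=0$ by applying the duality functor together with $\bD\,\underline{\bC}_{X,-\alpha f}[n]\simeq\underline{\bC}_{X,\alpha f}[n]$. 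Your route is more categorical and avoids the explicit duality computation; the paper's is slightly more concrete on the $\scD$-module side. Either way the argument is short and the content is the same.
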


\begin{proof}

Of course, we use the exact triangles appearing in Theorem \ref{thmphipsiMHM}. 

Firstly, as seen in Remark \ref{remjjtriangles}, we have the exact triangle
\[i_*i^!\underline{\bC}_{X,-\alpha f}[n]\to\underline{\bC}_{X,-\alpha f}[n] \to j_*j^*\underline{\bC}_{X,-\alpha f}[n] \xrightarrow{+1},\]
where $j$ is the open embedding $j:\{f\neq 0\}\hookrightarrow X$. It is also clear by definition that
\[j_*j^*\underline{\bC}_{X,-\alpha f}[n] \simeq M(f^{-\alpha}).\]

Moreover, $f$ is clearly not a zero-divisor on the $\scO_X$-module $\scO_{X,-\alpha f}$, so the localisation map $\scO_{X,-\alpha f} \to \scM(f^{-\alpha})$ is injective, implying that $H^0i_*i^!\underline{\bC}_{X,-\alpha f}[n]=0$. We obtain therefore a short exact sequence
\[0\to\underline{\bC}_{X,-\alpha f}[n] \to M(f^{-\alpha}) \to H^1i_*i^!\underline{\bC}_{X,-\alpha f}[n] \to 0.\]

Applying the duality functor (Proposition \ref{propMHMfunctors}.iii)), we have also that
\[H^0i_*i^*\underline{\bC}_{X,-\alpha f}[n] = H^0i_*i^*\bD\underline{\bC}_{X,\alpha f}[n] = \bD H^0i_*i^!\underline{\bC}_{X,\alpha f}[n] =0.\]
Note that the dual of $\underline{\bC}_{X,-\alpha f}[n]$ equals $\underline{\bC}_{X,\alpha f}[n]$ since the local systems $\bV^{-\alpha}$ and $\bV^{\alpha}$ are dual, and the duality functor commutes with pullback and direct image.

Using the exact triangles in Theorem \ref{thmphipsiMHM} thus gives us the two short exact sequences as in the statement of the proposition. The final exact sequence then just follows by combining these two short exact sequences together (using that $N=\text{var}\circ\text{can}$), since as proven above we have that
\[H^1i_*i^!\underline{\bC}_{X,-\alpha f}[n] \simeq \frac{M(f^{-\alpha})}{\underline{\bC}_{X,-\alpha f}[n]}.\]

\end{proof}

\begin{lem}\vspace{-3pt}

\[j_{f,*}j_f^*F_{k+1}^Hi_{f,+}\scM(f^{-\alpha})\cap i_{f,+}\scM(f^{-\alpha}) = F_k^{t-\text{\emph{ord}}}i_{f,+}\scM(f^{-\alpha}):=\sum_{i=0}^k(i_{f,*}\scM(f^{-\alpha}))\partial_t^i.\vspace{-8pt}\] 

\label{lemjj}
    
\end{lem}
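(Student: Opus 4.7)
The plan is to compute both sides of the claimed equality by restricting to $X \times \bC^*$, where the Hodge filtration on $\scM(f^{-\alpha})|_U$ collapses to the trivial filtration concentrated in degree $0$, so that $F_{k+1}^Hi_{f,+}\scM(f^{-\alpha})$ coincides with the $t$-order filtration step on that open set.

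The first step will be to observe that on $U$, where $f$ is invertible, the $\scD_U$-module $\scM(f^{-\alpha})|_U$ equals $\scO_U f^{-\alpha} = \widetilde{f}^*\scO^{-\alpha}$, and by Example \ref{eglocsys} the Hodge filtration on this (coming from the associated variation of $\bC$-Hodge structure) is trivial: $F_0^H\scM(f^{-\alpha})|_U = \scM(f^{-\alpha})|_U$ and $F_{-1}^H\scM(f^{-\alpha})|_U = 0$. Substituting this into the filtered direct image formula
\[
F_{k+1}^Hi_{f,+}\scM(f^{-\alpha}) = \sum_{i\geq 0}\left(i_{f,*}F_{k-i}^H\scM(f^{-\alpha})\right)\partial_t^i
\]
recalled earlier in this subsection, the restriction of $F_{k+1}^Hi_{f,+}\scM(f^{-\alpha})$ to $X \times \bC^*$ reduces to $\bigoplus_{i=0}^{k} \scM(f^{-\alpha})|_U \cdot \partial_t^i$, since the summands with $i > k$ vanish.

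With this in hand, I would analyze an arbitrary element $\eta = \sum_{i=0}^{N}m_i\partial_t^i \in i_{f,+}\scM(f^{-\alpha})$ (with $m_i \in \scM(f^{-\alpha})$) and observe that its image in $j_{f,*}j_f^*i_{f,+}\scM(f^{-\alpha})$ lies in the subsheaf $j_{f,*}j_f^*F_{k+1}^Hi_{f,+}\scM(f^{-\alpha})$ exactly when $m_i|_U = 0$ for all $i > k$. Since $\scM(f^{-\alpha}) = \scO_X(*f)f^{-\alpha}$ has no sections supported on $Z$ (so the restriction $\scM(f^{-\alpha}) \hookrightarrow j_*(\scM(f^{-\alpha})|_U)$ is injective, using that $\scO_X$ itself has no sections supported on the hypersurface $Z$), this vanishing on $U$ forces $m_i = 0$ for all $i > k$, giving $\eta \in F_k^{t-\text{ord}}i_{f,+}\scM(f^{-\alpha})$. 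The reverse inclusion is immediate from running the same computation in the other direction.

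The only step requiring genuine care is the first: correctly identifying the Hodge filtration on $\scM(f^{-\alpha})|_U$ via the variation of $\bC$-Hodge structure in Example \ref{eglocsys}. Once that is done, the equality reduces to the observation that a finite $\partial_t$-polynomial with $\scM(f^{-\alpha})$-coefficients has vanishing coefficients of degree $> k$ if and only if it lies in the $t$-order truncation, which is purely formal.
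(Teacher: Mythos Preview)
Your proof is correct and follows essentially the same approach as the paper: both arguments restrict to $X\times\bC^*$ (the paper via the Cartesian diagram with $i_f'$ and $j$), use that the Hodge filtration on $\scM(f^{-\alpha})|_U=\scO_{U,-\alpha f}$ is trivial in degree $0$, and then apply the filtered direct image formula for $i_{f,+}$ to identify the restriction of $F_{k+1}^H$ with the $t$-order truncation. Your version makes explicit the injectivity step (no sections of $\scM(f^{-\alpha})$ supported on $Z$) that the paper leaves implicit in its final equality.
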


\begin{proof}

Consider the following Cartesian diagram. \vspace{3pt}

\begin{center}
\begin{tikzcd}
U \arrow[d,"j"] \arrow[rr, "i_f'"] & & X\times\bC^* \arrow[d,"j_f"]\\
X \arrow[rr, "i_f"] & & X\times\bC\\
\end{tikzcd}\vspace{-17pt}
\end{center}

\noindent Here, $i_f'$ is the graph embedding of the function $f$ on $U$.

As we've seen before, $j^*\scM(f^{-\alpha}) = \scO_{U,-\alpha f}$. Thus, recalling that the filtration on the pullback of a filtered $\scD$-module is simply given by pulling back the individual steps of the original filtration, we have that $j^*F_k^H\scM(f^{-\alpha}) = \scO_{U,-\alpha f}$ for all $k\geq 0$. Thus 

\begingroup
\allowdisplaybreaks
\begin{align*}
j_f^*F_{k+1}^Hi_{f,+}\scM(f^{-\alpha}) &= j_f^*\sum_{i\geq 0}i_{f,*}F_{k-i}^H\scM(f^{-\alpha})\partial_t^i\\[1pt]
&=\sum_{i\geq 0}i_{f,*}'j^*F_{k-i}^H\scM(f^{-\alpha})\partial_t^i\\[-4pt]
&=\sum_{i=0}^ki_{f,*}'\scO_{U,-\alpha f}\partial_t^i,\\[-15pt]
\end{align*}
\endgroup
thus implying that
\begin{align*}j_{f,*}j_f^*F_{k+1}^Hi_{f,+}\scM(f^{-\alpha})\cap i_{f,+}\scM(f^{-\alpha})&=\left(\sum_{i=0}^k(i_f\circ j)_*\scO_{U,-\alpha f}\partial_t^i\right)\cap i_{f,+}\scM(f^{-\alpha})\\&=\sum_{i=0}^k(i_{f,*}\scM(f^{-\alpha}))\partial_t^i\end{align*}
as required.
    
\end{proof}

\begin{cor}

\[F_{k+1}^HW_{n+l}i_{f,+}\scM(f^{-\alpha}) = \sum_{i\geq 0}\partial_t^i\cdot(V^0W_{n+l}i_{f,+}\scM(f^{-\alpha})\cap F_{k-i}^{t-\text{\emph{ord}}}i_{f,+}\scM(f^{-\alpha})).\]

\label{corV0}
    
\end{cor}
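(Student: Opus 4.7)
The plan is to apply Lemma \ref{lemV0}.ii) to the filtered $\scD_{X\times\bC}$-module $(W_{n+l}i_{f,+}\scM(f^{-\alpha}), F_\bullet^H)$, and then to rewrite the resulting intersection using essentially the same bookkeeping as in Lemma \ref{lemjj}. Since $(W_{n+l}i_{f,+}\scM(f^{-\alpha}), F_\bullet^H) = (i_{f,+}W_{n+l}\scM(f^{-\alpha}), F_\bullet^H)$ underlies the complex mixed Hodge module $i_{f,+}W_{n+l}M(f^{-\alpha})$, Lemma \ref{lemMfquni} ensures that it is quasi-unipotent and regular along $\{t=0\}$. By the uniqueness of the $V$-filtration, the inclusion $W_{n+l}i_{f,+}\scM(f^{-\alpha}) \hookrightarrow i_{f,+}\scM(f^{-\alpha})$ gives $V^\gamma W_{n+l}i_{f,+}\scM(f^{-\alpha}) = V^\gamma i_{f,+}\scM(f^{-\alpha}) \cap W_{n+l}i_{f,+}\scM(f^{-\alpha})$, a compatibility I will use freely.

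Next I would verify the hypothesis of Lemma \ref{lemV0}.ii): namely, that $t : \text{gr}_V^0 W_{n+l}i_{f,+}\scM(f^{-\alpha}) \to \text{gr}_V^1 W_{n+l}i_{f,+}\scM(f^{-\alpha})$ is injective and strict with respect to $F_\bullet^H$. Strictness is automatic because this is a morphism of complex mixed Hodge modules. For injectivity, Proposition \ref{propVcomp}.iii) shows that $\Phi$ is $\scD_X\langle t, t^{-1}, s\rangle$-linear, so $t$ acts invertibly on $i_{f,+}\scM(f^{-\alpha})$; combined with Proposition \ref{propVcomp}.ii) (which in particular forces $t \cdot V^{>0}i_{f,+}\scM(f^{-\alpha}) = V^{>1}i_{f,+}\scM(f^{-\alpha})$), one deduces that if $u \in V^0$ satisfies $tu \in V^{>1}$ then $u \in V^{>0}$. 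The compatibility observed above then transfers this injectivity to the weight step $W_{n+l}$.

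Applying Lemma \ref{lemV0}.ii) now yields
\[F_{k+1}^H W_{n+l}i_{f,+}\scM(f^{-\alpha}) = \sum_{i \geq 0}\partial_t^i\cdot\left(V^0 W_{n+l}i_{f,+}\scM(f^{-\alpha}) \cap j_{f,*}j_f^* F_{k+1-i}^H W_{n+l}i_{f,+}\scM(f^{-\alpha})\right).\]
To finish, I would rewrite each intersection. Since $V^0 W_{n+l}i_{f,+}\scM(f^{-\alpha}) \subseteq i_{f,+}\scM(f^{-\alpha})$, the localised factor may be further intersected with $i_{f,+}\scM(f^{-\alpha})$ without change. On $U$, every weight step $W_{n+l}\scM(f^{-\alpha})|_U$ collapses to $\scO_{U,-\alpha f}$ (since the open restriction of $M(f^{-\alpha})$ is the pure Hodge module $\underline{\bC}_{U,-\alpha f}[n]$ induced by a VHS with trivial Hodge filtration), so $j^* F_k^H W_{n+l}\scM(f^{-\alpha})$ equals $\scO_{U,-\alpha f}$ for $k \geq 0$ and vanishes otherwise. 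The same calculation as in Lemma \ref{lemjj} then gives $j_{f,*}j_f^* F_{k+1-i}^H W_{n+l}i_{f,+}\scM(f^{-\alpha}) \cap i_{f,+}\scM(f^{-\alpha}) = F_{k-i}^{t-\text{ord}}i_{f,+}\scM(f^{-\alpha})$, completing the argument.

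The main obstacle, in my view, is the injectivity step: one must correctly identify the induced $V$-filtration on the $\scD$-submodule $W_{n+l}$ and invoke the $t$-invertibility of Proposition \ref{propVcomp}. Everything else is essentially routine bookkeeping modelled on Lemmas \ref{lemV0} and \ref{lemjj}.
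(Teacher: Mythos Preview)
Your proposal is correct and takes essentially the same approach as the paper, which simply states that the result follows by combining Lemma~\ref{lemV0} and Lemma~\ref{lemjj} together with the identification $W_{n+l}i_{f,+}\scM(f^{-\alpha})=i_{f,+}W_{n+l}\scM(f^{-\alpha})$. You have spelled out the details the paper leaves implicit: the verification of the injectivity hypothesis for $t$ via Proposition~\ref{propVcomp}, and the observation that $j_f^*W_{n+l}i_{f,+}\scM(f^{-\alpha})=j_f^*i_{f,+}\scM(f^{-\alpha})$ so that the Lemma~\ref{lemjj} computation goes through unchanged. One minor imprecision: Lemma~\ref{lemMfquni} is stated only for $\scM(f^{-\alpha})$ itself, but the same argument (via Lemma~\ref{lemsummand}) shows that each weight step is also a summand of a rational mixed Hodge module and hence quasi-unipotent and regular.
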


\begin{proof}

This now follows immediately by combining Lemma \ref{lemV0} and Lemma \ref{lemjj}, since 
\[W_{n+l}i_{f,+}\scM(f^{-\alpha})=i_{f,+}W_{n+l}\scM(f^{-\alpha}).\]
    
\end{proof}

\begin{cor}

\[V^0i_{f,+}\scM(f^{-\alpha})\cap F_{k+1}^Hi_{f,+}\scM(f^{-\alpha})=V^0i_{f,+}\scM(f^{-\alpha})\cap F_k^{t-\text{\emph{ord}}}i_{f,+}\scM(f^{-\alpha}).\]
Thus the Hodge filtrations overlying $\phi_{f,1}M(f^{-\alpha})$ and $\psi_{f,1}M(f^{-\alpha})(-1)$ are equal to the ones induced by the $t$-order filtrations on $i_{f,+}\scM(f^{-\alpha})$.

\label{corV02}
    
\end{cor}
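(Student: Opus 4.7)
The plan is to split the claimed equality into two inclusions, one of which is essentially definitional and the other of which is an immediate consequence of Corollary \ref{corV0}.

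For the inclusion $V^0 \cap F_{k+1}^H \subseteq V^0 \cap F_k^{t-\text{ord}}$, I would simply unpack the definition of the Hodge filtration on the pushforward. Since $F_{-j}^H\scM(f^{-\alpha})=0$ for $j \geq 1$, one has
\[F_{k+1}^H i_{f,+}\scM(f^{-\alpha}) = \sum_{i=0}^{k} F_{k-i}^H\scM(f^{-\alpha})\,\partial_t^i \subseteq \sum_{i=0}^{k} \scM(f^{-\alpha})\,\partial_t^i = F_k^{t-\text{ord}} i_{f,+}\scM(f^{-\alpha}),\]
and intersecting with $V^0 i_{f,+}\scM(f^{-\alpha})$ gives the desired inclusion. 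For the reverse inclusion, I would apply Corollary \ref{corV0} with $l$ chosen large enough that $W_{n+l} i_{f,+}\scM(f^{-\alpha}) = i_{f,+}\scM(f^{-\alpha})$ (such $l$ exists by the note following Theorem \ref{thmpiplus}), obtaining
\[F_{k+1}^H i_{f,+}\scM(f^{-\alpha}) = \sum_{i\geq 0} \partial_t^i \cdot \bigl(V^0 i_{f,+}\scM(f^{-\alpha}) \cap F_{k-i}^{t-\text{ord}} i_{f,+}\scM(f^{-\alpha})\bigr).\]
The $i=0$ summand yields $V^0 \cap F_k^{t-\text{ord}} \subseteq F_{k+1}^H$, and as this set already lies in $V^0$ we obtain $V^0 \cap F_k^{t-\text{ord}} \subseteq V^0 \cap F_{k+1}^H$, completing the first statement.

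For the final assertion, the same double-chain of inclusions forces $V^{>0} \cap F_{k+1}^H = V^{>0} \cap F_k^{t-\text{ord}}$ as well: any element of $V^{>0}\cap F_{k+1}^H$ lies in $V^0 \cap F_{k+1}^H = V^0 \cap F_k^{t-\text{ord}}$, hence in $V^{>0}\cap F_k^{t-\text{ord}}$, and the reverse inclusion is symmetric. Passing to the quotient $\text{gr}_V^0 = V^0/V^{>0}$ identifies, after the standard shift by one, the Hodge filtration on $\phi_{f,1}M(f^{-\alpha})$ with the one induced by the $t$-order filtration. For $\psi_{f,1}M(f^{-\alpha})(-1)$, I would use the surjection $t: V^0 \twoheadrightarrow V^1$ from Proposition \ref{propVcomp}.ii) to transfer the equality from $V^0$ to $V^1$, and hence to $\text{gr}_V^1$; the Tate twist $(-1)$ shifts the Hodge filtration by exactly one, exactly absorbing the indexing discrepancy in the statement.

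The main obstacle is not any mathematical difficulty but rather the bookkeeping of filtration shifts: the $+1$ built into the pushforward Hodge filtration, the $-1$ from the Tate twist on $\psi_{f,1}(-1)$, and ensuring the analogous equality on $V^{>0}$ and $V^1$ descends correctly to the graded pieces. The substantive content of the proof is a one-line extraction of the $i=0$ term in Corollary \ref{corV0}, together with the elementary observation that $F_{k+1}^H i_{f,+}\scM(f^{-\alpha}) \subseteq F_k^{t-\text{ord}} i_{f,+}\scM(f^{-\alpha})$ holds globally, not just after intersecting with $V^0$.
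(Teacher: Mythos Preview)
Your argument for the main equality is exactly the paper's: the chain $F_k^{t-\text{ord}}V^0 \subseteq F_{k+1}^H \subseteq F_k^{t-\text{ord}}$ follows from Corollary~\ref{corV0} (the $i=0$ summand) and the definition of the pushforward Hodge filtration, and intersecting with $V^0$ gives the result.

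For the final statement the paper takes a slightly different route. Rather than passing to $V^{>0}$ and $V^1$ directly, it observes that $\text{var}:\phi_{f,1}M(f^{-\alpha})\to\psi_{f,1}M(f^{-\alpha})(-1)$ is an \emph{isomorphism} of mixed Hodge modules (this is what Proposition~\ref{propVcomp}.ii) really buys: $t$ is bijective $\text{gr}_V^0\to\text{gr}_V^1$), so once the Hodge filtration on $\phi_{f,1}$ is identified with the $t$-order filtration, the same holds automatically for $\psi_{f,1}(-1)$. Your approach is also correct, but note that your detour through the surjection $t:V^0\to V^1$ is unnecessary: since $V^1\subseteq V^{>0}\subseteq V^0$, the equality $V^1\cap F_{k+1}^H=V^1\cap F_k^{t-\text{ord}}$ (and likewise for $V^{>1}$) follows immediately from the main equality by further intersection, without invoking $t$ at all.
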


\begin{proof}

This is clear, as Corollary $\ref{corV0}$ in particular implies that
\[F_k^{t-\text{ord}}V^0i_{f,+}\scM(f^{-\alpha})\subseteq F_{k+1}^Hi_{f,+}\scM(f^{-\alpha})\subseteq F_k^{t-\text{ord}}i_{f,+}\scM(f^{-\alpha}).\]
The final statement follows since 
\[\text{var}:\phi_{f,1}M(f^{-\alpha})\to\psi_{f,1}M(f^{-\alpha})(-1)\]
may easily be checked to be an isomorphism of mixed Hodge modules (using Proposition \ref{propVcomp}).
    
\end{proof}

\noindent Now consider, for each $\beta, \gamma \in\bQ$, the map
\[\psi_{-\beta}:i_{f,+}\scM(f^{-\gamma}) \to \scM(f^{-\gamma-\beta})\, ;\, u \partial_t^j \mapsto uQ_j(\beta)f^{-j-\beta},\]
for $u \in \scM(f^{-\gamma})$, where we recall that $Q_j(s)$ is the polynomial $Q_j(s)=s(s+1) \ldots (s+j-1)$ ($:=1$ if $j=0$).

The main theorem of Section \ref{sectionVfilt} is as follows. Using our understanding of the interaction between the Hodge filtrations on $\scM(f^{-\alpha})$ and $\psi_{f,1}\scO_{X,-\alpha f}$ seen above, we have the following expression for the Hodge and weight filtrations on $\scM(f^{-\alpha})$ in terms of the Kashiwara-Malgrange $V$-filtration.

\begin{thm}

For any $k,l \in \bZ_{\geq 0}$, we have the equalities
\[F_k^HW_{n+l}\scM(f^{-\alpha}) = \psi_0\left(F_k^{t-\text{\emph{ord}}}K_lV^0i_{f,+}\scM(f^{-\alpha})\right) = \psi_{-\alpha}\left(F_k^{t-\text{\emph{ord}}}K_lV^{\alpha}i_{f,+}\scO_X(*f)\right).\]

\label{thmmainformula}
    
\end{thm}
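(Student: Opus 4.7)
The plan is to handle the two equalities separately.

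The second equality is essentially formal. The isomorphism $\Phi$ of Proposition \ref{propVcomp}(iii) sends $V^0$ onto $V^{\alpha}$ and preserves the $t$-order filtration; by its $s$-linearity (with $s$ acting as $s+\alpha$ on the target) it intertwines $\partial_tt$ on the source with $\partial_tt-\alpha$ on the target, so $\Phi(K_lV^0)=K_lV^{\alpha}$. A direct computation with the explicit formula for $\Phi$ given in Proposition \ref{propVcomp}(iii) verifies $\psi_{-\alpha}\circ\Phi=\psi_0$, which gives the second equality immediately.

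For the first equality, the starting point is Corollary \ref{corV0}. Applying $\psi_0$ to
\[F_{k+1}^HW_{n+l}i_{f,+}\scM(f^{-\alpha})=\sum_{i\geq 0}\partial_t^i\cdot\bigl(V^0W_{n+l}i_{f,+}\scM(f^{-\alpha})\cap F_{k-i}^{t-\text{ord}}i_{f,+}\scM(f^{-\alpha})\bigr),\]
all summands with $i\geq 1$ vanish (since $\psi_0(u\partial_t^j)=uQ_j(0)f^{-j}=0$ for $j\geq 1$), while on the left the definition of the pushforward Hodge filtration yields $F_k^HW_{n+l}\scM(f^{-\alpha})$. Hence the theorem reduces to proving
\[\psi_0\bigl(V^0i_{f,+}W_{n+l}\scM(f^{-\alpha})\cap F_k^{t-\text{ord}}\bigr)=\psi_0\bigl(K_lV^0i_{f,+}\scM(f^{-\alpha})\cap F_k^{t-\text{ord}}\bigr).\]

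The main obstacle is proving this last equality of subsets of $\scM(f^{-\alpha})$, and I would proceed by induction on $l$. The base case $l=0$ is $\psi_0(V^0i_{f,+}\scO_{X,-\alpha f}\cap F_k^{t-\text{ord}})=\psi_0(V^{>0}i_{f,+}\scM(f^{-\alpha})\cap F_k^{t-\text{ord}})$; both sides are shown separately to equal $F_k^H\scO_{X,-\alpha f}$, the left by Corollary \ref{corV0} applied at $l=0$, and the right by the classical Hodge-filtration formula for the minimal extension (Lemma \ref{lemV0}(i) applied to $\scO_{X,-\alpha f}$), combined with Proposition \ref{propVcomp}(i) to identify the $V^{>0}$-pieces of $i_{f,+}\scO_{X,-\alpha f}$ and $i_{f,+}\scM(f^{-\alpha})$. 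For the inductive step, one uses Proposition \ref{propcanvar} to identify $\scM(f^{-\alpha})/\scO_{X,-\alpha f}\simeq\text{coker}(N:\psi_{f,1}\scM(f^{-\alpha})\to\psi_{f,1}\scM(f^{-\alpha})(-1))$, together with Theorem \ref{thmphipsiMHM} which identifies the weight filtrations on $\psi_{f,1}\scM(f^{-\alpha})$ and $\phi_{f,1}\scM(f^{-\alpha})$ with the (relative) monodromy weight filtrations. Since $N$ shifts weight by $-2$, the jump from $W_{n+l}\scM(f^{-\alpha})$ to $W_{n+l+1}\scM(f^{-\alpha})$ in the cokernel corresponds to the addition of one more power of $N=\partial_tt$ in the kernel filtration on $\phi_{f,1}\scM(f^{-\alpha})=V^0/V^{>0}$, and the strictness of morphisms of mixed Hodge modules with respect to the Hodge filtration propagates this comparison through $\psi_0$ and the $t$-order filtration, matching $\psi_0(K_{l+1}V^0\cap F_k^{t-\text{ord}})/\psi_0(K_lV^0\cap F_k^{t-\text{ord}})$ with the corresponding $F_k^H\text{gr}^W_{n+l+1}\scM(f^{-\alpha})$ and completing the induction.
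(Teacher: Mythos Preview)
Your treatment of the second equality and the reduction of the first equality via Corollary~\ref{corV0} are correct and match the paper. The base case $l=0$ is also fine: Lemma~\ref{lemV0}(i) indeed applies to $\scO_{X,-\alpha f}$ because for the minimal extension the map $\text{can}=\partial_t$ is surjective, and Proposition~\ref{propVcomp}(i) identifies the relevant $V^{>0}$-pieces.

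The inductive step, however, has a genuine gap. Your plan is to compute $W_{n+l}(\scM(f^{-\alpha})/\scO_{X,-\alpha f})$ via Proposition~\ref{propcanvar} as the image of $K_l\psi_{f,1}$ in $\text{coker}(N)$, and then match this with $\psi_0(K_lV^0\cap F_k^{t-\text{ord}})$ modulo $\scO_{X,-\alpha f}$. But Proposition~\ref{propcanvar} only gives an \emph{abstract} isomorphism of mixed Hodge modules; to make your comparison you must know that the concrete map $\psi_0$ (equivalently $\psi_{-1}:\text{gr}_V^1\to\scM(f^{-\alpha})/\scO_{X,-\alpha f}$) realises this isomorphism in a Hodge- and weight-compatible way. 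In the paper that compatibility is Corollary~\ref{cortauH}, which is deduced \emph{from} Theorem~\ref{thmmainformula} --- so invoking it here is circular. The assertion that ``strictness propagates through $\psi_0$'' is exactly the point at issue: $\psi_0$ is not a priori a morphism of mixed Hodge modules. A second, smaller omission: $M(f^{-\alpha})$ is mixed, so Theorem~\ref{thmphipsiMHM} only gives the \emph{relative} monodromy weight filtration on $\psi_{f,1}M(f^{-\alpha})$; you must still argue (as the paper does, using Proposition~\ref{propVcomp}(i) to see the na\"ive limit filtration on $\text{gr}_V^1$ is trivial) that it coincides with the ordinary monodromy filtration.

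The paper avoids both difficulties by invoking a different structural input: Saito's gluing formula (\cite{MSai90}, Corollary~1.9), which expresses $\text{gr}_V^0W_ii_{f,+}\scM(f^{-\alpha})$ directly as $\partial_t\cdot\text{gr}_V^1+M_i^{\text{rel}}\text{gr}_V^0$. After showing $M^{\text{rel}}=M$ on $\text{gr}_V^1$ and using the explicit formula $M_{n+l-2}=\sum_{m\geq 0}(t\partial_t)^mK_{2m+l}$ together with the bijectivity of $t:\text{gr}_V^0\to\text{gr}_V^1$, one obtains $V^0W_{n+l}=\partial_t\cdot V^1+K_lV^0$ directly, with no need to identify $\psi_0$ as a Hodge-theoretic map. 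The $t$-order filtration is then incorporated via the strictness of the \emph{a priori} MHM morphism $(\partial_tt)^l\partial_t$, and applying $\psi_0$ kills the $\partial_t\cdot V^1$ summand.
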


\begin{proof}

The second equality follows immediately once we note that $\psi_0=\psi_{-\alpha}\circ\Phi$, where $\Phi$ is the $s$-linear (thus preserving the kernel filtration) isomorphism appearing in Proposition \ref{propVcomp}. Thus it suffices to prove the first equality. 

Now, by Corollary \ref{corV0}, we have the equation
\begin{equation}\label{eq1}
F_{k+1}^HW_{n+l}i_{f,+}\scM(f^{-\alpha})=\sum_{i\geq 0}\partial_t^i\cdot\left(F_{k-i}^{t-\text{ord}}V^0W_{n+l}i_{f,+}\scM(f^{-\alpha})\right),
\end{equation}
noting also that we have an equality $V^0W_{n+l}i_{f,+}\scM(f^{-\alpha}) = W_{n+l}i_{f,+}\scM(f^{-\alpha}) \cap V^0i_{f,+}\scM(f^{-\alpha})$.

We have moreover that
\begin{equation}\label{eq2}
F_{k+1}^HW_{n+l}i_{f,+}\scM(f^{-\alpha})=F_{k+1}^Hi_{f,+}W_{n+l}\scM(f^{-\alpha}) = \sum_{i\geq 0}\left(F_{k-i}^HW_{n+l}\scM(f^{-\alpha})\right)\partial_t^i.
\end{equation}

We thus reduce to obtaining an expression for $F_k^{t-\text{ord}}V^0W_{n+l}i_{f,+}\scM(f^{-\alpha})$.

Firstly, Proposition \ref{propVcomp} implies that we have an equality
\begin{equation}\label{eqnew1}
V^{>0}i_{f,+}\scO_{X,-\alpha f}=V^{>0}W_{n+l}i_{f,+}\scM(f^{-\alpha})=V^{>0}i_{f,+}\scM(f^{-\alpha}),
\end{equation}
which implies in particular that 
\[\text{gr}_V^1W_{n+l}i_{f,+}\scM(f^{-\alpha})=\text{gr}_V^1i_{f,+}\scM(f^{-\alpha}).\]

Now, by \cite{MSai90}, Corollary 1.9 (condition (1.9.2) specifically), the weight filtration $W_{\bullet}$ on $i_{f,+}\scM(f^{-\alpha})$ and relative monodromy weight filtration $M_{\bullet}^{\text{rel}}$ on $\text{gr}_V^0i_{f,+}\scM(f^{-\alpha})$ satisfy the equation (in $\text{gr}_V^0i_{f,+}\scM(f^{-\alpha})$)
\[\text{gr}_V^0W_ii_{f,+}\scM(f^{-\alpha})=\partial_t\cdot\text{gr}_V^1i_{f,+}\scM(f^{-\alpha}) + M_i^{\text{rel}}\text{gr}_V^0i_{f,+}\scM(f^{-\alpha}),\]
since $M_{\bullet}^{\text{rel}}$ is by definition the relative monodromy weight filtration associated to the naïve limit filtration $L_i\text{gr}_V^0i_{f,+}\scM(f^{-\alpha}):=\text{gr}_V^0W_ii_{f,+}\scM(f^{-\alpha})$.

This induces in turn an equality
\[V^0W_{n+l}i_{f,+}\scM(f^{-\alpha})=\partial_t\cdot V^1i_{f,+}\scM(f^{-\alpha}) + M_{n+l}^{\text{rel}}V^0i_{f,+}\scM(f^{-\alpha}),\]
where $M_{\bullet}^{\text{rel}}V^0i_{f,+}\scM(f^{-\alpha})$ is the pullback of the relative monodromy weight filtration to $V^0$.

Now, considering the (bi-strict) morphism $\text{var}$ of mixed Hodge modules described in Theorem \ref{thmphipsiMHM}, and using Proposition \ref{propVcomp}, we have an equality
\[t\cdot M_{n+l}^{\text{rel}}V^0i_{f,+}\scM(f^{-\alpha})=M_{n+l-2}^{\text{rel}}V^1i_{f,+}\scM(f^{-\alpha}).\]
Moreover, Equation \eqref{eqnew1} implies that we have equalities
\[V^1i_{f,+}\scO_{X,-\alpha f}=V^1W_ii_{f,+}\scM(f^{-\alpha})=V^1i_{f,+}\scM(f^{-\alpha})\]
for all $i \geq n$, implying that the naïve limit filtration $L_i\psi_{f,1}\scM(f^{-\alpha})=\text{gr}_V^1W_{i+1}i_{f,+}\scM(f^{-\alpha})$ is trivial, which in turn implies by the definition of the relative monodromy weight filtration that it equals the monodromy weight filtration centred at $n-1$, i.e. that
\[M_{n+l-2}^{\text{rel}}V^1i_{f,+}\scM(f^{-\alpha})=M_{n+l-2}V^1i_{f,+}\scM(f^{-\alpha}).\]
We have an explicit expression for this monodromy weight filtration:
\[M_{n+l-2}V^1i_{f,+}\scM(f^{-\alpha})=\sum_{m\geq 0}(t\partial_t)^m\cdot K_{2m+l}V^1i_{f,+}\scM(f^{-\alpha}),\]
which, combining with the above expression, gives us that
\begin{equation}\label{eqVWK}
V^0W_{n+l}i_{f,+}\scM(f^{-\alpha})=\partial_t\cdot V^1i_{f,+}\scM(f^{-\alpha}) + K_lV^0i_{f,+}\scM(f^{-\alpha}).
\end{equation}

Of course, we have to incorporate the $t$-order filtration as well to conclude the proof. For this, recall that the morphism of filtered $\scD$-modules
\[(\partial_tt)^l\partial_t:\text{gr}_V^1i_{f,+}\scM(f^{-\alpha})\to \left(\text{gr}_V^0i_{f,+}\scM(f^{-\alpha})\right)(-l)\]
underlies a morphism of mixed Hodge modules, and is thus strict with respect to the Hodge filtration, which on both sides is induced by the $t$-order filtration (by Corollary \ref{corV02}). This implies that we have the stronger equality
\[F_k^{t-\text{ord}}V^0W_{n+l}i_{f,+}\scM(f^{-\alpha})=\partial_t\cdot F_{k-1}^{t-\text{ord}}V^1i_{f,+}\scM(f^{-\alpha}) + F_k^{t-\text{ord}}K_lV^0i_{f,+}\scM(f^{-\alpha}).\]

Indeed, the inclusion $\supseteq$ is immediate by Equation \ref{eqVWK}. Conversely, given an element $u\in F_k^{t-\text{ord}}V^0W_{n+l}i_{f,+}\scM(f^{-\alpha})$, we may by the same equation write $u=\partial_t\cdot u_1+u_2$, where $u_1\in V^1i_{f,+}\scM(f^{-\alpha})$ and $u_2\in K_lV^0i_{f,+}\scM(f^{-\alpha})$.

Then $(\partial_tt)^l\cdot \overline{u} \in F_{k+l}^{t-\text{ord}}\text{gr}_V^0i_{f,+}\scM(f^{-\alpha})$ is in the image of the map $(\partial_tt)^l\partial_t$ mentioned above, which, by the strictness of said map, implies that we may write $(\partial_tt)^l\cdot \overline{u} = (\partial_tt)^l\partial_t\cdot \overline{u_3}$, for some $u_3\in F_{k-1}^{t-\text{ord}}V^1i_{f,+}\scM(f^{-\alpha})$, so that
\[(\partial_tt)^l\cdot u\in (\partial_tt)^l\partial_t \cdot F_{k-1}^{t-\text{ord}}V^1i_{f,+}\scM(f^{-\alpha})+V^{>0}i_{f,+}\scM(f^{-\alpha}),\]
which in turn implies that
\[u\in \partial_t \cdot F_{k-1}^{t-\text{ord}}V^1i_{f,+}\scM(f^{-\alpha})+F_k^{t-\text{ord}}K_lV^0i_{f,+}\scM(f^{-\alpha}).\]

Finally, recalling that the map $\psi_0$ is just given by mapping $\partial_t$ to 0, combining Equations \eqref{eq1} and \eqref{eq2} with the above equality now gives us that
\[F_k^HW_{n+l}\scM(f^{-\alpha}) = \psi_0\left(F_k^{t-\text{ord}}K_lV^0i_{f,+}\scM(f^{-\alpha})\right)\]
as required.
    
\end{proof}

\begin{rem}

In the above proof, we actually proved that there is a natural isomorphism of complex mixed Hodge modules
\[\psi_{f,1}\underline{\bC}_{X,-\alpha f}[n] \simeq \psi_{f,1}M(f^{-\alpha}).\]
    
\end{rem}

\begin{cor}

The left $\scD_X$-module map 
\[\psi_{-1}:\text{\emph{gr}}_V^1i_{f,+}\scO_{X,-\alpha f} \rightarrow \frac{\scM(f^{-\alpha})}{\scO_{X,-\alpha f}},\]
induced by the map $\psi_{-1}$ defined above, underlies a morphism of mixed Hodge modules
\[\psi_{f,1}\underline{\bC}_{X,-\alpha f}[n](-1)\to H^1 i_*i^!\underline{\bC}_{X,-\alpha f}[n].\] 
Moreover, this morphism equals up to automorphism the morphism appearing in Proposition \ref{propcanvar}.

\label{cortauH}
    
\end{cor}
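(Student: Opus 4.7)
The strategy is to identify $\psi_{-1}$, on underlying $\scD_X$-modules, with the cokernel projection $\pi$ arising from the first short exact sequence of Proposition \ref{propcanvar}, up to a $\scD_X$-module automorphism of the target, and then transfer the MHM structure from $\pi$.

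First I translate that SES to underlying $\scD_X$-modules. Theorem \ref{thmphipsiRH}(ii) identifies $\text{var}$ with $t$, Proposition \ref{propVcomp}(i) ensures $\text{gr}_V^1 i_{f,+}\scO_{X,-\alpha f}=\text{gr}_V^1 i_{f,+}\scM(f^{-\alpha})$, and the proof of Proposition \ref{propcanvar} identifies the underlying $\scD_X$-module of $H^1 i_*i^!\underline{\bC}_{X,-\alpha f}[n]$ with $\scM(f^{-\alpha})/\scO_{X,-\alpha f}$, so the SES reads
\[0\to\text{gr}_V^0 i_{f,+}\scO_{X,-\alpha f}\xrightarrow{t}\text{gr}_V^1 i_{f,+}\scO_{X,-\alpha f}\xrightarrow{\pi}\scM(f^{-\alpha})/\scO_{X,-\alpha f}\to 0,\]
with $\pi$ the cokernel of $t$, which underlies the MHM morphism of Proposition \ref{propcanvar}.

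The main computational content is then that $\psi_{-1}$ vanishes on $tV^0 i_{f,+}\scO_{X,-\alpha f}$ modulo $\scO_{X,-\alpha f}$. Using the standard $\scD_{X\times\bC}$-action formula $t\cdot u\partial_t^j=(fu)\partial_t^j-ju\partial_t^{j-1}$ on $i_{f,+}\scO_{X,-\alpha f}=\scO_{X,-\alpha f}[\partial_t]$, I compute for $u\in\scO_{X,-\alpha f}$ and $j\geq 1$
\[\psi_{-1}(t\cdot u\partial_t^j)=(fu)\,j!\,f^{-j-1}-ju\,(j-1)!\,f^{-j}=j!\,uf^{-j}-j!\,uf^{-j}=0\]
in $\scM(f^{-\alpha})$ (after the natural identification of $\scM(f^{-\alpha-1})$ with $\scM(f^{-\alpha})$), while for $j=0$ we get $\psi_{-1}(fu)=u\in\scO_{X,-\alpha f}$, which vanishes in the quotient. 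Proposition \ref{propVcomp}(ii) gives $V^{>1}i_{f,+}\scO_{X,-\alpha f}=tV^{>0}\subseteq tV^0$, so $\psi_{-1}$ also vanishes on $V^{>1}$ modulo $\scO_{X,-\alpha f}$, and thus descends to a $\scD_X$-linear map
\[\overline{\psi_{-1}}:\text{gr}_V^1 i_{f,+}\scO_{X,-\alpha f}/t\text{gr}_V^0 i_{f,+}\scO_{X,-\alpha f}\to\scM(f^{-\alpha})/\scO_{X,-\alpha f}.\]

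The main obstacle is to conclude that $\overline{\psi_{-1}}$ is a $\scD_X$-module isomorphism whose composition $\overline{\psi_{-1}}\circ\pi^{-1}$ with the identification from Proposition \ref{propcanvar} underlies an MHM automorphism of $\scM(f^{-\alpha})/\scO_{X,-\alpha f}$. Surjectivity of $\overline{\psi_{-1}}$ is established by constructing explicit preimages: any class $\sum_{i\geq 1}h_i f^{-i-\alpha}\mod\scO_{X,-\alpha f}$ with $h_i\in\scO_X$ is realized by $\psi_{-1}$ applied to a suitable lift in $V^1 i_{f,+}\scO_{X,-\alpha f}$, and injectivity follows from the SES. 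For the MHM compatibility, the key observation is that $\psi_{-1}$ is $t$-order-preserving by its explicit formula, so by Corollary \ref{corV02} it respects the Hodge filtration on $\text{gr}_V^1$; weight compatibility follows from the fact that the monodromy weight filtration on $\psi_{f,1}\underline{\bC}_{X,-\alpha f}[n](-1)$ (centred at $n-1$ by Theorem \ref{thmphipsiMHM}) is controlled by the kernel filtration of $\partial_tt-1$, in precisely the way exploited in the proof of Theorem \ref{thmmainformula}. Together these compatibilities show that $\overline{\psi_{-1}}\circ\pi^{-1}$ is an MHM automorphism, and hence that $\psi_{-1}$ underlies the claimed MHM morphism equal to $\pi$ up to that automorphism.
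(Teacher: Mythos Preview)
Your computation that $\psi_{-1}(t\cdot u\partial_t^j)$ equals $u$ for $j=0$ and $0$ for $j\geq 1$ is precisely the identity $\psi_{-1}\circ t=\psi_0$, which is the paper's central observation, but you do not name it and then work harder than necessary. The paper's proof of the first statement is just this identity together with Theorem~\ref{thmmainformula}: since $t:V^0i_{f,+}\scM(f^{-\alpha})\to V^1i_{f,+}\scM(f^{-\alpha})$ is a bijection preserving $F_\bullet^{t\text{-ord}}$ and $K_\bullet$ (Proposition~\ref{propVcomp}(ii) and the relation $(\partial_tt-1)t=t(\partial_tt)$), Theorem~\ref{thmmainformula} gives directly
\[\psi_{-1}(F_k^{t\text{-ord}}K_lV^1)=\psi_0(F_k^{t\text{-ord}}K_lV^0)=F_k^HW_{n+l}\scM(f^{-\alpha}),\]
which is exactly the Hodge and weight compatibility required. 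Your separate verification via Corollary~\ref{corV02} and the kernel filtration is correct in spirit but repackages work already done in Theorem~\ref{thmmainformula}.

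For the second statement your route and the paper's are essentially the same (identify kernel and image of $\psi_{-1}$ and compare with Proposition~\ref{propcanvar}), but two steps need tightening. Surjectivity: rather than ``constructing explicit preimages'' in $V^1$, which you do not actually carry out, use that $\psi_0$ is surjective onto $\scM(f^{-\alpha})$ by Theorem~\ref{thmmainformula} and hence so is $\psi_{-1}=\psi_0\circ t^{-1}$. Injectivity: ``follows from the SES'' is not a proof as stated; you have only shown $t\cdot\text{gr}_V^0\subseteq\ker\overline{\psi_{-1}}$, not equality. You need either a length argument (both sides are holonomic of equal length, so a surjection is an isomorphism) or a direct check. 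The paper phrases the kernel as $\text{im}(t\partial_t:\text{gr}_V^1\to\text{gr}_V^1)$, equivalent to yours since $\partial_t:\text{gr}_V^1\to\text{gr}_V^0i_{f,+}\scO_{X,-\alpha f}$ is surjective by the second short exact sequence of Proposition~\ref{propcanvar}, and then compares with the four-term exact sequence there rather than the first SES.
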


\begin{proof}

The first statement follows by Theorem \ref{thmmainformula} and by the commutativity of the following diagram.
\begin{center}
\begin{tikzcd}
    V^0i_{f,+}\scM(f^{-\alpha}) \arrow[rd, "\psi_0"] \arrow[dd, "t"] & \\[-15pt]
    & \scM(f^{-\alpha})\\[-15pt]
    V^1i_{f,+}\scO_{X,-\alpha f} \arrow[ur, "\psi_{-1}"']\\
\end{tikzcd}
\end{center}

\vspace{-17pt} \noindent It is moreover a simple exercise to check that the map $\psi_{-1}$ in the statement of the Corollary is surjective and has kernel equal to the image of $t\partial_t : \text{gr}_V^1 \to \text{gr}_V^1$. Comparing with the final exact sequence appearing in Proposition \ref{propcanvar}, we obtain the second statement in the Corollary.
    
\end{proof}

\vspace{5pt}

\subsection{Examples: SNC and weighted homogeneous isolated singularities}\label{subsectionegs}

\; \vspace{5pt} \\ We now use Theorem \ref{thmmainformula} to calculate the Hodge and weight filtrations for some examples, including for simple normal crossing divisors, the formula of which was used crucially in Section 2 above.

We restate Theorem \ref{thmSNCFW} for ease of reading. Let $X=\bC^n$, $f=\prod_{i=1}^nx_i^{a_i}\in\scO_X$ and $\alpha \in \bQ$. Recall the definitions of $I$, $I_{\alpha}$ and $m_{\alpha}$:
\[I:=\{i \in [n] \; |\; a_i \neq 0\}, \;\;\;\; I_{\alpha}:=\{i \in I\; |\; \alpha a_i \in \bZ\} \;\; \text{ and } \;\; m_{\alpha}:=|I_{\alpha}|.\]
We assume that $I\neq \emptyset$.

\begin{thm}

Assume $\alpha >0$. With $X$ and $f$ as above, we have the following expressions.

\begin{enumerate}[label = \roman*)]

\item \[W_{n+m_{\alpha}}\scM(f^{-\alpha}) = \scM(f^{-\alpha}).\vspace{5pt}\]

\item 
Let $l\in\bZ$ such that $0 \leq l \leq m_{\alpha}$. Then
\[F_0^HW_{n+l}\scM(f^{-\alpha}) = \sum_{J\subseteq I_{\alpha},\, |J|=l}\scO_X\prod_{i \in I_{\alpha}\backslash J}x_i^{\lceil \alpha a_i\rceil}\prod_{i \in (I\backslash I_{\alpha})\cup J}x_i^{\lceil \alpha a_i\rceil-1} f^{-\alpha}\]
and, for $k \in \bZ_{\geq 0}$, 
\[F_k^HW_{n+l}\scM(f^{-\alpha})=F_k\scD_X\cdot F_0^HW_{n+l}\scM(f^{-\alpha}).\]
    
\end{enumerate}

\label{thmSNCFW2}
    
\end{thm}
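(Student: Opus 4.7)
The plan is to apply Theorem \ref{thmmainformula}, then via Proposition \ref{propVcomp}.iii) translate the problem to an explicit computation of $V^{\alpha}$, its kernel filtration, and the $t$-order filtration on $i_{f,+}\scO_X(*f)$. The module $i_{f,+}\scO_X(*f)$ is generated over $\scD_X[\partial_t]$ by $1 \leftrightarrow f^s$, and for $f = \prod x_i^{a_i}$ monomial the action of $\partial_t t$ is computable directly on monomial representatives $x^{\mathbf{b}} f^s$, using the identity $x_i \partial_{x_i}(x^{\mathbf{b}} f^s) = (b_i + a_i s) x^{\mathbf{b}} f^s$ and the identification of $s$ with $-\partial_t t$ in the graph-embedded picture.

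First I would describe $V^{\alpha}$ concretely. By verifying the axioms of Definition \ref{defnspecial}, the step $V^{\alpha} i_{f,+}\scO_X(*f)$ is generated over $V^0 \scD_{X \times \bC}$ by those monomials $x^{\mathbf{b}} f^s$ (with $b_i \geq 0$ for $i \notin I$) for which $b_i + 1 \geq \alpha a_i$ for every $i \in I$, equivalently $b_i \geq \lceil \alpha a_i \rceil - 1$. The class in $\text{gr}_V^{\alpha}$ vanishes unless equality is attained for some index, and this can happen nontrivially only at indices $i \in I_\alpha$; for $i \in I \setminus I_\alpha$ the value $b_i + 1 = \lceil \alpha a_i \rceil$ is strictly greater than $\alpha a_i$, so the corresponding monomial sits in $V^{>\alpha}$ in that coordinate direction. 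Next I would identify the kernel filtration: the operator $\partial_t t - \alpha$ acts on the class of $x^{\mathbf{b}} f^s$ in $\text{gr}_V^{\alpha}$ with nilpotency order equal to the number of indices $i \in I_\alpha$ at which $b_i = \lceil \alpha a_i \rceil - 1$, so that $K_l V^{\alpha}$ is generated by monomials where the set of such boundary indices has size at most $l$.

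Third, I would apply $\psi_{-\alpha}$, which sends $x^{\mathbf{b}} \partial_t^j f^s \mapsto Q_j(\alpha) x^{\mathbf{b}} f^{-j-\alpha}$, combined with the $t$-order cutoff $j \leq k$. Specialising to $k = 0$ (so only $j = 0$ contributes), the generators $x^{\mathbf{b}} f^{-\alpha}$ running over monomials with boundary-set $J \subseteq I_\alpha$ of size $l$ assemble into precisely the sum in the statement, where indices $i \in J$ (the boundary) and $i \in I \setminus I_\alpha$ both contribute the exponent $\lceil \alpha a_i \rceil - 1$, while $i \in I_\alpha \setminus J$ contributes $\lceil \alpha a_i \rceil$. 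The equality $F_k^H W_{n+l} = F_k \scD_X \cdot F_0^H W_{n+l}$ would then follow from the strictness of the map $\partial_t \colon \text{gr}_V^1 i_{f,+} \scM(f^{-\alpha}) \to \text{gr}_V^0 i_{f,+} \scM(f^{-\alpha})$ as a morphism of mixed Hodge modules, together with Corollary \ref{corV0}, which propagate the $F_0$-description to higher Hodge steps by $\scD_X$-action. Part (i) is then immediate from part (ii) with $l = m_\alpha$: the only admissible subset is $J = I_\alpha$, producing the single generator $\prod_{i \in I} x_i^{\lceil \alpha a_i \rceil - 1} f^{-\alpha}$, and $\scD_X$-action on this generator evidently exhausts all of $\scM(f^{-\alpha})$.

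The principal obstacle is the explicit identification of $V^{\alpha}$ and the kernel filtration in the monomial basis, in particular the careful bookkeeping separating $I_\alpha$- from $(I \setminus I_\alpha)$-directions: the former produce nilpotency of $\partial_t t - \alpha$ at the boundary, while the latter merely shift the $V$-index and split off into $\text{gr}_V^{\gamma}$ for $\gamma > \alpha$. The strictness argument propagating the $F_0$ formula to all higher Hodge steps is the other technical point, but is mild given the general framework already in place.
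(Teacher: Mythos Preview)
Your overall approach matches the paper's: apply Theorem~\ref{thmmainformula}, compute the $V$-filtration and the kernel filtration on $i_{f,+}\scO_X$ explicitly in the monomial basis, then read off the answer via $\psi_{-\alpha}$. Your description of $V^{\alpha}$ and of the nilpotency order of $\partial_tt-\alpha$ on monomial classes is correct in content, and the paper proves exactly the formula you assert,
\[
F_k^{t\text{-ord}}K_lV^{\alpha}i_{f,+}\scO_X=\sum_{J\subseteq I_{\alpha},\,|J|=l}F_k\scD_X\cdot f^{(\alpha)}\prod_{i\in I_{\alpha}\setminus J}x_i,
\]
by a double induction on $l$ and $k$.

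There is, however, a genuine gap in your argument for propagating the $F_0$-level formula to higher $k$. Invoking strictness of $\partial_t$ and Corollary~\ref{corV0} does not yield $F_k^HW_{n+l}=F_k\scD_X\cdot F_0^HW_{n+l}$: what Corollary~\ref{corV0} gives is an expression in terms of $V^0W_{n+l}\cap F_{k-i}^{t\text{-ord}}$, and to conclude you would still need to know that $F_k^{t\text{-ord}}K_lV^{\alpha}=F_k\scD_X\cdot F_0^{t\text{-ord}}K_lV^{\alpha}$, which is precisely the content to be proven. The paper handles this via a separate \emph{Claim}: for any $b\in\bZ_{\geq 0}^n$ with $\mathrm{Supp}(b)\subseteq I$, one has $F_k^{t\text{-ord}}i_{f,+}\scO_X\cap\scD_X\cdot x^b=F_k\scD_X\cdot x^b$. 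This is not a formal strictness statement; its proof is a direct computation using the explicit syzygies $(a_jx_i\partial_i-a_ix_j\partial_j-(a_ib_j-a_jb_i))\cdot x^b=0$ to reduce the order of a differential operator acting on $x^b$. This Claim is then fed into the induction on $k$ inside the kernel-filtration computation. Your ``strictness'' shortcut does not replace this step, and without it the generating-level-zero assertion in part~ii) is unsupported.
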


\begin{proof}

Of course, we are going to use Theorem \ref{thmmainformula} to prove this. To this end, we will prove an expression for $K_lV^{\alpha}i_{f,+}\scO_X$. It is well known (see \cite{MSai90}, Theorem 3.4, as well as  \cite{CDM24} Example 4.8 for instance) that in this situation we have the following expression for the $V$-filtration on $i_{f,+}\scO_X$:
\[V^{\lambda}i_{f,+}\scO_X = \sum_{j\geq 0}\scD_X\cdot f^{(\lambda+j)}\partial_t^j\]
for all $\lambda \in\bQ$, where $f^{(\lambda)}:=\prod_{i=1}^nx_i^{\lceil\lambda a_i\rceil-1}$ (and where $x_i^k:=1$ for any $k\in\bZ_{<0}$). In particular, is is easy to show that this implies
\[V^{\lambda}i_{f,+}\scO_X = \scD_X\cdot f^{(\lambda)}\]
for all $\lambda>0$.

First we prove the following simple claim.

\vspace{10pt}

\textbf{Claim.} Let $b\in\bZ^n_{\geq 0}$ and $k \in \bZ_{\geq 0}$. We write $\text{Supp}(b):=\{i \in[n]\mid b_i\neq 0\}$. Assume that $\text{Supp}(b) \subseteq I$. Then $F_k^{t-\text{ord}}i_{f,+}\scO_X \cap \scD_X\cdot x^b = F_k\scD_X\cdot x^b$.

\emph{Proof.} Of course the inclusion $F_k\scD_X\cdot x^b\subseteq F_k^{t-\text{ord}}i_{f,+}\scO_X \cap \scD_X\cdot x^b$ is clear. To prove the reverse inclusion, we take $u\in F_k^{t-\text{ord}}i_{f,+}\scO_X\cap \scD_X\cdot x^b$ and write $u=P\cdot x^b$, with $P\in F_r\scD_X$ say. If $r\leq k$ then we're done, so we assume that $r>k$. We prove the claim by then proving that $u \in F_{r-1}\scD_X\cdot x^b$ also.

Firstly, we may write 
\[P=\sum_{|\alpha|=r}f_{\alpha}\partial^{\alpha} + P',\]
with $P'\in F_{r-1}\scD_X$, for some $f_{\alpha}\in \scO_X$. Since $\text{Supp}(b)\subseteq I$, $\partial_i \cdot x^b =0$ for any $i \notin I$. Therefore we may assume that $f_{\alpha}=0$ whenever $\text{Supp}(\alpha)\nsubseteq I$.

Now the coefficient of $\partial_t^r$ in the expression for $P\cdot x^b$ is equal to zero, i.e. we have that
\[\sum_{|\alpha|=r}f_{\alpha}\prod_{i=1}^n\partial_i(f)^{\alpha_i}=0\]
(here $0^0:=1$). This in turn of course implies that
\[\sum_{|\alpha|=r}f_{\alpha}\prod_{i=1}^n\left(\frac{a_i}{x_i}\right)^{\alpha_i}=0.\]
Writing
\[f_{\alpha}=\sum_{\beta \in \bZ_{\geq 0}^n}c_{\alpha,\beta}x^{\beta},\]
with $c_{\alpha,\beta}\in\bC$, we then have, for each $\gamma\in\bZ^n$, that
\[\sum_{|\alpha|=r}c_{\alpha,\gamma+\alpha}\prod_{i=1}^na_i^{\alpha_i}=0.\]
Here we define $c_{\alpha,\gamma}:=0$ if $\alpha\notin\bZ^n_{\geq 0}$ or $\gamma\notin\bZ^n_{\geq 0}$. In particular we have that $c_{\alpha,0}=0$ for all $|\alpha|=r$. See also that this equivalently gives, for each $\gamma,\delta\in\bZ^n$, that
\begin{equation}\label{eqc}\sum_{|\alpha|+|\delta|=r}c_{\delta+\alpha,\gamma+\alpha}\prod_{i=1}^na_i^{\alpha_i}=0.\end{equation}

Now take $\gamma,\delta\in\bZ^n$ such that $|\delta|=r-1$. Choose also $j \in I$. Then, since for any $i \in [n]$,
\begin{equation}\label{eqann}(a_jx_i\partial_i-a_ix_j\partial_j-(a_ib_j-a_jb_i))\cdot x^b =0\end{equation}
in $i_{f,+}\scO_X$, there exists some $\widetilde{P}_{\gamma,\delta}\in F_{r-1}\scD_X$ such that
\[\left[a_j\sum_{i=1}^nc_{\delta+1_i,\gamma+1_i}x^{\gamma+1_i}\partial^{\delta+1_i}-\sum_{i=1}^nc_{\delta+1_i,\gamma+1_i}a_ix^{\gamma+1_j}\partial^{\delta+1_j}\right]\cdot x^b = \widetilde{P}_{\gamma,\delta}\cdot x^b.\]
Here, $1_i$ is the vector with a one in the $i$-th entry and zeroes elsewhere. We know however by Equation \eqref{eqc} that $\sum_{i=1}^nc_{\delta+1_i,\gamma+1_i}a_i=0$, so we see that 
\[\left[a_j\sum_{i=1}^nc_{\delta+1_i,\gamma+1_i}x^{\gamma+1_i}\partial^{\delta+1_i}\right]\cdot x^b = \widetilde{P}_{\gamma,\delta}\cdot x^b.\]
We thus finally see that
\begin{align*}
u=P\cdot x^b &= \left[\sum_{\alpha,\beta,|\alpha|=r}c_{\alpha,\beta}x^{\beta}\partial^{\alpha} \right]\cdot x^b + P'\cdot x^b\\
&=\left[\frac{1}{n}\sum_{\gamma,\delta,i, |\delta|=r-1}c_{\delta+1_i,\gamma+1_i}x^{\gamma+1_i}\partial^{\delta+1_i}\right]\cdot x^b+P'\cdot x^b\\
&=\left[\frac{1}{na_j}\sum_{\gamma,\delta,|\delta|=r-1}\widetilde{P}_{\gamma,\delta}+P'\right]\cdot x^b\in F_{r-1}\scD_X\cdot x^b. \hspace{30pt}\square
\end{align*}

\vspace{10pt}

\noindent Combining the statement of this claim with the above expression for the $V$-filtration gives us that
\[F_k^{t-\text{ord}}V^{\lambda}i_{f,+}\scO_X = F_k\scD_X\cdot f^{(\lambda)}\]
for all $\lambda>0$.

Next, we prove the following expression for the kernel filtration.
\begin{equation}\label{eqkernel}F_k^{t-\text{ord}}K_lV^{\lambda}i_{f,+}\scO_X=\sum_{J\subseteq I_{\lambda}, \,|J|=l}F_k\scD_X\cdot f^{(\lambda)}\prod_{i\in I_{\lambda}\backslash J}x_i\,\,\,\text{ for all }\,\, \lambda >0,\end{equation}
where $I_{\lambda}$ is defined analogously to $I_{\alpha}$, i.e. $I_{\lambda}:=\{i\in I\mid\lambda a_i\in\bZ\}$.

We prove this via induction on $l$. For $l=0$, 
\[F_k^{t-\text{ord}}K_0V^{\lambda}i_{f,+}\scO_X=F_k^{t-\text{ord}}V^{>\lambda}i_{f,+}\scO_X=F_k\scD_X\cdot \prod_{i=1}^nx_i^{\lceil(\lambda+\epsilon)a_i\rceil-1} = F_k\scD_X\cdot \prod_{i=1}^nx_i^{\lceil\lambda a_i\rceil-1}\prod_{i\in I_{\lambda}}x_i\]
as required, where $0<\epsilon<<1$.

Now let $l>0$. We prove first the inclusion $\supseteq$. For this, see that, for $b\in\bZ_{\geq 0}^n$ and $j\in I$,
\[(\partial_tt-\lambda)\cdot x^b = -\frac{\partial_j}{a_j}\cdot x_jx^b + \left(\frac{b_j+1}{a_j}-\lambda\right)x^b.\]
Thus, if $b$ is such that $x^b = f^{(\lambda)}\prod_{i\in I_{\lambda}\backslash J}x_i$ for some $J\subseteq I_{\lambda}$ with $|J|=l$ and $j \in J$,
\[(\partial_tt-\lambda)\cdot x^b = -\frac{\partial_j}{a_j}\cdot x_jx^b \in K_{l-1}V^{\lambda}i_{f,+}\scO_X\]
by the inductive hypothesis, thus implying that $x^b\in K_lV^{\lambda}i_{f,+}\scO_X$ as required.

We prove the reverse inclusion via induction on $k$ (within our original induction on $l$). For the case $k=0$, consider $u \in F_0^{t-\text{ord}}K_lV^{\lambda}i_{f,+}\scO_X$. Then, by the claim, we may write $u=\sum_{b \in \bZ^n_{\geq 0}} \nu_bx^bf^{(\lambda)}$, for some $\nu_b \in\bC$. 

Now, $(\partial_tt-\lambda)\cdot u \in F_1^{t-\text{ord}}K_{l-1}V^{\lambda}i_{f,+}\scO_X$, so, by our induction hypothesis on $l$, 
\[(\partial_tt-\lambda)\cdot\sum_{b \in \bZ^n_{\geq 0}} \nu_bx^bf^{(\lambda)}\in \sum_{J\subseteq I_{\lambda}, \,|J|=l-1}F_1\scD_X\cdot f^{(\lambda)}\prod_{i\in I_{\lambda}\backslash J}x_i.\]

Comparing coefficients of $\partial_t^0$, we see that for every $b\in \bZ_{\geq 0}^n$ with $\nu_b\neq 0$ we must have 
\[x^bf^{(\lambda)} = x_{j_b}^{-1}x^{c_b}f^{(\lambda)}\prod_{i \in I_{\lambda}\backslash J}x_i\]
for some $J\subseteq I_{\lambda}$ with $|J|=l-1$, for some $j_b \in [n]$, and some $c_b\in\bZ^n_{\geq 0}$.

This in turn implies that we may write
\[x^bf^{(\lambda)} = x^{\widetilde{c_b}}f^{(\lambda)}\prod_{i \in I_{\lambda}\cup\{j_b\}\backslash J\cup\{j_b\}}x_i,\]
where
\[\widetilde{c_b}:=\begin{cases}c_b & : \,\,\,j_b \in I_{\lambda}\backslash J \\ c_b - 1_{j_b} &: \,\,\,\text{otherwise.}\end{cases}\]
See that, as $b \in \bZ_{\geq 0}^n$, $\widetilde{c_b}\in\bZ_{\geq 0}^n$ also.

So we see that $x^bf^{(\lambda)}$ is of the required form as on the right hand side of Equation \eqref{eqkernel}, implying of course that $u$ is as well.

Now we prove the case $k>0$. The proof is similar to the case $k=0$. Consider $u \in F_k^{t-\text{ord}}K_lV^{\lambda}i_{f,+}\scO_X$. Then we may write $u=P\cdot f^{(\lambda)}$ for some $P\in F_k\scD_X$. Write $P=\sum_{b,c \in \bZ^n_{\geq 0}} \nu_{b,c}\partial^cx^b$, with $\nu_{b,c} \in\bC$. 

Now, $(\partial_tt-\lambda)\cdot u \in F_{k+1}^{t-\text{ord}}K_{l-1}V^{\lambda}i_{f,+}\scO_X$, so, by the induction hypothesis on $l$, 
\[(\partial_tt-\lambda)\cdot\sum_{b,c \in \bZ^n_{\geq 0}} \nu_{b,c}\partial^c\cdot x^bf^{(\lambda)}\in \sum_{J\subseteq I_{\lambda}, \,|J|=l-1}F_{k+1}\scD_X\cdot f^{(\lambda)}\prod_{i\in I_{\lambda}\backslash J}x_i.\]

Comparing coefficients of $\partial_t^0$, we see that for every $b,c\in \bZ_{\geq 0}^n$ with $\nu_{b,c}\neq 0$ we must have 
\[x^{b-c}f^{(\lambda)} = x^{\delta_{b,c}-\gamma_{b,c}}f^{(\lambda)}\prod_{i \in I_{\lambda}\backslash J_{b,c}}x_i\]
for some $J_{b,c}\subseteq I_{\lambda}$ with $|J_{b,c}|=l-1$ and for some $\gamma_{b,c},\delta_{b,c} \in \bZ^n_{\geq 0}$ with $|\gamma_{b,c}|\leq k+1$.

Now see that, since the monomial $f^{(\lambda)}\prod_{i \in I_{\lambda}\backslash J_{b,c}}x_i$ is independent of the variables $x_i$ with $i \notin I$, we have that for any $\nu_{b,c}\neq 0$, $(\delta_{b,c})_i \geq (\gamma_{b,c})_i$ for any $i \notin I$. Therefore without loss of generality we may assume that $(\gamma_{b,c})_i=0$ for any $i\notin I$. Furthermore, we may assume without loss of generality that $|\gamma_{b,c}|=k+1$, by increasing the entries of $\delta_{b,c}$ and $\gamma_{b,c}$ if necessary.

Now, take any $b,c\in\bZ_{\geq 0}^n$ with $\nu_{b,c}\neq 0$ and $|c|=k$. Choose any $j\in [n]$ such that $(\gamma_{b,c})_j \neq 0$. Then Equation \eqref{eqann} gives us that
\begin{align*}\partial^c\cdot x^bf^{(\lambda)} &- \mu_{b,c}\partial^{\gamma_{b,c}-1_j} \cdot x_j^{-1}x^{\delta_{b,c}}f^{(\lambda)}\prod_{i\in I_{\lambda}\backslash J_{b,c}}x_i =\\&\partial^c\cdot x^cx^{\delta_{b,c}-\gamma_{b,c}}f^{(\lambda)}\prod_{i \in I_{\lambda}\backslash J_{b,c}}x_i - \mu_{b,c}\partial_t^{\gamma_{b,c}-1_j} \cdot x_j^{-1}x^{\delta_{b,c}}f^{(\lambda)}\prod_{i\in I_{\lambda}\backslash J_{b,c}}x_i \in F_{k-1}^{t-\text{ord}}i_{f,+}\scO_X, \end{align*}
where $\mu_{b,c}$ is the constant given by
\[\mu_{b,c}:=\prod_{i\in [n]}a_i^{c_i}\prod_{i\in I}a_i^{-(\gamma_{b,c}-1_j)_i}.\]

But $\partial_t^{\gamma_{b,c}-1_j} \cdot x_j^{-1}x^{\delta_{b,c}}f^{(\lambda)}\prod_{i\in I_{\lambda}\backslash J_{b,c}}x_i$ is of the form appearing on the right hand side of Equation \eqref{eqkernel}, so in particular, as we've already shown earlier in the proof, lies in $F_k^{t-\text{ord}}K_lV^{\lambda}i_{f,+}\scO_X$.

Thus we in fact have that
\[u-\sum_{b,c\in\bZ_{\geq 0}^n, |c|=k}\nu_{b,c} \mu_{b,c}\partial_t^{\gamma_{b,c}-1_j} \cdot x_j^{-1}x^{\delta_{b,c}}f^{(\lambda)}\prod_{i\in I_{\lambda}\backslash J_{b,c}}x_i\in F_{k-1}^{t-\text{ord}}K_lV^{\lambda}i_{f,+}\scO_X.\]

By our inductive hypothesis (on $k$) we may thus write the above expression in a form as on the right hand side of Equation \eqref{eqkernel}, implying the same for $u$. This thus finishes the proof of Equation \eqref{eqkernel}.

\vspace{10pt}

\noindent Now we conclude by using the expression in Equation \eqref{eqkernel} to prove the parts i) and ii) in the statement of the Theorem. 

For part i), see that our expression \eqref{eqkernel} gives us that 
\[K_{m_{\alpha}}V^{\alpha}i_{f,+}\scO_X = V^{\alpha}i_{f,+}\scO_X,\]
implying by Theorem \ref{thmmainformula} that
\[W_{n+m_{\alpha}}\scM(f^{-\alpha})=\psi_{-\alpha}\left(K_{m_{\alpha}}V^{\alpha}i_{f,+}\scO_X\right) = \psi_{-\alpha}\left(V^{\alpha}i_{f,+}\scO_X\right) = \scM(f^{-\alpha}).\]

For part ii), see that
\begin{align*}F_0^HW_{n+l}\scM(f^{-\alpha})&=\psi_{-\alpha}\left(F_0^{t-\text{ord}}K_lV^{\alpha}i_{f,+}\scO_X\right) \\&= \psi_{-\alpha}\left(\sum_{J\subseteq I_{\alpha}, \,|J|=l}\scO_X f^{(\alpha)}\prod_{i\in I_{\alpha}\backslash J}x_i\right)\\&=\sum_{J\subseteq I_{\alpha}, \,|J|=l}\scO_X f^{(\alpha)}\prod_{i\in I_{\alpha}\backslash J}x_i f^{-\alpha},\end{align*}
and that
\begin{align*}F_k^HW_{n+l}\scM(f^{-\alpha})&=\psi_{-\alpha}\left(F_k^{t-\text{ord}}K_lV^{\alpha}i_{f,+}\scO_X\right)\\&=\psi_{-\alpha}\left(F_k\scD_X\cdot F_0^{t-\text{ord}}K_lV^{\alpha}i_{f,+}\scO_X\right)\\&=F_k\scD_X\cdot \psi_{-\alpha}\left( F_0^{t-\text{ord}}K_lV^{\alpha}i_{f,+}\scO_X\right)\\&=F_k\scD_X\cdot F_0^HW_{n+l}\scM(f^{-\alpha}).\end{align*}
    
\end{proof}

\noindent The second class of examples we can calculate for is divisors with (positively) weighted homogeneous isolated singularities. For this, we again assume that $X=\bC^n$, now with $n\geq 2$, and that $f$ is a polynomial that is weighted homogeneous with weight vector $w=(w_1 \,w_2\,\ldots\,w_n) \in \bQ^n_{>0}$. This means that
\[E(f)=f, \,\,\,\,\text{ where }\,\,\, E:=\sum_{i=1}^nw_ix_i\partial_i.\]
We assume in addition that $f$ has an isolated singularity at the origin. We say that a polynomial $g\in\scO_X$ is \emph{homogeneous with respect to $w$} if there exists some rational number $\rho(g)$ such that
\[E(g)=\rho(g)g.\]
We also write, for $\gamma\in\bQ$,
\[\scO^{\geq \gamma}:=\text{Span}_{\bC}\{g\in\scO_X\mid g\text{ homogeneous polynomial w.r.t. }w, \,\, \rho(g)\geq\gamma\} \subseteq \scO_X.\]
We define $\scO^{>\gamma}$ and $\scO^{\gamma}$ similarly. Of course, $\scO^{\geq\gamma}$ and $\scO^{>\gamma}$ are ideals of $\scO_X$.

In this case, we also have a formula for the $V$-filtration, proven by M. Saito, \cite{MSai09}, (4.2.1). In this same paper, Saito used this expression to obtain a formula for the Hodge filtration on $\scO_X(*f)$ (Theorem 0.7). Zhang (\cite{Z21}, Theorem A) extended this result to the twisted setting. We are also now able to calculate the Hodge filtration on the weight filtration steps.

\begin{thm}

Assume that $X$ and $f$ are as above. Then 

\begin{enumerate}[label=\roman*)]

\item If $\alpha\in\bQ\cap(0,1)$, then 
\[W_{n+1}\scM(f^{-\alpha})=\scM(f^{-\alpha})\]
and
\[F_k^HW_n\scM(f^{-\alpha})=\sum_{j\geq 0}F_{k-j}\scD_X\cdot \scO^{>\alpha+j-|w|}f^{-j-\alpha}.\]

\item \[W_{n+2}\scO_X(*f)=\scO_X(*f)\]
and
\[F_k^HW_{n+1}\scO_X(*f)=\sum_{j\geq 0}F_{k-j}\scD_X\cdot \scO^{>1+j-|w|}f^{-j-1}.\]
    
\end{enumerate}

\label{thmWHom}
    
\end{thm}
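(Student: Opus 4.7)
The strategy is to apply Theorem~\ref{thmmainformula},
\[F_k^HW_{n+l}\scM(f^{-\alpha}) = \psi_{-\alpha}\!\left(F_k^{t\text{-ord}}K_lV^{\alpha}i_{f,+}\scO_X(*f)\right),\]
using M.~Saito's explicit description of the $V$-filtration on $i_{f,+}\scO_X(*f)$ along $\{t=0\}$ for weighted homogeneous isolated singularities (\cite{MSai09}~(4.2.1)). Saito's formula exploits the identity $E(f)=f$ for $E=\sum_iw_ix_i\partial_i$ and expresses $V^\lambda$ in terms of the weighted-homogeneous decomposition $\scO_X=\bigoplus_\gamma\scO^\gamma$, making essential use of the finite-dimensionality of the Milnor algebra $\scO_X/J(f)$, where $J(f)=(\partial_1 f,\ldots,\partial_n f)$. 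The key consequence, which is the technical workhorse below, is an identification of the form
\[F_k^{t\text{-ord}}V^{>\lambda}i_{f,+}\scO_X(*f)=\sum_{j=0}^{k}F_{k-j}\scD_X\cdot\scO^{>\lambda+j-|w|}\partial_t^j,\]
together with an explicit description of the action of $\partial_t t$ on the graded pieces $\text{gr}_V^\lambda$.

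For part~i), with $\alpha\in\bQ\cap(0,1)$ we have $\alpha\notin\bZ$, and so $\partial_t t-\alpha$ acts as zero on $\text{gr}_V^\alpha i_{f,+}\scO_X(*f)$. This yields $K_0V^\alpha=V^{>\alpha}$ and $K_lV^\alpha=V^\alpha$ for $l\geq 1$; the latter gives $W_{n+1}\scM(f^{-\alpha})=\scM(f^{-\alpha})$ at once, while the former, together with the $\scD_X$-linearity of $\psi_{-\alpha}$ and the identity $\psi_{-\alpha}(g\partial_t^j)=Q_j(\alpha)gf^{-j-\alpha}$ with $Q_j(\alpha)\neq 0$, produces the claimed formula
\[F_k^HW_n\scM(f^{-\alpha})=\sum_{j\geq0}F_{k-j}\scD_X\cdot\scO^{>\alpha+j-|w|}f^{-j-\alpha}\]
directly from the displayed expression above for $F_k^{t\text{-ord}}V^{>\alpha}$.

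Part~ii) is the technically more demanding case because $\alpha=0\in\bZ$, so $\partial_t t$ has a nontrivial Jordan structure on $\text{gr}_V^0 i_{f,+}\scO_X(*f)$. Via the relation $\partial_i\cdot(g\partial_t^j)=(\partial_i g)\partial_t^j-(\partial_i f)g\partial_t^{j+1}$, elements of $J(f)$ may be traded for shifts in the $\partial_t$-degree; combined with finite-dimensionality of $\scO_X/J(f)$, this forces the Jordan blocks of $\partial_t t$ on $\text{gr}_V^0$ to have size at most $2$, yielding $K_2V^0=V^0$ and hence $W_{n+2}\scO_X(*f)=\scO_X(*f)$. For the formula for $F_k^HW_{n+1}\scO_X(*f)$, one uses that $\psi_0(u\partial_t^j)=0$ for $j\geq 1$, so $\psi_0$ projects onto the $\partial_t^0$-slot; combined with the decomposition $V^0W_{n+1}=\partial_t V^1+K_1V^0$ from equation~\eqref{eqVWK}, and with the observation that $\partial_t V^1$ contributes nothing to the $\partial_t^0$-slot, the problem reduces to identifying the $\partial_t^0$-projection of $V^0W_{n+1}\cap F_k^{t\text{-ord}}$ via Saito's formula, yielding the claimed $\sum_{j\geq 0}F_{k-j}\scD_X\cdot\scO^{>1+j-|w|}f^{-j-1}$ (the shift from exponent $0$ to $1$ reflects the fact that $-1$ is always a root of the Bernstein--Sato polynomial). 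The main obstacle is that individual monomials $gf^{-j-1}$ with $j$ large generally do not themselves lie in $V^0 i_{f,+}\scO_X(*f)$, and so must be realised as the $\partial_t^0$-components of larger elements $\sum_{j'\geq 0}u_{j'}\partial_t^{j'}\in K_1V^0$ whose higher-$\partial_t$ corrections are engineered both to push the whole sum into $V^0$ and to ensure $(\partial_t t)\cdot(\sum u_{j'}\partial_t^{j'})\in V^{>0}$; exhibiting these lifts explicitly and tracking their interaction with $F_k^{t\text{-ord}}$ is the core computation, after which the formula follows by bookkeeping on the weighted decomposition.
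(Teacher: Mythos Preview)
Your treatment of part~i) is correct and matches the paper's argument. The one quibble is that ``$\alpha\notin\bZ$, so $\partial_tt-\alpha$ acts as zero on $\text{gr}_V^\alpha$'' is not a general fact but is specific to weighted homogeneous isolated singularities; the paper cites \cite{CDM24} for this.

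For part~ii) there is a genuine gap, and it stems from your choice to apply Theorem~\ref{thmmainformula} with $\alpha=0$. Saito's formula (\cite{MSai09}~(4.2.1)) describes $F_k^{t-\text{ord}}V^\lambda i_{f,+}\scO_X$ only for $0<\lambda\leq 1$, so it does not give you $V^0i_{f,+}\scO_X(*f)$ directly. You correctly identify the resulting obstacle --- one must realise $gf^{-j-1}$ as the $\partial_t^0$-component of an explicit element of $K_1V^0$ --- but ``exhibiting these lifts explicitly'' and ``bookkeeping on the weighted decomposition'' is not a proof; you have named the difficulty without resolving it. The paper avoids this entirely by using $\scO_X(*f)\simeq\scM(f^{-1})$ and taking $\alpha=1$ in Theorem~\ref{thmmainformula}, so that one works in $V^1i_{f,+}\scO_X(*f)=V^1i_{f,+}\scO_X$, where Saito's formula applies verbatim. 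The kernel step is then clean: since $V^{>1}=t\cdot V^{>0}$ and $t$ is injective, $u\in K_1V^1$ if and only if $\partial_tu\in V^{>0}$, and reading off $\text{Im}(\partial_t)\cap F_{k+1}^{t-\text{ord}}V^{>0}$ from Saito's formula gives
\[F_k^{t-\text{ord}}K_1V^1i_{f,+}\scO_X=\sum_{j\geq 0}F_{k-j}\scD_X\cdot\scO^{>1+j-|w|}\partial_t^j\]
in one line. Applying $\psi_{-1}$ (where $\psi_{-1}(g\partial_t^j)=j!\,gf^{-j-1}$) finishes. The moral: use $\alpha=1$, not $\alpha=0$.
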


\begin{proof}

The reason why we are able to calculate the Hodge and weight filtrations precisely under these assumptions is that we have the following formula for the $V$-filtration, as seen in \cite{MSai09}, (4.2.1):

For $0<\lambda\leq 1$,
\[F_k^{t-\text{ord}}V^{\lambda}i_{f,+}\scO_X=\sum_{j\geq 0}F_{k-j}\scD_X\cdot \scO^{\geq\lambda+j-|w|}\partial_t^j.\]

One may easily (see for instance the proof of Theorem 6.1 of \cite{CDM24}) then prove that
\[(\partial_tt-\lambda)\cdot V^{\lambda}i_{f,+}\scO_X \subseteq V^{>\lambda}i_{f,+}\scO_X \,\,\,\text{ for }\,\,\, 0<\lambda<1\]
and that
\[(\partial_tt-1)^2\cdot V^1i_{f,+}\scO_X \subseteq V^{>1}i_{f,+}\scO_X,\]
thus proving the first three statements of the theorem by Theorem \ref{thmmainformula}.

It now only remains to find an expression for $F_k^{t-\text{ord}}K_1V^1i_{f,+}\scO_X$. This is relatively straightforward; given $u \in F_k^{t-\text{ord}}V^1i_{f,+}\scO_X$, $u \in K_1V^1i_{f,+}\scO_X$ if and only if $t\partial_t\cdot u \in V^{>1}i_{f,+}\scO_X=t\cdot V^{>0}i_{f,+}\scO_X$, which in turn is the case if and only if $\partial_t\cdot u\in V^{>0}i_{f,+}\scO_X$. Using the above formula for the $V$-filtration, it is easy to see that
\[\text{Im}(\partial_t:i_{f,+}\scO_X \to i_{f,+}\scO_X)\cap F_{k+1}^{t-\text{ord}}V^{>0}i_{f,+}\scO_X = \sum_{j\geq 1}F_{k+1-j}\scD_X\cdot \scO^{>j-|w|}\partial_t^j.\]
This finally gives us the formula
\begin{equation}\label{eqkernelpwhom}F_k^{t-\text{ord}}K_1V^1i_{f,+}\scO_X=\sum_{j\geq 0}F_{k-j}\scD_X\cdot\scO^{>1+j-|w|}\partial_t^j.\end{equation} 
Theorem \ref{thmmainformula} then gives us that
\[F_k^HW_{n+1}\scO_X(*f)=\psi_{-1}\left(F_k^{t-\text{ord}}K_1V^1i_{f,+}\scO_X\right)=\sum_{j\geq 0}F_{k-j}\scD_X\cdot \scO^{>1+j-|w|}f^{-j-1}\]
as required.
\end{proof}

\begin{rem}

In Section 4, we will introduce the weighted microlocal multiplier ideals (or weighted higher multiplier ideals). The above formula for the $V$-filtration gives us a formula for the weighted microlocal multiplier ideals in this case. Some elementary calculation (use Lemma \ref{lemmicromultformulae}) yields:
\[W_0\widetilde{V}^{k+\alpha}\scO_X=\sum_{j=0}^k\scO_X\left(\sum_{\gamma\in\bZ_{\geq 0}^n, \,\, |\gamma|=k-j}\prod_{i=1}^n\partial_i(f)^{\gamma_i}\right)\scO^{>\alpha+j-|w|},\]
for $\alpha\in\bQ\cap(0,1]$.

This is of course nothing new, as $W_0\widetilde{V}^{k+\alpha}=\widetilde{V}^{>k+\alpha}$, and the above formula for the $V$-filtration extends to give a formula for the microlocal $V$-filtration (\cite{MSai09}, (4.2.1)). We could alternatively have used microlocal methods to prove Theorem \ref{thmWHom}.
        
\end{rem}

\newpage

\section{Microlocalisation and weighted minimal exponents}\label{sectionmicrolocal}

\noindent In this section we introduce new invariants called weighted minimal exponents. These generalise the minimal exponent, a well-studied birational invariant. We investigate how these invariants are related to microlocalisation and derive several results relating these invariants to the mixed Hodge module structure overlying $\scM(f^{-\alpha})$ described in previous sections.

\vspace{10pt}

\subsection{Weighted minimal exponents and microlocal multiplier ideals}

\; \vspace{5pt} \\ Let $X$ be a complex manifold of dimension $n$ and $f\in\scO_X$ such that $Z:=f^{-1}(0)$ is a hypersurface in $X$. Let $\fx\in Z$ and write $f_{\fx}\in\scO_{X,\hspace{0.7pt}\fx}$ for the germ of $f$ at $\fx$.

\begin{defn}

The \emph{(local) Bernstein-Sato polynomial of $f$ at $\fx$} is the unique monic polynomial of minimal degree $b_{f,\hspace{0.7pt}\fx}(s)\in\bC[s]$ satisfying that
\[b_{f,\hspace{0.7pt}\fx}(s)\cdot f_{\fx}^s\in\scD_{X,\hspace{1pt}\fx}[s]\cdot f_{\fx}^{s+1}.\]
Write $\rho_{f,\hspace{0.7pt}\fx}\subseteq\bC$ for the set of roots of $b_{f,\hspace{0.7pt}\fx}(s)$. For $\beta \in\bC$, write $m_{f,\beta,\hspace{0.7pt}\fx}$ for the multiplicity of $\beta$ as a root of $b_{f,\hspace{0.7pt}\fx}(s)$.
    
\end{defn}

\begin{note}

The existence of the Bernstein-Sato polynomial in the algebraic setting was first proven by Bernstein (\cite{B73}, Theorem 1'). Its existence in the local analytic setting is due to Kashiwara (\cite{Kash76}, Theorem 3.3). 
    
\end{note}

\begin{rem}[\cite{Kash76}, Section 5 and \cite{MSai94}, Theorem 0.4]

Some simple facts about $b_{f,\hspace{0.7pt}\fx}(s)$ are as follows:

\begin{enumerate}[label=\roman*)]

\item $\rho_{f,\hspace{0.7pt}\fx} \subseteq \bQ\cap(-n,0)$.

\item $-1 \in \rho_{f,\hspace{0.7pt}\fx}$.

\item $b_{f,\hspace{0.7pt}\fx}(s)=s+1$ if and only if $f$ is smooth at $\fx$.
    
\end{enumerate}
    
\end{rem}

\noindent We've already seen that the set $\rho_{f,\hspace{0.7pt}\fx}$ determines (at least locally) when the mixed Hodge module $M(f^{-\alpha})$ is pure (Proposition \ref{lemDmodprops}.iv)). We shall see in this section that there is a more precise relation between the multiplicities of roots of $b_{f,\hspace{0.7pt}\fx}(s)$ and the number of possible weights of the mixed Hodge module $M(f^{-\alpha})$.

This relation is due to the following close relation between the Kashiwara-Malgrange $V$-filtration associated to $\scO_X(*f)$ and the Bernstein-Sato polynomial of $f$.

\begin{lem}

Define a filtration (for $k \in\bZ$)
\[G_ki_{f,+}\scO_X(*f):=t^{-k}\cdot(\scD_X[s]\cdot 1),\]
where $s$ is defined to act as $-\partial_tt$ (throughout this section). Then $b_{f,\hspace{0.7pt}\fx}(s)$ coincides with the minimal monic polynomial of the action of $s$ on $\text{\emph{gr}}^G_0i_{f,+}\scO_X(*f)$ at the point $\fx$.

As a consequence, for $\alpha \in\bQ$ and $k \in\bZ$,
\[m_{f,-\alpha-k,\hspace{0.7pt}\fx} = \text{\emph{min}}\{i \in\bZ_{\geq 0}\,|\, (s+\alpha)^i\cdot\text{\emph{gr}}^G_k\text{\emph{gr}}^{\alpha}_Vi_{f,+}\scO_X(*f)=0\,\text{ \emph{at} }\, \fx\}.\]

In particular,
\[\alpha_{f,\hspace{0.7pt}\fx}:=\text{\emph{min}}\,(-\rho_{f,\hspace{0.7pt}\fx}) = \text{\emph{min}}\{\gamma\in\bQ\,|\,\text{\emph{gr}}^G_0\text{\emph{gr}}_V^{\gamma}i_{f,+}\scO_X(*f) \neq 0\,\text{ \emph{at} }\, \fx\}.\]

\label{lemBSmult}
    
\end{lem}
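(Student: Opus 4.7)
The plan is in three steps: (1) identify $G_0 / G_{-1}$ with the standard quotient $\scD_X[s] \cdot f^s / \scD_X[s] \cdot f^{s+1}$, reducing the first assertion to the definition of $b_{f,\hspace{0.7pt}\fx}(s)$; (2) exploit the commutation relation $[s,t] = -t$ to transfer statements from $\text{gr}^G_0$ to the other $\text{gr}^G_k$; (3) use the nilpotency axiom of the $V$-filtration to interpret $\text{gr}^\alpha_V$ as a generalised eigenspace for the operator $s = -\partial_t t$.

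For step (1), I would construct a $\scD_X[s]$-module morphism $\Psi \colon \scD_X[s]\cdot f^s \to G_0$ sending the formal generator $f^s$ to $1 \in i_{f,+}\scO_X(*f)$, with $s$ on the right acting as $-\partial_t t$. Well-definedness reduces to checking that the $\partial_i$ act compatibly: using $i_{f,+}\scO_X(*f) \simeq \scO_X(*f)\otimes_{\bC}\bC[\partial_t]$, one computes $\partial_i \cdot 1 = -(\partial_i f)\otimes \partial_t$, while the image of $\partial_i \cdot f^s = s(\partial_i f) f^{-1} f^s$ under $\Psi$ is $-\partial_t t \cdot (\partial_i f) f^{-1} \otimes 1 = -\partial_t \cdot (\partial_i f) \otimes 1 = -(\partial_i f) \otimes \partial_t$, where we use that $t$ is invertible on $i_{f,+}\scO_X(*f)$ with $t \cdot (f^{-1} \otimes 1) = 1 \otimes 1$. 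Surjectivity is clear; injectivity is the universal property of $f^s$ (that relations among $P(s)\cdot f^s$ are precisely those holding after every specialisation $s \mapsto \alpha$), which is standard. Since $\Psi(f^{s+1}) = \Psi(f \cdot f^s) = t \cdot 1$, $\Psi$ descends to an isomorphism $\scD_X[s] f^s / \scD_X[s] f^{s+1} \isommap G_0/G_{-1}$. The first assertion of the Lemma then follows immediately from the defining property of $b_{f,\hspace{0.7pt}\fx}(s)$.

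For steps (2) and (3): from $[s,t] = -t$ one obtains $t^{-k} s = (s-k)t^{-k}$, so multiplication by $t^{-k}$ yields a $\bC$-linear isomorphism $G_0/G_{-1} \isommap G_k/G_{k-1} = \text{gr}^G_k$ intertwining $s$ with $s+k$. Hence the minimal polynomial of $s$ on $\text{gr}^G_k$ at $\fx$ is $b_{f,\hspace{0.7pt}\fx}(s-k)$. By Definition \ref{defnspecial}.iv), $-(s+\alpha) = \partial_t t - \alpha$ is nilpotent on $\text{gr}^\alpha_V$, so $\text{gr}^\alpha_V$ coincides with the generalised $(-\alpha)$-eigenspace of $s$. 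Consequently $\text{gr}^G_k \text{gr}^\alpha_V$ is the generalised $(-\alpha)$-eigenspace of $s$ on $\text{gr}^G_k$, on which $s + \alpha$ acts nilpotently of index equal to the multiplicity of $-\alpha$ as a root of $b_{f,\hspace{0.7pt}\fx}(s-k)$, i.e. the multiplicity of $-\alpha-k$ as a root of $b_{f,\hspace{0.7pt}\fx}(s)$, namely $m_{f,-\alpha-k,\hspace{0.7pt}\fx}$. Specialising to $k = 0$, the smallest $\gamma$ with $\text{gr}^G_0 \text{gr}^\gamma_V \ne 0$ at $\fx$ is the smallest $\gamma$ with $-\gamma \in \rho_{f,\hspace{0.7pt}\fx}$, namely $\alpha_{f,\hspace{0.7pt}\fx}$.

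The main obstacle will be the final identification of $\text{gr}^G_k \text{gr}^\alpha_V$ with the generalised eigenspace of $s$ on $\text{gr}^G_k$: this requires strict compatibility of the two filtrations $G_\bullet$ and $V^\bullet$, so that the double graded piece $(G_k \cap V^\alpha)/(G_{k-1}\cap V^\alpha + G_k \cap V^{>\alpha})$ genuinely decomposes as an $s$-generalised eigenspace. This is not automatic from the axioms, but follows because $G_0$ is cyclic (hence coherent over $V^0\scD_X\langle s\rangle$) and because the $V$-filtration is characterised by its nilpotency properties; from this one deduces that any $s$-generalised eigenspace in $\text{gr}^G_k$ is supported in a single $\text{gr}^\alpha_V$. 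Making this compatibility precise is the only step that requires substantial technical care.
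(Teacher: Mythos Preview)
The paper does not argue this lemma directly; its proof is a one-line reference to \cite{MSai17}, Section~1.2. Your three-step outline is a faithful reconstruction of the standard argument behind that reference. Steps (1) and (2) are correct as written: the identification of $\text{gr}^G_0$ with $\scD_X[s]f^s/\scD_X[s]f^{s+1}$ via $f^s\mapsto 1$, and the shift $t^{-k}s=(s-k)t^{-k}$ yielding minimal polynomial $b_{f,\hspace{0.7pt}\fx}(s-k)$ for $s$ on $\text{gr}^G_k$, are exactly the mechanism used.

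The obstacle you flag in step (3) is genuine and is in fact the only substantive content. Note that $G_0$ is merely a $V^0\scD_{X\times\bC}$-submodule of $i_{f,+}\scO_X(*f)$, not a $\scD_{X\times\bC}$-submodule (it is not stable under $\partial_t$, since $\partial_t\cdot 1=-t^{-1}(s+1)\cdot 1\in G_1\setminus G_0$), so one cannot simply invoke strictness of $V^\bullet$ with respect to short exact sequences of $\scD$-modules. The resolution in the cited reference is to show that for $u\in G_0$ one has $u\in V^\alpha$ if and only if the $b$-function of $u$ relative to the filtration $G_\bullet$ has all roots in $(-\infty,-\alpha]$; this identifies the induced filtration $V^\bullet\text{gr}^G_0$ with the filtration by sums of generalised eigenspaces for $s$, which is precisely what you need. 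Your justification via cyclicity of $G_0$ and the nilpotency axiom points in the right direction, but this last equivalence would need to be written out explicitly for the argument to be complete.
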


\begin{proof}

See \cite{MSai17}, Section 1.2.
    
\end{proof}

\begin{defn}

Write $\widetilde{b}_{f,\hspace{0.7pt}\fx}(s):=b_{f,\hspace{0.7pt}\fx}(s)/(s+1) \in \bC[s]$ for the \emph{reduced Bernstein-Sato polynomial of $f$ at $\fx$} (also called the \emph{microlocal Bernstein-Sato polynomial of $f$ at $\fx$}, the reason for which we shall see later). Write $\widetilde{\rho}_{f,\hspace{0.7pt}\fx}\subseteq\bC$ for the set of roots of $\widetilde{b}_{f,\hspace{0.7pt}\fx}(s)$. For $\beta \in\bC$, write $\widetilde{m}_{f,\beta,\hspace{0.7pt}\fx}$ for the multiplicity of $\beta$ as a root of $\widetilde{b}_{f,\hspace{0.7pt}\fx}(s)$.

The \emph{minimal exponent of $f$ at $\fx$} is the smallest rational number $\widetilde{\alpha}_{f,\hspace{0.7pt}\fx}\in\bQ_{>0}$ such that
\[\widetilde{b}_{f,\hspace{0.7pt}\fx}(-\widetilde{\alpha}_{f,\hspace{0.7pt}\fx})=0.\]

\end{defn}

\begin{defn}

We recursively define a collection of polynomials $b_{f,\hspace{0.7pt}\fx}^{(l)}(s)$, for $l \in\bZ_{\geq 0}$, as follows:
\[b_{f,\hspace{0.7pt}\fx}^{(0)}(s):=\widetilde{b}_{f,\hspace{0.7pt}\fx}(s), \hspace{10pt} b_{f,\hspace{0.7pt}\fx}^{(l+1)}(s) := b_{f,\hspace{0.7pt}\fx}^{(l)}(s)/\sqrt{b_{f,\hspace{0.7pt}\fx}^{(l)}(s)} \,\,\text{ for } l \geq 0.\]
Then the \emph{$l$-th weighted minimal exponent of $f$ at $\fx$} (these will collectively be referred to as the \emph{weighted minimal exponents of $f$ at $\fx$}) is the smallest rational number $\widetilde{\alpha}_{f,\hspace{0.7pt}\fx}^{(l)}\in\bQ_{>0}$ such that
\[b_{f,\hspace{0.7pt}\fx}^{(l)}(-\widetilde{\alpha}_{f,\hspace{0.7pt}\fx}^{(l)})=0.\]

\label{defnwminlexp}
    
\end{defn}

\begin{rem}{\blank}

\begin{enumerate}[label=\roman*)]

\item $\widetilde{\alpha}_{f,\hspace{0.7pt}\fx}^{(0)} = \widetilde{\alpha}_{f,\hspace{0.7pt}\fx}$ and $\widetilde{\alpha}_{f,\hspace{0.7pt}\fx}^{(l)} \leq \widetilde{\alpha}_{f,\hspace{0.7pt}\fx}^{(l+1)}$ for all $l \geq 0$. 

\item Note that $\widetilde{\alpha}^{(l)}_{f,\hspace{0.7pt}\fx}$ is only defined when $b_{f,\hspace{0.7pt}\fx}^{(l)}(s) \neq 1$. In particular the minimal exponent of $f$ at $\fx$ is only defined when $f$ is singular at $\fx$. Throughout, whenever we write $\widetilde{\alpha}^{(l)}_{f,\hspace{0.7pt}\fx}$, we make the implicit assumption that $b_{f,\hspace{0.7pt}\fx}^{(l)}(s) \neq 1$.

\item Note also that $-\widetilde{\alpha}_{f,\hspace{0.7pt}\fx}^{(l)}$ may alternatively be defined to be the largest root of $\widetilde{b}_{f,\hspace{0.7pt}\fx}(s)$ whose multiplicity is greater than or equal to $l+1$, i.e.
\[-\widetilde{\alpha}_{f,\hspace{0.7pt}\fx}^{(l)}=\text{max}\{\beta \in \bQ \,|\, \widetilde{m}_{f,\beta,\hspace{0.7pt}\fx} \geq l+1\}.\]

\item Using \cite{MSai94}, Theorem 0.5, we have the bound
\[\widetilde{\alpha}_{f,\hspace{0.7pt}\fx}^{(l)} \leq n-\widetilde{\alpha}_{f,\hspace{0.7pt}\fx}-l.\footnotemark\]
Thus, in conclusion, we have
\[\widetilde{\alpha}_{f,\hspace{0.7pt}\fx}^{(l)}\in \bQ\cap \left[\widetilde{\alpha}_{f,\hspace{0.7pt}\fx},n-\widetilde{\alpha}_{f,\hspace{0.7pt}\fx}-l\right].\]

\footnotetext{Indeed, we have the same bound for any root of $\widetilde{b}_{f,\hspace{0.7pt}\fx}(s)$ of multiplicity $\geq l+1$. In particular, all roots of $\widetilde{b}_{f,\hspace{0.7pt}\fx}(s)$ have multiplicity $\leq n-1$.}

\end{enumerate}
    
\end{rem}

\noindent Now we introduce microlocalisation and relate the above polynomials and invariants to natural filtrations and operators on this module, in a similar vein to Lemma \ref{lemBSmult}. We imitate the notation of \cite{MSai94} and \cite{MSai17}, writing
\[\scB_f:=i_{f,+}\scO_X.\]

\begin{defn}

The \emph{algebraic partial microlocalisation} of $\scB_f$ is the $\scD_{X\times\bC}$-module 
\[\widetilde{\scB}_f:=(i_{f,*}\scO_X)[\partial_t,\partial_t^{-1}].\]
Its action is defined analogously to that of $(i_{f,*}\scO_X)[\partial_t]$, in particular so that (in local coordinates)
\[\partial_{x_i}\cdot (g \partial_t^k) = \partial_{x_i}(g) \partial_t^k - \partial_{x_i}(f)g\partial_t^{k+1}, \,\,\,\,\,\,\,t\cdot (g\partial_t^k)=fg\partial_t^k-kg\partial_t^{k-1}. \]
We define also the filtrations
\[F_k^{t-\text{ord}}\widetilde{\scB}_f := \sum_{i \leq k}(i_{f,*}\scO_X)\partial_t^i, \,\,\,\,\,\,G_k\widetilde{\scB}_f:=\scD_X[s,\partial_t^{-1}]\cdot\partial_t^k.\]
(Again, $s$ is defined to act as $-\partial_tt$ here.)

\label{defnmicrol}
    
\end{defn}

\begin{lem}[\cite{MSai94}, Proposition 0.3]

The reduced Bernstein-Sato polynomial $\widetilde{b}_{f,\hspace{0.7pt}\fx}(s)$ coincides with the minimal monic polynomial of the action of $s$ on $\text{\emph{gr}}_0^G\widetilde{\scB}_f$ at the point $\fx$. i.e. $\widetilde{b}_{f,\hspace{0.7pt}\fx}(s)$ is the minimal monic polynomial satisfying the functional equation
\[\widetilde{b}_{f,\hspace{0.7pt}\fx}(s)\cdot 1 \in \scD_{X,\hspace{1pt}\fx}[s,\partial_t^{-1}] \cdot \partial_t^{-1} \,\,\,\text{ in }\,\,\, \widetilde{\scB}_{f,\hspace{0.7pt}\fx}.\]

\label{lemSaitomicrobfcn}
    
\end{lem}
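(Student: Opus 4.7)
The result characterizes $\widetilde{b}_{f,\fx}(s) = b_{f,\fx}(s)/(s+1)$ as the minimal polynomial of $s$ acting on $\text{gr}_0^G\widetilde{\scB}_{f,\fx}$, analogous to Lemma~\ref{lemBSmult} for $\text{gr}_0^G i_{f,+}\scO_{X,\fx}(*f_\fx)$. The proof hinges on the key microlocal identity $t = -(s+1)\partial_t^{-1}$ as operators in $\widetilde{\scB}_f$, which follows from $s = -\partial_t t$ and $[\partial_t,t] = 1$ (giving $t\partial_t = \partial_t t - 1 = -(s+1)$). I would split the argument into existence and minimality.

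For \emph{existence}, I would begin with the Bernstein-Sato equation in the form $b_{f,\fx}(s)\cdot 1 = Q(s)\cdot t\cdot 1$ in $\scB_{f,\fx}\hookrightarrow\widetilde{\scB}_{f,\fx}$ for some $Q(s)\in\scD_{X,\fx}[s]$ (the module-level formulation of Lemma~\ref{lemBSmult}). Substituting the key identity and using that $(s+1)$ commutes with $Q(s)\in\scD_X[s]$ (since $s$ commutes with $\scD_X$), one obtains
\[(s+1)\widetilde{b}_{f,\fx}(s)\cdot 1 = b_{f,\fx}(s)\cdot 1 = -(s+1)Q(s)\partial_t^{-1}\cdot 1,\]
so $(s+1)\cdot\bigl[\widetilde{b}_{f,\fx}(s) + Q(s)\partial_t^{-1}\bigr]\cdot 1 = 0$ in $\widetilde{\scB}_{f,\fx}$. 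A direct calculation verifies that $(s+1)$ acts injectively on $\widetilde{\scB}_{f,\fx}$: a finite Laurent polynomial $u = \sum_k g_k\partial_t^k$ satisfying $(s+1)u = 0$ yields the recursion $(k+1)g_k = fg_{k-1}$ (via $s\cdot(g\partial_t^k) = kg\partial_t^k - fg\partial_t^{k+1}$), which forces every $g_k$ to vanish since $\scO_{X,\fx}$ is an integral domain and $f_\fx\neq 0$ is neither a zero-divisor nor nilpotent, and $u$ has finite support. Injectivity immediately yields $\widetilde{b}_{f,\fx}(s)\cdot 1 = -Q(s)\partial_t^{-1}\cdot 1 \in \scD_{X,\fx}[s,\partial_t^{-1}]\cdot \partial_t^{-1}$.

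For \emph{minimality}, setting $M := \scD_{X,\fx}[s]\cdot 1$, the crucial claim is
\[(s+1)\cdot\bigl(M\cap G_{-1}\widetilde{\scB}_{f,\fx}\bigr) \subseteq tM,\]
from which Lemma~\ref{lemBSmult} immediately delivers $b_{f,\fx}(s) \mid (s+1)q(s)$, hence $\widetilde{b}_{f,\fx}(s) \mid q(s)$, for any $q(s)$ with $q(s)\cdot 1 \in G_{-1}\widetilde{\scB}_{f,\fx}$. The claim is established by reversing the existence calculation: writing $q(s)\cdot 1 = U\partial_t^{-1}\cdot 1$ with $U \in \scD_{X,\fx}[s,\partial_t^{-1}]$, the commutations $(s+1)\partial_t^{-1} = \partial_t^{-1}s$ and $\partial_t^{-1}s\cdot 1 = -f$ give $(s+1)q(s)\cdot 1 = -U\cdot f$ in $\widetilde{\scB}_{f,\fx}$. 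Since the left-hand side lies in $\scB_{f,\fx}$, expanding $U = \sum U_i\partial_t^{-i}$ in normal form and tracking the shift relations $(s+1)\partial_t^{-n} = \partial_t^{-n}(s-n+1)$, together with the cancellations forced among the $\partial_t^{-i}$-terms by this containment, identifies $-U\cdot f$ as an element of $\scD_{X,\fx}[s]\cdot f = tM$. This final identification, requiring delicate bookkeeping with the microlocal commutation relations, is the main obstacle; the existence step is comparatively routine once the injectivity of $(s+1)$ on $\widetilde{\scB}_{f,\fx}$ has been verified.
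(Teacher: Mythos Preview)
The paper does not supply its own proof of this lemma: it is stated with a citation to \cite{MSai94}, Proposition 0.3, and no argument is given in the text. So there is nothing in the paper to compare your attempt against.

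That said, your existence argument is correct and is essentially the standard one: the identity $t = -(s+1)\partial_t^{-1}$ together with the injectivity of $(s+1)$ on $\widetilde{\scB}_{f,\fx}$ (which you verify correctly via the recursion $(k+1)g_k = fg_{k-1}$) immediately transports the classical Bernstein--Sato relation to the microlocal one.

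Your minimality sketch, however, is genuinely incomplete, and you flag this yourself. The difficulty is real: writing $q(s)\cdot 1 = U\partial_t^{-1}\cdot 1$ with $U\in\scD_{X,\fx}[s,\partial_t^{-1}]$ and then multiplying by $(s+1)$ does not straightforwardly land you in $tM$, because $(s+1)$ does not commute with the $\partial_t^{-1}$'s inside $U$. A cleaner route is to observe that $G_0^{\text{mic}}\widetilde{\scB}_{f,\fx} = M + G_{-1}^{\text{mic}}\widetilde{\scB}_{f,\fx}$ (immediate from the definition), so $\text{gr}_0^{G}\widetilde{\scB}_{f,\fx} \simeq M/(M\cap G_{-1}^{\text{mic}})$, and then to identify $M\cap G_{-1}^{\text{mic}}$ directly. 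Alternatively, one can argue via $(s+1)q(s)\cdot 1 = -t\partial_t\cdot q(s)\cdot 1$ and show that $\partial_t$ maps $M\cap G_{-1}^{\text{mic}}$ into $M$, which then gives $(s+1)q(s)\cdot 1 \in tM$; but verifying $\partial_t\cdot(M\cap G_{-1}^{\text{mic}})\subseteq M$ still requires the bookkeeping you allude to. Saito's original argument in \cite{MSai94} handles this via the explicit structure of $\widetilde{\scB}_f$ as a free $\scO_X[\partial_t,\partial_t^{-1}]$-module; consulting that source would close the gap.
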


\begin{defn}[\cite{MSai94}, (2.1.3)]

The \emph{microlocal $V$-filtration} for $\widetilde{\scB}_f$ along the hypersurface $\{t=0\}$ is the decreasing rational (exhaustive) filtration on $\widetilde{\scB}_f$ defined, for $\gamma\in\bQ$, by
\[V^{\gamma}\widetilde{\scB}_f = \begin{cases}V^{\gamma}\scB_f + F_{-1}^{t-\text{ord}}\widetilde{\scB}_f & \text{ if } \gamma \leq 1\\ \partial_t^{-\lfloor\gamma\rfloor}\cdot V^{\gamma -\lfloor\gamma\rfloor}\widetilde{\scB}_f & \text{ otherwise.}\end{cases}\]
    
\end{defn}

\begin{rem}

Firstly, the inclusion $\scB_f\hookrightarrow\widetilde{\scB}_f$ induces an isomorphism of $\scD_X[s]$-modules (which in addition preserves the filtrations $F_{\bullet}^{t-\text{ord}}$)
\[\text{gr}_V^{\gamma}\scB_f \isommap \text{gr}_V^{\gamma}\widetilde{\scB}_f \]
whenever $\gamma<1$.

Secondly, under the bijection $\partial_t : \widetilde{\scB}_f \isommap\widetilde{\scB}_f$, $V^{\gamma}\widetilde{\scB}_f$ is mapped bijectively to $V^{\gamma-1}\widetilde{\scB}_f$ for all $\gamma\in\bQ$ (\cite{MSai94}, Lemma 2.2), i.e.
\[\partial_t \cdot V^{\gamma}\widetilde{\scB}_f =V^{\gamma-1}\widetilde{\scB}_f \,\,\,\,\,\forall \gamma\in\bQ.\]
    
\end{rem}

\noindent Analogously to Lemma \ref{lemBSmult}, Lemma \ref{lemSaitomicrobfcn} allows us to obtain expressions for the roots and multiplicities of the reduced Bernstein-Sato polynomial $\widetilde{b}_{f,\hspace{0.7pt}\fx}(s)$ in terms of the filtrations $V^{\bullet}$ and $G_{\bullet}$ defined on $\widetilde{\scB}_f$.

\begin{lem}

For $\alpha \in\bQ$ and $k \in\bZ$,
\[\widetilde{m}_{f,-\alpha-k,\hspace{0.7pt}\fx} = \text{\emph{min}}\{i \in\bZ_{\geq 0}\,|\, (s+\alpha)^i\cdot\text{\emph{gr}}^G_k\text{\emph{gr}}^{\alpha}_V\widetilde{\scB}_f=0\,\text{ \emph{at} }\, \fx\}.\]

\noindent In particular,
\[\widetilde{\alpha}_{f,\hspace{0.7pt}\fx}^{(l)}=\text{\emph{min}}\{\gamma\in\bQ\,|\,(s+\gamma)^l \cdot \text{\emph{gr}}^G_0\text{\emph{gr}}_V^{\gamma}\widetilde{\scB}_f \neq 0\,\text{ \emph{at} }\, \fx\}.\]

\label{lemredBSmult}
    
\end{lem}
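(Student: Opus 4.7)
The plan is to deduce this refinement of Lemma \ref{lemSaitomicrobfcn} in two steps, first reducing to the case $k=0$ and then factoring the minimal polynomial of $s$ on $\text{gr}_0^G\widetilde{\scB}_f$ according to the $V$-grading.

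First I would reduce to the case $k=0$. The commutation $[s,\partial_t]=\partial_t$ (an immediate consequence of $s=-\partial_tt$) iterates to $s\partial_t^k=\partial_t^k(s+k)$. Combined with the bijectivity of $\partial_t:V^\gamma\widetilde{\scB}_f\isommap V^{\gamma-1}\widetilde{\scB}_f$ for every $\gamma\in\bQ$, and the analogous bijectivity $\partial_t:G_r\widetilde{\scB}_f\isommap G_{r+1}\widetilde{\scB}_f$ (immediate from the definition $G_r\widetilde{\scB}_f=\scD_X[s,\partial_t^{-1}]\cdot\partial_t^r$), this produces an $\scO_X$-linear isomorphism
\[\partial_t^k:\text{gr}_0^G\text{gr}_V^{\alpha+k}\widetilde{\scB}_f\isommap\text{gr}_k^G\text{gr}_V^\alpha\widetilde{\scB}_f\]
intertwining the action of $s+k$ on the source with that of $s$ on the target. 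Hence $(s+\alpha)^i$ annihilates $\text{gr}_k^G\text{gr}_V^\alpha\widetilde{\scB}_f$ at $\fx$ if and only if $(s+\alpha+k)^i$ annihilates $\text{gr}_0^G\text{gr}_V^{\alpha+k}\widetilde{\scB}_f$ at $\fx$, so it suffices to prove the identity when $k=0$, namely
\[\widetilde{m}_{f,-\gamma,\hspace{0.7pt}\fx}=\min\{i\in\bZ_{\geq 0}\mid(s+\gamma)^i\cdot\text{gr}_0^G\text{gr}_V^\gamma\widetilde{\scB}_f=0\text{ at }\fx\}\]
for every $\gamma\in\bQ$.

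Let $n_\gamma$ denote the right-hand side; it is finite because $\partial_tt-\gamma=-(s+\gamma)$ acts nilpotently on $\text{gr}_V^\gamma\widetilde{\scB}_f$ (for $\gamma<1$ via the isomorphism $\text{gr}_V^\gamma\scB_f\isommap\text{gr}_V^\gamma\widetilde{\scB}_f$ recorded after Definition 4.8, and for $\gamma\geq 1$ via $\partial_t$-translation). Since $\text{gr}_0^G\text{gr}_V^\gamma\widetilde{\scB}_f$ is a subquotient of $\text{gr}_0^G\widetilde{\scB}_f$, any polynomial annihilator of the latter must annihilate it too, so Lemma \ref{lemSaitomicrobfcn} gives $(s+\gamma)^{n_\gamma}\mid\widetilde{b}_{f,\hspace{0.7pt}\fx}(s)$, whence $n_\gamma\leq\widetilde{m}_{f,-\gamma,\hspace{0.7pt}\fx}$. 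For the reverse inequality I would exhibit $P(s):=\prod_\gamma(s+\gamma)^{n_\gamma}$ as an annihilator of $\text{gr}_0^G\widetilde{\scB}_{f,\hspace{0.7pt}\fx}$ and invoke minimality of $\widetilde{b}_{f,\hspace{0.7pt}\fx}(s)$ to conclude $\widetilde{m}_{f,-\gamma,\hspace{0.7pt}\fx}\leq n_\gamma$. To check that $P(s)$ is a polynomial and genuinely annihilates, I order the (finitely many) jumps $\gamma_1<\cdots<\gamma_N$ of the $V$-filtration on $\text{gr}_0^G\widetilde{\scB}_{f,\hspace{0.7pt}\fx}$ and apply the factors $(s+\gamma_i)^{n_{\gamma_i}}$ in decreasing order of $i$: by definition of $n_{\gamma_i}$ the factor $(s+\gamma_i)^{n_{\gamma_i}}$ maps $V^{\gamma_i}\text{gr}_0^G\widetilde{\scB}_f$ into $V^{>\gamma_i}\text{gr}_0^G\widetilde{\scB}_f$, so after all factors are applied one lands in $V^{>\gamma_N}\text{gr}_0^G\widetilde{\scB}_f=0$. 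The ``in particular'' clause is then immediate from the main identity by combining with the characterisation $\widetilde{\alpha}_{f,\hspace{0.7pt}\fx}^{(l)}=\min\{\gamma\in\bQ\mid\widetilde{m}_{f,-\gamma,\hspace{0.7pt}\fx}\geq l+1\}$ recorded in the remark after Definition \ref{defnwminlexp}.

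The main obstacle is the finiteness statement used in the last step: one must justify that only finitely many $\gamma$ contribute nontrivial factors to $P(s)$, i.e. that the $V$-filtration on $\text{gr}_0^G\widetilde{\scB}_{f,\hspace{0.7pt}\fx}$ has only finitely many jumps at $\fx$. This rests on the regular holonomicity of $\widetilde{\scB}_f$ together with the coherence of the Kashiwara--Malgrange filtration over $V^0\scD_{X\times\bC}$; once in place, the rest of the argument is the standard generalised-eigenspace bookkeeping sketched above, and the proof formally parallels that of Lemma \ref{lemBSmult}.
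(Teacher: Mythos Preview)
Your argument is correct and is essentially the standard one; the paper itself does not give a proof but simply cites \cite{MSai17}, Section 1.3, so you have supplied the details that the paper omits. One small remark: the ``main obstacle'' you flag at the end (finiteness of the jumping numbers of $V^{\bullet}$ on $\text{gr}_0^G\widetilde{\scB}_{f,\hspace{0.7pt}\fx}$) is not actually an obstacle, because you have already established $n_\gamma\leq\widetilde{m}_{f,-\gamma,\hspace{0.7pt}\fx}$ before constructing $P(s)$, and $\widetilde{b}_{f,\hspace{0.7pt}\fx}(s)$ has only finitely many roots, so $n_\gamma=0$ for all but finitely many $\gamma$ and the product defining $P(s)$ is automatically finite.
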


\begin{proof}

This follows by \cite{MSai17}, Section 1.3.
    
\end{proof}

\noindent In order to compare the microlocal $V$-filtration to the Hodge filtration on $\scM(f^{-\alpha})$, we first extend the kernel filtration to the microlocal $V$-filtration in the obvious way.

\begin{defn} 

For $\gamma \in \bQ$, we define the \emph{kernel filtration} on $V^{\gamma}\widetilde{\scB}_f$ as follows. For $l \in \bZ_{\geq 0}$,
\[K_lV^{\gamma}\widetilde{\scB}_f := \{u \in V^{\gamma}\widetilde{\scB}_f \,|\, (s+\gamma)^l \cdot u \in V^{>\gamma}\widetilde{\scB}_f\}.\]
    
\end{defn}

\noindent It is important to note that under this definition we have, for $\gamma\in\bQ$ and $l\in\bZ_{\geq 0}$, that
\[\partial_t\cdot K_lV^{\gamma}\widetilde{\scB}_f = K_lV^{\gamma-1}\widetilde{\scB}_f.\]

\begin{lem}

We have the following expression for the kernel filtration on $V^{\gamma}\widetilde{\scB}_f$:
\[K_lV^{\gamma}\widetilde{\scB}_f = \begin{cases}K_lV^{\gamma}\scB_f + F_{-1}^{t-\text{\emph{ord}}}\widetilde{\scB}_f & \text{ if } \gamma < 1\\ K_{l+1}V^1\scB_f + F_{-1}^{t-\text{\emph{ord}}}\widetilde{\scB}_f & \text{ if } \gamma=1\\ \partial_t^{-\lfloor\gamma\rfloor}\cdot K_lV^{\gamma -\lfloor\gamma\rfloor}\widetilde{\scB}_f & \text{ otherwise.}\end{cases}\]

\label{lemkerVmicro}
    
\end{lem}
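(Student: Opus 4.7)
The proof proceeds by a case split on the value of $\gamma$, in increasing order of difficulty.

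\emph{Case $\gamma<1$:} This is essentially formal. The crucial observation is that $F_{-1}^{t\text{-ord}}\widetilde{\scB}_f\subseteq V^1\widetilde{\scB}_f$ by definition of the microlocal $V$-filtration, hence $F_{-1}^{t\text{-ord}}\widetilde{\scB}_f\subseteq V^{>\gamma}\widetilde{\scB}_f$ whenever $\gamma<1$. Since the decomposition $\widetilde{\scB}_f=\scB_f\oplus F_{-1}^{t\text{-ord}}\widetilde{\scB}_f$ is a direct sum of abelian groups, it is easy to see that $\scB_f\cap V^{>\gamma}\widetilde{\scB}_f=V^{>\gamma}\scB_f$ for $\gamma<1$. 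For $v=u+w$ with $u\in V^{\gamma}\scB_f$ and $w\in F_{-1}^{t\text{-ord}}\widetilde{\scB}_f$, the operator $s+\gamma$ preserves the $V$-filtration, so $(s+\gamma)^{l}w\in V^{>\gamma}\widetilde{\scB}_f$ automatically, and the kernel condition on $v$ reduces to $(s+\gamma)^{l}u\in V^{>\gamma}\scB_f$.

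\emph{Case $\gamma\notin\bZ$ with $\gamma>1$:} I will reduce to Case $\gamma<1$ via the bijection $\partial_t$. Starting from $[s,\partial_t^{-1}]=-\partial_t^{-1}$, which is a direct computation from $s=-\partial_tt$ and $[t,\partial_t^{-1}]=\partial_t^{-2}$, an easy induction gives the commutation $(s+\gamma)\partial_t^{-k}=\partial_t^{-k}(s+\gamma-k)$ for all $k\geq 0$. Combined with the defining property $V^{\gamma}\widetilde{\scB}_f=\partial_t^{-\lfloor\gamma\rfloor}V^{\gamma-\lfloor\gamma\rfloor}\widetilde{\scB}_f$ and the bijectivity of $\partial_t$ on $\widetilde{\scB}_f$ (which transports $V^{>\gamma}$ to $V^{>\gamma-\lfloor\gamma\rfloor}$), this immediately yields the stated identity $K_lV^{\gamma}\widetilde{\scB}_f=\partial_t^{-\lfloor\gamma\rfloor}K_lV^{\gamma-\lfloor\gamma\rfloor}\widetilde{\scB}_f$.

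\emph{Case $\gamma=1$:} This is the main obstacle. The plan is to again apply the bijection $\partial_t^{-1}\colon V^{0}\widetilde{\scB}_f\isommap V^{1}\widetilde{\scB}_f$ and the commutation $\partial_t^{-1}s^l=(s+1)^l\partial_t^{-1}$ to obtain $K_lV^{1}\widetilde{\scB}_f=\partial_t^{-1}K_lV^{0}\widetilde{\scB}_f$, then substitute the Case 1 expression $K_lV^{0}\widetilde{\scB}_f=K_lV^{0}\scB_f+F_{-1}^{t\text{-ord}}\widetilde{\scB}_f$. The difficulty is that $\partial_t^{-1}$ does not preserve the splitting $\widetilde{\scB}_f=\scB_f\oplus F_{-1}^{t\text{-ord}}\widetilde{\scB}_f$, but rather mixes the two pieces. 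Explicitly, an element $u\in V^{0}\scB_f\subseteq\scO_X[\partial_t]$ decomposes uniquely as $u=u_0+\partial_tu'$ with $u_0\in\scO_X$ and $u'\in\scB_f$; then $\partial_t^{-1}u=u_0\partial_t^{-1}+u'$, where the first summand lies in $F_{-1}^{t\text{-ord}}\widetilde{\scB}_f$ and the second lies in $V^{1}\widetilde{\scB}_f\cap\scB_f=V^{1}\scB_f$. The shift from $K_l$ to $K_{l+1}$ arises from carefully tracking how the kernel condition $s^l u\in V^{>0}\scB_f$ translates to a condition on $u'$: the extraction of the degree-$0$ component $u_0$ (which is then absorbed into the $F_{-1}^{t\text{-ord}}$ summand) costs one extra power of $s+1$ when pushing the resulting element of $V^{1}\scB_f$ into $V^{>1}\scB_f$.

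The bulk of the technical work lies in this last step: verifying that the map $u\mapsto u'$ intertwines $K_lV^{0}\scB_f$ with $K_{l+1}V^{1}\scB_f$ modulo the $F_{-1}^{t\text{-ord}}$-contribution, and conversely that every element of $K_{l+1}V^{1}\scB_f$ arises in this way. Once this bookkeeping is done, reassembling the two components yields the stated formula.
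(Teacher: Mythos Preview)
Your arguments for $\gamma<1$ and $\gamma>1$ match the paper's (minor: the third case also covers integer $\gamma>1$, where $\gamma-\lfloor\gamma\rfloor=0$; your commutation argument applies verbatim there too).

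For $\gamma=1$ your route differs from the paper's, and the ``bookkeeping'' you defer is where a genuine ingredient hides. The paper does not go through $\partial_t^{-1}$ and the case $\gamma=0$. It argues both inclusions directly: given $u\in V^1\scB_f$ with $(s+1)^lu\in V^{>1}\widetilde{\scB}_f\cap\scB_f$, one applies one more factor of $s+1=-t\partial_t$ and uses the chain
\[
V^{>1}\widetilde{\scB}_f\cap\scB_f\;\xrightarrow{\;\partial_t\;}\;V^{>0}\widetilde{\scB}_f\cap\scB_f=V^{>0}\scB_f\;\xrightarrow{\;t\;}\;V^{>1}\scB_f,
\]
the last map being the bijection from axiom (ii) of the $V$-filtration. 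The index shift $l\mapsto l+1$ thus falls out in one line with no splitting gymnastics; the reverse inclusion is handled by the same idea read backwards.

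If you pursue your approach instead, writing $u=u_0+\partial_t u'$ for $u\in K_lV^0\scB_f$, a direct computation gives $(s+1)^{l+1}u'=-ts^lu+ts^lu_0$. The first term lies in $V^{>1}\scB_f$, but the second requires $u_0\in V^{>0}\scB_f$, i.e.\ the blanket fact $\scO_X\subseteq V^{>0}\scB_f$; the same fact is needed in your converse direction to absorb $\scO_X\cdot\partial_t^{-1}\subseteq F_{-1}^{t\text{-ord}}\widetilde{\scB}_f$ back into $\partial_t^{-1}K_lV^0\scB_f+F_{-2}^{t\text{-ord}}\widetilde{\scB}_f$. This positivity statement is true (it is the negativity of all Bernstein--Sato roots), and in its microlocal guise $F_{-1}^{t\text{-ord}}\widetilde{\scB}_f\subseteq V^{>1}\widetilde{\scB}_f$ it is precisely what the paper establishes at the outset via $1\in V^{\widetilde{\alpha}_{f,\hspace{0.7pt}\fx}}\widetilde{\scB}_f$. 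But it is not the trivial containment $F_{-1}^{t\text{-ord}}\widetilde{\scB}_f\subseteq V^1\widetilde{\scB}_f$ that you cite from the definition, and your sketch never invokes it. Without it your $\gamma=1$ argument does not close.
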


\begin{proof}

It suffices to consider $\gamma \leq 1$. We begin by proving the reverse inclusion $\supseteq$.

Firstly, see that Lemma \ref{lemredBSmult} implies that $1 \in V^{\widetilde{\alpha}_{f,\hspace{0.7pt}\fx}}\widetilde{\scB}_f$ (at $\fx$). Thus in particular $1 \in V^{>0}\widetilde{\scB}_f$, so $\partial_t^{-1} \in V^{>1}\widetilde{\scB}_f$. This then also implies that $\partial_t^{-k}\in V^{>1}\widetilde{\scB}_f$ for all $k\geq 1$. Therefore 
\[F_{-1}^{t-\text{ord}}\widetilde{\scB}_f \subseteq K_lV^{\gamma}\widetilde{\scB}_f\]
for all $\gamma \leq 1$.

Now if $\gamma<1$ and $u \in K_lV^{\gamma}\scB_f$, then 
\[(s+\gamma)^l\cdot u \in V^{>\gamma}\scB_f \subseteq V^{>\gamma}\widetilde{\scB}_f,\]
implying that $u \in K_lV^{\gamma}\widetilde{\scB}_f$ as required. 

If instead $u \in K_{l+1}V^1\scB_f$, then 
\[(t\partial_t)^{l+1}\cdot u =(-1)^{l+1}(s+1)^{l+1}\cdot u\in V^{>1}\scB_f \,\,\Rightarrow \,\,\partial_t(t\partial_t)^l\cdot u \in V^{>0}\scB_f\subseteq V^{>0}\widetilde{\scB}_f,\]
so that 
\[(t\partial_t)^l\cdot u\in V^{>1}\widetilde{\scB}_f\]
as required.

Now we prove the reverse inclusion. Let $u \in K_lV^{\gamma}\widetilde{\scB}_f$, $\gamma \leq 1$. As seen above, $F_{-1}^{t-\text{ord}}\widetilde{\scB}_f \subseteq K_lV^{\gamma}\widetilde{\scB}_f$, so without loss of generality we may assume that $u \in V^{\gamma}\scB_f$. Then
\[(s+\gamma)^l\cdot u \in V^{>\gamma}\widetilde{\scB}_f\cap\scB_f.\]
If $\gamma <1$, then this implies that $(s+\gamma)^l\cdot u \in V^{\gamma}\scB_f$, so $u \in K_lV^{\gamma}\scB_f$ as required. 

If $\gamma=1$, then 
\[(s+\gamma)^{l+1}\cdot u \in t\partial_t\cdot \left(V^{>1}\widetilde{\scB}_f\cap\scB_f\right)=t\cdot\left( V^{>0}\widetilde{\scB}_f\cap\scB_f\right) = t\cdot V^{>0}\scB_f = V^{>1}\scB_f,\]
so $u \in K_{l+1}V^1\scB_f$ as required.
    
\end{proof}

\noindent Another set of useful microlocal invariants we now introduce are the so-called \emph{microlocal multiplier ideals} and their weighted counterparts, defined in terms of the above kernel filtrations. As we shall see, there is a relation between the weighted microlocal multiplier ideals and the weighted minimal exponents.

\begin{defn}

Consider the map
\[\psi_0 : \widetilde{\scB}_f \to \scO_X \,;\, \sum_{i\in\bZ}u_i\partial_t^i \mapsto u_0.\]
The \emph{microlocal multiplier ideals associated to $f$} (c.f. \cite{MSai17}, (1.5.4)) are the ideals, for $\gamma \in \bQ$, defined by
\[\widetilde{V}^{\gamma}\scO_X := \psi_0(F_0^{t-\text{ord}}V^{\gamma}\widetilde{\scB}_f).\]
Similarly, the \emph{weighted microlocal multiplier ideals} associated to $f$ are the ideals, for $\gamma \in \bQ$ and $l \in \bZ_{\geq 0}$, defined by
\[W_l\widetilde{V}^{\gamma}\scO_X := \psi_0(F_0^{t-\text{ord}}K_lV^{\gamma}\widetilde{\scB}_f).\]

\label{defnmicromultideals}
    
\end{defn}

\begin{rem}

By definition, $\widetilde{V}^{>\gamma}\scO_X=W_0\widetilde{V}^{\gamma}\scO_X\subseteq W_l\widetilde{V}^{\gamma}\scO_X\subseteq \widetilde{V}^{\gamma}\scO_X$ for all $\gamma$ and $l$, where $\widetilde{V}^{>\gamma}\scO_X:=\bigcup_{\gamma'>\gamma}\widetilde{V}^{\gamma'}\scO_X=\widetilde{V}^{\gamma + \epsilon}\scO_X$ for $0<\epsilon<<1$. 

Moreover, $W_l\widetilde{V}^{\gamma}\scO_X=\scO_X$ for all $l$ and $\gamma \leq 0$, since as we've already seen $1 \in V^{>0}\widetilde{\scB}_f$.
    
\end{rem}

\begin{rem}

The microlocal multiplier ideals associated to $f$ coincide with the \emph{higher multiplier ideals} defined in \cite{SY24} (see Section 5.4). In the notation of this paper, 
\[\widetilde{V}^{\gamma}\scO_X=\scI_{\lceil\gamma\rceil-1,\gamma-\lceil\gamma\rceil+1}(f).\]
Moreover, a filtration $W_{\bullet}$ is defined on the higher multiplier ideals in \cite{SY24} (Definition 5.7), and we have in addition that
\[W_l\widetilde{V}^{\gamma}\scO_X = W_l\scI_{\lceil\gamma\rceil-1,\gamma-\lceil\gamma\rceil+1}(f).\]
Namely, we have the following expressions for $\widetilde{V}^{\gamma}\scO_X$ and $W_l\widetilde{V}^{\gamma}\scO_X$, when $\gamma>0$.
    
\end{rem}

\begin{lem}

For $k \in\bZ_{\geq 0}$, $\alpha \in (0,1]$ and $l\in\bZ_{\geq 0}$,
\[W_l\widetilde{V}^{k+\alpha}\scO_X=\{v \in \scO_X\,|\, \text{there exists } \sum_{i=0}^kv_i\partial_t^i \in K_{l+\lfloor\alpha\rfloor}V^{\alpha}\scB_f\,\text{ such that }\, v_k=v\}.\]

\label{lemmicromultformulae}
    
\end{lem}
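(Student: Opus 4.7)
The plan is to compute $F_0^{t\text{-ord}}K_lV^{k+\alpha}\widetilde{\scB}_f$ directly by first translating everything through the $\partial_t^{-1}$-shift that defines the microlocal $V$-filtration for $\gamma>1$, and then applying the explicit formula for $K_lV^{\gamma}\widetilde{\scB}_f$ supplied by Lemma \ref{lemkerVmicro}. The statement then reduces, under $\psi_0$, to extracting the coefficient of $\partial_t^0$ from $\partial_t^{-k}u$ for $u$ in a $t$-order-$k$ piece of the kernel filtration on $\scB_f$.

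First I would observe that since $\partial_t \cdot V^{\gamma}\widetilde{\scB}_f = V^{\gamma-1}\widetilde{\scB}_f$ for all $\gamma\in\bQ$, and since conjugation by $\partial_t$ shifts the operator $s=-\partial_t t$ by $+1$ (so that $(s+\gamma)\partial_t = \partial_t(s+\gamma-1)$), the map $\partial_t$ induces a bijection $K_lV^{\gamma}\widetilde{\scB}_f \isommap K_lV^{\gamma-1}\widetilde{\scB}_f$ for every $\gamma\in\bQ$ and $l\in\bZ_{\geq 0}$. Iterating gives
\[K_lV^{k+\alpha}\widetilde{\scB}_f \;=\; \partial_t^{-k}\cdot K_lV^{\alpha}\widetilde{\scB}_f, \qquad \alpha\in(0,1],\ k\in\bZ_{\geq 0}.\]
An element $\partial_t^{-k}v$ has $t$-order $\leq 0$ if and only if $v$ has $t$-order $\leq k$, hence
\[F_0^{t\text{-ord}}K_lV^{k+\alpha}\widetilde{\scB}_f \;=\; \partial_t^{-k}\bigl(F_k^{t\text{-ord}}\widetilde{\scB}_f\,\cap\, K_lV^{\alpha}\widetilde{\scB}_f\bigr).\]

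Next I would plug in Lemma \ref{lemkerVmicro}. For $\alpha\in(0,1)$, $K_lV^{\alpha}\widetilde{\scB}_f = K_lV^{\alpha}\scB_f + F_{-1}^{t\text{-ord}}\widetilde{\scB}_f$, while for $\alpha=1$, $K_lV^{1}\widetilde{\scB}_f = K_{l+1}V^{1}\scB_f + F_{-1}^{t\text{-ord}}\widetilde{\scB}_f$; in both cases this is $K_{l+\lfloor\alpha\rfloor}V^{\alpha}\scB_f + F_{-1}^{t\text{-ord}}\widetilde{\scB}_f$. The key small lemma to record is that intersection with the $t$-order filter distributes across this sum: if $w=a+b$ with $a\in\scB_f$ (no negative powers of $\partial_t$) and $b\in F_{-1}^{t\text{-ord}}\widetilde{\scB}_f$ (only negative powers), then the $\partial_t^i$-coefficient of $w$ for $i\geq 0$ coincides with that of $a$. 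Therefore $w\in F_k^{t\text{-ord}}$ forces $a\in F_k^{t\text{-ord}}$, and
\[F_k^{t\text{-ord}}\widetilde{\scB}_f\cap K_lV^{\alpha}\widetilde{\scB}_f \;=\; F_k^{t\text{-ord}}K_{l+\lfloor\alpha\rfloor}V^{\alpha}\scB_f \;+\; F_{-1}^{t\text{-ord}}\widetilde{\scB}_f.\]

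Finally I would apply $\psi_0$. Multiplying the second summand above by $\partial_t^{-k}$ lands it inside $F_{-k-1}^{t\text{-ord}}\widetilde{\scB}_f$, which consists of strictly negative powers of $\partial_t$ and so is killed by $\psi_0$. For $u=\sum_{i=0}^{k}u_i\partial_t^{i}\in F_k^{t\text{-ord}}K_{l+\lfloor\alpha\rfloor}V^{\alpha}\scB_f$, we have $\psi_0(\partial_t^{-k}u)=u_k$. Combining yields
\[W_l\widetilde{V}^{k+\alpha}\scO_X \;=\; \Bigl\{u_k \,\Big|\, \textstyle\sum_{i=0}^{k}u_i\partial_t^{i} \in K_{l+\lfloor\alpha\rfloor}V^{\alpha}\scB_f\Bigr\},\]
which is the asserted formula.

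The only real subtlety, and the source of the $\lfloor\alpha\rfloor$ in the statement, is the boundary case $\alpha=1$: here Lemma \ref{lemkerVmicro} produces an index shift $l\mapsto l+1$ coming from the extra factor of $t$ hidden in $s+1=-\partial_t t+1$ at $\gamma=1$. Everything else is bookkeeping about the $\partial_t$-shift on the microlocal $V$-filtration and the trivial decomposition of a $t$-order-bounded element across $\scB_f$ and $F_{-1}^{t\text{-ord}}\widetilde{\scB}_f$.
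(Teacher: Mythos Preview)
Your proof is correct and follows essentially the same approach as the paper: both use the $\partial_t$-shift identity $\partial_t\cdot F_k^{t\text{-ord}}K_lV^{\gamma}\widetilde{\scB}_f = F_{k+1}^{t\text{-ord}}K_lV^{\gamma-1}\widetilde{\scB}_f$ together with Lemma \ref{lemkerVmicro}, then extract the top $\partial_t$-coefficient via $\psi_0$. You spell out more carefully than the paper does the ``splitting'' step, namely that the decomposition $K_{l+\lfloor\alpha\rfloor}V^{\alpha}\scB_f + F_{-1}^{t\text{-ord}}\widetilde{\scB}_f$ is compatible with the $t$-order filtration because $\scB_f$ and $F_{-1}^{t\text{-ord}}\widetilde{\scB}_f$ are complementary in $\widetilde{\scB}_f$; the paper leaves this implicit.
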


\begin{proof}

This follows almost immediately by Lemma \ref{lemkerVmicro} and from the property of microlocalisation
\[\partial_t\cdot F_k^{t-\text{ord}}K_lV^{\gamma}\widetilde{\scB}_f=F_{k+1}^{t-\text{ord}}K_lV^{\gamma-1}\widetilde{\scB}_f.\]

Namely, if $v\in W_l\widetilde{V}^{k+\alpha}\scO_X$, then there exists some $u \in F_0^{t-\text{ord}}K_lV^{k+\alpha}\widetilde{\scB}_f$ such that $\psi_0(u)=v$. Then
\[\partial_t^k\cdot u \in F_k^{t-\text{ord}}K_lV^{\alpha}\widetilde{\scB}_f = K_{l+\lfloor\alpha\rfloor}V^{\alpha}\scB_f+F_{-1}^{t-\text{ord}}\widetilde{\scB}_f,\]
so $v$ lies in the right hand side of the expression given in the statement of the lemma.

If, conversely, $v\in\scO_X$ lies in the right hand side of the expression given in the statement, then there exists some $\sum_{i=0}^kv_i\partial_t^i\in K_{l+\lfloor\alpha\rfloor}V^{\alpha}\scB_f\subseteq K_lV^{\alpha}\widetilde{\scB}_f$ such that $v_k=v$, and thus
\[v = \psi_0\left(\partial_t^{-k}\cdot \sum_{i=0}^kv_i\partial_t^i\right)\in \psi_0\left(F_0^{t-\text{ord}}K_lV^{k+\alpha}\widetilde{\scB}_f\right)=W_l\widetilde{V}^{k+\alpha}\scO_X.\]
    
\end{proof}

\begin{cor}

$W_l\widetilde{V}^{k+\alpha}\scO_X=\scO_X$ if and only if $\partial_t^k \in K_{l+\lfloor\alpha\rfloor}V^{\alpha}\scB_f$.

\label{corpartial}
    
\end{cor}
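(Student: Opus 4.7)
The plan is to prove the two implications separately, with the reverse one being essentially immediate. For $(\Leftarrow)$, I will observe that $K_{l+\lfloor\alpha\rfloor}V^{\alpha}\scB_f$ is a sub-$V^0\scD_{X\times\bC}$-module of $\scB_f$, hence in particular stable under $\scO_X$-multiplication; so assuming $\partial_t^k\in K_{l+\lfloor\alpha\rfloor}V^{\alpha}\scB_f$, one has $v\partial_t^k\in K_{l+\lfloor\alpha\rfloor}V^{\alpha}\scB_f$ for every $v\in\scO_X$, and Lemma \ref{lemmicromultformulae} then places every $v\in\scO_X$ inside $W_l\widetilde{V}^{k+\alpha}\scO_X$, giving the equality.

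For the forward implication I would induct on $k\geq 0$. The base case $k=0$ is the direct content of Lemma \ref{lemmicromultformulae}: $W_l\widetilde{V}^{\alpha}\scO_X$ is the $\scO_X$-submodule $\scO_X\cap K_{l+\lfloor\alpha\rfloor}V^{\alpha}\scB_f$, so $W_l\widetilde{V}^{\alpha}\scO_X=\scO_X$ is equivalent to $1=\partial_t^0\in K_{l+\lfloor\alpha\rfloor}V^{\alpha}\scB_f$. Before running the inductive step I would record a preliminary monotonicity, namely $W_l\widetilde{V}^{\gamma_2}\scO_X\subseteq W_l\widetilde{V}^{\gamma_1}\scO_X$ whenever $\gamma_1<\gamma_2$: this reduces to $K_lV^{\gamma_2}\widetilde{\scB}_f\subseteq K_lV^{\gamma_1}\widetilde{\scB}_f$, which holds because $V^{\gamma_2}\widetilde{\scB}_f\subseteq V^{>\gamma_1}\widetilde{\scB}_f$ makes the defining kernel condition at $\gamma_1$ automatic for any $u\in V^{\gamma_2}\widetilde{\scB}_f$. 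Applying this with $\gamma_2=k+\alpha$ and $\gamma_1=i+\alpha$ for $i<k$, the assumption $W_l\widetilde{V}^{k+\alpha}\scO_X=\scO_X$ propagates down to $W_l\widetilde{V}^{i+\alpha}\scO_X=\scO_X$ for all $i<k$, and the inductive hypothesis then yields $\partial_t^i\in K_{l+\lfloor\alpha\rfloor}V^{\alpha}\scB_f$ for every such $i$.

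To close the induction I would invoke Lemma \ref{lemmicromultformulae} with $v=1$ to produce some $u=\partial_t^k+\sum_{i=0}^{k-1}v_i\partial_t^i\in K_{l+\lfloor\alpha\rfloor}V^{\alpha}\scB_f$. Since each $\partial_t^i$ with $i<k$ is in $K_{l+\lfloor\alpha\rfloor}V^{\alpha}\scB_f$ by induction and $\scO_X$-multiplication preserves this module, each summand $v_i\partial_t^i$ and hence the lower-order tail $\sum_{i=0}^{k-1}v_i\partial_t^i$ lies in $K_{l+\lfloor\alpha\rfloor}V^{\alpha}\scB_f$; subtracting it from $u$ leaves $\partial_t^k\in K_{l+\lfloor\alpha\rfloor}V^{\alpha}\scB_f$. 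The only genuinely subtle step is the monotonicity of the kernel filtration noted above; once it is in place, the induction and the ``clean up the tail'' manipulation are formal.
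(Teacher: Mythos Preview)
Your proof is correct and shares the same inductive skeleton as the paper's: both argue by induction on $k$, both reduce to showing $W_l\widetilde{V}^{i+\alpha}\scO_X=\scO_X$ for all $i<k$ so that $\partial_t^i\in K_{l+\lfloor\alpha\rfloor}V^{\alpha}\scB_f$ by induction, and both finish by subtracting the lower-order tail from a witness $u=\partial_t^k+\sum_{i<k}v_i\partial_t^i$ produced by Lemma~\ref{lemmicromultformulae}.

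The one genuine difference is in how the intermediate monotonicity step is obtained. You argue abstractly in the microlocalisation $\widetilde{\scB}_f$: since $V^{\gamma_2}\widetilde{\scB}_f\subseteq V^{>\gamma_1}\widetilde{\scB}_f$ for $\gamma_1<\gamma_2$ and $V^{>\gamma_1}\widetilde{\scB}_f$ is $s$-stable, the kernel condition at $\gamma_1$ is automatic, whence $K_lV^{\gamma_2}\widetilde{\scB}_f\subseteq K_lV^{\gamma_1}\widetilde{\scB}_f$ and $W_l\widetilde{V}^{\gamma_2}\scO_X\subseteq W_l\widetilde{V}^{\gamma_1}\scO_X$. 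The paper instead works entirely inside $\scB_f$ and computes directly: starting from a witness $v=\sum_{i=0}^k v_i\partial_t^i\in K_{l+\lfloor\alpha\rfloor}V^{\alpha}\scB_f$ with $v_k=1$, it applies $(t-f)^j$ for $1\le j\le k$, observes that $(t-f)^j\cdot v\in K_{l+\lfloor\alpha\rfloor}V^{\alpha}\scB_f$ and that its top $\partial_t$-coefficient is a nonzero constant times $1$ at degree $k-j$, thereby placing $1\in W_l\widetilde{V}^{(k-j)+\alpha}\scO_X$ for every $j$. Your route is cleaner and more conceptual (it exploits the microlocal side where $\partial_t$ is invertible), while the paper's route stays in $\scB_f$ and avoids appealing to properties of the microlocal $V$-filtration. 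Both are equally valid.
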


\begin{proof}

If $\partial_t^k \in K_{l+\lfloor\alpha\rfloor}V^{\alpha}\scB_f$, then by the expression of the previous lemma it is clear that $1\in W_l\widetilde{V}^{k+\alpha}\scO_X$, which of course implies that $W_l\widetilde{V}^{k+\alpha}\scO_X=\scO_X$ as $W_l\widetilde{V}^{k+\alpha}\scO_X$ is an ideal of $\scO_X$.

The converse we prove via induction on $k$. For $k=0$ this is clear by the above lemma. For $k>0$, assume that $1 \in W_l\widetilde{V}^{k+\alpha}\scO_X$. Then, by the lemma, there exists some
\[v=\sum_{i=0}^kv_i\partial_t^i\in K_{l+\lfloor\alpha\rfloor}V^{\alpha}\scB_f\]
satisfying that $v_k=1$. Then, see that, for any $1\leq j\leq k$,
\[(t-f)^j\cdot v \in V^{\alpha+1}\scB_f + f^j\cdot K_{l+\lfloor\alpha\rfloor}V^{\alpha}\scB_f \subseteq K_{l+\lfloor\alpha\rfloor}V^{\alpha}\scB_f.\]
But
\[(t-f)^j\cdot v =(-1)^j\sum_{i=j}^k\frac{i!}{(i-j)!}v_i\partial_t^{i-j},\]
so we see in particular that $1\in W_l\widetilde{V}^{j+\alpha}\scO_X$ for all $j < k$. By the inductive hypothesis, this implies that $\partial_t^j\in K_{l+\lfloor\alpha\rfloor}V^{\alpha}\scB_f$ for all $j <k$. This finally implies that
\[\partial_t^k = v- \sum_{i=0}^{k-1}v_i\partial_t^i \in K_{l+\lfloor\alpha\rfloor}V^{\alpha}\scB_f.\]
    
\end{proof}

\noindent Comparing with the expression for the Hodge and weight filtrations obtained in Theorem \ref{thmmainformula}, Lemma \ref{lemmicromultformulae} gives the following (versions of this result having already been observed in \cite{MSai17}, Theorem 1, \cite{MP20}, Theorem A and \cite{SY24}, Corollary 5.25):

\begin{cor}

For $k \in\bZ_{\geq 0}$, $\alpha \in (0,1]$ and $l\in\bZ_{\geq 0}$,
\[(W_l\widetilde{V}^{k+\alpha}\scO_X)f^{-k-\alpha} +\scO_Xf^{-k+1-\alpha} = F_k^HW_{n+l+\lfloor\alpha\rfloor}\scM(f^{-\alpha}) + \scO_Xf^{-k+1-\alpha}\]
(as $\scO_X$-submodules of $\scM(f^{-\alpha})$).

\label{corHodgemicrocomp}
    
\end{cor}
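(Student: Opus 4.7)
The plan is to combine Theorem \ref{thmmainformula} (applied at weight index $l+\lfloor\alpha\rfloor$) with the characterisation of $W_l\widetilde{V}^{k+\alpha}\scO_X$ provided by Lemma \ref{lemmicromultformulae}. Theorem \ref{thmmainformula} gives
\[F_k^HW_{n+l+\lfloor\alpha\rfloor}\scM(f^{-\alpha}) = \psi_{-\alpha}\left(F_k^{t-\text{ord}}K_{l+\lfloor\alpha\rfloor}V^{\alpha}i_{f,+}\scO_X(*f)\right),\]
and since $\alpha>0$, Proposition \ref{propVcomp}.i) (applied with the parameter $0$) gives $V^{\alpha}i_{f,+}\scO_X(*f)=V^{\alpha}\scB_f$. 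On this common subspace, the $t$-order and kernel filtrations agree with those induced from $\scB_f$, since every element admits a unique expansion $\sum v_i\partial_t^i$ with $v_i\in\scO_X$. I can therefore replace $i_{f,+}\scO_X(*f)$ by $\scB_f$ on the right-hand side, aligning the filtered $V$-term precisely with the one appearing in Lemma \ref{lemmicromultformulae}.

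Next, I would compute $\psi_{-\alpha}$ explicitly on a representative $u=\sum_{j=0}^k v_j\partial_t^j\in F_k^{t-\text{ord}}\scB_f$ (so each $v_j\in\scO_X$), obtaining $\psi_{-\alpha}(u)=\sum_{j=0}^k v_j Q_j(\alpha)f^{-j-\alpha}$. For $j\leq k-1$, each summand lies in $\scO_X f^{-k+1-\alpha}$, so
\[\psi_{-\alpha}(u)\equiv v_k\,Q_k(\alpha)\,f^{-k-\alpha}\pmod{\scO_X f^{-k+1-\alpha}}.\]
Crucially, $Q_k(\alpha)=\alpha(\alpha+1)\cdots(\alpha+k-1)$ is a non-zero scalar throughout $\alpha\in(0,1]$, so multiplication by it is an invertible operation on the line $\scO_X f^{-k-\alpha}$ modulo lower-pole terms.

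Finally I would invoke Lemma \ref{lemmicromultformulae}, which identifies $W_l\widetilde{V}^{k+\alpha}\scO_X$ with exactly the set of possible top coefficients $v_k$ as $u=\sum_{j=0}^k v_j\partial_t^j$ ranges over $K_{l+\lfloor\alpha\rfloor}V^{\alpha}\scB_f$. Reading this together with the congruence above, the image of $F_k^{t-\text{ord}}K_{l+\lfloor\alpha\rfloor}V^{\alpha}\scB_f$ under $\psi_{-\alpha}$ agrees modulo $\scO_X f^{-k+1-\alpha}$ with $W_l\widetilde{V}^{k+\alpha}\scO_X\cdot f^{-k-\alpha}$, and rearranging yields the claimed equality. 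I expect the main technical subtlety to be the filtration bookkeeping in the first step---specifically, checking that the $t$-order and kernel filtrations transfer cleanly from $i_{f,+}\scO_X(*f)$ to $\scB_f$ via Proposition \ref{propVcomp}.i), and in particular treating the boundary case $\alpha=1$ (where $\lfloor\alpha\rfloor=1$ is absorbed into the extra kernel step in Lemma \ref{lemmicromultformulae}). Once that reconciliation is carried out, the remainder of the argument is a direct computation of top-order coefficients modulo lower-pole terms.
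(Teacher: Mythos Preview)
Your proposal is correct and follows exactly the approach the paper intends: the paper's proof consists of a single sentence directing the reader to compare Theorem \ref{thmmainformula} with Lemma \ref{lemmicromultformulae}, and what you have written is precisely the explicit unpacking of that comparison. The only remark is that your anticipated ``technical subtlety'' is not really one---the identification $V^{\alpha}i_{f,+}\scO_X(*f)=V^{\alpha}\scB_f$ via Proposition \ref{propVcomp}.i) (with parameter $0$) is immediate for $\alpha>0$, and the $\lfloor\alpha\rfloor$ shift is built into both Theorem \ref{thmmainformula} (at weight index $l+\lfloor\alpha\rfloor$) and Lemma \ref{lemmicromultformulae} symmetrically, so the case $\alpha=1$ requires no separate treatment.
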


\begin{rem}

In the case $\alpha=1$, there is actually a stronger relationship between the weighted microlocal multiplier ideals $W_k\widetilde{V}^{k+1}\scO_X$ and $F_k^HW_{n+l+1}\scO_X(*f)$ (see \cite{SY24}, Lemma 5.22). This recursive relationship may also be extended to the twisted setting, using for instance the weighted analogue of \cite{DY25}, Corollary 1.2.
    
\end{rem}

\noindent A final use of microlocalisation in this subsection is to obtain yet another equivalent definition for the weighted minimal exponents. This is an extension of \cite{MSai17}, (1.3.4) and (1.3.8), to the weighted setting.

\begin{lem}

Let $l\in\bZ_{\geq 0}$. Write
\[\widetilde{\scJ}_{f,\hspace{0.7pt}\fx}:=\{\gamma \in \bQ_{>0}\mid \text{\emph{gr}}_V^{\gamma}\widetilde{\scB}_f \neq 0 \text{ at }\fx\}\]
for the set of (positive) \emph{jumping numbers} of $V^{\bullet}\widetilde{\scB}_f$ at $\fx$. Then
\begin{align*}\widetilde{\alpha}_{f,\hspace{0.7pt}\fx}^{(l)}&=\text{\emph{max}}\{\gamma\in\bQ\,|\,\prod_{\gamma'\in\widetilde{\scJ}_{f,\hspace{0.7pt}\fx}, \,\gamma'<\gamma}(s+\gamma')^l\cdot 1 \in V^{\gamma}\widetilde{\scB}_{f,\hspace{0.7pt}\fx}+G_{-1}\widetilde{\scB}_{f,\hspace{0.7pt}\fx}\}\\ &\leq\text{\emph{min}}\{\gamma\in\bQ\,|\,\widetilde{V}^{\gamma}\scO_X \neq W_{l+\sum_{i=0}^{l-1}\lfloor\gamma-\widetilde{\alpha}_{f,\hspace{0.7pt}\fx}^{(i)}\rfloor}\widetilde{V}^{\gamma}\scO_X\text{ at }\fx\}.\end{align*}
In particular, 
\begin{align*}\widetilde{\alpha}_{f,\hspace{0.7pt}\fx}&=\text{\emph{max}}\{\gamma\in\bQ\,|\,1 \in \widetilde{V}^{\gamma}\scO_X\text{ at }\fx\}\\&=\text{\emph{min}}\{\gamma\in\bQ\,|\,\widetilde{V}^{\gamma}\scO_X\neq\widetilde{V}^{>\gamma}\scO_X\text{ at }\fx\}.\\\end{align*}

\label{lemmicrominlexp}
    
\end{lem}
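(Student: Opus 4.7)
The plan is to reduce everything to a structural analysis of the cyclic $\bC[s]$-submodule generated by $[1]$ in $\text{gr}^G_0\widetilde{\scB}_{f,\hspace{0.7pt}\fx}$ under the action of $s=-\partial_t t$. By Lemma \ref{lemSaitomicrobfcn} this action has minimal polynomial $\widetilde{b}_{f,\hspace{0.7pt}\fx}(s)$, so $\bC[s]\cdot[1]$ is isomorphic to $\bC[s]/(\widetilde{b}_{f,\hspace{0.7pt}\fx}(s))$. I would then apply the Chinese Remainder Theorem to decompose it as $\bigoplus_{\gamma'\in\widetilde{\scJ}_{f,\hspace{0.7pt}\fx}}\bC[s]/((s+\gamma')^{\widetilde{m}_{f,-\gamma',\hspace{0.7pt}\fx}})$ and invoke Lemma \ref{lemredBSmult} to identify the $\gamma'$-summand with the $\gamma'$-piece of the induced $V$-filtration on $\text{gr}^G_0\widetilde{\scB}_{f,\hspace{0.7pt}\fx}$, using that $s+\gamma'$ is nilpotent on $\text{gr}_V^{\gamma'}$ (by axiom iv) of the $V$-filtration) and acts invertibly on $\text{gr}_V^{\gamma''}$ for $\gamma''\neq\gamma'$.

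Given this decomposition, the max formula falls out cleanly: $\prod_{\gamma'<\gamma,\,\gamma'\in\widetilde{\scJ}_{f,\hspace{0.7pt}\fx}}(s+\gamma')^l\cdot[1]$ lands in $V^\gamma\text{gr}^G_0\widetilde{\scB}_{f,\hspace{0.7pt}\fx}$ iff, for every $\gamma''<\gamma$ in $\widetilde{\scJ}_{f,\hspace{0.7pt}\fx}$, the factor $(s+\gamma'')^l$ annihilates the cyclic summand $\bC[s]/((s+\gamma'')^{\widetilde{m}_{f,-\gamma'',\hspace{0.7pt}\fx}})$ (the other factors being units in this quotient). This is exactly the condition $\widetilde{m}_{f,-\gamma'',\hspace{0.7pt}\fx}\leq l$ for every such $\gamma''$, which by Definition \ref{defnwminlexp} is $\gamma\leq\widetilde{\alpha}^{(l)}_{f,\hspace{0.7pt}\fx}$; translating between $\text{gr}^G_0\widetilde{\scB}_f$ and $\widetilde{\scB}_f$ gives the stated max equality.

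For the inequality I would take $\gamma<\widetilde{\alpha}^{(l)}_{f,\hspace{0.7pt}\fx}$ and use Lemma \ref{lemmicromultformulae} together with Corollary \ref{corpartial} to reduce $\widetilde{V}^\gamma\scO_X=W_L\widetilde{V}^\gamma\scO_X$ (with $L:=l+\sum_{i=0}^{l-1}\lfloor\gamma-\widetilde{\alpha}^{(i)}_{f,\hspace{0.7pt}\fx}\rfloor$) to showing that any lift in $F_0^{t-\text{ord}}V^\gamma\widetilde{\scB}_{f,\hspace{0.7pt}\fx}$ can be replaced, without changing its $\psi_0$-image, by an element of $F_0^{t-\text{ord}}K_LV^\gamma\widetilde{\scB}_{f,\hspace{0.7pt}\fx}$. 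The idea is to apply the max formula iteratively at the weighted minimal exponents $\widetilde{\alpha}^{(0)}_{f,\hspace{0.7pt}\fx},\ldots,\widetilde{\alpha}^{(l-1)}_{f,\hspace{0.7pt}\fx}$: each factor $(s+\widetilde{\alpha}^{(i)})$ contributes one power to the kernel filtration index (giving the summand $l$), while the floor $\lfloor\gamma-\widetilde{\alpha}^{(i)}\rfloor$ records the number of inverse $\partial_t$-shifts needed to bring the result back from $V^{\widetilde{\alpha}^{(i)}}$ to $V^\gamma$. Lemma \ref{lemkerVmicro} then transfers the statement between $\widetilde{\scB}_f$ and $\scB_f$.

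The in particular clause is the $l=0$ specialisation. The max formula becomes $\widetilde{\alpha}_{f,\hspace{0.7pt}\fx}=\max\{\gamma\mid 1\in\widetilde{V}^\gamma\scO_X\text{ at }\fx\}$ after unwinding $\widetilde{V}^\gamma\scO_X$ and noting $F_{-1}^{t-\text{ord}}\widetilde{\scB}_f\subseteq G_{-1}\widetilde{\scB}_f$. The inequality at $l=0$ gives $\widetilde{V}^\gamma\scO_X=W_0\widetilde{V}^\gamma\scO_X=\widetilde{V}^{>\gamma}\scO_X$ at $\fx$ for $\gamma<\widetilde{\alpha}_{f,\hspace{0.7pt}\fx}$, while the max formula guarantees $1\in\widetilde{V}^{\widetilde{\alpha}_{f,\hspace{0.7pt}\fx}}\scO_X\setminus\widetilde{V}^{>\widetilde{\alpha}_{f,\hspace{0.7pt}\fx}}\scO_X$, establishing the second equality. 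The main obstacle I foresee lies in the inequality: the sum $\sum_{i=0}^{l-1}\lfloor\gamma-\widetilde{\alpha}^{(i)}\rfloor$ must precisely track Jordan depths and $\partial_t$-shifts through the iterated lifting, and the case $\gamma>1$ requires extra care because $V^\gamma\widetilde{\scB}_f$ is then defined via $\partial_t^{-\lfloor\gamma\rfloor}$.
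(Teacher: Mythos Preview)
Your approach to the max equality via the Chinese Remainder decomposition of $\bC[s]\cdot[1]\simeq\bC[s]/(\widetilde b_{f,\fx}(s))$ is correct and is essentially the same argument the paper gives, just packaged differently: the paper works directly with Zassenhaus' lemma on $\text{gr}^G_0\text{gr}_V^{\gamma'}$ and an induction over jumping numbers, but both amount to recognising that the induced $V$-filtration on the cyclic $\bC[s]$-submodule is the generalised-eigenspace filtration, so that $\prod_{\gamma'<\gamma}(s+\gamma')^l\cdot[1]\in V^{\gamma}\text{gr}^G_0$ is equivalent to $\widetilde m_{f,-\gamma'',\fx}\leq l$ for every $\gamma''<\gamma$. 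Your phrasing is arguably cleaner. The ``in particular'' clause for $l=0$ is also handled correctly.

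The gap is in the inequality, and your own caveat points at it. The paper does \emph{not} argue by ``applying the max formula at each $\widetilde\alpha^{(i)}$'' nor by multiplying by factors $(s+\widetilde\alpha^{(i)})$; the operator whose $L$-th power must land in $V^{>\gamma'}$ is $(s+\gamma')$, not a product over the weighted minimal exponents. The actual mechanism is an induction on $l$: the hypothesis $(s+\gamma')^l\cdot\text{gr}^G_0\text{gr}_V^{\gamma'}=0$ only yields
\[
(s+\gamma')^l\cdot G_0V^{\gamma'}\subseteq G_0V^{>\gamma'}+G_{-1}V^{\gamma'},
\]
which is weaker than landing in $V^{>\gamma'}$. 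Using $G_{-1}V^{\gamma'}=\partial_t^{-1}G_0V^{\gamma'-1}$ one iterates this $(k_{l-1}+1)$ times (with $k_i:=\lfloor\gamma'-\widetilde\alpha^{(i)}_{f,\fx}\rfloor$) to force the remainder into $G_{-k_{l-1}-1}V^{\gamma'}$; since $\gamma'-k_{l-1}-1<\widetilde\alpha^{(l-1)}_{f,\fx}$, the inductive hypothesis at level $l-1$ kills this piece after $(l-1)+\sum_{i=0}^{l-2}(k_i-k_{l-1}-1)$ further powers. The identity
\[
l(k_{l-1}+1)+(l-1)+\sum_{i=0}^{l-2}(k_i-k_{l-1}-1)=l+\sum_{i=0}^{l-1}k_i
\]
is what produces the exact exponent in the statement. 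Your description of the floors as ``$\partial_t$-shifts needed to bring the result back from $V^{\widetilde\alpha^{(i)}}$ to $V^{\gamma}$'' has the direction reversed and misses that the induction descends in $l$, not across the individual thresholds $\widetilde\alpha^{(0)},\ldots,\widetilde\alpha^{(l-1)}$ one at a time. Lemma~\ref{lemmicromultformulae} and Corollary~\ref{corpartial} play no role here; the paper works entirely inside $G_\bullet V^\bullet\widetilde{\scB}_f$ and only passes to $\widetilde V^{\gamma'}\scO_X$ at the very end via the inclusion $F_0^{t\text{-ord}}\subseteq G_0$.
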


\begin{proof}

Recall Lemma \ref{lemredBSmult}, which says that
\[\widetilde{\alpha}_{f,\hspace{0.7pt}\fx}^{(l)}=\text{min}\{\gamma\in\bQ\,|\,(s+\gamma)^l \cdot \text{gr}^G_0\text{gr}_V^{\gamma}\widetilde{\scB}_f \neq 0\,\text{ at }\, \fx\}.\]
We will prove (for any $\gamma \in \bQ$) that
\[\prod_{\gamma'\in\widetilde{\scJ}_{f,\hspace{0.7pt}\fx}, \,\gamma'<\gamma}(s+\gamma')^l\cdot 1 \in V^{\gamma}\widetilde{\scB}_{f,\hspace{0.7pt}\fx}+G_{-1}\widetilde{\scB}_{f,\hspace{0.7pt}\fx}\Longleftrightarrow \,\,(s+\gamma')^l \cdot \text{gr}^G_0\text{gr}_V^{\gamma'}\widetilde{\scB}_{f,\hspace{0.7pt}\fx} = 0\,\,\text{ for all }\,\, \gamma'< \gamma,\]
which will prove the first equality in the statement.

Note first, by Zassenhaus' lemma (see for instance \cite{Del71}, (1.2.1)), that we have a canonical isomorphism
\[\text{gr}^G_0\text{gr}_V^{\gamma}\widetilde{\scB}_f \simeq G_0V^{\gamma}\widetilde{\scB}_f/(G_0V^{>\gamma}\widetilde{\scB}_f+G_{-1}V^{\gamma}\widetilde{\scB}_f).\]

Assume firstly that $\prod_{\gamma'\in\widetilde{\scJ}_{f,\hspace{0.7pt}\fx}, \,\gamma'<\gamma}(s+\gamma')^l\cdot 1 \in V^{\gamma}\widetilde{\scB}_{f,\hspace{0.7pt}\fx}+G_{-1}\widetilde{\scB}_{f,\hspace{0.7pt}\fx}$. Then, since 
\[(s+\gamma''):\text{gr}^G_0\text{gr}_V^{\gamma'}\widetilde{\scB}_{f,\hspace{0.7pt}\fx} \isommap \text{gr}^G_0\text{gr}_V^{\gamma'}\widetilde{\scB}_{f,\hspace{0.7pt}\fx}\]
for all $\gamma'\neq\gamma''$, we have for all $\gamma'<\gamma$ that
\begin{align*}
G_0V^{\gamma'}\widetilde{\scB}_{f,\hspace{0.7pt}\fx} &\subseteq \prod_{\gamma''\in\widetilde{\scJ}_{f,\hspace{0.7pt}\fx}, \,\gamma''<\gamma, \gamma''\neq \gamma'}(s+\gamma'')^l\cdot G_0V^{\gamma'}\widetilde{\scB}_{f,\hspace{0.7pt}\fx}+G_0V^{>\gamma'}\widetilde{\scB}_{f,\hspace{0.7pt}\fx} +G_{-1}V^{\gamma'}\widetilde{\scB}_{f,\hspace{0.7pt}\fx}
\end{align*}
\noindent This then implies by assumption that 
\[(s+\gamma')^l\cdot G_0V^{\gamma'}\widetilde{\scB}_{f,\hspace{0.7pt}\fx} \subseteq G_0V^{>\gamma'}\widetilde{\scB}_{f,\hspace{0.7pt}\fx} + G_{-1}V^{\gamma'}\widetilde{\scB}_{f,\hspace{0.7pt}\fx},\]
which is equivalent by Zassenhaus' lemma to $(s+\gamma')^l\cdot \text{gr}^G_0\text{gr}_V^{\gamma'}\widetilde{\scB}_{f,\hspace{0.7pt}\fx}=0$, as required.

Assume conversely that $(s+\gamma')^l\cdot \text{gr}^G_0\text{gr}_V^{\gamma'}\widetilde{\scB}_{f,\hspace{0.7pt}\fx}=0$ for all $\gamma'<\gamma$.

We may without loss of generality assume that $\gamma\in\widetilde{\scJ}_{f,\hspace{0.7pt}\fx}$, and proceed via induction on $\gamma\in\widetilde{\scJ}_{f,\hspace{0.7pt}\fx}$. 

If $\gamma = \text{min}\widetilde{\scJ}_{f,\hspace{0.7pt}\fx}$, then as seen above $1 \in V^{\text{min}\widetilde{\scJ}_{f,\hspace{0.7pt}\fx}}\widetilde{\scB}_{f,\hspace{0.7pt}\fx}$.

Otherwise, we write $\gamma'$ for the jumping number such that $V^{>\gamma'}\widetilde{\scB}_{f,\hspace{0.7pt}\fx}=V^{\gamma}\widetilde{\scB}_{f,\hspace{0.7pt}\fx}$ and assume that $\prod_{\gamma''\in\widetilde{\scJ}_{f,\hspace{0.7pt}\fx}, \,\gamma''<\gamma'}(s+\gamma'')^l\cdot 1 \in V^{\gamma'}\widetilde{\scB}_{f,\hspace{0.7pt}\fx}+G_{-1}\widetilde{\scB}_{f,\hspace{0.7pt}\fx}$. Write 
\[\prod_{\gamma''\in\widetilde{\scJ}_{f,\hspace{0.7pt}\fx}, \,\gamma''<\gamma'}(s+\gamma'')^l\cdot 1=v_1+v_2\]
with $v_1\in G_0V^{\gamma'}\widetilde{\scB}_{f,\hspace{0.7pt}\fx}$ and $v_2\in G_{-1}\widetilde{\scB}_{f,\hspace{0.7pt}\fx}$. Then, by hypothesis (and using Zassenhaus' lemma), $(s+\gamma')^l\cdot v_1 \in G_0V^{>\gamma'}\widetilde{\scB}_{f,\hspace{0.7pt}\fx}+G_{-1}\widetilde{\scB}_{f,\hspace{0.7pt}\fx}$, implying that
\[\prod_{\gamma''\in\widetilde{\scJ}_{f,\hspace{0.7pt}\fx}, \,\gamma''<\gamma}(s+\gamma'')^l\cdot 1 \in V^{\gamma}\widetilde{\scB}_{f,\hspace{0.7pt}\fx}+G_{-1}\widetilde{\scB}_{f,\hspace{0.7pt}\fx}\]
as required.

The final statement of the lemma is proven in \cite{MSai17}, and follows from the first equality in the lemma using the same argument as (1.3.4)-(1.3.6) in this paper.

Now we prove the inequality appearing in the statement of the lemma. For this it suffices to show that
\[(s+\gamma')^l \cdot \text{gr}^G_0\text{gr}_V^{\gamma'}\widetilde{\scB}_{f,\hspace{0.7pt}\fx} = 0\,\,\,\forall \gamma'< \gamma \Longrightarrow (s+\gamma')^{l+\sum_{i=0}^{l-1}\lfloor\gamma'- \widetilde{\alpha}_{f,\hspace{0.7pt}\fx}^{(i)}\rfloor}\cdot G_0V^{\gamma'}\widetilde{\scB}_{f,\hspace{0.7pt}\fx} \subseteq V^{>\gamma'}\widetilde{\scB}_{f,\hspace{0.7pt}\fx}\,\,\,\forall\gamma'<\gamma,\]
since the latter statement implies in particular that
\[F_0^{t-\text{ord}}V^{\gamma'}\widetilde{\scB}_{f,\hspace{0.7pt}\fx} = F_0^{t-\text{ord}}K_{l+\sum_{i=0}^{l-1}\lfloor\gamma'- \widetilde{\alpha}_{f,\hspace{0.7pt}\fx}^{(i)}\rfloor}V^{\gamma'}\widetilde{\scB}_{f,\hspace{0.7pt}\fx}\,\,\text{ for all }\,\gamma'<\gamma,\]
so that
\[\widetilde{V}^{\gamma'}\scO_{X,\hspace{0.7pt}\fx}=W_{l+\sum_{i=0}^{l-1}\lfloor\gamma'- \widetilde{\alpha}_{f,\hspace{0.7pt}\fx}^{(i)}\rfloor}\widetilde{V}^{\gamma'}\scO_{X,\hspace{0.7pt}\fx}\,\,\text{ for all }\,\gamma'<\gamma.\]
We prove this via induction. The case $l=0$ follows from the final statement in the lemma. 

Assume that $l>0$. Our assumption is equivalent to
\[(s+\gamma')^l\cdot G_0V^{\gamma'}\widetilde{\scB}_{f,\hspace{0.7pt}\fx} \subseteq G_0V^{>\gamma'}\widetilde{\scB}_{f,\hspace{0.7pt}\fx}+G_{-1}V^{\gamma'}\widetilde{\scB}_{f,\hspace{0.7pt}\fx}\,\,\text{ for all }\,\gamma'<\gamma.\]
However, $G_{-1}V^{\gamma'}\widetilde{\scB}_{f,\hspace{0.7pt}\fx}=\partial_t^{-1}\cdot G_0V^{\gamma'-1}\widetilde{\scB}_{f,\hspace{0.7pt}\fx}$, so we may iterate to in particular obtain the statement
\[(s+\gamma')^{l(k_{l-1}+1)}\cdot G_0V^{\gamma'}\widetilde{\scB}_{f,\hspace{0.7pt}\fx} \subseteq G_0V^{>\gamma'}\widetilde{\scB}_{f,\hspace{0.7pt}\fx}+G_{-k_{l-1}-1}V^{\gamma'}\widetilde{\scB}_{f,\hspace{0.7pt}\fx},\]
where we write $k_i:=\lfloor\gamma'-\widetilde{\alpha}_{f,\hspace{0.7pt}\fx}^{(i)}\rfloor$.

Now, $\gamma'-k_{l-1}-1<\widetilde{\alpha}_{f,\hspace{0.7pt}\fx}^{(l-1)}$, so by our inductive hypothesis we have that
\[(s+\gamma'-k_{l-1}-1)^{l-1+\sum_{i=0}^{l-2}(k_i-k_{l-1}-1)}\cdot G_0V^{\gamma'-k_{l-1}-1}\widetilde{\scB}_{f,\hspace{0.7pt}\fx} \subseteq V^{>\gamma'-k_{l-1}-1}\widetilde{\scB}_{f,\hspace{0.7pt}\fx}.\]
Combining, we thus have
\[(s+\gamma')^{l+\sum_{i=0}^{l-1}k_i}\cdot G_0V^{\gamma'}\widetilde{\scB}_{f,\hspace{0.7pt}\fx} \subseteq V^{>\gamma'}\widetilde{\scB}_{f,\hspace{0.7pt}\fx}\]
as required.

\end{proof}

\vspace{10pt}

\subsection{Generating level and highest weights for $M(f^{-\alpha})$}

\; \vspace{5pt} \\ In this subsection we use the technology we introduced in the previous subsection to learn about some of the most basic invariants attached to the bi-filtered $\scD_X$-module $(\scM(f^{-\alpha}),F_{\bullet}^H,W_{\bullet})$. Namely, the Hodge filtration $F_{\bullet}^H$ is a good filtration so has some finite \emph{generating level}, while the weight filtration $W_{\bullet}$ is a finite filtration so we can ask how many steps this filtration has.

Consider firstly the following consequence of the expression for the minimal exponent given in Lemma \ref{lemmicrominlexp} (as well as Corollary \ref{corHodgemicrocomp}), as observed in \cite{MP20}, Corollary C.

\begin{prop}

Let $k\in\bZ_{\geq 0}$ and $\alpha \in (0,1]$. Then
\[F_k^H\scM(f^{-\alpha})=\scO_Xf^{-k-\alpha} \text{ at }\fx\]
if and only if 
\[k+\alpha \leq \widetilde{\alpha}_{f,\hspace{0.7pt}\fx}.\]

\label{prophodgepole1}
    
\end{prop}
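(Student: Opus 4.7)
The strategy is to deduce this from Corollary~\ref{corHodgemicrocomp} together with the characterization of the minimal exponent in Lemma~\ref{lemmicrominlexp}, working locally at $\fx$. If $f(\fx)\neq 0$ then $f$ is a unit there and both sides of the equivalence hold trivially (with $\widetilde{\alpha}_{f,\fx}=\infty$), so I assume throughout that $f(\fx)=0$.

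First I would choose $l$ large enough that both $W_l\widetilde{V}^{k+\alpha}\scO_X = \widetilde{V}^{k+\alpha}\scO_X$ and $W_{n+l+\lfloor\alpha\rfloor}\scM(f^{-\alpha}) = \scM(f^{-\alpha})$ hold; the latter is possible since the weight filtration is bounded above by $W_{2n}$, as noted after Theorem~\ref{thmpiplus}. For this $l$, Corollary~\ref{corHodgemicrocomp} produces the local identity
\[F_k^H\scM(f^{-\alpha}) + (f)\cdot f^{-k-\alpha} = \widetilde{V}^{k+\alpha}\scO_X \cdot f^{-k-\alpha} + (f)\cdot f^{-k-\alpha},\]
where I have used the identification $\scO_X f^{-k+1-\alpha}=(f)\cdot f^{-k-\alpha}$ inside $\scM(f^{-\alpha})$. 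By Lemma~\ref{lemmicrominlexp}, the inequality $k+\alpha \leq \widetilde{\alpha}_{f,\fx}$ is equivalent to $1 \in \widetilde{V}^{k+\alpha}\scO_X$ at $\fx$, i.e.\ to $\widetilde{V}^{k+\alpha}\scO_{X,\fx}=\scO_{X,\fx}$.

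I would then split into cases. If $\widetilde{V}^{k+\alpha}\scO_{X,\fx}=\scO_{X,\fx}$, the right-hand side of the displayed identity equals $\scO_{X,\fx}\cdot f^{-k-\alpha}$, so there exist $u\in F_k^H\scM(f^{-\alpha})_{\fx}$ and $h\in\scO_{X,\fx}$ with $u=(1-hf)f^{-k-\alpha}$; since $f(\fx)=0$ makes $1-hf$ a unit at $\fx$, this yields $f^{-k-\alpha}\in F_k^H\scM(f^{-\alpha})_{\fx}$ and hence $\scO_{X,\fx}f^{-k-\alpha}\subseteq F_k^H\scM(f^{-\alpha})_{\fx}$. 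Conversely, if $\widetilde{V}^{k+\alpha}\scO_{X,\fx}$ is a proper ideal of the local ring $\scO_{X,\fx}$, then it lies in $\fm_{\fx}$, so the right-hand side is contained in $\fm_{\fx}f^{-k-\alpha}$, forcing $F_k^H\scM(f^{-\alpha})_{\fx}\subsetneq \scO_{X,\fx}f^{-k-\alpha}$.

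The remaining ingredient, and the main technical obstacle, is the pole-order inclusion $F_k^H\scM(f^{-\alpha})\subseteq \scO_Xf^{-k-\alpha}$, needed to upgrade the containment in the first case to an equality. I would appeal here to the known result of Mustaţă and Popa (cf.\ \cite{MP19a}, \cite{MP19b}); it can also be read off from Theorem~\ref{thmmainformula}, since the $V^{\alpha}$-condition constrains the coefficients of any element of $F_k^{t-\text{ord}}V^{\alpha}i_{f,+}\scO_X(*f)$ so that its image under $\psi_{-\alpha}$ has pole order at most $k+\alpha$.
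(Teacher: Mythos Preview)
Your proof is correct and follows exactly the route the paper indicates: it combines Corollary~\ref{corHodgemicrocomp} (for $l$ large) with the characterization of $\widetilde{\alpha}_{f,\fx}$ in Lemma~\ref{lemmicrominlexp}, together with the pole-order bound $F_k^H\scM(f^{-\alpha})\subseteq\scO_Xf^{-k-\alpha}$. The paper itself does not spell out a proof but only records that the statement is a consequence of these two results, citing \cite{MP20}, Corollary~C; your argument is a faithful expansion of that sketch. One small remark: your justification of the pole-order bound via Theorem~\ref{thmmainformula} is correct but you could make explicit the key step, namely that $V^{\alpha}i_{f,+}\scO_X(*f)=V^{\alpha}\scB_f$ for $\alpha>0$ (Proposition~\ref{propVcomp}.i) with $\alpha=0$), which forces the coefficients $u_i$ of any element $\sum_{i=0}^k u_i\partial_t^i\in F_k^{t\text{-ord}}V^{\alpha}$ to lie in $\scO_X$ rather than merely $\scO_X(*f)$.
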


We obtain an analogous result concerning the weight filtration steps, as an immediate consequence of Corollary \ref{corHodgemicrocomp}. This is an extension of \cite{Olano23}, Corollary 5.10, which covers the case $\alpha =1$, $l=0$.

\begin{prop}

For $k \in \bZ_{\geq 0}$, $\alpha \in (0,1]$ and $l \in \bZ_{\geq 0}$, we have that
\[F_k^HW_{n+l+\lfloor\alpha\rfloor}\scM(f^{-\alpha})=\scO_Xf^{-k-\alpha} \text{ at }\fx\]
if and only if 
\[\text{either } k+\alpha < \widetilde{\alpha}_{f,\hspace{0.7pt}\fx} \text{ or } k+\alpha =\widetilde{\alpha}_{f,\hspace{0.7pt}\fx} \text{ and }\widetilde{\alpha}_{f,\hspace{0.7pt}\fx} \neq \widetilde{\alpha}_{f,\hspace{0.7pt}\fx}^{(l)}\footnotemark.\]
(i.e. if and only if $k \leq \text{\emph{min}}\{\lfloor\widetilde{\alpha}_{f,\hspace{0.7pt}\fx}-\alpha\rfloor,\lceil\widetilde{\alpha}_{f,\hspace{0.7pt}\fx}^{(l)}-\alpha\rceil-1\}$.)

\footnotetext{Taken to hold superfluously if $\widetilde{\alpha}_{f,\hspace{0.7pt}\fx}^{(l)}$ isn't defined.}

\label{prophodgepole2}
    
\end{prop}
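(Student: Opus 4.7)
The plan is to deduce Proposition \ref{prophodgepole2} from Corollary \ref{corHodgemicrocomp} by reducing the statement to the equivalent ideal-theoretic claim that $W_l\widetilde{V}^{k+\alpha}\scO_X=\scO_X$ at $\fx$ if and only if the stated condition on $\widetilde{\alpha}_{f,\hspace{0.7pt}\fx}$ and $\widetilde{\alpha}^{(l)}_{f,\hspace{0.7pt}\fx}$ holds. One direction of the reduction is immediate: given the equality $F_k^HW_{n+l+\lfloor\alpha\rfloor}\scM(f^{-\alpha})=\scO_Xf^{-k-\alpha}$ at $\fx$, pass to the quotient by $\scO_Xf^{-k+1-\alpha}$ and use $\scO_Xf^{-k-\alpha}/\scO_Xf^{-k+1-\alpha}\simeq \scO_Z$. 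Conversely, Corollary \ref{corHodgemicrocomp} only yields the equality up to the correction $\scO_Xf^{-k+1-\alpha}$, so I would drop this by writing $f^{-k-\alpha}=u+hf\cdot f^{-k-\alpha}$ with $u\in F_k^HW_{n+l+\lfloor\alpha\rfloor}\scM(f^{-\alpha})_{\fx}$, and noting that $1-hf$ is a unit at any $\fx\in\{f=0\}$ (the only non-trivial case). The reverse inclusion uses Proposition \ref{prophodgepole1}, which is applicable because $W_l\widetilde{V}^{k+\alpha}\scO_X=\scO_X$ forces $k+\alpha\leq\widetilde{\alpha}_{f,\hspace{0.7pt}\fx}$ via the characterization in Lemma \ref{lemmicrominlexp}.

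For the reduced statement I would split into cases according to how $k+\alpha$ compares with $\widetilde{\alpha}_{f,\hspace{0.7pt}\fx}$. The strict inequality cases are handled directly by the characterization $\widetilde{\alpha}_{f,\hspace{0.7pt}\fx}=\max\{\gamma\mid 1\in\widetilde{V}^{\gamma}\scO_X\text{ at }\fx\}$ from Lemma \ref{lemmicrominlexp}: if $k+\alpha<\widetilde{\alpha}_{f,\hspace{0.7pt}\fx}$, then $\widetilde{V}^{>k+\alpha}\scO_X=\scO_X$ at $\fx$, and hence $W_l\widetilde{V}^{k+\alpha}\scO_X=\scO_X$ at $\fx$ for every $l$; if $k+\alpha>\widetilde{\alpha}_{f,\hspace{0.7pt}\fx}$, then already $\widetilde{V}^{k+\alpha}\scO_X\subsetneq\scO_X$ at $\fx$, so the same holds for $W_l\widetilde{V}^{k+\alpha}\scO_X$.

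The main substance lies in the boundary case $k+\alpha=\widetilde{\alpha}_{f,\hspace{0.7pt}\fx}$, where the task is to show $W_l\widetilde{V}^{\widetilde{\alpha}_{f,\hspace{0.7pt}\fx}}\scO_X=\scO_X$ at $\fx$ if and only if $\widetilde{m}_{f,-\widetilde{\alpha}_{f,\hspace{0.7pt}\fx},\fx}\leq l$ (equivalently, $\widetilde{\alpha}^{(l)}_{f,\hspace{0.7pt}\fx}\neq\widetilde{\alpha}_{f,\hspace{0.7pt}\fx}$). By Corollary \ref{corpartial}, the left-hand side translates to $\partial_t^k\in K_{l+\lfloor\alpha\rfloor}V^{\alpha}\scB_{f,\hspace{0.7pt}\fx}$, and Lemma \ref{lemkerVmicro} (which absorbs the $\lfloor\alpha\rfloor$-shift in the $\alpha=1$ subcase) rephrases this in turn as $\partial_t^k\in K_lV^{\alpha}\widetilde{\scB}_{f,\hspace{0.7pt}\fx}$, i.e. $(s+\alpha)^l\cdot\overline{\partial_t^k}=0$ in $\text{gr}^G_k\text{gr}^\alpha_V\widetilde{\scB}_f$ at $\fx$; here I would use the fact that $G_{k-1}\widetilde{\scB}_f\subseteq V^{\alpha+1}\widetilde{\scB}_f\subseteq V^{>\alpha}\widetilde{\scB}_f$ to identify the kernel-filtration condition with vanishing in the double graded. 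The key technical step is then to observe that $\overline{\partial_t^k}$ is a $\scD_X[s]$-generator of $\text{gr}^G_k\text{gr}^\alpha_V\widetilde{\scB}_f$: every element of $G_k\widetilde{\scB}_f$ modulo $G_{k-1}\widetilde{\scB}_f$ has the form $P(s,x,\partial_x)\cdot\overline{\partial_t^k}$, and $\scD_X[s]$ preserves the $V$-filtration. Since $s$ commutes with $\scD_X$, vanishing of $(s+\alpha)^l$ on the generator $\overline{\partial_t^k}$ is equivalent to vanishing on the entire module, which by Lemma \ref{lemredBSmult} is in turn equivalent to $\widetilde{m}_{f,-\widetilde{\alpha}_{f,\hspace{0.7pt}\fx},\fx}\leq l$. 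The main obstacle I anticipate is this generation statement together with the careful reconciliation of kernel-filtration indices between $\scB_f$ and $\widetilde{\scB}_f$ in the $\alpha=1$ subcase.
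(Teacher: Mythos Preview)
Your proposal is correct and follows essentially the same approach as the paper: reduce via Corollary~\ref{corHodgemicrocomp} to the statement $W_l\widetilde{V}^{k+\alpha}\scO_{X,\fx}=\scO_{X,\fx}$, dispose of the strict inequality cases via Lemma~\ref{lemmicrominlexp}, and in the boundary case $k+\alpha=\widetilde{\alpha}_{f,\fx}$ translate the ideal condition into a kernel-filtration condition in $\widetilde{\scB}_{f,\fx}$ and compare with the multiplicity characterization. The only cosmetic difference is that the paper works with the element $1$ at $V$-level $k+\alpha$ (arguing directly that $1\in K_lV^{k+\alpha}\widetilde{\scB}_{f,\fx}+F_{-1}^{t-\text{ord}}\widetilde{\scB}_{f,\fx}\Leftrightarrow 1\in K_lV^{k+\alpha}\widetilde{\scB}_{f,\fx}$ and then invoking Lemma~\ref{lemmicrominlexp}), whereas you pass through Corollary~\ref{corpartial} to work with $\partial_t^k$ at $V$-level $\alpha$ and finish via the generation observation and Lemma~\ref{lemredBSmult}; these are equivalent under the $\partial_t^k$-shift.
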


\begin{proof}

By Corollary \ref{corHodgemicrocomp}, 
\[F_k^HW_{n+l+\lfloor\alpha\rfloor}\scM(f^{-\alpha})=\scO_Xf^{-k-\alpha} \text{ at }\fx\,\,\Longleftrightarrow W_l\widetilde{V}^{k+\alpha}\scO_{X,\hspace{0.7pt}\fx}=\scO_{X,\hspace{0.7pt}\fx}.\]
The condition on the right hand side is equivalent to
\[1\in K_lV^{k+\alpha}\widetilde{\scB}_{f,\hspace{0.7pt}\fx}+F_{-1}^{t-\text{ord}}\widetilde{\scB}_{f,\hspace{0.7pt}\fx}.\]
Using recursion, this is equivalent to 
\[1\in K_lV^{k+\alpha}\widetilde{\scB}_{f,\hspace{0.7pt}\fx}+F_{-k'}^{t-\text{ord}}\widetilde{\scB}_{f,\hspace{0.7pt}\fx}\]
for all $k' \geq 1$, which is equivalent to
\[1\in K_lV^{k+\alpha}\widetilde{\scB}_{f,\hspace{0.7pt}\fx},\]
since $K_lV^{k+\alpha}\widetilde{\scB}_{f,\hspace{0.7pt}\fx}$ contains $F_{-k'}^{t-\text{ord}}\widetilde{\scB}_{f,\hspace{0.7pt}\fx}$ for $k'$ sufficiently large. By Proposition \ref{prophodgepole1}, it then suffices to show that
\[\widetilde{\alpha}_{f,\hspace{0.7pt}\fx}\neq \widetilde{\alpha}_{f,\hspace{0.7pt}\fx}^{(l)} \,\,\Leftrightarrow \,\, 1\in K_lV^{\widetilde{\alpha}_{f,\hspace{0.7pt}\fx}}\widetilde{\scB}_{f,\hspace{0.7pt}\fx}.\]
By Lemma \ref{lemmicrominlexp}, $\widetilde{\alpha}_{f,\hspace{0.7pt}\fx}\neq \widetilde{\alpha}_{f,\hspace{0.7pt}\fx}^{(l)} \Leftrightarrow (s+\widetilde{\alpha}_{f,\hspace{0.7pt}\fx})^l\cdot 1\in V^{>\widetilde{\alpha}_{f,\hspace{0.7pt}\fx}}\widetilde{\scB}_{f,\hspace{0.7pt}\fx}+G_{-1}V^{\widetilde{\alpha}_{f,\hspace{0.7pt}\fx}}\widetilde{\scB}_{f,\hspace{0.7pt}\fx}$. Since (by Lemma \ref{lemmicrominlexp}) $G_{-1}V^{\widetilde{\alpha}_{f,\hspace{0.7pt}\fx}}\widetilde{\scB}_{f,\hspace{0.7pt}\fx}=G_{-1}\widetilde{\scB}_{f,\hspace{0.7pt}\fx}=G_{-1}V^{>\widetilde{\alpha}_{f,\hspace{0.7pt}\fx}}\widetilde{\scB}_{f,\hspace{0.7pt}\fx}$, this is equivalent to the statement $(s+\widetilde{\alpha}_{f,\hspace{0.7pt}\fx})^l\cdot 1\in V^{>\widetilde{\alpha}_{f,\hspace{0.7pt}\fx}}\widetilde{\scB}_{f,\hspace{0.7pt}\fx}$, which is equivalent to
\[1\in K_lV^{\widetilde{\alpha}_{f,\hspace{0.7pt}\fx}}\widetilde{\scB}_{f,\hspace{0.7pt}\fx}.\]
    
\end{proof}

\begin{eg}

Given $f$ arbitrary, write $\widetilde{\alpha}_{f,\hspace{0.7pt}\fx}=r +\alpha$ with $r \in\bZ_{\geq 0}$ and $\alpha\in(0,1]$, and write $m$ for the multiplicity of $-\widetilde{\alpha}_{f,\hspace{0.7pt}\fx}$ as a root of $\widetilde{b}_{f,\hspace{0.7pt}\fx}(s)$. Then Proposition \ref{prophodgepole1} tells us (as always, locally at $\fx$) that 
\[F_k^H\scM(f^{-\alpha})=\scO_Xf^{-k-\alpha} \text{ for }k \leq r,\]
and Proposition \ref{prophodgepole2} in the case $l=0$ gives us that
\[F_k^HW_{n+\lfloor\alpha\rfloor}\scM(f^{-\alpha})=\scO_Xf^{-k-\alpha} \text{ for }k < r.\]
Then the multiplicity of $\widetilde{\alpha}_{f,\hspace{0.7pt}\fx}$ tells us exactly the largest index of $F_r^HW_{n+\bullet+\lfloor\alpha\rfloor}\scM(f^{-\alpha})$ we need consider, i.e.
\[F_r^HW_{n+l+\lfloor\alpha\rfloor}\scM(f^{-\alpha})=\scO_Xf^{-r-\alpha} \Leftrightarrow l \geq m.\]
We'll see momentarily that a stronger relation between highest weights and multiplicities of roots may be inferred in general.
    
\end{eg}

\begin{cor}

Write $D$ for the divisor associated to $f$.

\begin{enumerate}[label=\roman*)]

\item The pair $(X,\alpha D)$ is log canonical if and only if $\alpha \leq \widetilde{\alpha}_{f,\hspace{0.7pt}\fx}$.

\item The pair $(X,\alpha D)$ is purely log terminal if and only if $\alpha < \widetilde{\alpha}_{f,\hspace{0.7pt}\fx}$ or $\alpha=\widetilde{\alpha}_{f,\hspace{0.7pt}\fx}$ and $\alpha \neq 1$, $\widetilde{\alpha}_{f,\hspace{0.7pt}\fx}\neq \widetilde{\alpha}_{f,\hspace{0.7pt}\fx}^{(1)}$.

\item The pair $(X,\alpha D)$ is Kawamata log terminal if and only if $\alpha \neq 1$ and $\alpha < \widetilde{\alpha}_{f,\hspace{0.7pt}\fx}$.

\end{enumerate} 

\label{corlct}

\end{cor}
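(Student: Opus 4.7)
The plan is to derive all three equivalences by combining Corollary \ref{corpair}, which converts each singularity-type condition into an equality of the form $F_0^H W_{?} \scM(f^{-\alpha}) = \scO_X f^{-\alpha}$, with the characterizations of such equalities in terms of weighted minimal exponents supplied by Propositions \ref{prophodgepole1} and \ref{prophodgepole2}. Throughout, the implicit assumption $\alpha \in (0,1]$ from Propositions \ref{prophodgepole1}--\ref{prophodgepole2} will be in force. Part i) is immediate: Corollary \ref{corpair}.i) gives that $(X,\alpha D)$ is lc iff $F_0^H\scM(f^{-\alpha}) = \scO_X f^{-\alpha}$, and Proposition \ref{prophodgepole1} with $k=0$ rewrites this as $\alpha \leq \widetilde{\alpha}_{f,\hspace{0.7pt}\fx}$.

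For part iii), Corollary \ref{corpair}.iii) reduces klt to the condition $F_0^H W_n\scM(f^{-\alpha}) = \scO_X f^{-\alpha}$. I would first dispose of the case $\alpha = 1$: here $W_n\scM(f^{-1}) = W_n\scO_X(*f) = \scO_X$ by the note following Theorem \ref{thmlargestweightintro}, and this cannot equal $\scO_X f^{-1}$ since $f$ is non-invertible; hence klt fails, consistent with the required condition $\alpha \neq 1$. For $\alpha \in (0,1)$ we have $\lfloor \alpha \rfloor = 0$, so Proposition \ref{prophodgepole2} applies with $k = l = 0$; the ``second disjunct'' $\widetilde{\alpha}_{f,\hspace{0.7pt}\fx} \neq \widetilde{\alpha}_{f,\hspace{0.7pt}\fx}^{(0)}$ is vacuous by Definition \ref{defnwminlexpintro}, and what remains is $\alpha < \widetilde{\alpha}_{f,\hspace{0.7pt}\fx}$, which is the stated condition.

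For part ii), Corollary \ref{corpair}.ii) reduces plt to $F_0^H W_{n+1}\scM(f^{-\alpha}) = \scO_X f^{-\alpha}$, and I would apply Proposition \ref{prophodgepole2} with $k = 0$ and $l + \lfloor \alpha \rfloor = 1$. This forces a case split: for $\alpha \in (0,1)$ take $l = 1$, yielding the condition $\alpha < \widetilde{\alpha}_{f,\hspace{0.7pt}\fx}$ or ($\alpha = \widetilde{\alpha}_{f,\hspace{0.7pt}\fx}$ and $\widetilde{\alpha}_{f,\hspace{0.7pt}\fx} \neq \widetilde{\alpha}_{f,\hspace{0.7pt}\fx}^{(1)}$); for $\alpha = 1$ take $l = 0$, yielding the condition $1 < \widetilde{\alpha}_{f,\hspace{0.7pt}\fx}$ (the equality sub-case is again vacuous). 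Writing ``$\alpha \neq 1$'' in the second disjunct of the stated equivalence exactly amalgamates these two cases, so comparing termwise completes the verification. The main obstacle is purely bookkeeping: one must track carefully how $\lfloor \alpha \rfloor$ jumps at $\alpha = 1$ so that the chosen $l$ in Proposition \ref{prophodgepole2} yields weight $n + 1$ (resp. $n$) on the nose, and confirm that the vacuous / non-vacuous behaviour of the second disjunct in Proposition \ref{prophodgepole2} aligns with the conditions $\widetilde{\alpha}_{f,\hspace{0.7pt}\fx} \neq \widetilde{\alpha}_{f,\hspace{0.7pt}\fx}^{(l)}$ appearing in the corollary.
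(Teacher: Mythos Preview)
Your proposal is correct and follows essentially the same approach as the paper: combine Corollary~\ref{corpair} with Propositions~\ref{prophodgepole1} and~\ref{prophodgepole2}, splitting into cases according to $\lfloor\alpha\rfloor$. The paper's proof is terser (it merely says the result follows from Corollary~\ref{corpair} and justifies the restriction $\alpha\in(0,1]$), but your explicit bookkeeping of the case $\alpha=1$ versus $\alpha\in(0,1)$ and the vacuity of the $\widetilde{\alpha}_{f,\hspace{0.7pt}\fx}^{(0)}$ disjunct is exactly what is implicitly required.
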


\begin{proof}

This now follows immediately by Corollary \ref{corpair}. Note that for an arbitrary $\alpha \in \bQ_{>0}$, if the pair $(X,\alpha D)$ is log canonical or purely log terminal then necessarily $\alpha \leq 1$, and if the pair is Kawamata log terminal then $\alpha < 1$. Thus we may indeed restrict to the case $\alpha \in(0,1]$.
    
\end{proof}

We now obtain upper and lower bounds for the highest weight of $\scM(f^{-\alpha})$ (i.e. the largest number $l$ such that $\text{gr}_l^W\scM(f^{-\alpha})\neq 0$) in terms of the multiplicities of roots of the Bernstein-Sato polynomial associated to $f$. First we prove the following useful lemma.

\begin{lem}

Let $\alpha \in (0,1]$ and $l,k\in\bZ_{\geq 0}$. Then
\[F_k^HW_{n+l+\lfloor\alpha\rfloor}\scM(f^{-\alpha})=F_k^H\scM(f^{-\alpha}) \,\,\Leftrightarrow \,\,\widetilde{V}^{k'+\alpha}\scO_X = W_l\widetilde{V}^{k'+\alpha}\scO_X \text{ for all }k'\leq k.\]

\label{lemWcomp}
    
\end{lem}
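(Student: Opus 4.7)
The plan is to derive the equivalence from Corollary \ref{corHodgemicrocomp}, which provides a relation between $F_k^HW_{n+l+\lfloor\alpha\rfloor}\scM(f^{-\alpha})$ and $W_l\widetilde{V}^{k+\alpha}\scO_X$ but only modulo $\scO_Xf^{-k+1-\alpha}$; the core of the work will be promoting such modular equalities to genuine ones, for which I would use the strictness of the Hodge filtration under morphisms of mixed Hodge modules, combined with an induction on $k' \leq k$.

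For the forward direction, I would begin by using strictness of the Hodge filtration with respect to the inclusion of mixed Hodge modules $W_{n+l+\lfloor\alpha\rfloor}\scM(f^{-\alpha}) \hookrightarrow \scM(f^{-\alpha})$: this gives $F_k^HW_{n+l+\lfloor\alpha\rfloor}\scM(f^{-\alpha}) = F_k^H\scM(f^{-\alpha}) \cap W_{n+l+\lfloor\alpha\rfloor}\scM(f^{-\alpha})$, so the hypothesis forces $F_k^H\scM(f^{-\alpha}) \subseteq W_{n+l+\lfloor\alpha\rfloor}\scM(f^{-\alpha})$, and since the Hodge filtration is increasing, the same equality $F_{k'}^HW = F_{k'}^H\scM$ holds for every $k' \leq k$. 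Applying Corollary \ref{corHodgemicrocomp} at each such level (to both the weighted side and its unweighted counterpart, the latter obtained by taking $l$ sufficiently large), and identifying $\scO_Xf^{-k'+1-\alpha}$ inside $\scM(f^{-\alpha})$ with the image of $f\scO_X$ under the injection $\iota_{k'}: \scO_X \hookrightarrow \scM(f^{-\alpha})$, $g \mapsto gf^{-k'-\alpha}$, one extracts the ideal-level congruence
\[W_l\widetilde{V}^{k'+\alpha}\scO_X + f\scO_X = \widetilde{V}^{k'+\alpha}\scO_X + f\scO_X\]
for each $k' \leq k$. I would then promote this mod-$f$ equality to a true equality by ascending induction on $k'$, using the compatibility $f\cdot W_l\widetilde{V}^{k'-1+\alpha}\scO_X \subseteq W_l\widetilde{V}^{k'+\alpha}\scO_X$; this inclusion follows from the commutator identity $[s,t] = -t$ (which gives $(s+\gamma+1)^l\cdot tu = t\cdot(s+\gamma)^l u$, hence $t\cdot K_lV^{\gamma}\widetilde{\scB}_f \subseteq K_lV^{\gamma+1}\widetilde{\scB}_f$), together with $t\cdot F_0^{t-\text{ord}}\widetilde{\scB}_f \subseteq F_0^{t-\text{ord}}\widetilde{\scB}_f$ and $\psi_0(tu) = f\psi_0(u)$ for $u \in F_0^{t-\text{ord}}\widetilde{\scB}_f$.

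The reverse direction is symmetric: Corollary \ref{corHodgemicrocomp} and the hypothesis give $F_{k'}^HW_{n+l+\lfloor\alpha\rfloor}\scM(f^{-\alpha}) + \scO_Xf^{-k'+1-\alpha} = F_{k'}^H\scM(f^{-\alpha}) + \scO_Xf^{-k'+1-\alpha}$ for every $k' \leq k$, and an analogous induction on $k'$ yields the desired equality. The base case $k' = 0$ is handled by the observation that, for $\alpha \in (0,1]$, $f^{1-\alpha} = f^{\,\overline{\alpha}} \in \scO_{X,-\alpha f} = W_n\scM(f^{-\alpha})$ by Lemma \ref{lemDmodprops}.iii), so $\scO_Xf^{1-\alpha} \subseteq W_{n+l+\lfloor\alpha\rfloor}\scM(f^{-\alpha})$ and strictness places the residue $gf^{1-\alpha} \in F_0^H\scM \cap W = F_0^HW$. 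The main obstacle in both directions is precisely this promotion step: transferring the residual $f$-multiple arising from the modular congruence into the smaller submodule (either $W_l\widetilde{V}^{k'+\alpha}\scO_X$ or $F_{k'-1}^HW_{n+l+\lfloor\alpha\rfloor}\scM(f^{-\alpha})$), which requires the $t$-compatibility of the microlocal $V$- and kernel filtrations described above together with the inductive hypothesis at level $k'-1$.
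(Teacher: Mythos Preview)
Your promotion step has a genuine gap in both directions.

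For the forward direction: from the congruence $W_l\widetilde{V}^{k'+\alpha} + (f) = \widetilde{V}^{k'+\alpha} + (f)$ and the inductive hypothesis $\widetilde{V}^{k'-1+\alpha} = W_l\widetilde{V}^{k'-1+\alpha}$, you need $(f)\cap\widetilde{V}^{k'+\alpha} \subseteq W_l\widetilde{V}^{k'+\alpha}$. The compatibility $f\cdot W_l\widetilde{V}^{k'-1+\alpha} \subseteq W_l\widetilde{V}^{k'+\alpha}$ together with the inductive hypothesis gives only $f\cdot\widetilde{V}^{k'-1+\alpha} \subseteq W_l\widetilde{V}^{k'+\alpha}$. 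For the argument to close you would need the reverse-type statement $(f)\cap\widetilde{V}^{k'+\alpha} \subseteq f\cdot\widetilde{V}^{k'-1+\alpha}$, i.e.\ that $fg \in \widetilde{V}^{k'+\alpha}$ forces $g \in \widetilde{V}^{k'-1+\alpha}$. This divisibility is neither obvious nor supplied; the inclusion you verify ($t\cdot K_lV^{\gamma}\widetilde{\scB}_f \subseteq K_lV^{\gamma+1}\widetilde{\scB}_f$) goes the wrong way. The same issue recurs in the reverse direction: the residue $gf^{-(k'-1)-\alpha} = u - v$ lies in $F_{k'}^H\scM(f^{-\alpha}) \cap \scO_Xf^{-(k'-1)-\alpha}$, but there is no reason this intersection is contained in $F_{k'-1}^H\scM(f^{-\alpha})$, so the inductive hypothesis does not apply. (Your base cases $k'=0$ are fine, since there $(f) \subseteq \widetilde{V}^{>\alpha}\scO_X$ and $\scO_Xf^{1-\alpha} \subseteq W_n\scM(f^{-\alpha})$.)

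The paper avoids this entirely by working one level up, with $F_k^{t-\text{ord}}V^{\alpha}\scB_f$ via Theorem \ref{thmmainformula} rather than through the lossy Corollary \ref{corHodgemicrocomp}. For $\Rightarrow$, the hypothesis combined with $\ker\psi_{-\alpha} = (s+\alpha)\cdot i_{f,+}\scO_X(*f)$ yields
\[F_k^{t-\text{ord}}V^{\alpha}\scB_f = F_k^{t-\text{ord}}K_{l+\lfloor\alpha\rfloor}V^{\alpha}\scB_f + (s+\alpha)\cdot F_{k-1}^{t-\text{ord}}V^{\alpha}\scB_f,\]
and a one-line nilpotency argument (apply $(s+\alpha)^{m-1}$ for $m$ minimal with $(s+\alpha)^mF_k^{t-\text{ord}}V^{\alpha}\subseteq V^{>\alpha}$) forces $F_k^{t-\text{ord}}V^{\alpha}\scB_f \subseteq K_{l+\lfloor\alpha\rfloor}V^{\alpha}\scB_f$, whence Lemma \ref{lemmicromultformulae} gives all the ideal equalities at once, with no modular promotion. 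For $\Leftarrow$, Lemma \ref{lemmicromultformulae} gives the \emph{exact} decomposition $F_k^{t-\text{ord}}V^{\alpha}\scB_f = F_k^{t-\text{ord}}K_{l+\lfloor\alpha\rfloor}V^{\alpha}\scB_f + F_{k-1}^{t-\text{ord}}V^{\alpha}\scB_f$, so after applying $\psi_{-\alpha}$ the residue lands in $F_{k-1}^H\scM(f^{-\alpha})$ (not merely in $\scO_Xf^{-(k-1)-\alpha}$), and the induction closes immediately.
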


\begin{proof}

\underline{$\Rightarrow$} By Theorem \ref{thmmainformula}, the assumption implies that
\[F_k^{t-\text{ord}}V^{\alpha}\scB_f =F_k^{t-\text{ord}}K_{l+\lfloor\alpha\rfloor}V^{\alpha}\scB_f + \ker(\psi_{-\alpha})\cap F_k^{t-\text{ord}}V^{\alpha}\scB_f.\]
It is a simple exercise to check that 
\[\ker(\psi_{-\alpha})=\text{im}((s+\alpha):i_{f,+}\scO_X(*f) \to i_{f,+}\scO_X(*f)),\]
so that 
\[F_k^{t-\text{ord}}V^{\alpha}\scB_f =F_k^{t-\text{ord}}K_{l+\lfloor\alpha\rfloor}V^{\alpha}\scB_f + (s+\alpha)\cdot F_{k-1}^{t-\text{ord}}V^{\alpha}\scB_f.\]
Now let $m\geq 0$ be minimal such that
\[(s+\alpha)^m\cdot F_k^{t-\text{ord}}V^{\alpha}\scB_f \subseteq V^{>\alpha}\scB_f.\]
If $l+\lfloor\alpha\rfloor < m$, then
\[(s+\alpha)^{m-1}\cdot F_k^{t-\text{ord}}V^{\alpha}\scB_f \subseteq (s+\alpha)^{m-1}\cdot K_{l+\lfloor\alpha\rfloor}V^{\alpha}\scB_f + (s+\alpha)^m\cdot F_{k-1}^{t-\text{ord}}V^{\alpha}\scB_f \subseteq V^{>\alpha}\scB_f,\]
a contradiction. Thus
\[F_k^{t-\text{ord}}V^{\alpha}\scB_f\subseteq K_{l+\lfloor\alpha\rfloor}V^{\alpha}\scB_f.\]
By Lemma \ref{lemmicromultformulae}, this implies that
\[\widetilde{V}^{k'+\alpha}\scO_X = W_l\widetilde{V}^{k'+\alpha}\scO_X \text{ for all }k'\leq k.\]

\underline{$\Leftarrow$} We induct on $k$. For $k=0$, 
\[F_0^HW_{n+l'+\lfloor\alpha\rfloor}\scM(f^{-\alpha})=(W_{l'}\widetilde{V}^{\alpha}\scO_X)f^{-\alpha}\]
for all $l'$, so the statement is immediate.

Otherwise, our assumption implies (by Lemma \ref{lemmicromultformulae}) that
\[F_k^{t-\text{ord}}V^{\alpha}\scB_f = F_k^{t-\text{ord}}K_{l+\lfloor\alpha\rfloor}V^{\alpha}\scB_f + F_{k-1}^{t-\text{ord}}V^{\alpha}\scB_f,\]
implying through application of Theorem \ref{thmmainformula} that
\[F_k^H\scM(f^{-\alpha})= F_k^HW_{n+l+\lfloor\alpha\rfloor}\scM(f^{-\alpha})+F_{k-1}^H\scM(f^{-\alpha}).\]
Our inductive hypothesis however says that $F_{k-1}^H\scM(f^{-\alpha})=F_{k-1}^HW_{n+l+\lfloor\alpha\rfloor}\scM(f^{-\alpha})$, thus giving us that
\[F_k^H\scM(f^{-\alpha})= F_k^HW_{n+l+\lfloor\alpha\rfloor}\scM(f^{-\alpha})\]
as required.
    
\end{proof}




    

    

\begin{thm}\hfill

\begin{enumerate}[label=\roman*)]

\item Let $\alpha\in(0,1]$ and $l\in\bZ_{\geq 0}$. Write $\rho_{f,\hspace{0.7pt}\fx}^{(l)}$ for the set of roots of $b_{f,\hspace{0.7pt}\fx}^{(l)}(s)$. Then
\[W_{n+l+\lfloor\alpha\rfloor}\scM(f^{-\alpha})=\scM(f^{-\alpha}) \text{ at }\fx\,\, \Rightarrow\,\, \rho_{f,\hspace{0.7pt}\fx}^{(l)}\cap(-\alpha +\bZ) = \emptyset.\]

\item Let $k \in\bZ$ and $\alpha\in(0,1]$. Then
\[F_k^HW_{n+\sum_{i=0}^k\widetilde{m}_{f,-\alpha-i,\hspace{0.7pt}\fx}+\lfloor\alpha\rfloor}\scM(f^{-\alpha})=F_k^H\scM(f^{-\alpha}) \text{ at }\fx.\]
In particular, 
\[W_{n+\sum_{i\geq 0}\widetilde{m}_{f,-\alpha-i,\hspace{0.7pt}\fx}+\lfloor\alpha\rfloor}\scM(f^{-\alpha})=\scM(f^{-\alpha}) \text{ at }\fx.\]

\end{enumerate}

\label{thmlargestweight}
    
\end{thm}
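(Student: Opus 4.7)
The plan is to deduce both parts from Lemma \ref{lemWcomp}, which translates equalities of Hodge filtration steps into equalities of weighted microlocal multiplier ideals, combined with the nilpotency description of $\widetilde{m}_{f,-\alpha-k',\hspace{0.7pt}\fx}$ supplied by Lemma \ref{lemredBSmult} (and its unreduced counterpart Lemma \ref{lemBSmult}).

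For part ii), I would first apply Lemma \ref{lemWcomp} to reduce the claim $F_k^HW_{n+L+\lfloor\alpha\rfloor}\scM(f^{-\alpha})=F_k^H\scM(f^{-\alpha})$ at $\fx$, with $L:=\sum_{i=0}^k\widetilde{m}_{f,-\alpha-i,\hspace{0.7pt}\fx}$, to the equalities $\widetilde{V}^{k'+\alpha}\scO_X=W_L\widetilde{V}^{k'+\alpha}\scO_X$ at $\fx$ for all $k'\leq k$. Via the characterisation in Lemma \ref{lemmicromultformulae}, this is implied by the single inclusion
\[(s+\alpha)^{L+\lfloor\alpha\rfloor}\cdot F_k^{t-\text{ord}}V^{\alpha}\scB_f\subseteq V^{>\alpha}\scB_f\quad\text{at }\fx,\]
i.e. $F_k^{t-\text{ord}}V^{\alpha}\scB_f\subseteq K_{L+\lfloor\alpha\rfloor}V^{\alpha}\scB_f$, which I would prove by induction on $k\geq 0$. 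A direct calculation gives $L+\lfloor\alpha\rfloor=\sum_{i=0}^k m_{f,-\alpha-i,\hspace{0.7pt}\fx}$ (since $m_{f,-1,\hspace{0.7pt}\fx}=\widetilde{m}_{f,-1,\hspace{0.7pt}\fx}+1$, and for $\alpha\in(0,1]$ and $i\geq 0$ one has $-\alpha-i=-1$ only when $\alpha=1,i=0$), reducing matters to the unreduced Bernstein--Sato polynomial.

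The base case $k=0$ amounts to $(s+\alpha)^{m_{f,-\alpha,\hspace{0.7pt}\fx}}\cdot(\scO_X\cap V^{\alpha}\scB_f)\subseteq V^{>\alpha}\scB_f$, which follows from Lemma \ref{lemBSmult} at index $0$ together with $\scO_X\subseteq G_0i_{f,+}\scO_X(*f)$ and the identification $V^{\alpha}\scB_f=V^{\alpha}i_{f,+}\scO_X(*f)$ for $\alpha>0$ from Proposition \ref{propVcomp}. For the inductive step, using $F_k^{t-\text{ord}}\scB_f\subseteq G_ki_{f,+}\scO_X(*f)$, Lemma \ref{lemBSmult} at index $k$ yields
\[(s+\alpha)^{m_{f,-\alpha-k,\hspace{0.7pt}\fx}}\cdot F_k^{t-\text{ord}}V^{\alpha}\scB_f\subseteq V^{>\alpha}i_{f,+}\scO_X(*f)+G_{k-1}V^{\alpha}i_{f,+}\scO_X(*f)\]
at $\fx$; the $G_{k-1}$-residue can be represented modulo $V^{>\alpha}$ by an element of $F_{k-1}^{t-\text{ord}}V^{\alpha}\scB_f$ using $G_{k-1}=t\cdot G_k$ together with $t\cdot V^{\geq\alpha}=V^{\geq\alpha+1}$, to which the inductive hypothesis then applies, sending it into $V^{>\alpha}$ after a further $\sum_{i=0}^{k-1}m_{f,-\alpha-i,\hspace{0.7pt}\fx}$ powers of $(s+\alpha)$.

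For part i), I would argue by contrapositive. Suppose $-\alpha-k\in\rho_{f,\hspace{0.7pt}\fx}^{(l)}$ for some $k\in\bZ$; since the roots of $\widetilde{b}_{f,\hspace{0.7pt}\fx}(s)$ lie in $\bQ\cap(-n,0)$ and $\alpha\in(0,1]$, necessarily $k\geq 0$. Then $\widetilde{m}_{f,-\alpha-k,\hspace{0.7pt}\fx}\geq l+1$, so by Lemma \ref{lemredBSmult} (transported via the isomorphism $\partial_t^k\colon\text{gr}^G_0\text{gr}^{k+\alpha}_V\widetilde{\scB}_f\isommap\text{gr}^G_k\text{gr}^{\alpha}_V\widetilde{\scB}_f$ under which $s+\alpha+k$ on the source corresponds to $s+\alpha$ on the target) there exists some $u\in G_0V^{k+\alpha}\widetilde{\scB}_f$ at $\fx$ not killed modulo $V^{>k+\alpha}\widetilde{\scB}_f+G_{-1}V^{k+\alpha}\widetilde{\scB}_f$ by $(s+k+\alpha)^l$. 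Adjusting $u$ by an element of $F_{-1}^{t-\text{ord}}\widetilde{\scB}_f$ produces a lift in $F_0^{t-\text{ord}}V^{k+\alpha}\widetilde{\scB}_f$ whose image in $\widetilde{V}^{k+\alpha}\scO_X/W_l\widetilde{V}^{k+\alpha}\scO_X$ at $\fx$ is nonzero, and Lemma \ref{lemWcomp} then obstructs $F_k^HW_{n+l+\lfloor\alpha\rfloor}\scM(f^{-\alpha})=F_k^H\scM(f^{-\alpha})$, contradicting the hypothesis $W_{n+l+\lfloor\alpha\rfloor}\scM(f^{-\alpha})=\scM(f^{-\alpha})$.

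The main obstacle will be the inductive step of part ii): the nilpotency furnished by Lemma \ref{lemBSmult} is only modulo $G_{k-1}V^{\alpha}i_{f,+}\scO_X(*f)$ rather than modulo $V^{>\alpha}$ alone, and the discrepancy between the $G$-filtration (adapted to the action of $s+\alpha$) and the $t$-order filtration (adapted to the map $\psi_{-\alpha}$ that governs the Hodge filtration) must be reconciled. The identities $G_{k-1}=t\cdot G_k$ and $t\cdot V^{\geq\alpha}=V^{\geq\alpha+1}$ provide the right bookkeeping device, but executing the induction so that the total budget of $(s+\alpha)$-powers spent is exactly $\sum_{i=0}^k m_{f,-\alpha-i,\hspace{0.7pt}\fx}$---and no more---requires careful accounting, since the bound in the theorem is tight.
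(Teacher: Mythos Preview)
Your overall plan of reducing to Lemma \ref{lemWcomp} and then controlling the nilpotency order of $s+\alpha$ via the multiplicities of roots of the Bernstein--Sato polynomial is exactly right, and your identity $L+\lfloor\alpha\rfloor=\sum_{i=0}^k m_{f,-\alpha-i,\hspace{0.7pt}\fx}$ is a nice observation. However, the inductive step in part~ii) does not close. After applying $(s+\alpha)^{m_{f,-\alpha-k,\hspace{0.7pt}\fx}}$ to an element of $F_k^{t\text{-}\mathrm{ord}}V^{\alpha}\scB_f\subseteq G_kV^{\alpha}$, you land in $V^{>\alpha}+G_{k-1}V^{\alpha}$, and you then need the $G_{k-1}$-residue to lie in $F_{k-1}^{t\text{-}\mathrm{ord}}V^{\alpha}\scB_f$ modulo $V^{>\alpha}$ in order to apply the inductive hypothesis. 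But $G_{k-1}$ is strictly larger than $F_{k-1}^{t\text{-}\mathrm{ord}}$ (for instance $s^N\cdot 1\in G_0$ has arbitrarily high $\partial_t$-order), and the identities $G_{k-1}=t\cdot G_k$ and $t\cdot V^{\alpha}=V^{\alpha+1}$ do not bridge this gap: they let you move between $V$-levels, not between the $G$- and $F^{t\text{-}\mathrm{ord}}$-filtrations. Your ``main obstacle'' paragraph correctly diagnoses the tension but the proposed bookkeeping device does not resolve it.

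The paper avoids this by working entirely in the microlocalisation $\widetilde{\scB}_f$. There one fixes the $V$-level at $V^{k'+\alpha}$, starts from $G_0V^{k'+\alpha}$ (which contains $F_0^{t\text{-}\mathrm{ord}}V^{k'+\alpha}$), and iterates \emph{downward} in the $G$-filtration: each application of $(s+k'+\alpha)^{l_i}$ drops the $G$-level by one, and after $k'+1$ steps one reaches $G_{-k'-1}\widetilde{\scB}_f$, which is contained in $V^{>k'+\alpha}\widetilde{\scB}_f$ because $\partial_t^{-k'-1}\in V^{>k'+1}\widetilde{\scB}_f$. This termination is precisely what microlocalisation buys you and is unavailable in $\scB_f$. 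The conclusion $\widetilde{V}^{k'+\alpha}\scO_X=W_{\sum l_i}\widetilde{V}^{k'+\alpha}\scO_X$ then follows, and Lemma \ref{lemWcomp} finishes.

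Your part~i) has a related but more easily repaired gap: the witness $u\in G_0V^{k+\alpha}\widetilde{\scB}_f$ from Lemma \ref{lemredBSmult} need not lie in $F_0^{t\text{-}\mathrm{ord}}\widetilde{\scB}_f$ (again because $G_0\not\subseteq F_0^{t\text{-}\mathrm{ord}}$), and adjusting by $F_{-1}^{t\text{-}\mathrm{ord}}$ does not help since $F_{-1}^{t\text{-}\mathrm{ord}}\widetilde{\scB}_f\not\subseteq V^{>k+\alpha}$ for large $k$. The simplest fix is to bypass $\widetilde{V}^{k+\alpha}\scO_X$ entirely: $\widetilde{m}_{f,-\alpha-k,\hspace{0.7pt}\fx}\geq l+1$ means $(s+\alpha)^l\cdot\mathrm{gr}^G_k\mathrm{gr}^{\alpha}_V\widetilde{\scB}_f\neq 0$, hence $V^{\alpha}\widetilde{\scB}_f\neq K_lV^{\alpha}\widetilde{\scB}_f$, hence $V^{\alpha}\scB_f\neq K_{l+\lfloor\alpha\rfloor}V^{\alpha}\scB_f$; the argument at the start of the proof of Lemma \ref{lemWcomp} (taken for all $k$ and run in contrapositive) then gives $W_{n+l+\lfloor\alpha\rfloor}\scM(f^{-\alpha})\neq\scM(f^{-\alpha})$. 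This is exactly the paper's direct argument, read backwards.
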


\begin{proof}
    
\begin{enumerate}[label=\roman*)]

\item  Using the same argument as used at the start of the proof of Lemma \ref{lemWcomp}, we see that
\[F_k^{t-\text{ord}}V^{\alpha}\scB_{f,\hspace{0.7pt}\fx} \subseteq K_{l+\lfloor\alpha\rfloor}V^{\alpha}\scB_{f,\hspace{0.7pt}\fx} \,\,\text{ for all }\, k\geq 0,\]
i.e. that
\[V^{\alpha}\scB_{f,\hspace{0.7pt}\fx} = K_{l+\lfloor\alpha\rfloor}V^{\alpha}\scB_{f,\hspace{0.7pt}\fx},\]
i.e. that
\[V^{\alpha}\widetilde{\scB}_{f,\hspace{0.7pt}\fx} = K_lV^{\alpha}\widetilde{\scB}_{f,\hspace{0.7pt}\fx}.\]
In particular, for any $k\in\bZ$,
\[(s+k+\alpha)^l \cdot G_0V^{k+\alpha}\widetilde{\scB}_{f,\hspace{0.7pt}\fx}=(s+\alpha)^l \cdot G_kV^{\alpha}\widetilde{\scB}_{f,\hspace{0.7pt}\fx} \subseteq V^{>\alpha}\widetilde{\scB}_{f,\hspace{0.7pt}\fx},\]
so that $-k-\alpha\notin\rho_{f,\hspace{0.7pt}\fx}^{(l)}$ by Lemma \ref{lemredBSmult}.

\item To ease notation, we write $l_i:=\widetilde{m}_{f,-\alpha-i,\hspace{0.7pt}\fx}$. Then, by Lemma \ref{lemredBSmult}, we have that
\[(s+\alpha)^{l_i}\cdot\text{gr}^G_0\text{gr}_V^{i+\alpha}\widetilde{\scB}_{f,\hspace{0.7pt}\fx}=0\,\,\text{ for all }i \leq k.\]
Applying Zassenhaus' lemma and iterating, we obtain that 
\[(s+\alpha)^{\sum_{i=0}^{k'}l_i}\cdot G_0V^{k'+\alpha}\widetilde{\scB}_{f,\hspace{0.7pt}\fx}\subseteq V^{>k'+\alpha}\widetilde{\scB}_{f,\hspace{0.7pt}\fx}+G_{-k'-1}V^{k'+\alpha}\widetilde{\scB}_{f,\hspace{0.7pt}\fx}\,\,\text{ for all }k'\leq k.\]
As seen before, $\partial_t^{-k'-1}\in V^{>k'+\alpha}\widetilde{\scB}_{f,\hspace{0.7pt}\fx}$, so 
\[(s+\alpha)^{\sum_{i=0}^{k'}l_i}\cdot G_0V^{k'+\alpha}\widetilde{\scB}_{f,\hspace{0.7pt}\fx}\subseteq V^{>k'+\alpha}\widetilde{\scB}_{f,\hspace{0.7pt}\fx}.\]
In particular
\[F_0^{t-\text{ord}}V^{k'+\alpha}\widetilde{\scB}_{f,\hspace{0.7pt}\fx}=F_0^{t-\text{ord}}K_{\sum_{i=0}^{k'}l_i}V^{k'+\alpha}\widetilde{\scB}_{f,\hspace{0.7pt}\fx},\]
so
\[\widetilde{V}^{k'+\alpha}\scO_{X,\hspace{0.7pt}\fx} = W_{\sum_{i=0}^{k'}l_i}\widetilde{V}^{k'+\alpha}\scO_{X,\hspace{0.7pt}\fx}\,\,\text{ for all }k'\leq k.\]
By Lemma \ref{lemWcomp}, this implies in particular that 
\[F_k^HW_{n+\sum_{i=0}^kl_i+\lfloor\alpha\rfloor}\scM(f^{-\alpha})_{\fx}=F_k^H\scM(f^{-\alpha})_{\fx}.\]

\end{enumerate}

\end{proof}



    

\begin{eg}

In particular, the highest weight $w_{\text{max}}$ of $\scM(f^{-\alpha})_{\fx}$ satisfies, writing $w_{\text{max}}=n+l_{\text{max}}+\lfloor\alpha\rfloor$, the inequalities
\[\text{max}\{\widetilde{m}_{f,-\alpha-i,\hspace{0.7pt}\fx}\,|\,i \in \bZ\} \leq l_{\text{max}}\leq \sum_{i\geq 0}\widetilde{m}_{f,-\alpha-i,\hspace{0.7pt}\fx}.\]
In certain situations, these upper and lower bounds coincide and thus give equalities:

\begin{enumerate}[label=\roman*)]
    \item If $n=2$ and $\alpha=1$, then
    \[w_{\text{max}}=n+\widetilde{m}_{f,-1,\hspace{0.7pt}\fx}+1,\]
    which, since in general all roots of the reduced Bernstein-Sato polynomial have multiplicity $\leq n-1$, equals either 3 or 4.
    \item If $f$ defines a \emph{strongly Koszul free divisor}, then it is shown in \cite{Nar15} that the set of roots of the Bernstein-Sato polynomial is contained in the open interval $(-2,0)$, so for $\alpha=1$ we have that 
    \[w_{\text{max}}=n+\widetilde{m}_{f,-1,\hspace{0.7pt}\fx}+1.\]
    \item If $f$ is a (reduced, central) arrangement of hyperplanes, then Saito has shown in \cite{MSai16} that all roots of the Bernstein-Sato polynomial lie in the interval $(-2,0)$, and that the multiplicity of $-1$ as a root of the Bernstein-Sato polynomial is $n$. Thus, for $\alpha=1$, we have the equality
    \[w_{\text{max}}=2n.\]
\end{enumerate}

\label{egweightbounds}
    
\end{eg}

\begin{rem}

Note that it is unclear to us whether the length of the weight filtration on $\scM(f^{-\alpha})$ (i.e., the size of the set $\{l \in \bZ \,|\, \text{gr}_l^W\scM(f^{-\alpha})\neq 0\}$) is equal in general to $1$ plus the difference between the highest weight and smallest weight ($=n$) of $\scM(f^{-\alpha})$.
    
\end{rem}

\noindent Now we switch our attention to the Hodge filtration $F_{\bullet}^H$ on $W_{n+l}\scM(f^{-\alpha})$, for fixed $l$, and obtain a bound for its generating level using our understanding of this filtration in terms of $V$-filtrations and microlocal multiplier ideals.

Mustață and Popa (\cite{MP19c}, Theorem E) proved\footnote{This result is only proved in the case that $f$ is reduced in \cite{MP19c}, but the proof otherwise is completely analogous.} the following generating level bound for the Hodge filtration on $\scM(f^{-\alpha})$:

\begin{thm}

Assume that $\alpha\in(0,1]$. The generating level of $(\scM(f^{-\alpha}),F_{\bullet}^H)$ at the point $\fx\in X$ is always bounded above by
\[n-\lceil\widetilde{\alpha}_{f,\hspace{0.7pt}\fx}+\alpha\rceil.\]

\label{thmMPgenlevalg}
    
\end{thm}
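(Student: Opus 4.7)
The natural strategy, given the microlocal apparatus developed in this section, is to reduce the generating level bound to a statement about the microlocal $V$-filtration and the jumping numbers of $\widetilde{V}^\bullet\scO_X$.

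The plan is to start by invoking Theorem \ref{thmmainformula}. Since the weight filtration on $\scM(f^{-\alpha})$ is finite and $(\partial_tt-\alpha)$ acts nilpotently on $\mathrm{gr}_V^\alpha\, i_{f,+}\scO_X(*f)$, we may choose $l \gg 0$ so that both $W_{n+l}\scM(f^{-\alpha}) = \scM(f^{-\alpha})$ and $K_l V^\alpha i_{f,+}\scO_X(*f) = V^\alpha i_{f,+}\scO_X(*f)$ at $\fx$, yielding the formula
\[
F_k^H\scM(f^{-\alpha})_{\fx} \;=\; \psi_{-\alpha}\!\left(F_k^{t-\text{ord}}V^{\alpha}\,i_{f,+}\scO_X(*f)\right)_{\fx}.
\]
Since $\psi_{-\alpha}$ is $\scD_X$-linear, showing generating level $\leq k_0 := n - \lceil \widetilde{\alpha}_{f,\fx}+\alpha\rceil$ reduces to proving that for every $k \geq k_0$,
\[
F_{k+1}^{t-\text{ord}}V^\alpha i_{f,+}\scO_X(*f) \;\subseteq\; F_1\scD_X\cdot F_k^{t-\text{ord}}V^\alpha i_{f,+}\scO_X(*f) + \ker(\psi_{-\alpha}).
\]

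Next, I would reformulate this in terms of the weighted microlocal multiplier ideals via Lemma \ref{lemmicromultformulae} and Corollary \ref{corHodgemicrocomp}: modulo lower order poles, the passage from $F_k^H$ to $F_{k+1}^H$ corresponds to the inclusion $\widetilde{V}^{k+\alpha}\scO_X \supseteq \widetilde{V}^{k+1+\alpha}\scO_X$, and the first-order differential operators act on generators $g\, f^{-k-\alpha}$ via $\partial_i(g)f^{-k-\alpha} - (k+\alpha)g\,\partial_i(f)f^{-k-1-\alpha}$. Hence the generating level condition translates, modulo $f\scO_X$, into the statement that $\widetilde{V}^{k+1+\alpha}\scO_X$ is contained in the ideal $\sum_{i}\partial_i(f)\,\widetilde{V}^{k+\alpha}\scO_X + f\scO_X$ for $k\geq k_0$. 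Combining the jumping number analysis of Lemma \ref{lemmicrominlexp} (so that $\widetilde V^\gamma\scO_X = \scO_X$ for $\gamma \leq \widetilde\alpha_{f,\fx}$) with the fact that all roots of $\widetilde b_{f,\fx}(s)$ lie in $(-n,0)$ (forcing the microlocal $V$-filtration to stabilise for $\gamma \geq n$), the range of $\gamma = k+\alpha$ for which any nontrivial lifting is required is confined to $(\widetilde\alpha_{f,\fx},\,n]$, which is precisely the arithmetic content of the bound $n - \lceil\widetilde\alpha_{f,\fx}+\alpha\rceil$.

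The main obstacle will be the precise lifting argument: given $u \in F_{k+1}^{t-\text{ord}}V^\alpha i_{f,+}\scO_X(*f)$, one must extract from the leading coefficient in $\partial_t$ an element that can be realised as $\partial_i$ applied to a lower-$t$-order element modulo $\ker(\psi_{-\alpha})$. I expect this to proceed by applying $\partial_t^{-(k+1)}$ to pass into $V^{k+1+\alpha}\widetilde{\scB}_f$, using the identity $[\partial_i,\partial_t^{-1}] = -\partial_i(f)\,\partial_t^{-2}$ in the microlocalisation and the structure of $G_\bullet V^{k+1+\alpha}\widetilde{\scB}_f$. Once $k+1+\alpha > n$, the inductive step is essentially free because $\mathrm{gr}_V^{k+1+\alpha}\widetilde{\scB}_f = 0$, while for $\widetilde\alpha_{f,\fx} < k+1+\alpha \leq n$ one controls the jump using the multiplicity estimates in Lemma \ref{lemredBSmult}. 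The cleanest formulation of this final step is the key technical content and would likely occupy the bulk of the argument, essentially recovering the microlocal argument of Mustaţă--Popa in \cite{MP19c}.
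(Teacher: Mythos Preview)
The paper does not actually prove this theorem: it is quoted from Musta\c{t}\u{a}--Popa \cite{MP19c}, Theorem E, and no argument is given here. What the paper does prove are the weighted analogues (Theorems \ref{thmgenlevelW1} and \ref{thmgenlevelW2}), and it explicitly says ``We employ the same method as is used in \cite{MP19c}.'' That method is \emph{not} a direct microlocal lifting argument of the kind you sketch. Instead, one first reduces via Lemma \ref{lemgenlevphi} to bounding the generating level of the nearby cycles $(\text{gr}_V^{\alpha}\scO_X,F_\bullet^H)$, and then applies the duality criteria of Lemma \ref{lemgenlevcrit}: for $\alpha\in(0,1)$ one uses the isomorphism $\bD(\psi_{f,e(\alpha)}\underline{\bC}_X[n])\simeq(\psi_{f,e(-\alpha)}\underline{\bC}_X[n])(n-1)$ together with the vanishing $F_{n-k-1}^H\text{gr}_V^{1-\alpha}\scO_X=0$ coming from $\partial_t^{n-k-1}\in V^{>1-\alpha}\scO_X$ (via Corollary \ref{corpartial}); for $\alpha=1$ one uses the Ext-vanishing criterion. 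Your approach is genuinely different in spirit, attempting to work directly with the $t$-order filtration on $V^\alpha$ rather than passing through duality.

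Your proposal has a real gap at the point you yourself identify as the ``main obstacle.'' The reduction you claim---that generating level $\leq k_0$ amounts, modulo $f\scO_X$, to the ideal containment $\widetilde V^{k+1+\alpha}\scO_X\subseteq\sum_i\partial_i(f)\,\widetilde V^{k+\alpha}\scO_X+f\scO_X$---is not established, and is not obviously correct: the ``modulo lower order poles'' bookkeeping hides an induction on $k$ that you never set up, and Corollary \ref{corHodgemicrocomp} only matches $F_k^H$ with $\widetilde V^{k+\alpha}$ modulo $\scO_Xf^{-k+1-\alpha}$, not modulo $F_{k-1}^H$. More seriously, even granting the reduction, you do not prove the containment: observing that the relevant range of $\gamma$ is $(\widetilde\alpha_{f,\fx},n]$ and that this ``is precisely the arithmetic content of the bound'' is not an argument, and your final paragraph concedes that the actual lifting via $G_\bullet V^{k+1+\alpha}\widetilde{\scB}_f$ is left undone. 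The duality route avoids this entirely by converting the generating level question into a Hodge-filtration vanishing on the \emph{dual} side, where the minimal exponent enters directly through $1\in\widetilde V^{\widetilde\alpha_{f,\fx}}\scO_X$.
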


In the remainder of this subsection we prove an analogous result for the filtered $\scD_X$-module $(W_{n+l}\scM(f^{-\alpha}),F_{\bullet}^H)$. We employ the same method as is used in \cite{MP19c}. Namely, we use the following lemma to reduce the proof to finding a bound on the generating levels of weight filtration steps on the vanishing cycles.

\begin{lem}

Let $\alpha\in(0,1]$ and $k \in \bZ_{\geq 0}$, and assume that the generating level of the filtered $\scD_X$-module $(\text{\emph{gr}}^W_{n-2+l}\text{\emph{gr}}_V^{\alpha}\scO_X,F_{\bullet}^H)$ is less than or equal to $k+1$. Then the generating level of $(\text{\emph{gr}}^W_{n+l}\scM(f^{-\alpha}),F_{\bullet}^H)$ is less than or equal to $k$.

\label{lemgenlevphi}
    
\end{lem}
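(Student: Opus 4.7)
The plan is to apply the main formula from Theorem \ref{thmmainformula} to translate the question about the Hodge and weight filtrations on $\scM(f^{-\alpha})$ into a question about the kernel and Hodge filtrations on $V^{\alpha}\scB_f$, then pass to the associated graded $\text{gr}_V^{\alpha}\scB_f$ and use the generating level hypothesis there. Concretely, Theorem \ref{thmmainformula} gives
\[F_k^HW_{n+l}\scM(f^{-\alpha})=\psi_{-\alpha}\bigl(F_k^{t-\text{ord}}K_lV^{\alpha}\scB_f\bigr),\]
so taking the $l$-th weight graded piece realises $F_k^H\text{gr}_{n+l}^W\scM(f^{-\alpha})$ as the image under $\psi_{-\alpha}$ of $F_k^{t-\text{ord}}K_lV^{\alpha}\scB_f$ modulo $F_k^{t-\text{ord}}K_{l-1}V^{\alpha}\scB_f$ and $V^{>\alpha}\scB_f$. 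Modulo these lower pieces (which are killed by $\psi_{-\alpha}$ up to contributions from deeper $V$-steps, already analysed in the proof of Theorem \ref{thmmainformula} via equation \eqref{eqVWK}), this quotient identifies with a subquotient of $\text{gr}_V^{\alpha}\scB_f=\text{gr}_V^{\alpha}\scO_X$, and in fact with the graded piece $\text{gr}_{n-2+l}^W\text{gr}_V^{\alpha}\scO_X$.

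There are two crucial index shifts to justify. First, the weight-index shift from $n+l$ to $n-2+l$ arises because $\text{gr}_V^{\alpha}\scO_X\simeq\psi_{f,e(\alpha)}\scO_X$ carries a monodromy weight filtration centred at $n-1$ (since $\underline{\bC}_X[n]$ is pure of weight $n$), together with an additional Tate twist by $(-1)$ coming from the variation map, exactly as appears in Corollary \ref{cortauH} where $\psi_{f,1}\underline{\bC}_{X,-\alpha f}[n](-1)$ is identified with $H^1 i_*i^!\underline{\bC}_{X,-\alpha f}[n]\simeq\scM(f^{-\alpha})/\scO_{X,-\alpha f}$. Second, the Hodge-index shift by $1$ is the content of Corollary \ref{corV02}: on $V^0i_{f,+}\scM(f^{-\alpha})\simeq V^{\alpha}\scB_f$ (via $\Phi$), the Hodge filtration at index $k+1$ agrees with the $t$-order filtration at index $k$, while $\psi_{-\alpha}$ converts $F_{\bullet}^{t-\text{ord}}$ on $V^{\alpha}\scB_f$ directly into $F_{\bullet}^H$ on $\scM(f^{-\alpha})$. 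This second shift is precisely why the hypothesis is formulated at generating level $k+1$ on the $V$-graded side, while the conclusion is at generating level $k$ on $\scM(f^{-\alpha})$.

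Given these two identifications, the proof becomes a formal transfer. For $u\in F_{k+i}^H\text{gr}_{n+l}^W\scM(f^{-\alpha})$, I would lift $u$ to an element of $F_{k+i}^{t-\text{ord}}K_lV^{\alpha}\scB_f$, map its class to $F_{k+i+1}^H\text{gr}_{n-2+l}^W\text{gr}_V^{\alpha}\scO_X$ via the shifts above, apply the generating level hypothesis to write this class as $\sum_jP_j\cdot v_j$ with $P_j\in F_i\scD_X$ and $v_j\in F_{k+1}^H\text{gr}_{n-2+l}^W\text{gr}_V^{\alpha}\scO_X$, and then pull this factorisation back through $\psi_{-\alpha}$, using the $V^0\scD_X$-linearity of the $V$- and kernel filtrations and the fact that $\psi_{-\alpha}$ commutes with the $\scD_X$-action (suitably interpreted, via $\partial_t\mapsto 0$). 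The main obstacle is the careful bookkeeping needed to verify strict compatibility of $\psi_{-\alpha}$ with the Hodge and weight filtrations modulo the lower kernel and $V$-filtration steps, so that the lifting and pushing-modulo operations above do not spoil the Hodge filtration level; this rests on the bi-strictness of morphisms of mixed Hodge modules (Theorem \ref{thmphipsiMHM}) together with the filtration identities already established in the proof of Theorem \ref{thmmainformula}.
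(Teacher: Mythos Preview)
Your approach is correct and is exactly what the paper intends: its own proof is the single line ``This follows by Theorem \ref{thmmainformula}.'' The cleanest way to unpack this (which your sketch essentially reaches) is via Corollary \ref{cortauH}: the map $\psi_{-1}$ underlies a surjection of mixed Hodge modules $\psi_{f,1}\underline{\bC}_{X,-\alpha f}[n](-1)\twoheadrightarrow M(f^{-\alpha})/\underline{\bC}_{X,-\alpha f}[n]$, hence is bi-strict; applying $\text{gr}^W_{n+l}$ gives a strict filtered surjection from $(\text{gr}^M_{n-2+l}\text{gr}_V^{\alpha}\scO_X,F^H_{\bullet+1})$ onto $(\text{gr}^W_{n+l}\scM(f^{-\alpha}),F^H_{\bullet})$, and the generating level of a strict filtered quotient is at most that of the source. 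Two small points to tighten: what you call an ``identification'' with $\text{gr}^W_{n-2+l}\text{gr}_V^{\alpha}\scO_X$ is only a strict \emph{surjection} (the kernel is $\text{im}\,N$), though this is harmless for the conclusion; and your Step 5 ``formal transfer'' with explicit lifting is unnecessary once you note that generating level passes to strict quotients.
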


\begin{proof}

This follows by Theorem \ref{thmmainformula}.
    
\end{proof}

We will then use the following two criteria, as in \cite{MP19c}, to obtain bounds on the generating level of the modules $(\text{gr}^W_{n-2+l}\text{gr}_V^{\alpha}\scO_X,F_{\bullet}^H)$. The proofs are then close to analogous to the proofs used in \cite{MP19c}.

\begin{lem}[\cite{MP19c}, Proposition 3.1 and Proposition 3.3]

Let $(\scM,F_{\bullet})$ be a left filtered $\scD_X$-module underlying a complex mixed Hodge module $M$.

\begin{enumerate}[label=\roman*)]

\item If the filtered $\scD_X$-module $(\bD\scM,F_{\bullet})$ underlying the dual Hodge module $\bD M$ satisfies $F_{2n-k-1}\bD\scM=0$, then the generating level of $(\scM,F_{\bullet})$ is less than or equal to $k$. 

\item Assume further that $\bD(\scM,F_{\bullet})\simeq (\scM,F_{\bullet})(d)$ for some integer $d$. Then the generating level of $(\scM,F_{\bullet})$ is less than or equal to $k$ if
\[\scE xt_{\scO_X}^j(\text{\emph{gr}}_{j-p+n-d}^F\scM,\scO_X)=0 \hspace{15pt} \forall \,0\leq j \leq n, \,\,\,p>k.\]
    
\end{enumerate}

\label{lemgenlevcrit}
    
\end{lem}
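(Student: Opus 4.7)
The plan is to reduce both parts to the standard cohomological criterion that the generating level of $(\scM, F_\bullet)$ is $\leq k$ precisely when $\scH^0\,\text{gr}^F_{p-n}\text{DR}_X(\scM, F_\bullet) = 0$ for all $p > k$ (this is the characterisation invoked via \cite{MP22}, Lemma 10.1 already in the excerpt, and reduces to the fact that the top cohomology of the graded de Rham complex is precisely the obstruction to $F_p\scM$ being generated by $F_1\scD_X \cdot F_{p-1}\scM$). The problem is thus recast as a vanishing statement for the top cohomology of a graded de Rham complex whose terms are tensor products of the $\text{gr}^F_q\scM$ with the exterior powers of $\Omega_X^1$.

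For part (i), the key ingredient is Saito's duality theorem for filtered $\scD$-modules underlying a mixed Hodge module, which supplies a natural quasi-isomorphism
\[\text{gr}^F_p\,\text{DR}_X(\bD\scM) \;\simeq\; \bR\scH om_{\scO_X}\!\bigl(\text{gr}^F_{-p}\,\text{DR}_X\scM,\,\omega_X[n]\bigr),\]
up to signs fixed by the left--right $\scD$-module convention. Under local Grothendieck--Serre duality this swaps $\scH^0$ and $\scH^{-n}$, so the vanishing $\scH^0\,\text{gr}^F_{p-n}\text{DR}_X\scM = 0$ becomes equivalent to $\scH^{-n}\,\text{gr}^F_{-p+n}\text{DR}_X(\bD\scM) = 0$. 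The latter is a subobject of the extremal graded piece of $\bD\scM$ appearing in its de Rham complex; tracking indices shows this extremal term vanishes whenever $F_{2n-k-1}\bD\scM = 0$, which is exactly the hypothesis.

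For part (ii), the self-duality $\bD(\scM, F_\bullet) \simeq (\scM, F_\bullet)(d)$ allows one to rewrite the left-hand side of the above duality in terms of $\scM$ itself rather than $\bD\scM$. Combining the duality with the Spencer/Koszul-type filtered resolution of $\scM$ produces a spectral sequence whose $E_2$-page involves the sheaves $\scE xt^j_{\scO_X}(\text{gr}^F_{j-p+n-d}\scM, \scO_X)$ and whose abutment controls $\text{gr}^F_p\bD\scM$. The assumed Ext-vanishing for all $0 \leq j \leq n$ and $p > k$ collapses this spectral sequence, forces the relevant graded pieces of $\bD\scM$ to vanish in the required range, and thereby yields the hypothesis of part (i), which in turn gives the desired generating-level bound.

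The main obstacle I expect will be pinning down the numerical shifts exactly. The constants $2n-k-1$ in (i) and $j-p+n-d$ in (ii) encode the combined effects of the Tate twist $(d)$, the shift $\omega_X[n]$ in Grothendieck duality, and the left/right $\scD$-module conversion; each of the cited references \cite{MSai88}, \cite{MSai90}, \cite{MP19c} uses slightly different sign conventions, so verifying that the index-count of part (i) aligns with the Ext-vanishing hypothesis of part (ii) will be the delicate bookkeeping step where the proof could most easily go wrong.
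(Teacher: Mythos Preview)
The paper does not supply its own proof; the lemma is simply cited from \cite{MP19c}. Your outline has the right ingredients---the cohomological criterion $\scH^0\text{gr}^F_{p-n}\text{DR}_X\scM=0$ for $p>k$, combined with Saito's duality $\text{gr}^F_q\text{DR}_X(\bD\scM)\simeq\bR\scH om_{\scO_X}(\text{gr}^F_{-q}\text{DR}_X\scM,\omega_X[n])$---and this is indeed the method of \cite{MP19c}.

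One step in part~(i) is imprecise, though. Grothendieck duality on a perfect complex does not ``swap $\scH^0$ and $\scH^{-n}$''; there is no degree-by-degree exchange of cohomology-sheaf vanishing. The actual mechanism is both simpler and stronger: for $p>k$ the complex $\text{gr}^F_{n-p}\text{DR}_X(\bD\scM)$ has degree-$(-n+j)$ term $\Omega_X^j\otimes\text{gr}^F_{n-p+j}\bD\scM$, and since $n-p+j\leq 2n-p\leq 2n-k-1$ the hypothesis $F_{2n-k-1}\bD\scM=0$ annihilates \emph{every} term. The entire complex is zero, so its Grothendieck dual $\text{gr}^F_{p-n}\text{DR}_X\scM$ is acyclic and in particular $\scH^0=0$. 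Note that the binding constraint is the degree-$0$ term $\omega_X\otimes\text{gr}^F_{2n-p}\bD\scM$, not the degree-$(-n)$ term you single out. For part~(ii) your spectral-sequence approach is correct: after inserting the self-duality $\bD\scM\simeq\scM(d)$, the $E_1$ terms on the diagonal abutting to $\scH^0\text{gr}^F_{p-n}\text{DR}_X\scM$ are exactly $\scE xt^j_{\scO_X}(\text{gr}^F_{j-p+n-d}\scM,\scO_X)$ tensored with a locally free sheaf, and their vanishing gives the conclusion directly---it need not, and in general does not, pass through the stronger hypothesis of~(i).
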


As in \cite{MP19c}, we separate into two separate cases, applying one of each of the two above criteria in each case. Using the first of the two criteria, we get the following bound in the case $\alpha \in (0,1)$.

\begin{thm}

Assume $\alpha\in(0,1)$. Let $l \geq 0$. The generating level of $(\text{\emph{gr}}^W_{n+l}\scM(f^{-\alpha}),F_{\bullet}^H)$ at a point $\fx\in X$ is less than or equal to 
\[n-l-\lceil\alpha+\widetilde{\alpha}_{f,\hspace{0.7pt}\fx}\rceil+1.\]

\label{thmgenlevelW1}
    
\end{thm}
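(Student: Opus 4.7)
The plan is to follow the strategy of Mustață and Popa from the proof of the analogous unfiltered statement (Theorem \ref{thmMPgenlevalg}), adapted to the weight-graded setting. First, I would invoke Lemma \ref{lemgenlevphi} to reduce the question to bounding the generating level of the weight-graded vanishing cycle $(\text{gr}^W_{n-2+l}\text{gr}_V^\alpha\scB_f, F_\bullet^H)$ by $n - l - \lceil\alpha + \widetilde{\alpha}_{f,\hspace{0.7pt}\fx}\rceil + 2$. Via Kashiwara's equivalence, this object is a filtered $\scD_X$-module underlying a complex mixed Hodge module supported on $Z = f^{-1}(0)$.

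Next, I would apply the duality criterion of Lemma \ref{lemgenlevcrit}.i), which translates the desired generating-level bound into the vanishing of a specific top Hodge step of the dual mixed Hodge module. The hypothesis $\alpha \in (0,1)$ is crucial here, as the duality functor interacts particularly cleanly with the vanishing cycles in this range. Since $\bV^{-\alpha}$ is dual to $\bV^\alpha \simeq \bV^{-(1-\alpha)}$, one can identify $\bD M(f^{-\alpha})$ with a Tate-twisted copy of $M(f^{-(1-\alpha)})$, and correspondingly express the dual of $\text{gr}_V^\alpha\scB_f$ as an appropriately shifted weight-graded piece of the nearby cycles associated with $\scM(f^{-(1-\alpha)})$, with the weights reindexed by $k \leftrightarrow 2n - k$.

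With this identification in hand, the required $F$-step vanishing on the dual translates, via Theorem \ref{thmmainformula} and the standard relation between Hodge filtrations on nearby cycles and on the ambient module, into an equality of consecutive Hodge steps on some $W_\bullet \scM(f^{-(1-\alpha)})$. This equality is exactly the content of Proposition \ref{prophodgepole2} applied with $1-\alpha$ in place of $\alpha$: it holds whenever the relevant pole bound $k + (1-\alpha) \leq \widetilde{\alpha}_{f,\hspace{0.7pt}\fx}$ is satisfied, and upon unpacking the index identifications this condition becomes equivalent to the inequality $k \leq n - l - \lceil\alpha + \widetilde{\alpha}_{f,\hspace{0.7pt}\fx}\rceil + 1$ we set out to prove.

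The main obstacle — and the most delicate bookkeeping — will be tracking the multiple shifts that accumulate throughout: the Tate twist and Hodge re-indexing from the duality functor, the shift between the kernel filtration on $V^1$ and on $\widetilde{V}^1$ (cf. Lemma \ref{lemkerVmicro}), and the weight indexing on nearby cycles coming from the relative monodromy weight filtration rather than directly from the weights on $\scM(f^{-\alpha})$. Ensuring these all align so that the ceiling function $\lceil\alpha + \widetilde{\alpha}_{f,\hspace{0.7pt}\fx}\rceil$ appears correctly in the final bound, rather than shifted by $\pm 1$, is the heart of the calculation.
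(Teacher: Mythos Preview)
Your strategy matches the paper's: reduce via Lemma \ref{lemgenlevphi}, apply the duality criterion Lemma \ref{lemgenlevcrit}.i), identify the dual via $\bD\psi_{f,e(\alpha)}\underline{\bC}_X[n] \simeq \psi_{f,e(-\alpha)}\underline{\bC}_X[n](n-1)$ together with the symmetry of the monodromy weight filtration, and then show the relevant Hodge step vanishes on $\text{gr}^W_{n-2+l}\text{gr}_V^{1-\alpha}\scO_X$.

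Two cautions. First, your identification $\bD M(f^{-\alpha}) \simeq M(f^{-(1-\alpha)})$ (up to twist) is false: the dual of $j_*$ is $j_!$, so one obtains $j_!\underline{\bC}_{U,-(1-\alpha)f}[n]$ rather than $j_*$. This is ultimately harmless because the paper dualizes $\psi_{f,e(\alpha)}\underline{\bC}_X[n]$ directly rather than $M(f^{-\alpha})$, and nearby cycles do not distinguish $j_*$ from $j_!$; but as written your identification is incorrect. Second, and more substantively, the final vanishing is more direct than your proposal and does not pass through $\scM(f^{-(1-\alpha)})$ or Proposition \ref{prophodgepole2}. The paper shows $F_{n-k-l}^H\text{gr}_V^{1-\alpha}\scO_X = 0$ outright (for the whole $\text{gr}_V$, not just one weight piece): since this Hodge step is induced by $F_{n-k-l-1}^{t-\text{ord}}$, it suffices that $\partial_t^{n-k-l-1} \in V^{>1-\alpha}\scB_f$, which by Corollary \ref{corpartial} and Lemma \ref{lemmicrominlexp} holds exactly when $n-k-l-\alpha < \widetilde{\alpha}_{f,\fx}$. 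Your proposed translation to ``equality of consecutive Hodge steps on some $W_\bullet\scM(f^{-(1-\alpha)})$'' is not a correct formulation of this vanishing, and any repaired version would in any case have to unwind through these same microlocal ingredients.
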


\begin{proof}

Assume that $k$ is an integer satisfying
\[k \geq n-l-\lceil\alpha+\widetilde{\alpha}_{f,\hspace{0.7pt}\fx}\rceil+1.\]
By Lemma \ref{lemgenlevphi} and Lemma \ref{lemgenlevcrit}.i), it suffices to prove that 
\[F_{2n-k-2}^H\bD(\text{gr}^W_{n-2+l}\text{gr}_V^{\alpha}\scO_X)=0.\]
Now, there is a natural isomorphism of complex mixed Hodge modules
\[\bD(\psi_{f,e(\alpha)}\underline{\bC}_X[n])\simeq (\psi_{f,e(-\alpha)}\underline{\bC}_X[n])(n-1)\]
induced by the natural polarisation on $\underline{\bC}_X[n]$ (see \cite{MSai89}, Theorem 1.6). Moreover, the indexing of weight filtrations reverse under the duality functor, i.e.
\[\bD \circ \text{gr}^W_{\bullet}=\text{gr}^W_{-\bullet}\circ \bD.\]
(See \cite{MSai90}, Proposition 2.6 and 2.17. See also \cite{Sch14}, Section 29 and \cite{CNS22}, Theorem 4.9.) Therefore we obtain the following isomorphism:
\[\bD(\text{gr}^W_{n-2+l}\psi_{f,e(\alpha)}\underline{\bC}_X[n])\simeq \text{gr}^W_{-n+2-l}\bD(\psi_{f,e(\alpha)} \underline{\bC}_X[n]) \simeq (\text{gr}^W_{n-l}(\psi_{f,e(-\alpha)} \underline{\bC}_X[n]))(n-1).\]
Recall that the weight filtration on the vanishing cycles is given by the monodromy weight filtration centred at $n-1$, whose graded pieces are symmetric about $n-1$, in that there is an isomorphism of complex mixed Hodge modules 
\[\text{gr}^W_{n-1+l'}(\psi_{f,e(\gamma)} \underline{\bC}_X[n]) \isommap \text{gr}^W_{n-1-l'}(\psi_{f,e(\gamma)} \underline{\bC}_X[n])(-l')\]
for any $\gamma$ and $l'$, whose underlying morphism of $\scD_X$-modules is given by multiplication by $(\partial_tt-\gamma)^{l'}$. Thus, combining, we have a natural isomorphism
\[\bD(\text{gr}^W_{n-2+l}\psi_{f,e(\alpha)}\underline{\bC}_X[n])\simeq\text{gr}^W_{n-2+l}(\psi_{f,e(-\alpha)} \underline{\bC}_X[n])(n-2+l).\footnotemark\]

\footnotetext{This isomorphism is in fact precisely the one inducing a canonical polarisation on the (complex) pure Hodge module $\text{gr}^W_{n-2+l}\psi_{f,e(\alpha)}\underline{\bC}_X[n]$ of weight $n-2+l$.}

Thus, by the discussion at the beginning of the proof, we see that it suffices to prove that 
\[F_{n-k-l}^H\text{gr}^W_{n-2+l}\text{gr}^{1-\alpha}_V\scO_X=F_{2n-k-2-(n-2+l)}^H\text{gr}^W_{n-2+l}\text{gr}^{1-\alpha}_V\scO_X=0.\]

Now, $n-k-l-1+1-\alpha<\widetilde{\alpha}_{f,\hspace{0.7pt}\fx}$, so by Lemma \ref{lemmicrominlexp}, it follows that 
\[\widetilde{V}^{>n-k-l-1+1-\alpha}\scO_X=\scO_X. \]
Now, by Corollary \ref{corpartial}, this implies that
\[\partial_t^{n-k-l-1}\in V^{>1-\alpha}\scO_X.\]
Recalling (see Theorem \ref{thmphipsiMHM}) that the Hodge filtration on $\text{gr}_V^{1-\alpha}\scO_X$ is induced by the Hodge filtration on $i_{f,+}\scO_X$, which equals the $t$-order filtration up to a shift by $-1$, this finally implies that
\[F_{n-k-l}^HV^{1-\alpha}\scO_X=F_{n-k-l-1}^{t-\text{ord}}i_{f,+}\scO_X=F_{n-k-l}^HV^{>1-\alpha}\scO_X,\]
so that in particular
\[F_{n-k-l}^H\text{gr}^W_{n-2+l}\text{gr}_V^{1-\alpha}\scO_X=0\]
as required.
    
\end{proof}


Running through the same proof as above in the case $\alpha=1$, we obtain the bound $n-l-\lceil\widetilde{\alpha}_{f,\hspace{0.7pt}\fx}\rceil+1$. However, this can be slightly improved, using our second generating level criterion.

\begin{thm}

The generating level of $(\text{\emph{gr}}_{n+l}^W\scO_X(*f),F_{\bullet}^H)$ at the point $\fx\in X$ is less than or equal to 
\[n-l-\lceil\widetilde{\alpha}_{f,\hspace{0.7pt}\fx}\rceil.\]

\label{thmgenlevelW2}

\end{thm}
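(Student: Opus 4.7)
The plan is to mimic the strategy of Theorem \ref{thmgenlevelW1} as closely as possible, extracting an improvement of $1$ from the fact that $\psi_{f,1}\underline{\bC}_X[n]$ is self-dual (whereas in the $\alpha\in(0,1)$ case $\psi_{f,e(\alpha)}$ and $\psi_{f,e(-\alpha)}$ are only exchanged under duality). By Lemma \ref{lemgenlevphi}, it suffices to bound the generating level of $(\text{gr}^W_{n-2+l}\psi_{f,1}\scO_X, F_\bullet^H)$ by $n-l-\lceil\widetilde{\alpha}_{f,\hspace{0.7pt}\fx}\rceil+1$. Combining the duality $\bD(\psi_{f,1}\underline{\bC}_X[n])\simeq(\psi_{f,1}\underline{\bC}_X[n])(n-1)$ with the monodromy-weight symmetry $N^m:\text{gr}^W_{n-1+m}\isommap\text{gr}^W_{n-1-m}(-m)$, I obtain
\[
\bD\bigl(\text{gr}^W_{n-2+l}\psi_{f,1}\scO_X,F_\bullet^H\bigr)\simeq\bigl(\text{gr}^W_{n-2+l}\psi_{f,1}\scO_X,F_\bullet^H\bigr)(n+l-2),
\]
so the self-dual criterion Lemma \ref{lemgenlevcrit}.ii) applies with $d=n+l-2$.

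With this value of $d$, the Ext vanishings demanded by criterion ii) become $\scE xt^j(\text{gr}^F_{j-p+2-l}\text{gr}^W_{n-2+l}\psi_{f,1}\scO_X,\scO_X)=0$ for $0\leq j\leq n$ and $p>k$. I would then invoke Saito's Cohen-Macaulay property for the $\text{gr}^F$-module underlying a pure Hodge module (the same ingredient used for the generating-level bound of $\scM(f^{-\alpha})$ in \cite{MP19c}), which reduces all of these Ext vanishings to the single cohomological vanishing of the top Hodge piece $F_{n-k-l}^H\text{gr}^W_{n-2+l}\psi_{f,1}\scO_X=0$. This is the step that yields the shift of $1$ compared to Theorem \ref{thmgenlevelW1}, since criterion i) would have required vanishing one degree higher.

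To verify this remaining vanishing for $k\geq n-l-\lceil\widetilde{\alpha}_{f,\hspace{0.7pt}\fx}\rceil$, I would translate, as in the proof of Theorem \ref{thmgenlevelW1}, the Hodge filtration into the $t$-order filtration on $V^1\scB_f$: $F_{n-k-l}^H\psi_{f,1}\scO_X$ is the image of $F_{n-k-l-1}^{t-\text{ord}}V^1\scO_X$, generated over $\scO_X$ by $\partial_t^{n-k-l-1}$. By Corollary \ref{corpartial}, $\partial_t^{n-k-l-1}\in K_1V^1\scB_f$ is equivalent to $\widetilde{V}^{>n-k-l}\scO_X=\scO_X$ at $\fx$, which by Lemma \ref{lemmicrominlexp} is exactly $n-k-l<\widetilde{\alpha}_{f,\hspace{0.7pt}\fx}$, i.e.\ $k\geq n-l-\lceil\widetilde{\alpha}_{f,\hspace{0.7pt}\fx}\rceil$. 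Membership in $K_1V^1\scB_f$ places the image of $\partial_t^{n-k-l-1}$ in $\ker N\subseteq M_{n-1}\psi_{f,1}\scO_X=W_{n-1}$.

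The main obstacle is controlling the weight graded precisely: for $l\geq 2$ the inclusion $W_{n-1}\subseteq W_{n-3+l}$ is automatic and we are done immediately, while for $l=0$ the assertion is trivial since $\text{gr}^W_n\scO_X(*f)\simeq\underline{\bC}_X[n]$ has generating level $0$. The delicate case is $l=1$, where $\ker N\subseteq M_{n-1}=W_{n-1}$ is precisely the weight step in which we are trying to land. Here I would exploit the primitive decomposition of the pure Hodge module $\text{gr}^W_{n-1}\psi_{f,1}\scO_X=P_0$: combining the self-duality above with the primitive symmetry forces the Hodge filtration to satisfy the stronger vanishing needed, either directly or via one additional application of the Cohen-Macaulay reduction. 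Concatenating with Lemma \ref{lemgenlevphi} then yields the claimed bound $n-l-\lceil\widetilde{\alpha}_{f,\hspace{0.7pt}\fx}\rceil$ for $\text{gr}_{n+l}^W\scO_X(*f)$.
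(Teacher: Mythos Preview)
Your setup is right: the self-duality $\bD(\text{gr}^W_{n-2+l}\psi_{f,1}\underline{\bC}_X[n])\simeq(\text{gr}^W_{n-2+l}\psi_{f,1}\underline{\bC}_X[n])(n-2+l)$ and the application of the Ext criterion Lemma~\ref{lemgenlevcrit}.ii) match the paper exactly. The divergence begins immediately after.

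The step ``Saito's Cohen--Macaulay property\ldots reduces all of these Ext vanishings to the single cohomological vanishing $F_{n-k-l}^H\text{gr}^W_{n-2+l}\psi_{f,1}\scO_X=0$'' is not justified and is not what \cite{MP19c} does. The Cohen--Macaulay property of $\text{gr}^F\scM$ is an input in proving the Ext criterion itself; it does not collapse the family of Ext vanishings into a single Hodge vanishing. Moreover, your subsequent computation has an off-by-one: $n-k-l<\widetilde{\alpha}_{f,\hspace{0.7pt}\fx}$ is equivalent to $k\geq n-l-\lceil\widetilde{\alpha}_{f,\hspace{0.7pt}\fx}\rceil+1$, not $k\geq n-l-\lceil\widetilde{\alpha}_{f,\hspace{0.7pt}\fx}\rceil$, so even granting your reduction you would only recover the bound coming from criterion~i). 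Finally, the $l=1$ case is left as a sketch (``either directly or via one additional application\ldots'').

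The paper's actual argument for the Ext vanishings is concrete and different. After passing from $\text{gr}^W_{n-2+l}$ to $W_{n-2+l}$ via the strict short exact sequence, the paper proves a claim computing, for $q\leq\lceil\widetilde{\alpha}_{f,\hspace{0.7pt}\fx}\rceil-1$, an isomorphism $\text{gr}^{F^H}_{q+1}W_{n-2+l}\text{gr}_V^1\scO_X\simeq W_lJ_q/(f)$, where $W_lJ_q=\{g:g\partial_t^q\in M_{n-2+l}V^1\scO_X\}$, and moreover $W_lJ_q=\scO_X$ when $q+1\leq\lceil\widetilde{\alpha}_{f,\hspace{0.7pt}\fx}\rceil-1$. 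Thus the relevant $\text{gr}^F$ pieces are $\scO_X/(f)$ except possibly at the top index $j=n$, and the required $\scE xt^j$ vanishings follow from the two short exact sequences $0\to\scO_X\xrightarrow{f}\scO_X\to\scO_X/(f)\to 0$ and $0\to W_lJ_{q}/(f)\to\scO_X/(f)\to\scO_X/W_lJ_{q}\to 0$. The improvement of $1$ over criterion~i) comes precisely from the fact that $\scO_X/(f)$ has projective dimension $1$ (so $\scE xt^j$ vanishes for $j\geq 2$), not from any Hodge-filtration vanishing on the nearby cycles.
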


\begin{proof}

Let $k$ be an integer satisfying $k \geq n-l-\lceil\widetilde{\alpha}_{f,\hspace{0.7pt}\fx}\rceil+1$.

As in the proof of Theorem \ref{thmgenlevelW1}, we have a canonical isomorphism
\[\bD(\text{gr}^W_{n-2+l}\psi_{f,1}\underline{\bC}_X[n])\simeq\text{gr}^W_{n-2+l}(\psi_{f,1} \underline{\bC}_X[n])(n-2+l).\]
Therefore, we may apply the second criterion from Lemma \ref{lemgenlevcrit}, and we see (using Lemma \ref{lemgenlevphi}) that it suffices to show that
\[\scE xt_{\scO_X}^j(\text{gr}_{j-p}^{F^H}\text{gr}^W_{n-2+l}\text{gr}_V^1\scO_X,\scO_X)=0 \hspace{5pt} \text{ for all } \,\,0\leq j \leq n, \,\,\,p\geq k+l-1.\]
Now, we of course have a strict filtered short exact sequence
\[0\to W_{n-2+l-1}\text{gr}_V^1\scO_X\to W_{n-2+l}\text{gr}_V^1\scO_X\to \text{gr}^W_{n-2+l}\text{gr}_V^1\scO_X \to 0,\]
and the strictness induces for any $j$ and $p$ a short exact sequence
\[0\to \text{gr}^{F^H}_{j-p}W_{n-2+l-1}\text{gr}_V^1\scO_X\to \text{gr}^{F^H}_{j-p}W_{n-2+l}\text{gr}_V^1\scO_X\to \text{gr}^{F^H}_{j-p}\text{gr}^W_{n-2+l}\text{gr}_V^1\scO_X\to 0.\]
Thus, it now suffices to prove the vanishing
\begin{equation}\scE xt_{\scO_X}^j(\text{gr}_{j-p}^{F^H}W_{n-2+l}\text{gr}_V^1\scO_X,\scO_X)=0 \hspace{15pt} \text{ for all } \,\,0\leq j \leq n, \,\,\,p\geq k+l-1.\label{eqext}\end{equation}
To prove this vanishing, we need to better understand the $\scO_X$-modules $\text{gr}_{j-p}^{F^H}W_{n-2+l}\text{gr}_V^1\scO_X$ for the necessary values of $j$ and $p$. To this end, we have the following claim.

\vspace{10pt}

\textbf{Claim.} Write $W_lJ_q:=\{g\in\scO_X\mid g\partial_t^q\in M_{n-2+l}V^1\scO_X\}\subseteq\scO_X.$ Then, for $q\in\bZ$ satisfying $q \leq \lceil\widetilde{\alpha}_{f,\hspace{0.7pt}\fx}\rceil-1$, we have an isomorphism
\[\text{gr}_{q+1}^{F^H}W_{n-2+l}\text{gr}_V^1\scO_X \simeq \frac{W_lJ_q}{(f)}\]
at $\fx$. Moreover, if $q+1\leq \lceil\widetilde{\alpha}_{f,\hspace{0.7pt}\fx}\rceil-1$, then $W_lJ_q=\scO_X$ at $\fx$.

\emph{Proof of claim.} If $q \leq \lceil\widetilde{\alpha}_{f,\hspace{0.7pt}\fx}\rceil-1$, then $q <\widetilde{\alpha}_{f,\hspace{0.7pt}\fx}$, so Proposition \ref{prophodgepole2} implies that we have (at $\fx$) an equality
\[F_{k-1}^HW_{n+1}\scO_X(*f)=\scO_Xf^{-k},\]
for every $k \leq q$. Corollary \ref{corHodgemicrocomp} then implies that $\widetilde{V}^{>k}\scO_X=\scO_X$, which by Corollary \ref{corpartial} then implies in turn that
\[\partial_t^{k-1}\in K_1V^1\scO_X \,\,\text{ for all }\,\, k \leq q.\]
This then implies that 
\[t\cdot\partial_t^k = f\partial_t^k-k\partial_t^{k-1}\in V^{>1}\scO_X \,\,\text{ for all }\,\, k \leq q,\]
so that
\[F_{q+1}^HW_{n-2+l}\text{gr}_V^1\scO_X \simeq \frac{W_lJ_q}{J_q'},\]
where $J_q'=\{g\in\scO_X\mid g\partial_t^q\in V^{>1}\scO_X\}\subseteq\scO_X$. 

Now, if $q+1\leq \lceil\widetilde{\alpha}_{f,\hspace{0.7pt}\fx}\rceil-1$, then as above we see that $\widetilde{V}^{>q+1}\scO_X=\scO_X$, implying in particular by Corollary \ref{corpartial} that 
\[\partial_t^k\in K_lV^1\scO_X\subseteq M_{n-2+l}V^1\scO_X\,\,\text{ for all }\,\,k \leq q,\]
so that $W_lJ_q = \scO_X$.

Finally, we return again to considering $q \leq \lceil\widetilde{\alpha}_{f,\hspace{0.7pt}\fx}\rceil-1$. See that $(q-1)+1 \leq  \lceil\widetilde{\alpha}_{f,\hspace{0.7pt}\fx}\rceil-1$, so $W_lJ_{q-1}=\scO_X$. Now the inclusion map
\[F_q^HW_{n-2+l}\text{gr}_V^1\scO_X \hookrightarrow F_{q+1}^HW_{n-2+l}\text{gr}_V^1\scO_X\]
corresponds under the above isomorphisms to the map
\[\frac{\scO_X}{J_{q-1}'}\to\frac{W_lJ_q}{J_q'}\] given by multiplication by $\frac{1}{q}f$, since
\[\partial_t^{q-1} = \frac{1}{q}f\partial_t^q-\frac{1}{q}t\cdot\partial_t^q,\]
and $t\cdot\partial_t^q \in V^{>1}\scO_X$ as seen above.
Therefore we have the required isomorphism
\[\text{gr}^{F^H}_{q+1}W_{n-2+l}\text{gr}_V^1\scO_X \simeq \frac{W_lJ_q}{J_q'+(f)}=\frac{W_lJ_q}{(f)},\]
since, as may be seen easily using that $V^{>1}\scO_X=t\cdot V^{>0}\scO_X$, $J_q'\subseteq (f)$. \hspace{15pt} $\square$

\vspace{10pt}

Now we use the above claim to prove the required vanishing. Recall $k$ is an integer such that $k\geq n-l-\lceil\widetilde{\alpha}_{f,\hspace{0.7pt}\fx}\rceil+1$, and we are required to show that the Ext groups \eqref{eqext} all vanish.

See firstly that 
\[j-p\leq j-(k+l-1) \leq n-(n-l-\lceil\widetilde{\alpha}_{f,\hspace{0.7pt}\fx}\rceil+1+l-1)=\lceil\widetilde{\alpha}_{f,\hspace{0.7pt}\fx}\rceil.\]
So we may apply the claim to see that
\[\text{gr}_{j-p}^{F^H}W_{n-2+l}\text{gr}_V^1\scO_X \simeq \frac{\scO_X}{(f)}\]
if $j-p < \lceil\widetilde{\alpha}_{f,\hspace{0.7pt}\fx}\rceil$, and
\[\text{gr}_{j-p}^{F^H}W_{n-2+l}\text{gr}_V^1\scO_X \simeq \frac{W_lJ_{j-p-1}}{(f)}\]
if $j-p = \lceil\widetilde{\alpha}_{f,\hspace{0.7pt}\fx}\rceil$. See also that this equality $j-p = \lceil\widetilde{\alpha}_{f,\hspace{0.7pt}\fx}\rceil$ happens only if $j=n$, so only the $n$-th Ext group needs to be checked to vanish. The two short exact sequences 
\[0 \to\scO_X\xrightarrow{f}\scO_X \to \frac{\scO_X}{(f)} \to 0\]
and 
\[0 \to \frac{W_lJ_{j-p-1}}{(f)} \to \frac{\scO_X}{(f)}\to \frac{\scO_X}{W_lJ_{j-p-1}}\to 0\]
thus give us all of the required vanishings.
    
\end{proof}

\begin{cor}

Assume $\alpha\in(0,1]$. The generating level of $(W_{n+l}\scM(f^{-\alpha}),F_{\bullet}^H)$ at $\fx$ is less than or equal to the minimum of $n-1$ and
$n-\lceil\alpha+\widetilde{\alpha}_{f,\hspace{0.7pt}\fx}\rceil+1-\lfloor\alpha\rfloor$.

\label{corgenlevelW}
    
\end{cor}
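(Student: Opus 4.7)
The plan is to argue by induction on $l \geq 0$, using at each step the short exact sequence of filtered $\scD_X$-modules
\begin{equation*}
0 \to W_{n+l-1}\scM(f^{-\alpha}) \to W_{n+l}\scM(f^{-\alpha}) \to \text{gr}^W_{n+l}\scM(f^{-\alpha}) \to 0,
\end{equation*}
which arises from a short exact sequence of complex mixed Hodge modules and is therefore strict with respect to the Hodge filtration (see \cite{M15a}, Lemma 7.1.2). A standard lifting argument exploits this strictness to show that the generating level of the middle term is at most the maximum of the generating levels of the two outer terms: given $u \in F_{k+i}^HW_{n+l}\scM(f^{-\alpha})$, one first writes its image in $F_{k+i}^H\text{gr}^W_{n+l}\scM(f^{-\alpha})$ as $\sum_j P_j \bar{v}_j$ with $P_j \in F_i\scD_X$ and $\bar{v}_j \in F_k^H\text{gr}^W_{n+l}\scM(f^{-\alpha})$, lifts each $\bar{v}_j$ to $F_k^HW_{n+l}\scM(f^{-\alpha})$ using strictness, and absorbs the resulting error, which by strictness lies in $F_{k+i}^HW_{n+l-1}\scM(f^{-\alpha})$, via the inductive hypothesis.

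The problem thereby reduces to bounding separately the generating levels of $W_n\scM(f^{-\alpha}) = \scO_{X,-\alpha f}$ and of each graded quotient $\text{gr}^W_{n+j}\scM(f^{-\alpha})$ for $1 \leq j \leq l$. For $W_n\scM(f^{-\alpha})$, Corollary \ref{corgenlevWn} directly yields the bound $n-1$; for $\alpha\in(0,1)$ the $l = 0$ instance of Theorem \ref{thmgenlevelW1} simultaneously furnishes the other half of the minimum, $n - \lceil\alpha+\widetilde{\alpha}_{f,\hspace{0.7pt}\fx}\rceil + 1$, while for $\alpha = 1$ the identification $\scO_{X,-f} = \scO_X$ with its trivial Hodge filtration makes the corresponding generating level $0$, which is compatible with $n - 1 - \lceil\widetilde{\alpha}_{f,\hspace{0.7pt}\fx}\rceil$ whenever $n \geq 2$ by virtue of M.~Saito's inequality $\widetilde{\alpha}_{f,\hspace{0.7pt}\fx} \leq n/2$.

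For $j \geq 1$, Theorem \ref{thmgenlevelW1} (respectively Theorem \ref{thmgenlevelW2}) supplies the bound $n - j - \lceil\alpha+\widetilde{\alpha}_{f,\hspace{0.7pt}\fx}\rceil + 1$ when $\alpha\in(0,1)$ (respectively $n - j - \lceil\widetilde{\alpha}_{f,\hspace{0.7pt}\fx}\rceil$ when $\alpha = 1$) on the generating level of $\text{gr}^W_{n+j}\scM(f^{-\alpha})$. A direct arithmetic check — substituting $\lfloor\alpha\rfloor = 0$ or $\lfloor\alpha\rfloor = 1$ as appropriate and using $\lceil 1+\widetilde{\alpha}_{f,\hspace{0.7pt}\fx}\rceil = 1+\lceil\widetilde{\alpha}_{f,\hspace{0.7pt}\fx}\rceil$ — verifies that this bound is at most $n - \lceil\alpha+\widetilde{\alpha}_{f,\hspace{0.7pt}\fx}\rceil + 1 - \lfloor\alpha\rfloor$ and simultaneously at most $n-1$ for all $j \geq 1$. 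Combined with the inductive hypothesis, this completes the induction.

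The real work has been done in Theorems \ref{thmgenlevelW1} and \ref{thmgenlevelW2} and in Corollary \ref{corgenlevWn}; the only substantive content added here is the strict-extension observation for generating levels. The mildly delicate aspect is the case bookkeeping between $\alpha\in(0,1)$ and $\alpha = 1$, since the two governing theorems produce numerically distinct bounds that nonetheless reassemble into the uniform expression $n - \lceil\alpha+\widetilde{\alpha}_{f,\hspace{0.7pt}\fx}\rceil + 1 - \lfloor\alpha\rfloor$ appearing in the statement.
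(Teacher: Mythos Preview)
The proposal is correct and follows essentially the same approach as the paper: both reduce to bounding the generating levels of $W_n\scM(f^{-\alpha})$ (via Corollary~\ref{corgenlevWn} and the triviality of $W_n\scO_X(*f)=\scO_X$) and of the graded pieces $\text{gr}^W_{n+j}\scM(f^{-\alpha})$ for $j\geq 1$ (via Theorems~\ref{thmgenlevelW1} and~\ref{thmgenlevelW2}), and then invoke the general fact that in a strict short exact sequence the generating level of the middle term is bounded by the maximum of the outer ones. Your write-up is slightly more explicit in spelling out the induction and in justifying the $\alpha=1$, $l=0$ numerics via Saito's inequality $\widetilde{\alpha}_{f,\hspace{0.7pt}\fx}\leq n/2$, but the argument is the same.
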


\begin{proof}

Combining Theorems \ref{thmgenlevelW1} and \ref{thmgenlevelW2} gives us that $(\text{gr}^W_{n+l}\scM(f^{-\alpha}),F_{\bullet}^H)$ has its generating level bounded above by $n-\lceil\alpha+\widetilde{\alpha}_{f,\hspace{0.7pt}\fx}\rceil$ when $l\geq 1$, which is less than or equal to both $n-1$ and $n-\lceil\alpha+\widetilde{\alpha}_{f,\hspace{0.7pt}\fx}\rceil+1-\lfloor\alpha\rfloor$.

Combining Theorems \ref{thmgenlevelW1} and Corollary \ref{corgenlevWn} (and recalling that $W_n\scO_X(*f)=\scO_X$ so has generating level equal to zero) also tells us that the generating level of $(W_n\scM(f^{-\alpha}), F_{\bullet}^H)$ is bounded above by the minimum of $n-1$ and $n-\lceil\alpha+\widetilde{\alpha}_{f,\hspace{0.7pt}\fx}\rceil+1-\lfloor\alpha\rfloor$.

The result now follows from the simple fact that in a strict short exact sequence of filtered objects
\[0 \to \scM_1\to\scM_2\to\scM_3\to 0,\]
the generating level of $\scM_2$ is less than or equal to the maximum of the generating levels of $\scM_1$ and $\scM_3$.

\end{proof}

\newpage

\section{Parametrically prime divisors}\label{sectionPPD}



In this section, we calculate the filtrations $F_{\bullet}^H$ and $W_{\bullet}$ on $\scM(f^{-\alpha})$ for a large class of divisors. This may be seen as an extension of the main results of \cite{BD24} and \cite{D24}, in which only the Hodge filtration is considered. 

As before, $X$ is an arbitrary complex manifold, $f\in\scO_X$ is reduced\footnote{Reducedness is actually completely unnecessary and is only included here in our assumptions as the same assumption is used in \cite{BD24} and \cite{D24}.} and is such that $f^{-1}(0)$ defines a hypersurface of $X$, and $\alpha \in\bQ_{\geq 0}$. The main results of this section require the following hypotheses on our holomorphic function $f$, as well as the inclusion $\rho_{f,\hspace{0.7pt}\fx}\subseteq (-2-\alpha,-\alpha)$.

\begin{defn}\label{defnPP}

\hfill

\begin{enumerate}[label=\alph*)]
    \item We say that $f$ is \emph{Euler homogeneous at $\fx\in X$} if there exists some $E \in\text{Der}_{\bC}(\scO_{X,\hspace{0.7pt}\fx})$ such that $E(f_{\fx})=f_{\fx}$, where $f_{\fx}\in\scO_{X,\hspace{1pt}\fx}$ is the germ of $f$ at $\fx\in X$. Such an $E$ is called an \emph{Euler vector field} for $f$ at $\fx$. $f$ is \emph{Euler homogeneous} if it is Euler homogeneous at $\fx$ for all $\fx\in X$.
    \item We say that $f$ is \emph{parametrically prime at $\fx\in X$} if $\text{gr}^{\sharp}(\text{ann}_{\scD_{X,\hspace{1pt}\fx}[s]} f_{\fx}^{s-1}) \subseteq \text{gr}^{\sharp}\scD_{X,\hspace{1pt}\fx}[s]$ is prime, where $\text{gr}^{\sharp}(\text{ann}_{\scD_{X,\hspace{1pt}\fx}[s]} f_{\fx}^{s-1})$ is the ideal of symbols of elements of $\text{ann}_{\scD_{X,\hspace{1pt}\fx}[s]} f_{\fx}^{s-1}$ with respect to the \emph{total order filtration} $F_k^{\sharp}\scD_{X,\hspace{1pt}\fx}[s]:=\sum_{i=0}^k(F_{k-i}\scD_{X,\hspace{1pt}\fx})s^i$ on $\scD_{X,\hspace{1pt}\fx}[s]$. $f$ is \emph{parametrically prime} if it is parametrically prime at $\fx$ for all $\fx\in X$.
\end{enumerate}

\end{defn}

\begin{egs}

There are numerous classes of examples satisfying these conditions, as discussed in \cite{BD24} and \cite{D24}.

\begin{enumerate}[label=\roman*)]

\item If $f$ is of linear Jacobian type at $\fx$ (see \cite{BD24}, Definition 5.3, as well as \cite{CN08} and \cite{Nar15}, and \cite{Vas97}), then it is parametrically prime and (strongly) Euler homogeneous at $\fx$ (by \cite{BD24}, Propositions 5.4 and 5.9, and Proposition 4.17).

\item If $f$ is tame, strongly Euler homogeneous and Saito holonomic at $\fx$, then it is of linear Jacobian type at $\fx$ (see \cite{Wal17}).

\item Strongly Koszul free divisors satisfy the conditions in ii), see \cite{Nar15}.

\item Any positively weighted homogeneous locally everywhere divisor is Saito holonomic. Thus for instance all tame hyperplane arrangements satisfy the conditions in ii). And if $n=3$, any positively weighted homogeneous locally everywhere divisor satisfies the conditions in ii).

\item Note also that strongly Koszul free divisors (\cite{Nar15}, Theorem 4.1) and hyperplane arrangements (\cite{MSai16}, Theorem 1) satisfy locally everywhere the condition $\rho_{f,\hspace{0.7pt}\fx}\subseteq (-2,0)$. As we will see, this means such divisors satisfy our hypotheses in the case $\alpha=0$. Note also that \cite{Bath24}, Corollary 7.10 (a strengthening of \cite{BS23}, Proposition 1) develops a criterion in the case that $X=\bC^3$, $f$ is a positively weighted homogeneous polynomial locally everywhere, for determining precisely when the condition $\rho_{f,\hspace{0.7pt}\fx}\subseteq (-2-\alpha,-\alpha)$ holds, giving us more examples satisfying our later hypotheses.
    
\end{enumerate}
    
\end{egs}

A more thorough discussion of these conditions can be found in \cite{BD24}.

We now recall some of the main results from \cite{BD24} and \cite{D24}, with the aim of extending them to include weight filtration steps as well. We need to consider the following list of maps, some of which have already been used in this paper:

\begin{defn}

\hfill

\begin{enumerate}[label =\roman*)] 

\item $\pi_f:\scD_X[s]\to i_{f,+}\scO_X(*f)\,;\, P(s) \mapsto P(-\partial_tt)\cdot f^{-1}.$

\item $\psi_{-\beta}:i_{f,+}\scM(f^{-\alpha}) \to \scM(f^{-\alpha-\beta})\, ;\, u \partial_t^j \mapsto uQ_j(\beta)f^{-j-\beta}.$

\item $\phi_{-\alpha}:\scD_X[s]\to \scD_X\,;\,P(s)\mapsto P(-\alpha)$.

\item $\widetilde{\phi}_{f,-\alpha}:\scD_X[s]\to \scM(f^{-\alpha})\,;\,P(s)\mapsto P(-\alpha)\cdot f^{-1-\alpha}$.
    
\end{enumerate}

\vspace{2pt}

\noindent See that $\widetilde{\phi}_{f,-\alpha}=\psi_{-\alpha}\circ\pi_f$, and that the kernel of $\pi_f$ equals $\text{ann}_{\scD_{X,\hspace{1pt}\fx}}f_{\fx}^{s-1}$. We will also use the same notation when localising at a point $\fx\in X$, hoping that context will help differentiate.

\end{defn}

\vspace{3pt}

\begin{defn}

Let $\fx\in X$. We define
\[\beta_{f,-\alpha,\hspace{0.7pt}\fx}(s):=\prod_{\lambda \in \rho_{f,\hspace{0.7pt}\fx}\cap (-\alpha-1,-\alpha)}(s+\lambda+1)^{m_{f,\lambda,\hspace{0.7pt}\fx}},\]
where we recall that $\rho_{f,\hspace{0.7pt}\fx}$ is the set of roots of the local Bernstein-Sato polynomial $b_{f,\hspace{0.7pt}\fx}(s)$, and that $m_{f,\lambda,\hspace{0.7pt}\fx}$ is the multiplicity of $\lambda$ as a root of $b_{f,\hspace{0.7pt}\fx}(s)$. 

We also define the $\scD_{X,\hspace{1pt}\fx}[s]$-ideal
\[\Gamma_{f,-\alpha,\hspace{0.7pt}\fx}:=\scD_{X,\hspace{1pt}\fx}[s]f_{\fx}+\scD_{X,\hspace{1pt}\fx}[s]\beta_{f,-\alpha,\hspace{0.7pt}\fx}(-s)+\text{ann}_{\scD_{X,\hspace{1pt}\fx}[s]}f_{\fx}^{s-1}\subseteq \scD_{X,\hspace{1pt}\fx}[s].\]

\end{defn}

\vspace{3pt}

\begin{thm}[\cite{D24}, Corollary 3.8 and Theorem 1.2]

Assume $\alpha\geq 0$. Let $\fx\in X$ and assume that $\rho_{f,\hspace{0.7pt}\fx}\subseteq(-2-\alpha,-\alpha)$. Then
\begin{enumerate}[label=\roman*)]

\item \, $\pi_f(\Gamma_{f,-\alpha,\hspace{0.7pt}\fx})=V^{\alpha}i_{f,+}\scO_X(*f)_{(\fx,0)}$.\vspace{4pt}

\item \, $F_0^H\scM(f^{-\alpha})_{\fx}=\left(\Gamma_{f,-\alpha,\hspace{0.7pt}\fx}\cap\scO_{X,\hspace{0.7pt}\fx}\right)\cdot f_{\fx}^{-1-\alpha}$.

\end{enumerate}

\label{thmgamma}
    
\end{thm}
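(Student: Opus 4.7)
The approach is to first prove (i), which identifies $\pi_f(\Gamma_{f,-\alpha,\hspace{0.7pt}\fx})$ with the $V$-filtration step, and then derive (ii) from it via Theorem \ref{thmmainformula}. For the containment $\pi_f(\Gamma_{f,-\alpha,\hspace{0.7pt}\fx})\subseteq V^{\alpha}i_{f,+}\scO_X(*f)_{(\fx,0)}$ I would treat the three generating pieces separately. The annihilator piece vanishes by construction of $\pi_f$. Since $\pi_f$ is $\scD_{X}[s]$-linear (with $s$ acting as $-\partial_tt$) and $V^{\alpha}$ is $V^0\scD_{X\times\bC}$-stable, the containment $\pi_f(\scD_{X,\fx}[s]f_{\fx})\subseteq V^{\alpha}$ reduces to $\pi_f(f_{\fx}) = 1 \in V^{\alpha}$; this follows from Lemma \ref{lemmicrominlexp}, which gives $1\in V^{\widetilde{\alpha}_{f,\hspace{0.7pt}\fx}}$, combined with the fact that the hypothesis $\rho_{f,\hspace{0.7pt}\fx}\subseteq(-2-\alpha,-\alpha)$ forces $\widetilde{\alpha}_{f,\hspace{0.7pt}\fx}\in(\alpha,\alpha+2)$. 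For the final generator, $\pi_f(\beta_{f,-\alpha,\hspace{0.7pt}\fx}(-s)) = \beta_{f,-\alpha,\hspace{0.7pt}\fx}(\partial_tt)\cdot f_{\fx}^{-1}$; the roots of $\beta_{f,-\alpha,\hspace{0.7pt}\fx}$ have been chosen precisely to annihilate the graded pieces of $f_{\fx}^{-1}$ at the $V$-jumps lying between its natural $V$-level (roughly $\widetilde{\alpha}_{f,\hspace{0.7pt}\fx}-1$) and $\alpha$, so that $\beta_{f,-\alpha,\hspace{0.7pt}\fx}(\partial_tt)$ pushes $f_{\fx}^{-1}$ upward into $V^{\alpha}$.

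For the reverse inclusion $V^{\alpha}\subseteq\pi_f(\Gamma_{f,-\alpha,\hspace{0.7pt}\fx})$, I would show that $\pi_f(\Gamma_{f,-\alpha,\hspace{0.7pt}\fx})$ is stable under the full $V^0\scD_{X\times\bC}$-action and contains sufficient generators of $V^{\alpha}/V^{>\alpha}$. Stability under $\scD_X$ and $-\partial_tt$ is automatic from $\scD_{X,\hspace{1pt}\fx}[s]$-linearity; stability under $t$ follows from the commutation identity $t\cdot P(-\partial_tt) = P(-\partial_tt - 1)\cdot t$ combined with $t\cdot f_{\fx}^{-1} = 1 = \pi_f(f_{\fx})$ and $t\cdot 1 = f_{\fx} = \pi_f(f_{\fx}^2)$, so in particular $t\cdot(\beta_{f,-\alpha,\hspace{0.7pt}\fx}(\partial_tt)\cdot f_{\fx}^{-1}) = \beta_{f,-\alpha,\hspace{0.7pt}\fx}(\partial_tt-1)\cdot 1 \in \pi_f(\scD_{X,\hspace{1pt}\fx}[s]f_{\fx})$. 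For the generation statement, the hypothesis $\rho_{f,\hspace{0.7pt}\fx}\subseteq(-2-\alpha,-\alpha)$ ensures that all relevant jumps of $V^{\bullet}$ in the range $V^{\alpha}/V^{>\alpha+2}$ come from integer shifts of roots in $\rho_{f,\hspace{0.7pt}\fx}$, split into contributions from $(-\alpha-2,-\alpha-1]$ (handled by the generator $1$) and from $(-\alpha-1,-\alpha)$ (handled by $\beta_{f,-\alpha,\hspace{0.7pt}\fx}(\partial_tt)\cdot f_{\fx}^{-1}$). An induction on $V$-level, using the Bernstein-Sato functional equation $b_{f,\hspace{0.7pt}\fx}(s)f_{\fx}^s = Q(s)f_{\fx}^{s+1}$ translated through $\pi_f$, completes the inclusion.

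For part (ii), applying Theorem \ref{thmmainformula} with $l$ chosen large enough that both $W_{n+l}\scM(f^{-\alpha})_{\fx}=\scM(f^{-\alpha})_{\fx}$ and $K_lV^{\alpha}i_{f,+}\scO_X(*f) = V^{\alpha}i_{f,+}\scO_X(*f)$, I obtain
\[F_0^H\scM(f^{-\alpha})_{\fx} = \psi_{-\alpha}\bigl(F_0^{t-\text{ord}}V^{\alpha}i_{f,+}\scO_X(*f)_{(\fx,0)}\bigr) = \bigl(V^{\alpha}i_{f,+}\scO_X(*f)\cap \scO_X(*f)\bigr)_{\fx}\cdot f_{\fx}^{-\alpha},\]
since $F_0^{t-\text{ord}}$ selects the $\partial_t$-free part and $\psi_{-\alpha}$ acts on such an element $u$ by $u \mapsto u\cdot f^{-\alpha}$. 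In view of (i), the proof of (ii) reduces to the equality $V^{\alpha}i_{f,+}\scO_X(*f)\cap\scO_X(*f) = (\Gamma_{f,-\alpha,\hspace{0.7pt}\fx}\cap\scO_{X,\hspace{0.7pt}\fx})\cdot f_{\fx}^{-1}$. The inclusion $\supseteq$ is immediate, since for $g\in\Gamma_{f,-\alpha,\hspace{0.7pt}\fx}\cap\scO_{X,\hspace{0.7pt}\fx}$ one has $\pi_f(g) = gf_{\fx}^{-1}\in\pi_f(\Gamma_{f,-\alpha,\hspace{0.7pt}\fx}) = V^{\alpha}$ and $gf_{\fx}^{-1}\in\scO_X(*f)$. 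The $\subseteq$ inclusion is a filtered (strictness) refinement of (i): given $u\in V^{\alpha}\cap\scO_X(*f)$, a representative $Q\in\Gamma_{f,-\alpha,\hspace{0.7pt}\fx}$ with $\pi_f(Q)=u$ must be replaceable by one in $F_0^{\sharp}\scD_{X,\hspace{1pt}\fx}[s] = \scO_{X,\hspace{0.7pt}\fx}$, equivalently $u\cdot f_{\fx}\in\Gamma_{f,-\alpha,\hspace{0.7pt}\fx}\cap\scO_{X,\hspace{0.7pt}\fx}$. I expect the main obstacle of the proof to be this strictness statement, as well as the reverse inclusion of (i) discussed above; both use the hypothesis $\rho_{f,\hspace{0.7pt}\fx}\subseteq(-2-\alpha,-\alpha)$ essentially, since without the confinement of roots additional $V$-jumps would require higher-pole representatives that could not be reduced to $\scO_X$-scalars via Bernstein-Sato manipulations.
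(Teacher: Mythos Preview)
The paper does not prove this theorem; it is quoted from \cite{D24} (Corollary 3.8 and Theorem 1.2) and used as a black box. So there is no ``paper's own proof'' to compare against, and your proposal is an attempt to reconstruct what happens in that reference.

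Your outline for part (i) is plausible in spirit: the three generators of $\Gamma_{f,-\alpha,\hspace{0.7pt}\fx}$ are indeed designed so that $\pi_f$ lands them in $V^{\alpha}$, and the reverse inclusion should be an induction on $V$-level using the Bernstein--Sato relation together with $V^{0}\scD_{X\times\bC}$-stability. The details you give are thin (the induction is asserted rather than carried out), but nothing you write is wrong.

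The genuine gap is in part (ii). You correctly reduce to the ``strictness'' statement
\[
F_0^{t\text{-ord}}V^{\alpha}i_{f,+}\scO_X(*f)_{(\fx,0)} \;=\; \pi_f\bigl(\Gamma_{f,-\alpha,\hspace{0.7pt}\fx}\cap\scO_{X,\hspace{0.7pt}\fx}\bigr),
\]
and you are right that this is the crux --- but you do not prove it, only restate it as ``$u\cdot f_{\fx}\in\Gamma_{f,-\alpha,\hspace{0.7pt}\fx}\cap\scO_{X,\hspace{0.7pt}\fx}$'' and flag it as the expected main obstacle. This is exactly the content of \cite{D24}, Theorem 1.2, and it does not follow formally from (i): knowing $\pi_f(\Gamma)=V^{\alpha}$ gives you \emph{some} lift of $u$, but there is no a priori reason a lift can be chosen in $\scO_{X,\hspace{0.7pt}\fx}$ without further work. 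Note that the paper's own proof of the weighted analogue (Theorem \ref{thmgammaW}.iii)) explicitly invokes Theorem \ref{thmgamma} to supply the $\scO_X$-lift $\gamma_2$, so this strictness is genuinely an input from \cite{D24}, not something derivable from the tools assembled in the present paper. Your proposal therefore correctly locates the difficulty but leaves it open.
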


Under the same assumptions, we obtain as a simple consequence an analogous result concerning weight filtration steps.

\begin{defn}

For $\fx\in X$, define 
\[W_0\Gamma_{f,-\alpha,\hspace{0.7pt}\fx}:=\Gamma_{f,-\alpha-\epsilon,\hspace{0.7pt}\fx}\subseteq \Gamma_{f,-\alpha,\hspace{0.7pt}\fx},\]
where $\epsilon>0$ is sufficiently small. For $l \in\bZ_{\geq 0}$, define
\[W_l\Gamma_{f,-\alpha,\hspace{0.7pt}\fx}:=\{\gamma \in \Gamma_{f,-\alpha,\hspace{0.7pt}\fx}\mid (s+\alpha)^l\gamma\in W_0\Gamma_{f,-\alpha,\hspace{0.7pt}\fx}\}\subseteq \Gamma_{f,-\alpha,\hspace{0.7pt}\fx}.\]
    
\end{defn}

\begin{thm}

Let $\fx\in X$ and assume that $\rho_{f,\hspace{0.7pt}\fx}\subseteq(-2-\alpha,-\alpha)$. Let $l \in\bZ_{\geq 0}$. Then
\begin{enumerate}[label=\roman*)]

\item \, $\pi_f(W_l\Gamma_{f,-\alpha,\hspace{0.7pt}\fx})=K_lV^{\alpha}i_{f,+}\scO_X(*f)_{(\fx,0)}$.\vspace{4pt}

\item \, $W_{n+l}\scM(f^{-\alpha})_{\fx}=\phi_{-\alpha}(W_l\Gamma_{f,-\alpha,\hspace{0.7pt}\fx})\cdot f_{\fx}^{-1-\alpha}$.\vspace{4pt}

\item \, $F_0^HW_{n+l}\scM(f^{-\alpha})_{\fx}=\left(W_l\Gamma_{f,-\alpha,\hspace{0.7pt}\fx}\cap\scO_{X,\hspace{0.7pt}\fx}\right)\cdot f_{\fx}^{-1-\alpha}$.

\end{enumerate}

\label{thmgammaW}
    
\end{thm}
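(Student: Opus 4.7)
The plan is to bootstrap from Theorem \ref{thmgamma} (the unweighted version, already established in \cite{D24}) using that the maps $\pi_f$ and $\psi_{-\alpha}$ are compatible with the $(s+\alpha)$-action in a precise way, and then to combine this with Theorem \ref{thmmainformula}, which expresses the Hodge and weight filtrations on $\scM(f^{-\alpha})$ in terms of the $t$-order, kernel, and $V$-filtrations on $i_{f,+}\scO_X(*f)$. Throughout, I will use that $\pi_f$ is $\scD_X[s]$-linear (with $s$ acting as $-\partial_tt$ on the target) and that the identity $\widetilde{\phi}_{f,-\alpha}=\psi_{-\alpha}\circ\pi_f$ holds, both of which follow by direct computation on monomials $s^j$.

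For part (i), first apply Theorem \ref{thmgamma}.i) at the shifted parameter $\alpha+\epsilon$: the hypothesis $\rho_{f,\,\fx}\subseteq(-2-\alpha,-\alpha)$ ensures $\rho_{f,\,\fx}\subseteq(-2-\alpha-\epsilon,-\alpha-\epsilon)$ for $0<\epsilon\ll 1$, so
\[\pi_f(W_0\Gamma_{f,-\alpha,\,\fx})=\pi_f(\Gamma_{f,-\alpha-\epsilon,\,\fx})=V^{\alpha+\epsilon}i_{f,+}\scO_X(*f)=V^{>\alpha}i_{f,+}\scO_X(*f).\]
The forward inclusion $\pi_f(W_l\Gamma_{f,-\alpha,\,\fx})\subseteq K_lV^\alpha i_{f,+}\scO_X(*f)$ is then immediate from the $\scD_X[s]$-linearity of $\pi_f$. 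For the reverse, given $u\in K_lV^\alpha i_{f,+}\scO_X(*f)$, Theorem \ref{thmgamma}.i) produces $\gamma\in\Gamma_{f,-\alpha,\,\fx}$ with $\pi_f(\gamma)=u$; then $\pi_f((s+\alpha)^l\gamma)=(s+\alpha)^lu\in V^{>\alpha}=\pi_f(W_0\Gamma)$, so there exists $\delta\in W_0\Gamma$ with $(s+\alpha)^l\gamma-\delta\in\ker\pi_f=\text{ann}_{\scD_{X,\,\fx}[s]}f_\fx^{s-1}$. Since this annihilator is one of the three summands of $\Gamma_{f,-\alpha-\epsilon,\,\fx}=W_0\Gamma$, we conclude $(s+\alpha)^l\gamma\in W_0\Gamma$, whence $\gamma\in W_l\Gamma$.

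Part (ii) follows immediately by applying $\psi_{-\alpha}$ to both sides of part (i) and invoking Theorem \ref{thmmainformula} (summing over the Hodge-filtration index, which gives $W_{n+l}\scM(f^{-\alpha})=\psi_{-\alpha}(K_lV^\alpha i_{f,+}\scO_X(*f))$). Using $\widetilde{\phi}_{f,-\alpha}=\psi_{-\alpha}\circ\pi_f$ and $\widetilde{\phi}_{f,-\alpha}(Q)=\phi_{-\alpha}(Q)\cdot f^{-1-\alpha}$, this chains together to give $W_{n+l}\scM(f^{-\alpha})_\fx=\phi_{-\alpha}(W_l\Gamma_{f,-\alpha,\,\fx})\cdot f_\fx^{-1-\alpha}$.

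For part (iii), apply Theorem \ref{thmmainformula} at $k=0$: $F_0^HW_{n+l}\scM(f^{-\alpha})=\psi_{-\alpha}(F_0^{t-\text{ord}}K_lV^\alpha i_{f,+}\scO_X(*f))$. The inclusion $\supseteq$ is immediate, since for $\eta\in W_l\Gamma\cap\scO_X$ one has $\pi_f(\eta)=\eta f^{-1}\in F_0^{t-\text{ord}}$ and $\pi_f(\eta)\in K_lV^\alpha$ by part (i). For $\subseteq$, I would use that $F_0^HW_{n+l}\scM=F_0^H\scM\cap W_{n+l}\scM$ (induced filtration from the inclusion $W_{n+l}\scM\hookrightarrow\scM$), combine Theorem \ref{thmgamma}.ii) with part (ii) to rewrite this intersection as $(\Gamma_{f,-\alpha,\,\fx}\cap\scO_{X,\,\fx})f_\fx^{-1-\alpha}\cap\phi_{-\alpha}(W_l\Gamma_{f,-\alpha,\,\fx})f_\fx^{-1-\alpha}$, and then show this equals $(W_l\Gamma_{f,-\alpha,\,\fx}\cap\scO_{X,\,\fx})f_\fx^{-1-\alpha}$. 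The main obstacle lies here: given $\eta\in\Gamma\cap\scO_X$ and $Q\in W_l\Gamma$ with $\eta f^{-1-\alpha}=\phi_{-\alpha}(Q)f^{-1-\alpha}$ in $\scM(f^{-\alpha})$, one must upgrade the condition $\eta-\phi_{-\alpha}(Q)\in\text{ann}_{\scD_X}f^{-1-\alpha}$ to the ideal-theoretic statement $(s+\alpha)^l\eta\in\Gamma_{f,-\alpha-\epsilon,\,\fx}$. I expect this to require lifting the annihilator identity back to $\scD_X[s]$ via division by $(s+\alpha)$ (writing $\eta-Q$ as $P+(s+\alpha)R$ with $P=\phi_{-\alpha}(\eta-Q)$) and exploiting again that $\text{ann}_{\scD_{X,\,\fx}[s]}f_\fx^{s-1}\subseteq W_0\Gamma$, together with the hypothesis $\rho_{f,\,\fx}\subseteq(-2-\alpha,-\alpha)$ which ensures $\scM(f^{-\alpha})_\fx$ is generated by $f_\fx^{-1-\alpha}$ and gives control over $\text{ann}_{\scD_X}f^{-1-\alpha}$.
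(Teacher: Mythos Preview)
Parts (i) and (ii) of your proposal are correct and match the paper's argument essentially verbatim: for (i) you apply Theorem \ref{thmgamma} at $\alpha+\epsilon$ to get the $l=0$ case, then use $s$-linearity of $\pi_f$ together with $\ker\pi_f=\text{ann}_{\scD_{X,\fx}[s]}f_\fx^{s-1}\subseteq W_0\Gamma_{f,-\alpha,\fx}$; for (ii) you compose with $\psi_{-\alpha}$ and invoke Theorem \ref{thmmainformula}.

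For part (iii), your $\supseteq$ direction is fine, but your plan for $\subseteq$ takes an unnecessarily difficult route and the obstacle you flag is real. By passing to $\scM(f^{-\alpha})$ via $\psi_{-\alpha}$ before attempting the comparison, you replace the kernel $\ker\pi_f=\text{ann}_{\scD_{X,\fx}[s]}f_\fx^{s-1}$, which sits inside $W_0\Gamma_{f,-\alpha,\fx}$ by construction, with the kernel of $\widetilde{\phi}_{f,-\alpha}$, which involves $\text{ann}_{\scD_{X,\fx}}f_\fx^{-1-\alpha}$ and is not obviously contained in $\phi_{-\alpha}(W_l\Gamma_{f,-\alpha,\fx})$. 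Your sketched lifting argument (``division by $(s+\alpha)$'') would have to rebuild exactly this containment from scratch.

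The paper avoids this entirely by staying upstairs in $i_{f,+}\scO_X(*f)$: it proves directly that
\[\pi_f\bigl(W_l\Gamma_{f,-\alpha,\fx}\cap\scO_{X,\fx}\bigr)=F_0^{t-\text{ord}}K_lV^{\alpha}i_{f,+}\scO_X(*f)_{(\fx,0)},\]
and only then applies $\psi_{-\alpha}$. For the nontrivial inclusion $\supseteq$, given $u\in F_0^{t-\text{ord}}K_lV^{\alpha}$, one uses Theorem \ref{thmgamma} (which in particular gives $\pi_f(\Gamma_{f,-\alpha,\fx}\cap\scO_{X,\fx})=F_0^{t-\text{ord}}V^{\alpha}$) and part (i) to write $u=\pi_f(\gamma_1)=\pi_f(\gamma_2)$ with $\gamma_1\in W_l\Gamma_{f,-\alpha,\fx}$ and $\gamma_2\in\Gamma_{f,-\alpha,\fx}\cap\scO_{X,\fx}$. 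Then $\gamma_2-\gamma_1\in\ker\pi_f=\text{ann}_{\scD_{X,\fx}[s]}f_\fx^{s-1}\subseteq W_l\Gamma_{f,-\alpha,\fx}$, so $\gamma_2\in W_l\Gamma_{f,-\alpha,\fx}\cap\scO_{X,\fx}$. This is exactly the same trick you used in part (i), just repeated once more with the additional $\scO_X$-constraint; no new ideas are needed.
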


\begin{proof}

\begin{enumerate}[label=\roman*)]

\item For $\epsilon>0$ sufficiently small, we also have that $\rho_{f,\hspace{0.7pt}\fx}\subseteq(-2-\alpha-\epsilon,-\alpha-\epsilon)$. Thus, by Theorem \ref{thmgamma}, 
\[\pi_f(\Gamma_{f,-\alpha-\epsilon,\hspace{0.7pt}\fx}) = V^{\alpha +\epsilon}i_{f,+}\scO_X(*f)_{(\fx,0)} = V^{>\alpha}i_{f,+}\scO_X(*f)_{(\fx,0)},\]
i.e.
\[\pi_f(W_0\Gamma_{f,-\alpha,\hspace{0.7pt}\fx}) =K_0V^{\alpha}i_{f,+}\scO_X(*f)_{(\fx,0)}.\]

\noindent Now, for $l >0$, if $\gamma\in W_l\Gamma_{f,-\alpha,\hspace{0.7pt}\fx}$, then $\pi_f(\gamma)\in V^{\alpha}i_{f,+}\scO_X(*f)_{(\fx,0)}$ by Theorem \ref{thmgamma}, and 
\[(s+\alpha)^l\cdot \pi_f(\gamma)=\pi_f((s+\alpha)^l\gamma)\in \pi_f(W_0\Gamma_{f,-\alpha,\hspace{0.7pt}\fx})=V^{>\alpha}i_{f,+}\scO_X(*f)_{(\fx,0)},\]
so $\pi_f(\gamma)\in K_lV^{\alpha}i_{f,+}\scO_X(*f)_{(\fx,0)}$.

If, conversely, $u \in K_lV^{\alpha}i_{f,+}\scO_X(*f)_{(\fx,0)}$, then there exists by Theorem \ref{thmgamma} some $\gamma\in\Gamma_{f,-\alpha,\hspace{0.7pt}\fx}$ such that $\pi_f(\gamma)=u$. Then
\[\pi_f((s+\alpha)^l\gamma)=(s+\alpha)^l\cdot u \in V^{>\alpha}i_{f,+}\scO_X(*f)_{(\fx,0)} = \pi_f(W_0\Gamma_{f,-\alpha,\hspace{0.7pt}\fx}).\]
Thus
\[(s+\alpha)^l\gamma \in W_0\Gamma_{f,-\alpha,\hspace{0.7pt}\fx} + \ker\pi_f =W_0\Gamma_{f,-\alpha,\hspace{0.7pt}\fx} + \text{ann}_{\scD_{X,\hspace{1pt}\fx}[s]}f_{\fx}^{s-1} = W_0\Gamma_{f,-\alpha,\hspace{0.7pt}\fx},\]
so $\gamma \in W_l\Gamma_{f,-\alpha,\hspace{0.7pt}\fx}$, so $u \in\pi_f(W_l\Gamma_{f,-\alpha,\hspace{0.7pt}\fx})$.

\item By Theorem \ref{thmmainformula}, 
\begin{align*}W_{n+l}\scM(f^{-\alpha})_{\fx} = \psi_{-\alpha}(K_lV^{\alpha}i_{f,+}\scO_X&(*f)_{(\fx,0)})=(\psi_{-\alpha}\circ\pi_f)(W_l\Gamma_{f,-\alpha,\hspace{0.7pt}\fx})\\ &=\widetilde{\phi}_{f,-\alpha}(W_l\Gamma_{f,-\alpha,\hspace{0.7pt}\fx}) = \phi_{-\alpha}(W_l\Gamma_{f,-\alpha,\hspace{0.7pt}\fx})\cdot f_{\fx}^{-1-\alpha},
\end{align*}
where we have used part i).

\item Using the same argument as part ii), it suffices to show that 
\[\pi_f(W_l\Gamma_{f,-\alpha,\hspace{0.7pt}\fx}\cap\scO_{X,\hspace{0.7pt}\fx})=F_0^{t-\text{ord}}K_lV^{\alpha}i_{f,+}\scO_X(*f)_{(\fx,0)}.\]
$\subseteq$ is clear by part i). If $u \in F_0^{t-\text{ord}}K_lV^{\alpha}i_{f,+}\scO_X(*f)_{(\fx,0)}$, then by Theorem \ref{thmgamma} and part i), we may write
\[u=\pi_f(\gamma_1)=\pi_f(\gamma_2) \,\,\,\text{ with }\,\, \gamma_1 \in W_l\Gamma_{f,-\alpha,\hspace{0.7pt}\fx} \,\text{ and }\, \gamma_2 \in \Gamma_{f,-\alpha,\hspace{0.7pt}\fx}\cap\scO_{X,\hspace{0.7pt}\fx}.\]
Then
\[\gamma_2-\gamma_1 \in \ker\pi_f =\text{ann}_{\scD_{X,\hspace{1pt}\fx}}f_{\fx}^{s-1} \subseteq W_l\Gamma_{f,-\alpha,\hspace{0.7pt}\fx},\]
so we have that $\gamma_2 \in W_l\Gamma_{f,-\alpha,\hspace{0.7pt}\fx}$ also, implying that $u \in \pi_f(W_l\Gamma_{f,-\alpha,\hspace{0.7pt}\fx}\cap\scO_{X,\hspace{0.7pt}\fx})$ as required.

\end{enumerate}
\end{proof}

\begin{cor}

Let $\fx\in X$ and assume that $\rho_{f,\hspace{0.7pt}\fx}\subseteq(-2-\alpha,-\alpha)$. Assume moreover that $f$ is Euler homogeneous at $\fx$, with associated Euler vector field $E\in\text{\emph{Der}}_{\bC}(\scO_{X,\hspace{0.7pt}\fx})$. Let $l \in\bZ_{\geq 0}$ such that $l<m_{f,-\alpha-1,\hspace{0.7pt}\fx}$ (the multiplicity of $-\alpha-1$ as a root of $b_{f,\hspace{0.7pt}\fx}(s)$).

Choose a generating set $\zeta_1,\ldots,\zeta_m$ of $\text{\emph{ann}}_{\scD_{X,\hspace{1pt}\fx}}f_{\fx}^{s-1}$ and consider the left $\scD_{X,\hspace{1pt}\fx}$-linear map
\[\Phi_{l,-\alpha} :\scD_{X,\hspace{1pt}\fx}^{m+2}\to\scD_{X,\hspace{1pt}\fx}\,;\,(P_0,\ldots,P_{m+1})\mapsto P_0(E+\alpha+1)^l+\sum_{i=1}^mP_i\zeta_i+P_{m+1}f_{\fx}.\]
Write $K_{l,-\alpha}$ for the kernel of $\Phi_{l,-\alpha}$ and $p_1:\scD_{X,\hspace{1pt}\fx}^{m+2}\to\scD_{X,\hspace{1pt}\fx}$ for the projection onto the first coordinate. Then
\[W_{n+l}\scM(f^{-\alpha})_{\fx}=p_1(K_{l,-\alpha})\cdot f_{\fx}^{-1-\alpha}.\]

\label{corWformulagenl}
    
\end{cor}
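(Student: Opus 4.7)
The plan is to deduce this from Theorem \ref{thmgammaW}.ii), which already tells us that $W_{n+l}\scM(f^{-\alpha})_{\fx}=\phi_{-\alpha}(W_l\Gamma_{f,-\alpha,\hspace{0.7pt}\fx})\cdot f_{\fx}^{-1-\alpha}$. Therefore it suffices to identify the left $\scD_{X,\hspace{1pt}\fx}$-submodule $\phi_{-\alpha}(W_l\Gamma_{f,-\alpha,\hspace{0.7pt}\fx}) \subseteq \scD_{X,\hspace{1pt}\fx}$ with $p_1(K_{l,-\alpha})$. The key algebraic ingredient is Euler homogeneity: since $E(f_{\fx})=f_{\fx}$ gives $(s-E-1)\cdot f_{\fx}^{s-1}=0$, we have $s-E-1 \in \text{ann}_{\scD_{X,\hspace{1pt}\fx}[s]}f_{\fx}^{s-1}$, and a short commutator calculation shows $P\cdot(s+\alpha)^l - P\cdot(E+\alpha+1)^l \in \text{ann}_{\scD_{X,\hspace{1pt}\fx}[s]}f_{\fx}^{s-1}$ after acting on $f_{\fx}^{s-1}$, for any $P\in\scD_{X,\hspace{1pt}\fx}$. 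In particular, specialising $s=-\alpha$ exchanges $(s+\alpha)^l$-divisibility for $(E+\alpha+1)^l$-divisibility modulo $\text{ann}$.

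For the inclusion $p_1(K_{l,-\alpha}) \subseteq \phi_{-\alpha}(W_l\Gamma_{f,-\alpha,\hspace{0.7pt}\fx})$, given $P_0$ with witness $(P_0,P_1,\ldots,P_{m+1})\in K_{l,-\alpha}$, I would lift $P_0$ to $\widetilde{P}_0 := \beta_{f,-\alpha,\hspace{0.7pt}\fx}(\alpha)^{-1}\beta_{f,-\alpha,\hspace{0.7pt}\fx}(-s)\cdot P_0 \in \scD_{X,\hspace{1pt}\fx}[s]\beta_{f,-\alpha,\hspace{0.7pt}\fx}(-s) \subseteq \Gamma_{f,-\alpha,\hspace{0.7pt}\fx}$, noting that $\beta_{f,-\alpha,\hspace{0.7pt}\fx}(\alpha)\neq 0$ since its roots lie strictly inside $(0,1)$. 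Clearly $\phi_{-\alpha}(\widetilde{P}_0)=P_0$. To verify $(s+\alpha)^l\widetilde{P}_0 \in W_0\Gamma_{f,-\alpha,\hspace{0.7pt}\fx}$, use that scalar polynomials in $s$ are central to move $\beta(-s)$ to the front, then apply the defining relation $P_0(E+\alpha+1)^l = -\sum P_i\zeta_i - P_{m+1}f_{\fx}$, the Euler substitution, and the fact that $\zeta_i \cdot f_{\fx}^{s-1} \in (s+\alpha)\cdot\scD_{X,\hspace{1pt}\fx}[s]\cdot f_{\fx}^{s-1}$ (since $\zeta_i \in \text{ann}_{\scD_{X,\hspace{1pt}\fx}}f_{\fx}^{-\alpha-1}$) to land inside $\scD_{X,\hspace{1pt}\fx}[s]f_{\fx}+\text{ann}_{\scD_{X,\hspace{1pt}\fx}[s]}f_{\fx}^{s-1}\subseteq W_0\Gamma_{f,-\alpha,\hspace{0.7pt}\fx}$.

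For the reverse inclusion, given $\gamma \in W_l\Gamma_{f,-\alpha,\hspace{0.7pt}\fx}$, I would write out $(s+\alpha)^l\gamma \in W_0\Gamma_{f,-\alpha,\hspace{0.7pt}\fx}$ explicitly as $Af_{\fx}+B\beta_{f,-\alpha,\hspace{0.7pt}\fx}(-s)(s+\alpha)^m + \eta'$ (up to sign), using $\beta_{f,-\alpha-\epsilon,\hspace{0.7pt}\fx}(-s)=\pm\beta_{f,-\alpha,\hspace{0.7pt}\fx}(-s)(s+\alpha)^m$, apply this identity on $f_{\fx}^{s-1}$, use Euler to replace $(s+\alpha)$ with $(E+\alpha+1)$, then specialise at $s=-\alpha$. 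This yields $P_0(E+\alpha+1)^l \in \scD_{X,\hspace{1pt}\fx}f_{\fx}+\scD_{X,\hspace{1pt}\fx}(E+\alpha+1)^m+\phi_{-\alpha}(\text{ann}_{\scD_{X,\hspace{1pt}\fx}[s]}f_{\fx}^{s-1})$ where $P_0=\phi_{-\alpha}(\gamma)$. Since $(E+\alpha+1)\cdot f_{\fx}^{-\alpha-1}=0$, we have $(E+\alpha+1)^m\in\text{ann}_{\scD_{X,\hspace{1pt}\fx}}f_{\fx}^{-\alpha-1}=\sum_i\scD_{X,\hspace{1pt}\fx}\zeta_i$, and likewise the image of $\text{ann}_{\scD_{X,\hspace{1pt}\fx}[s]}f_{\fx}^{s-1}$ under $\phi_{-\alpha}$ sits in this ideal; thus $P_0(E+\alpha+1)^l \in \sum_i\scD_{X,\hspace{1pt}\fx}\zeta_i + \scD_{X,\hspace{1pt}\fx}f_{\fx}$, giving $P_0 \in p_1(K_{l,-\alpha})$. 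The main obstacle is the careful bookkeeping around identifying $\phi_{-\alpha}(\text{ann}_{\scD_{X,\hspace{1pt}\fx}[s]}f_{\fx}^{s-1})$ with $\sum_i\scD_{X,\hspace{1pt}\fx}\zeta_i$ and exploiting the hypothesis $l<m_{f,-\alpha-1,\hspace{0.7pt}\fx}$ to ensure the $(s+\alpha)^m$-factor survives to produce the $(E+\alpha+1)^m$-term after specialisation; everything else amounts to tracking cancellations carefully.
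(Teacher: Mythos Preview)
Your inclusion $p_1(K_{l,-\alpha}) \subseteq \phi_{-\alpha}(W_l\Gamma_{f,-\alpha,\fx})$ is essentially the paper's argument (you lift $P_0$ via $\beta_{f,-\alpha,\fx}(\alpha)^{-1}\beta_{f,-\alpha,\fx}(-s)$ and check membership in $W_l\Gamma$, exactly as the paper does). One small confusion: the $\zeta_i$ generate $\text{ann}_{\scD_{X,\fx}}f_{\fx}^{s-1}$, not $\text{ann}_{\scD_{X,\fx}}f_{\fx}^{-\alpha-1}$, so $\zeta_i\cdot f_{\fx}^{s-1}=0$ outright rather than merely landing in $(s+\alpha)\scD_{X,\fx}[s]f_{\fx}^{s-1}$; this only simplifies your argument.

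The reverse inclusion has a genuine gap, and it stems from the same confusion. After specialising at $s=-\alpha$ you obtain
\[P_0(E+\alpha+1)^l \in \scD_{X,\fx}f_\fx + \scD_{X,\fx}(E+\alpha+1)^m + \phi_{-\alpha}\bigl(\text{ann}_{\scD_{X,\fx}[s]}f_\fx^{s-1}\bigr),\]
and you want to conclude that the right-hand side lies in $\scD_{X,\fx}f_\fx + \sum_i\scD_{X,\fx}\zeta_i$. But neither $(E+\alpha+1)^m$ nor $\phi_{-\alpha}(\text{ann}_{\scD_{X,\fx}[s]}f_\fx^{s-1})$ is contained in $\sum_i\scD_{X,\fx}\zeta_i$: indeed $(E+\alpha+1)^m\cdot f_\fx^{s-1}=(s+\alpha)^m f_\fx^{s-1}\neq 0$, and $\phi_{-\alpha}(E-s+1)=E+\alpha+1$ likewise does not annihilate $f_\fx^{s-1}$. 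Both annihilate only the \emph{specific} power $f_\fx^{-\alpha-1}$, which the $\zeta_i$ do not generate. So your specialisation argument proves only $P_0(E+\alpha+1)^l\in\scD_{X,\fx}f_\fx+\scD_{X,\fx}(E+\alpha+1)+\sum_i\scD_{X,\fx}\zeta_i$, which is strictly weaker than $P_0\in p_1(K_{l,-\alpha})$.

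The paper repairs this by not merely specialising. It expands $(s+\alpha)^l\gamma = P(s)\beta(-s)(s+\alpha)^m + Q(s)(E-s+1) + \sum_iR^{(i)}(s)\zeta_i + T(s)f_\fx$ (using that in the Euler-homogeneous case $\text{ann}_{\scD_{X,\fx}[s]}f_\fx^{s-1}=\scD_{X,\fx}[s](E-s+1)+\sum_i\scD_{X,\fx}[s]\zeta_i$) and then compares coefficients of $(s+\alpha)^k$ for \emph{every} $0\le k\le l$, not just $k=0$. Writing $Q(s)=\sum_j Q_j(s+\alpha)^j$ and $(E-s+1)=(E+\alpha+1)-(s+\alpha)$, the relations at $k<l$ read $Q_k(E+\alpha+1)=Q_{k-1}-\sum_iR_k^{(i)}\zeta_i-T_kf_\fx$, which telescope (using $\zeta_i(E+\alpha+1)^j\in\sum_i\scD_{X,\fx}\zeta_i$ and $f_\fx(E+\alpha+1)^j\in\scD_{X,\fx}f_\fx$) to give $Q_{l-1}(E+\alpha+1)^l\in\sum_i\scD_{X,\fx}\zeta_i+\scD_{X,\fx}f_\fx$. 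The coefficient at $k=l$ then identifies $\gamma(-\alpha)\cdot f_\fx^{-1-\alpha}$ with $(-Q_{l-1}+T_lf_\fx)\cdot f_\fx^{-1-\alpha}$. The extra information from the lower-order coefficients is precisely what your single specialisation loses.
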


\begin{proof}

\fbox{$\subseteq$} By Theorem \ref{thmgammaW}, if $u \in W_{n+l}\scM(f^{-\alpha})_{\fx}$, then there exists some $\gamma \in W_l\Gamma_{f,-\alpha,\hspace{0.7pt}\fx}$ such that $u = \gamma(-\alpha)\cdot f_{\fx}^{-1-\alpha}$. Since $(s+\alpha)^l \gamma \in W_0\Gamma_{f,-\alpha,\hspace{0.7pt}\fx}$, we may write 
\[(s+\alpha)^l\gamma = P(s)\beta_{f,-\alpha,\hspace{0.7pt}\fx}(-s)(s+\alpha)^{m_{f,-\alpha-1,\hspace{0.7pt}\fx}}+Q(s)(E-s+1)+\sum_iR^{(i)}(s)\zeta_i + T(s)f_{\fx}\]
for some $P,Q,R^{(i)},T \in \scD_{X,\hspace{1pt}\fx}[s]$. Write $Q= \sum_j Q_j (s+\alpha)^j$, $Q_j\in\scD_{X,\hspace{1pt}\fx}$ and similarly for $P,R^{(i)},T$. 

Comparing coefficients of $(s+\alpha)^l$, we see that 
\[\gamma(-\alpha) = Q_l(E+\alpha+1)-Q_{l-1}+\sum_iR_l^{(i)}\zeta_i+T_lf_{\fx}.\]
See in particular that 
\[u = \gamma(-\alpha)\cdot f_{\fx}^{-1-\alpha} = -Q_{l-1}\cdot f_{\fx}^{-1-\alpha} + T_l \cdot f_{\fx}^{-\alpha}.\]

Comparing coefficients of $(s+\alpha)^k$ with $k < l$, we have that 
\[0 = Q_k(E+\alpha+1)-Q_{k-1}+\sum_iR_k^{(i)}\zeta_i + T_kf_{\fx}\]
Combining each of these equations together, and noting that $Q_{-1}=0$, we see that $Q_{l-1} \in p_1(K_{l,-\alpha})$. See also finally that $T_lf_{\fx}$ is clearly also in $p_1(K_{l,-\alpha})$, as $\Phi_{l,-\alpha}(T_lf_{\fx},0,\ldots,0,-T_l(E+\alpha)^l)=0$. Thus $u \in p_1(K_{l,-\alpha})\cdot f_{\fx}^{-1-\alpha}$.

\fbox{$\supseteq$} Conversely, let $Q \in p_1(K_{l,-\alpha})$. Then by definition there exist $R^{(i)}, T \in \scD_{X,\hspace{1pt}\fx}$ such that 
\[0 = Q(E+\alpha+1)^l + \sum_iR^{(i)}\zeta_i + Tf_{\fx}.\]
Now define 
\[\gamma := \frac{Q \beta_{f,-\alpha,\hspace{0.7pt}\fx}(-s)}{\beta_{f,-\alpha,\hspace{0.7pt}\fx}(\alpha)}.\]
$\gamma \in \Gamma_{f,-\alpha,\hspace{0.7pt}\fx}$ clearly, and $\gamma(-\alpha) = Q$. Finally, $\gamma (s+\alpha)^l - \gamma (E+\alpha+1)^l \in W_0\Gamma_{f,-\alpha,\hspace{0.7pt}\fx}$ while 
\[\gamma(E+\alpha+1)^l = -\frac{\beta_{f,-\alpha,\hspace{0.7pt}\fx}(-s)}{\beta_{f,-\alpha,\hspace{0.7pt}\fx}(\alpha)}\left(\sum_iR^{(i)}\zeta_i +Tf_{\fx}\right) \in W_0\Gamma_{f,-\alpha,\hspace{0.7pt}\fx},\]
so $(s+\alpha)^l\gamma \in W_0\Gamma_{f,-\alpha,\hspace{0.7pt}\fx}$, so $\gamma \in W_l\Gamma_{f,-\alpha,\hspace{0.7pt}\fx}$. Thus 
\[Q\cdot f_{\fx}^{-1} = \gamma(-\alpha)\cdot f_{\fx}^{-1-\alpha} \in \phi_{-\alpha}(W_l\Gamma_{f,-\alpha,\hspace{0.7pt}\fx})\cdot f_{\fx}^{-1-\alpha}=W_{n+l}\scM(f^{-\alpha})_{\fx}\]
by Theorem \ref{thmgammaW}.

\end{proof}

\begin{rem}

This expression has the advantage of being somewhat easier to calculate on the face of it than the expression given in Theorem \ref{thmgammaW}; one need not calculate the Bernstein-Sato polynomial.
    
\end{rem}

We obtain results about the Hodge filtration when we admit the hypotheses appearing in Definition \ref{defnPP}, as seen in \cite{BD24} and \cite{D24}. 

\begin{thm}[\cite{BD24}, Theorem 1.6 and \cite{D24}, Theorem 1.1]

Assume that $f$ is parametrically prime and Euler homogeneous at a point $\fx\in X$, and assume that $\rho_{f,\hspace{0.7pt}\fx}\subseteq(-2-\alpha,-\alpha)$. Then, for all $k \in \bZ$,
\[F_k^H\scM(f^{-\alpha})_{\fx}=\phi_{-\alpha}(\Gamma_{f,-\alpha,\hspace{0.7pt}\fx}\cap F_k^{\sharp}\scD_{X,\hspace{1pt}\fx}[s])\cdot f_{\fx}^{-1-\alpha}.\]

\label{thmPPformula}
    
\end{thm}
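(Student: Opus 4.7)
The plan is to derive the formula by combining Theorem \ref{thmmainformula} with Theorem \ref{thmgamma}, together with a strictness argument for the filtered map $\pi_f$. Applying Theorem \ref{thmmainformula} with $l$ chosen large enough that $K_l V^{\alpha} i_{f,+}\scO_X(*f) = V^{\alpha} i_{f,+}\scO_X(*f)$ and $W_{n+l}\scM(f^{-\alpha})_{\fx} = \scM(f^{-\alpha})_{\fx}$ yields
\[F_k^H\scM(f^{-\alpha})_{\fx} = \psi_{-\alpha}\bigl(F_k^{t-\text{ord}}V^{\alpha}i_{f,+}\scO_X(*f)_{(\fx,0)}\bigr).\]
Since $\psi_{-\alpha}\circ\pi_f = \widetilde{\phi}_{f,-\alpha}$, which sends $P(s)$ to $\phi_{-\alpha}(P)\cdot f_{\fx}^{-1-\alpha}$, the theorem will follow once one establishes the strictness identity
\[F_k^{t-\text{ord}}V^{\alpha}i_{f,+}\scO_X(*f)_{(\fx,0)} = \pi_f\bigl(\Gamma_{f,-\alpha,\hspace{0.7pt}\fx}\cap F_k^{\sharp}\scD_{X,\hspace{1pt}\fx}[s]\bigr).\]

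The inclusion $\supseteq$ of this identity is routine: a direct computation using $s\mapsto -\partial_t t$ shows that $\pi_f$ is filtered in the sense $\pi_f(F_k^{\sharp}\scD_X[s])\subseteq F_k^{t-\text{ord}}i_{f,+}\scO_X(*f)$, while Theorem \ref{thmgamma}.i) identifies $\pi_f(\Gamma_{f,-\alpha,\hspace{0.7pt}\fx})$ with $V^{\alpha}i_{f,+}\scO_X(*f)_{(\fx,0)}$. The reverse inclusion, the genuine content of the theorem, I would prove by showing that for every $k$ the induced symbol map on associated gradeds
\[\text{gr}^{\sharp}_k\,\Gamma_{f,-\alpha,\hspace{0.7pt}\fx}\longrightarrow \text{gr}^{t-\text{ord}}_k\,V^{\alpha}i_{f,+}\scO_X(*f)_{(\fx,0)}\]
is surjective. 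Given such graded surjectivity, any $v\in F_k^{t-\text{ord}}V^{\alpha}$ can be iteratively corrected in decreasing $F^{\sharp}$-degree by elements of $F_k^{\sharp}\Gamma_{f,-\alpha,\hspace{0.7pt}\fx}$ until one produces $\gamma\in\Gamma_{f,-\alpha,\hspace{0.7pt}\fx}\cap F_k^{\sharp}\scD_{X,\hspace{1pt}\fx}[s]$ with $\pi_f(\gamma) = v$.

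The main obstacle, and the only place at which both hypotheses are used essentially, is the proof of this graded surjectivity. The image of $\pi_f$ on $\text{gr}^{\sharp}_k\,\Gamma_{f,-\alpha,\hspace{0.7pt}\fx}$ is described algebraically in terms of $\text{gr}^{\sharp}(\text{ann}_{\scD_{X,\hspace{1pt}\fx}[s]}f_{\fx}^{s-1})$ together with the symbols of the additional generators $f_{\fx}$ and $\beta_{f,-\alpha,\hspace{0.7pt}\fx}(-s)$ appearing in the definition of $\Gamma_{f,-\alpha,\hspace{0.7pt}\fx}$. Parametric primality ensures that $\text{gr}^{\sharp}(\text{ann}_{\scD_{X,\hspace{1pt}\fx}[s]}f_{\fx}^{s-1})$ is a prime ideal of $\scO_{X,\hspace{0.7pt}\fx}[\xi_1,\ldots,\xi_n,s]$, giving rigid control on the associated graded quotient; Euler homogeneity supplies the distinguished element $E-s+1\in\text{ann}_{\scD_{X,\hspace{1pt}\fx}[s]}f_{\fx}^{s-1}$ whose symbol links the $s$-grading (i.e. the $V$-filtration grading) to the cotangent directions. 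The assumption $\rho_{f,\hspace{0.7pt}\fx}\subseteq(-2-\alpha,-\alpha)$ confines the roots of $b_{f,\hspace{0.7pt}\fx}(s)$ to a single length-$2$ interval, thereby ensuring $\beta_{f,-\alpha,\hspace{0.7pt}\fx}(-s)$ acts nondegenerately on the relevant symbols. Together these ingredients allow one to carry through the dimension-counting and symbol-lifting argument of \cite{BD24}, Theorem 1.6 and \cite{D24}, Theorem 1.1, forcing the symbol map above to be surjective and completing the proof.
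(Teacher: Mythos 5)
The first thing to note is that the paper does not actually prove this statement: it is quoted from \cite{BD24}, Theorem 1.6 and \cite{D24}, Theorem 1.1, and the only ingredient of those proofs that resurfaces here is the filtered identity $\pi_f(\Gamma_{f,-\alpha,\hspace{0.7pt}\fx}\cap F_k^{\sharp}\scD_{X,\hspace{1pt}\fx}[s]) = F_k^{t-\mathrm{ord}}V^{\alpha}i_{f,+}\scO_X(*f)_{(\fx,0)}$, which the paper imports as \cite{D24}, Lemma 4.4 in the proof of the weighted analogue, Theorem \ref{thmPPformulaW}. Your reduction is exactly the right architecture: taking $l\gg 0$ in Theorem \ref{thmmainformula} (so that $K_lV^{\alpha}=V^{\alpha}$ by nilpotency and $W_{n+l}\scM(f^{-\alpha})_{\fx}=\scM(f^{-\alpha})_{\fx}$ by finiteness of the weight filtration) does give $F_k^H\scM(f^{-\alpha})_{\fx}=\psi_{-\alpha}\bigl(F_k^{t-\mathrm{ord}}V^{\alpha}i_{f,+}\scO_X(*f)_{(\fx,0)}\bigr)$; your easy inclusion is correct ($\pi_f$ is filtered for $F^{\sharp}$ versus $F^{t-\mathrm{ord}}$, and Theorem \ref{thmgamma}.i) gives $\pi_f(\Gamma_{f,-\alpha,\hspace{0.7pt}\fx})=V^{\alpha}$); and the iterative correction scheme is sound, since $F^{t-\mathrm{ord}}_{\bullet}$ is bounded below and the error terms stay inside $V^{\alpha}$.

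The gap is the final step. The surjectivity of the symbol map $\mathrm{gr}^{\sharp}_k\Gamma_{f,-\alpha,\hspace{0.7pt}\fx}\to\mathrm{gr}^{t-\mathrm{ord}}_kV^{\alpha}i_{f,+}\scO_X(*f)_{(\fx,0)}$ is precisely where the hypotheses of parametric primeness and Euler homogeneity do their work, and you do not argue it: you name the three hypotheses, indicate roughly what each should contribute (primeness of $\mathrm{gr}^{\sharp}(\mathrm{ann}_{\scD_{X,\hspace{1pt}\fx}[s]}f_{\fx}^{s-1})$, the element $E-s+1$, the confinement of $\rho_{f,\hspace{0.7pt}\fx}$), and then appeal to the dimension-counting and symbol-lifting argument of \cite{BD24} and \cite{D24} --- that is, to the proof of the very theorem being established, which is circular as a standalone argument. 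In particular, nothing in your sketch explains the mechanism by which primeness of the symbol ideal forces the graded image of $\Gamma_{f,-\alpha,\hspace{0.7pt}\fx}$ to exhaust $\mathrm{gr}^{t-\mathrm{ord}}_kV^{\alpha}$, and that is where essentially all of the content lies. Relative to the paper this is not a defect, since the paper likewise only cites those works; but if that is the intent, the cleaner move is to quote \cite{D24}, Lemma 4.4 directly for the filtered identity, exactly as the paper does when proving Theorem \ref{thmPPformulaW}, rather than presenting the key step as if it were being proved.
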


A simple corollary is the following:

\begin{cor}

Assume that $f$ is parametrically prime and Euler homogeneous at a point $\fx\in X$. Assume moreover that $\rho_{f,\hspace{0.7pt}\fx}\subseteq(-2,-1]$. Then, for all $k\in\bZ$,
\[F_k^H\scO_X(*f)_{\fx}=F_k\scD_{X,\hspace{1pt}\fx}\cdot f_{\fx}^{-1}.\]

\label{cor-2-1}
    
\end{cor}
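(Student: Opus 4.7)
The plan is to apply Theorem \ref{thmPPformula} directly with $\alpha=0$, and exploit the fact that under the hypothesis $\rho_{f,\hspace{0.7pt}\fx}\subseteq(-2,-1]$ the polynomial $\beta_{f,0,\hspace{0.7pt}\fx}(s)$ trivialises, which in turn collapses the ideal $\Gamma_{f,0,\hspace{0.7pt}\fx}$ to all of $\scD_{X,\hspace{1pt}\fx}[s]$.

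First I would verify that the hypotheses of Theorem \ref{thmPPformula} are satisfied with $\alpha=0$. Parametric primality and Euler homogeneity at $\fx$ are assumed. The required inclusion $\rho_{f,\hspace{0.7pt}\fx}\subseteq(-2-\alpha,-\alpha)=(-2,0)$ follows from $\rho_{f,\hspace{0.7pt}\fx}\subseteq(-2,-1]\subseteq(-2,0)$. Next I would compute $\beta_{f,0,\hspace{0.7pt}\fx}(s)$: by definition this is the product over roots $\lambda\in\rho_{f,\hspace{0.7pt}\fx}\cap(-1,0)$, and this intersection is empty by the hypothesis. Hence $\beta_{f,0,\hspace{0.7pt}\fx}(s)=1$, so in particular $1\in\Gamma_{f,0,\hspace{0.7pt}\fx}$, which forces
\[\Gamma_{f,0,\hspace{0.7pt}\fx}=\scD_{X,\hspace{1pt}\fx}[s].\]

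Then I would invoke Theorem \ref{thmPPformula} to obtain
\[F_k^H\scO_X(*f)_{\fx}=\phi_0(\Gamma_{f,0,\hspace{0.7pt}\fx}\cap F_k^{\sharp}\scD_{X,\hspace{1pt}\fx}[s])\cdot f_{\fx}^{-1}=\phi_0(F_k^{\sharp}\scD_{X,\hspace{1pt}\fx}[s])\cdot f_{\fx}^{-1}.\]
Finally, since $F_k^{\sharp}\scD_{X,\hspace{1pt}\fx}[s]=\sum_{i=0}^k(F_{k-i}\scD_{X,\hspace{1pt}\fx})s^i$ and $\phi_0$ evaluates $s\mapsto 0$, only the $i=0$ summand survives, yielding $\phi_0(F_k^{\sharp}\scD_{X,\hspace{1pt}\fx}[s])=F_k\scD_{X,\hspace{1pt}\fx}$ and hence the desired equality.

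There is essentially no obstacle here: the only content is the observation that the gap condition $\rho_{f,\hspace{0.7pt}\fx}\cap(-1,0)=\emptyset$ makes the ``correction factor'' $\beta_{f,0,\hspace{0.7pt}\fx}$ disappear, so the formula of Theorem \ref{thmPPformula} reduces to the naive order filtration applied to the generator $f_{\fx}^{-1}$. The result may be interpreted as saying that under these hypotheses the generator $f_{\fx}^{-1}$ itself generates the entire Hodge filtration as a filtered $\scD$-module.
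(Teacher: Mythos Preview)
Your proof is correct and is exactly the intended argument: the paper states this result as an immediate corollary of Theorem~\ref{thmPPformula} without proof, and the only natural derivation is precisely the one you give---observe that $\rho_{f,\hspace{0.7pt}\fx}\cap(-1,0)=\emptyset$ forces $\beta_{f,0,\hspace{0.7pt}\fx}=1$, hence $\Gamma_{f,0,\hspace{0.7pt}\fx}=\scD_{X,\hspace{1pt}\fx}[s]$, and then $\phi_0(F_k^{\sharp}\scD_{X,\hspace{1pt}\fx}[s])=F_k\scD_{X,\hspace{1pt}\fx}$.
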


Using Theorem \ref{thmmainformula}, we can now also obtain an expression for the Hodge filtration when restricted to the weight filtration steps on $\scM(f^{-\alpha})$.

\begin{thm}

Assume that $f$ is Euler homogeneous and parametrically prime at a point $\fx\in X$, and assume that $\rho_{f,\hspace{0.7pt}\fx}\subseteq(-2-\alpha,-\alpha)$. Then, for all $k \in \bZ$ and $l\in\bZ_{\geq 0}$,
\[F_k^HW_{n+l}\scM(f^{-\alpha})_{\fx}=\phi_{-\alpha}(W_l\Gamma_{f,-\alpha,\hspace{0.7pt}\fx}\cap F_k^{\sharp}\scD_{X,\hspace{1pt}\fx}[s])\cdot f_{\fx}^{-1-\alpha}.\]

\label{thmPPformulaW}
    
\end{thm}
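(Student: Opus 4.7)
The strategy is to combine the main formula of Theorem \ref{thmmainformula} with a filtered refinement of Theorem \ref{thmgammaW}(i). Writing $\widetilde{\phi}_{f,-\alpha}=\psi_{-\alpha}\circ\pi_f$ and applying Theorem \ref{thmmainformula}, the theorem reduces to the filtered strictness statement
\[
\pi_f\bigl(W_l\Gamma_{f,-\alpha,\fx}\cap F_k^{\sharp}\scD_{X,\fx}[s]\bigr)=F_k^{t-\text{ord}}K_lV^{\alpha}i_{f,+}\scO_X(*f)_{(\fx,0)}.
\]
The inclusion $\subseteq$ is immediate from $\pi_f(F_k^{\sharp})\subseteq F_k^{t-\text{ord}}$ and Theorem \ref{thmgammaW}(i), so the content lies in $\supseteq$.

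The heart of the argument is first to prove the unweighted filtered strictness
\[
\pi_f(\Gamma_{f,-\alpha,\fx}\cap F_k^{\sharp}\scD_{X,\fx}[s])=F_k^{t-\text{ord}}V^{\alpha}i_{f,+}\scO_X(*f)_{(\fx,0)}
\]
by induction on $k$, with trivial base case $k<0$. Theorem \ref{thmPPformula} yields the equality only after applying $\psi_{-\alpha}$, so lifting it requires the kernel identity
\[
\ker\psi_{-\alpha}\cap F_k^{t-\text{ord}}V^{\alpha}i_{f,+}\scO_X(*f)=(s+\alpha)\cdot F_{k-1}^{t-\text{ord}}V^{\alpha}i_{f,+}\scO_X(*f),
\]
which follows from two observations: (a) on each $\text{gr}_V^{\beta}$ with $\beta<\alpha$ the operator $s+\alpha$ acts as $\alpha-\beta$ plus a nilpotent and is hence invertible, so any $v$ with $(s+\alpha)v\in V^{\alpha}$ lies in $V^{\alpha}$; and (b) since $\alpha>0$, the operator $s+\alpha=-\partial_tt+\alpha$ strictly raises the $t$-order by exactly one on any nonzero element of $i_{f,+}\scO_X(*f)$. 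The inductive step then reads: given $u\in F_k^{t-\text{ord}}V^{\alpha}$, Theorem \ref{thmPPformula} supplies $\gamma\in\Gamma_{f,-\alpha,\fx}\cap F_k^{\sharp}$ with $\psi_{-\alpha}(\pi_f(\gamma))=\psi_{-\alpha}(u)$, whence by the kernel identity $u-\pi_f(\gamma)=(s+\alpha)v$ for some $v\in F_{k-1}^{t-\text{ord}}V^{\alpha}$; by the inductive hypothesis $v=\pi_f(\delta)$ for some $\delta\in\Gamma_{f,-\alpha,\fx}\cap F_{k-1}^{\sharp}$, and $\gamma+(s+\alpha)\delta\in\Gamma_{f,-\alpha,\fx}\cap F_k^{\sharp}$ maps under $\pi_f$ to $u$.

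Given the unweighted filtered strictness, the weighted version follows formally: for $u\in F_k^{t-\text{ord}}K_lV^{\alpha}$, pick $\gamma\in\Gamma_{f,-\alpha,\fx}\cap F_k^{\sharp}$ with $\pi_f(\gamma)=u$. Then $(s+\alpha)^l\gamma\in F_{k+l}^{\sharp}$ and $\pi_f((s+\alpha)^l\gamma)=(s+\alpha)^lu\in V^{>\alpha}=\pi_f(W_0\Gamma_{f,-\alpha,\fx})$ by Theorem \ref{thmgammaW}(i); combined with $\ker\pi_f=\text{ann}_{\scD_{X,\fx}[s]}f_{\fx}^{s-1}\subseteq W_0\Gamma_{f,-\alpha,\fx}$, this forces $(s+\alpha)^l\gamma\in W_0\Gamma_{f,-\alpha,\fx}$, that is $\gamma\in W_l\Gamma_{f,-\alpha,\fx}$, as required.

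The main obstacle will be establishing the unweighted filtered strictness: Theorem \ref{thmPPformula} gives it only modulo $\ker\psi_{-\alpha}$, so one has to set up the kernel identity carefully and run the inductive lifting. Parametric primality and Euler homogeneity enter only through Theorem \ref{thmPPformula} itself; the remaining passage to the weighted statement is a clean algebraic manipulation using only the definition of $W_l\Gamma_{f,-\alpha,\fx}$ together with Theorem \ref{thmgammaW}(i).
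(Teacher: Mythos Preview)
Your approach is correct and follows the same overall architecture as the paper: reduce via Theorem~\ref{thmmainformula} to the filtered identity
\[
\pi_f\bigl(W_l\Gamma_{f,-\alpha,\fx}\cap F_k^{\sharp}\scD_{X,\fx}[s]\bigr)=F_k^{t-\text{ord}}K_lV^{\alpha}i_{f,+}\scO_X(*f)_{(\fx,0)},
\]
establish the unweighted version first, and then upgrade to the weighted one by the same $\ker\pi_f\subseteq W_0\Gamma_{f,-\alpha,\fx}$ trick. The paper's proof and yours are identical from the point where the unweighted filtered strictness
\[
\pi_f(\Gamma_{f,-\alpha,\fx}\cap F_k^{\sharp}\scD_{X,\fx}[s])=F_k^{t-\text{ord}}V^{\alpha}i_{f,+}\scO_X(*f)_{(\fx,0)}
\]
is available.

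The difference is only in how that unweighted identity is obtained. The paper simply quotes it as \cite{D24}, Lemma~4.4, whereas you supply a self-contained derivation from Theorem~\ref{thmPPformula} via the kernel identity $\ker\psi_{-\alpha}\cap F_k^{t-\text{ord}}V^{\alpha}=(s+\alpha)\cdot F_{k-1}^{t-\text{ord}}V^{\alpha}$ and induction on $k$. Your two ingredients (invertibility of $s+\alpha$ on $\text{gr}_V^{\beta}$ for $\beta\neq\alpha$, and the fact that $s+\alpha=-\partial_tt+\alpha$ raises the $t$-order by exactly one because the top coefficient $-fu_k$ never vanishes in $\scO_X(*f)$) are correct; note that the second point does not actually require $\alpha>0$, only that $f$ is a non-zero-divisor on $\scO_X(*f)$, so your argument works for all $\alpha\geq 0$ as needed. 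What you gain is a proof that is internal to the paper and makes transparent exactly where the parametric primality and Euler homogeneity hypotheses are consumed (solely through Theorem~\ref{thmPPformula}); what the paper gains by citing is brevity.
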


\begin{proof}

It suffices by Theorem \ref{thmmainformula} to prove that
\[\pi_f(W_l\Gamma_{f,-\alpha,\hspace{0.7pt}\fx}\cap F_k^{\sharp}\scD_{X,\hspace{1pt}\fx}[s])=F_k^{t-\text{ord}}K_lV^{\alpha}i_{f,+}\scO_X(*f)_{(\fx,0)}.\]
$\subseteq$ is clear. Let $u \in F_k^{t-\text{ord}}K_lV^{\alpha}i_{f,+}\scO_X(*f)_{(\fx,0)}$. Then, by \cite{D24}, Lemma 4.4, and by Theorem \ref{thmgammaW} above, we may write
\[u=\pi_f(\gamma_1)=\pi_f(\gamma_2) \,\,\,\text{ with }\,\, \gamma_1 \in W_l\Gamma_{f,-\alpha,\hspace{0.7pt}\fx} \,\text{ and }\, \gamma_2 \in \Gamma_{f,-\alpha,\hspace{0.7pt}\fx}\cap F_k^{\sharp}\scD_{X,\hspace{1pt}\fx}[s].\]
Then
\[\gamma_2-\gamma_1 \in \ker\pi_f =\text{ann}_{\scD_{X,\hspace{1pt}\fx}}f_{\fx}^{s-1} \subseteq W_l\Gamma_{f,-\alpha,\hspace{0.7pt}\fx},\]
so we have that $\gamma_2 \in W_l\Gamma_{f,-\alpha,\hspace{0.7pt}\fx}$ also, implying that $u \in \pi_f(W_l\Gamma_{f,-\alpha,\hspace{0.7pt}\fx}\cap F_k^{\sharp}\scD_{X,\hspace{1pt}\fx}[s])$ as required.
    
\end{proof}

\begin{cor}

Assume that $f$ is Euler homogeneous and parametrically prime at a point $\fx\in X$, and that $\rho_{f,\hspace{0.7pt}\fx}\subseteq(-2,-1]$. Choose $E,\zeta_1,\ldots,\zeta_m$ as in Corollary \ref{corWformulagenl}, and carry over the same notation. Let $l\in\bZ_{\geq 0}$ such that $l<m_{f,-1,\hspace{0.7pt}\fx}$. Then
\[F_k^HW_{n+l}\scO_X(*f)_{\fx} = ((p_1(K_{l,0})+\scD_{X,\hspace{1pt}\fx}(E+1))\cap F_k\scD_{X,\hspace{1pt}\fx})\cdot f_{\fx}^{-1}.\]

\label{corWformulagenlPP}
    
\end{cor}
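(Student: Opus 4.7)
The plan is to specialise Theorem \ref{thmPPformulaW} to $\alpha=0$ and match the resulting description against Corollaries \ref{corWformulagenl} and \ref{cor-2-1}. Since $\rho_{f,\hspace{0.7pt}\fx}\subseteq(-2,-1]$ the intersection $\rho_{f,\hspace{0.7pt}\fx}\cap(-1,0)$ is empty, giving $\beta_{f,0,\hspace{0.7pt}\fx}(s)=1$ and hence $\Gamma_{f,0,\hspace{0.7pt}\fx}=\scD_{X,\hspace{1pt}\fx}[s]$; for $\epsilon>0$ sufficiently small we instead have $\beta_{f,-\epsilon,\hspace{0.7pt}\fx}(-s)=(-s)^{m_0}$ with $m_0:=m_{f,-1,\hspace{0.7pt}\fx}$, and consequently
\[W_0\Gamma_{f,0,\hspace{0.7pt}\fx}=\scD_{X,\hspace{1pt}\fx}[s]f_{\fx}+\scD_{X,\hspace{1pt}\fx}[s]s^{m_0}+\text{ann}_{\scD_{X,\hspace{1pt}\fx}[s]}f_{\fx}^{s-1}.\]
Thus Theorem \ref{thmPPformulaW} presents $F_k^HW_{n+l}\scO_X(*f)_{\fx}$ as $\phi_0\bigl(W_l\Gamma_{f,0,\hspace{0.7pt}\fx}\cap F_k^{\sharp}\scD_{X,\hspace{1pt}\fx}[s]\bigr)\cdot f_{\fx}^{-1}$, and the task reduces to showing this coincides with the right-hand side of the claimed formula.

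The inclusion $\supseteq$ I would obtain purely formally by combining the two prior corollaries. Given $P\in F_k\scD_{X,\hspace{1pt}\fx}\cap(p_1(K_{l,0})+\scD_{X,\hspace{1pt}\fx}(E+1))$, write $P=P_0+A$ with $P_0\in p_1(K_{l,0})$ and $A\in\scD_{X,\hspace{1pt}\fx}(E+1)$. Since $E(f_{\fx})=f_{\fx}$ forces $(E+1)\cdot f_{\fx}^{-1}=0$, we have $A\cdot f_{\fx}^{-1}=0$, so $P\cdot f_{\fx}^{-1}=P_0\cdot f_{\fx}^{-1}\in p_1(K_{l,0})\cdot f_{\fx}^{-1}=W_{n+l}\scO_X(*f)_{\fx}$ by Corollary \ref{corWformulagenl}, and simultaneously $P\cdot f_{\fx}^{-1}\in F_k\scD_{X,\hspace{1pt}\fx}\cdot f_{\fx}^{-1}=F_k^H\scO_X(*f)_{\fx}$ by Corollary \ref{cor-2-1}. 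Since the Hodge filtration on the mixed Hodge submodule $W_{n+l}\scO_X(*f)$ is the restriction of the ambient one, the intersection of these two subspaces is exactly $F_k^HW_{n+l}\scO_X(*f)_{\fx}$.

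For the reverse inclusion I would run the coefficient-comparison argument from the proof of Corollary \ref{corWformulagenl}. Given $\gamma\in W_l\Gamma_{f,0,\hspace{0.7pt}\fx}\cap F_k^{\sharp}\scD_{X,\hspace{1pt}\fx}[s]$, write $s^l\gamma=Ps^{m_0}+Q(E-s+1)+\sum_iR^{(i)}\zeta_i+Tf_{\fx}$ and expand $Q,R^{(i)},T$ in powers of $s$. Since $P_{l-m_0}=0$ (as $l<m_0$), comparing the coefficient of $s^l$ yields
\[\phi_0(\gamma)=\gamma_0=Q_l(E+1)-Q_{l-1}+\sum_iR_l^{(i)}\zeta_i+T_lf_{\fx},\]
and the coefficients of $s^j$ for $0\le j<l$ produce telescoping relations that — exactly as in the proof of Corollary \ref{corWformulagenl} — assemble into a tuple in $\ker\Phi_{l,0}$ whose first coordinate represents $-Q_{l-1}+T_lf_{\fx}$ as an element of $p_1(K_{l,0})$ and in which the remainder $Q_l(E+1)+\sum_iR_l^{(i)}\zeta_i$ is absorbed into $\scD_{X,\hspace{1pt}\fx}(E+1)$ (using that each $\zeta_i\cdot f_{\fx}^{-1}=0$ and $(E+1)\cdot f_{\fx}^{-1}=0$). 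The containment $\phi_0(\gamma)\in F_k\scD_{X,\hspace{1pt}\fx}$ is automatic from $\gamma\in F_k^{\sharp}\scD_{X,\hspace{1pt}\fx}[s]$, since this forces $\gamma_0\in F_k\scD_{X,\hspace{1pt}\fx}$.

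The main technical obstacle is the last step: carrying the $F_k^{\sharp}$-estimate through the telescoping, i.e.\ verifying that the element of $p_1(K_{l,0})+\scD_{X,\hspace{1pt}\fx}(E+1)$ which one extracts from the tuple in $\ker\Phi_{l,0}$ agrees with $\phi_0(\gamma)$ modulo $\text{ann}_{\scD_{X,\hspace{1pt}\fx}}f_{\fx}^{-1}$ while itself lying in $F_k\scD_{X,\hspace{1pt}\fx}$. This is exactly where the parametric-primality hypothesis — via the filtered representation lemma (\cite{D24}, Lemma 4.4) that already underpins the proof of Theorem \ref{thmPPformulaW} — has to be invoked; the purely combinatorial assembly of the tuple then proceeds as in Corollary \ref{corWformulagenl}.
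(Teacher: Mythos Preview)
Your $\supseteq$ direction is correct and matches the paper.

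For $\subseteq$, the paper takes a more direct route that avoids re-entering the $\scD_{X,\fx}[s]$ world altogether. Given $u\in F_k^HW_{n+l}\scO_X(*f)_{\fx}$, the paper writes $u=P\cdot f_{\fx}^{-1}=Q\cdot f_{\fx}^{-1}$ with $P\in p_1(K_{l,0})$ (by Corollary~\ref{corWformulagenl}) and $Q\in F_k\scD_{X,\fx}$ (by Corollary~\ref{cor-2-1}). Then $Q-P\in\text{ann}_{\scD_{X,\fx}}f_{\fx}^{-1}$, and \cite{D24}, Lemma~3.2 gives $\text{ann}_{\scD_{X,\fx}}f_{\fx}^{-1}=\text{ann}_{\scD_{X,\fx}}f_{\fx}^{s}+\scD_{X,\fx}(E+1)$. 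One then checks $\text{ann}_{\scD_{X,\fx}}f_{\fx}^{s}\subseteq p_1(K_{l,0})$ (since for any $\zeta$ annihilating $f_{\fx}^{s}$, the product $\zeta(E+1)^l$ again annihilates $f_{\fx}^{s}$, hence annihilates $f_{\fx}^{s-1}$, and thus lies in $\sum_i\scD_{X,\fx}\zeta_i$). This yields $Q\in p_1(K_{l,0})+\scD_{X,\fx}(E+1)$ immediately, with $Q\in F_k\scD_{X,\fx}$ already given. No coefficient comparison, no further appeal to \cite{D24}, Lemma~4.4.

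Your approach via Theorem~\ref{thmPPformulaW} can be made to work, but there is a misstep in your decomposition. You correctly obtain $\gamma_0=Q_l(E+1)-Q_{l-1}+\sum_iR_l^{(i)}\zeta_i+T_lf_{\fx}$ and correctly place $Q_l(E+1)$ in $\scD_{X,\fx}(E+1)$ and $-Q_{l-1},\,T_lf_{\fx}$ in $p_1(K_{l,0})$. But you then try to absorb $\sum_iR_l^{(i)}\zeta_i$ into $\scD_{X,\fx}(E+1)$, justifying this only by saying both kill $f_{\fx}^{-1}$ --- which is an equality in $\scM(f^{-\alpha})$, not in $\scD_{X,\fx}$. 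The correct move is that each $\zeta_i$ already lies in $p_1(K_{l,0})$: since $(E+1)\cdot f_{\fx}^{s-1}=sf_{\fx}^{s-1}$, one has $\zeta_i(E+1)^l\cdot f_{\fx}^{s-1}=s^l\zeta_i\cdot f_{\fx}^{s-1}=0$, so $\zeta_i(E+1)^l\in\sum_j\scD_{X,\fx}\zeta_j$, exhibiting $\zeta_i\in p_1(K_{l,0})$.

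Finally, your ``main technical obstacle'' is misidentified. The containment $\phi_0(\gamma)=\gamma_0\in F_k\scD_{X,\fx}$ is automatic from $\gamma\in F_k^{\sharp}\scD_{X,\fx}[s]$, as you yourself note; there is nothing further to carry through the telescoping, and no second invocation of parametric primality or \cite{D24}, Lemma~4.4 is needed beyond what is already baked into Theorem~\ref{thmPPformulaW}. Once the $\zeta_i$ are placed correctly, the argument closes.
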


\begin{proof}

The inclusion $\supseteq$ is immediate by Corollary \ref{corWformulagenl} and Corollary \ref{cor-2-1}. Also by Corollaries \ref{corWformulagenl} and \ref{cor-2-1}, if $u\in F_k^HW_{n+l}\scO_X(*f)_{\fx}$, then we may write
\[u= P\cdot f_{\fx}^{-1} = Q \cdot f_{\fx}^{-1} \,\,\, \text{ where }\,\, P\in p_1(K_{l,0}) \,\text{ and }\, Q\in F_k\scD_{X,\hspace{1pt}\fx}.\]
Then $Q-P\in\text{ann}_{\scD_{X,\hspace{1pt}\fx}}f_{\fx}^{-1}$, implying by \cite{D24}, Lemma 3.2, that
\[Q\in p_1(K_{l,0})+\text{ann}_{\scD_{X,\hspace{1pt}\fx}}f_{\fx}^s + \scD_{X,\hspace{1pt}\fx}(E+1).\]
However, $\left(\text{ann}_{\scD_{X,\hspace{1pt}\fx}}f_{\fx}^s\right)(E+1)^l \subseteq \text{ann}_{\scD_{X,\hspace{1pt}\fx}}f_{\fx}^s$, so $\text{ann}_{\scD_{X,\hspace{1pt}\fx}}f_{\fx}^s\subseteq p_1(K_{l,0})$. 

Therefore we see that 
\[Q\in p_1(K_{l,0})+\scD_{X,\hspace{1pt}\fx}(E+1),\]
so that
\[u \in ((p_1(K_{l,0})+\scD_{X,\hspace{1pt}\fx}(E+1))\cap F_k\scD_{X,\hspace{1pt}\fx})\cdot f_{\fx}^{-1}\]
as required.
    
\end{proof}

\newpage

\bibliographystyle{siam}
\bibliography{bibliography}

\end{document}